\documentclass[11pt]{article}
\usepackage{amsmath, amsthm, amssymb, amscd, mathptmx}

\usepackage{amsfonts}

\newcommand\imCMsym[4][\mathord]{%
	\DeclareFontFamily{U} {#2}{}
	\DeclareFontShape{U}{#2}{m}{n}{
		<-6> #25
		<6-7> #26
		<7-8> #27
		<8-9> #28
		<9-10> #29
		<10-12> #210
		<12-> #212}{}
	\DeclareSymbolFont{CM#2} {U} {#2}{m}{n}
	\DeclareMathSymbol{#4}{#1}{CM#2}{#3}
}

\imCMsym{cmmi}{124}{\Cjmath}

\usepackage{color}
\usepackage{pstricks}

\usepackage[all]{xy}\CompileMatrices\SelectTips{cm}{12}

\theoremstyle{plain}
\newtheorem{Thm}{\sc Theorem}[section]
\newtheorem{Theorem}[Thm]{\sc Theorem}
\newtheorem{Corollary}[Thm]{\sc Corollary}

\newtheorem*{Corollary*}{\sc Corollary}

\newtheorem{Proposition}[Thm]{\sc Proposition}
\newtheorem*{Proposition*}{\sc Proposition}
\newtheorem{Lemma}[Thm]{\sc Lemma}

\theoremstyle{definition}
\newtheorem{Definition}[Thm]{Definition}

\theoremstyle{remark}
\newtheorem{Remark}[Thm]{Remark}
\newtheorem{Example}[Thm]{Example}
\newtheorem*{Example*}{Example}
\newtheorem*{Remark*}{Remark}

\renewcommand{\AA}{{\mathbb A}}

\newcommand{\CC}{{\mathbb C}}

\newcommand{\PP}{{\mathbb P}}

\newcommand{\cA}{{\mathcal A}}

\newcommand{\cB}{{\mathcal B}}

\newcommand{\cE}{{\mathcal E}}
\newcommand{\cF}{{\mathcal F}}
\newcommand{\cG}{{\mathcal G}}

\newcommand{\cI}{{\mathcal I}}
\newcommand{\cK}{{\mathcal K}}
\newcommand{\cL}{{\mathcal L}}

\newcommand{\cM}{{\mathcal M}}
\newcommand{\cN}{{\mathcal N}}
\newcommand{\cO}{{\mathcal O}}

\newcommand{\cQ}{{\mathcal Q}}

\newcommand{\cT}{{\mathcal T}}
\newcommand{\cV}{{\mathcal V}}
\newcommand{\cW}{{\mathcal W}}

\newcommand{\codim}{{\mathop{\rm codim \, }}}

\newcommand{\GL}{\mathop{\rm GL\, }}

\newcommand{\an}{{\mathop{\rm an }}}

\newcommand{\ch}{{\mathop{\rm ch \, }}}

\newcommand{\rk}{{\mathop{\rm rk \,}}}

\newcommand{\Sym}{{\mathop{{\rm Sym \, }}}}

\newcommand{\Supp}{{\mathop{{\rm Supp \,}}}}
\newcommand{\Spec}{{\mathop{{\rm Spec\, }}}}

\newcommand{\Vect}{{\mathop{{\rm Vect \,}}}}

\newcommand{\wf}{{\mathop{\rm wf}}}

\def\MR#1{}

\begin{document}

\markboth {\rm }{}

\title{Weak positivity and weak flatness of vector bundles}
\author{Adrian Langer} \date{\today}

\maketitle


{\noindent \sc Address:}\\
Institute of Mathematics, University of Warsaw,
ul.\ Banacha 2, 02-097 Warszawa, Poland\\
e-mail: {\tt alan@mimuw.edu.pl}

\medskip

\begin{abstract}
	We study Viehweg's weak positivity for vector bundles on
	quasi-projective schemes over noetherian rings, including mixed
	characteristic. We prove that weak positivity is preserved under
	tensor products, symmetric powers, divided powers, and exterior
	powers. We introduce weakly flat bundles, generalizing numerically
	flat bundles, and we use them to construct an S-fundamental group
	scheme for normal varieties admitting a small projective
	compactification. We compare weak flatness with strong numerical
	flatness and establish analogues of the Demailly–Peternell–Schneider theorem. 
	For smooth complex varieties admitting a small compactification, we prove that the semisimple objects
	in the category of weakly flat bundles are precisely the unitary
	flat bundles. We also show that a vector bundle equipped with an
	integrable algebraic connection and an invariant filtration with
	unitary flat quotients is weakly flat. As applications, we characterize quotients of abelian varieties in
	terms of weak positivity of some standard vector bundles.
\end{abstract}

\section*{Introduction}

Viehweg introduced weak positivity of vector bundles over a dense
open subset of a quasi-projective variety in \cite{Vi1} and developed
its basic properties in \cite{Vi2,Vi3}. This notion has become an
important tool in the study of moduli spaces of polarized varieties
and positivity on non-projective varieties.
One aim of this paper is to establish preservation of weak
positivity over arbitrary noetherian bases under tensor operations, extending the
characteristic-zero treatment in \cite[Section~2.3]{Vi2}.
Our first main result is the following.

\begin{Theorem}\label{main1}
	Let $X$ be a quasi-projective scheme over a noetherian ring $R$,
	and let $X_0\subseteq X$ be a fixed Zariski dense open subset.
	Then the class of vector bundles weakly positive over $X_0$ is
	closed under finite direct sums, quotient vector bundles, tensor
	products, positive symmetric powers, positive divided powers, and
	exterior powers of positive rank.
\end{Theorem}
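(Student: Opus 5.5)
We work with Viehweg's definition: $\cE$ is weakly positive over $X_0$ if for some (equivalently every) ample line bundle $\cA$ on $X$ and every $a>0$ there is $b>0$ such that $S^{ab}\cE\otimes\cA^{b}$ is globally generated over $X_0$. Since $R$ is only noetherian, the tensor constructions must be handled with care. $S^n$, the divided powers $\Gamma^n=(S^n\cE^\vee)^\vee$ and $\Lambda^n$ differ integrally, and none is a direct summand of $\cE^{\otimes n}$ when $n!$ is not invertible.

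\textbf{Direct sums and quotients.} For a quotient $\cE\to\cQ$, the induced map $S^{ab}\cE\otimes\cA^b\to S^{ab}\cQ\otimes\cA^b$ is surjective, so global generation over $X_0$ passes to $\cQ$. For $\cE_1\oplus\cE_2$ we have $S^{m}(\cE_1\oplus\cE_2)=\bigoplus_{i+j=m}S^i\cE_1\otimes S^j\cE_2$. We choose $b$ working for both bundles with exponent $2a$, and use that products of globally generated sheaves are globally generated. After replacing $\cA$ by a power, each summand $S^i\cE_1\otimes S^j\cE_2\otimes\cA^{b}$ is a quotient of a tensor product of generated pieces, where we absorb the remainders $i\bmod b$ using the fact that $S^r\cE\otimes\cA^{c}$ is generated for $c$ large by ampleness. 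This step is routine bookkeeping.

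\textbf{Tensor products.} This is the key case. The surjection $S^m\cE\otimes S^m\cF\to S^m(\cE\otimes\cF)$, given by $e_1\cdots e_m\otimes f_1\cdots f_m\mapsto\prod(e_i\otimes f_i)$ summed appropriately, is not obviously surjective integrally. The cleaner route is the multiplication map $S^m(\cE\otimes\cF)\leftarrow$ ? We instead argue geometrically. Let $\pi\colon\PP(\cE)\times_X\PP(\cF)\to X$ and use the Segre embedding into $\PP(\cE\otimes\cF)$. The key claim is that $\cE$ is weakly positive over $X_0$ iff $\cO_{\PP(\cE)}(1)\otimes\pi^*\cA^{\epsilon}$ is "weakly positive over $\pi^{-1}X_0$" in the sense that multiples of it plus $\pi^*\cA$ have sections generating over $\pi^{-1}X_0$. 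This holds because $\pi_*\cO(m)=S^m\cE$ and, by relative Serre vanishing, $\pi^*\pi_*$ is surjective for large $m$. Then $\cO(1,1)$ on the fiber product is the tensor of two such line bundles, and line bundles are handled directly by multiplying sections. Pushing forward, $\pi_*\cO(m,m)=S^m\cE\otimes S^m\cF$ surjects onto $S^m(\cE\otimes\cF)$ after the Segre restriction, because $\PP(\cE)\times\PP(\cF)\subset\PP(\cE\otimes\cF)$ is a closed immersion and $H^1$ of the twisted ideal vanishes relatively for $m\gg0$. Proving this Serre-vanishing comparison uniformly, that is, handling finitely many small $m$ via the ample twist, is the main obstacle. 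I would first try reducing the statement to a $\PP$-bundle criterion proved once over an arbitrary noetherian base.

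\textbf{Powers.} $S^n\cE$ and $\Lambda^n\cE$ are quotients of $\cE^{\otimes n}$, so they follow from the tensor and quotient cases. Divided powers are not quotients of $\cE^{\otimes n}$; they are subsheaves, since $\Gamma^n\cE=(\cE^{\otimes n})^{S_n}$. Here I would use the natural map $S^n\cE\to\Gamma^n\cE$, which is an isomorphism after inverting $n!$ but not integrally. Instead, note that $\Gamma^n\cE\otimes\cO_{\PP}$ pulls back along the same $\PP(\cE)$ formalism. Alternatively, use that $S^{n}(\Gamma^{n}\cE)$ receives a map from $S^{n\cdot?}$... The cleanest approach is the Frobenius-free trick: the product map $\Gamma^n\cE^{\otimes}\cdots$. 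I would try the following. The composite $S^{nk}\cE\to S^k(\Gamma^n\cE)$, induced by $S^n\cE\to\Gamma^n\cE$, has cokernel supported nowhere generically only in characteristic zero, so in mixed characteristic we instead use the surjection $\cE^{\otimes n}\to\Gamma^n\cE$ given by symmetrization $\sum_{\sigma}\sigma$ composed suitably. This fails in general, so I expect divided powers to require a separate argument. One option is via duality with $S^n$ on $\PP(\cE^\vee)$-type constructions.
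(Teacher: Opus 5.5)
\textbf{There is a genuine gap: the tensor product step fails, and divided powers are not handled.}

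Your arguments for direct sums and quotients are fine. Granting tensor products, $\Sym^n$ and $\bigwedge^n$ do follow as quotients of $\cE^{\otimes n}$.

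\textbf{Tensor products.} The Segre morphism $\sigma\colon\PP(\cE)\times_X\PP(\cF)\to\PP(\cE\otimes\cF)$ satisfies $\sigma^*\cO(1)\simeq\cO(1,1)$. Pushing forward yields the restriction map
\[
\Sym^m(\cE\otimes\cF)\longrightarrow\Sym^m\cE\otimes\Sym^m\cF .
\]
The relative vanishing for the twisted ideal sheaf that you invoke is exactly what proves that \emph{this} map is surjective. It goes the wrong way for you: global generation passes from a sheaf to its quotients, not from a quotient back to the source.

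No surjection in the direction you need can exist once $r=\rk\cE\ge2$ and $s=\rk\cF\ge2$. The two ranks grow like $m^{r+s-2}$ and $m^{rs-1}$, and $rs-1>r+s-2$.

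Conceptually, the Segre image is a proper closed subset of $\PP(\cE\otimes\cF)$. Positivity of $\cO(1)$ along it says nothing at non-decomposable points. Already for ordinary ampleness, ampleness of $\cO(1,1)$ on $\PP(\cE)\times_X\PP(\cF)$ is elementary, whereas ampleness of $\cE\otimes\cF$ is a deep theorem. So no Serre-vanishing bookkeeping can close this step.

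Your $\PP$-bundle ``key claim'' is also justified in only one direction. Sections of $\cO_{\PP(\cE)}(m)\otimes\pi^*\cA^b$ generating over $\pi^{-1}X_0$ only span a base-point-free subspace of $\Sym^m\cE(x)$, not all of $\Sym^m\cE(x)$.

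\textbf{Divided powers.} These are left open. As you observe, they are subsheaves rather than quotients of $\cE^{\otimes n}$, so the quotient trick cannot supply them.

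\textbf{The missing idea.} You need to import the ampleness theorems for tensor products and polynomial functors, including $\Gamma^d$, from \cite{La-Ample}. Then convert weak positivity into ampleness after an arbitrarily small twist. The paper does this in three steps.
\begin{enumerate}
\item Lemma~\ref{operations:transfer-ampleness} passes from ordinary ampleness to ampleness with respect to $X_0$. It uses the open locus of surjections in a Hom-scheme and a Hartogs-type extension.
\item Proposition~\ref{operations:root-cover-criterion} takes finite flat coordinate-power covers $f$ with $f^*H\simeq\cA^m$. It shows that $\cE$ is weakly positive over $X_0$ if and only if $f^*\cE\otimes\cA$ is ample with respect to $f^{-1}(X_0)$ for arbitrarily large $m$.
\item For an operation $\Phi$ homogeneous of degree $d$ under twists, $\Phi(f^*\cE_1\otimes\cA,\ldots)\simeq f^*\Phi(\cE_1,\ldots)\otimes\cA^d$. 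This treats $\otimes$, $\Sym^d$, $\Gamma^d$ and $\bigwedge^d$ uniformly.
\end{enumerate}
Descent along $f$ (Lemma~\ref{operations:finite-flat-generation}) uses the contraction $\cB\otimes\cB^\vee\to\cO_X$ rather than a trace, so $\deg f$ need not be invertible. This is what makes the argument work in mixed characteristic.
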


Corollary~\ref{operations:weak-positivity} proves a stronger statement
for bundles associated with polynomial representations and establishes
the analogous properties for ampleness with respect to
$X_0$. Closure under finite direct sums and quotients is elementary;
the main assertions concern tensor products, divided powers, and
exterior powers. The theorem applies in mixed characteristic and
requires an argument over the base, beyond the corresponding
statements on geometric fibers. Its proof uses the author's recent
results \cite{La-Ample} on tensor operations on ample vector bundles
over possibly non-proper schemes.

We also study positivity in positive characteristic. In particular,
we characterize weak positivity over a fixed dense open subset
in terms of ampleness of Frobenius pullbacks with respect to that
open subset (see Proposition~\ref{frobenius-wp}).

A second aim of the paper is to extend aspects of the theory of
numerically flat bundles to non-projective varieties.
Recall that a vector bundle $\cE$ is numerically flat if both
$\cE$ and $\cE^*$ are nef.
Demailly, Peternell and Schneider proved that a vector bundle on a
smooth complex projective variety is numerically flat if and only
if it admits a filtration by subbundles with unitary flat quotients
\cite[Theorem~1.18]{DPS1994}.
Moreover, Simpson's results show that such a bundle admits an
integrable algebraic connection preserving a filtration of this
kind and inducing unitary connections on its quotients
(see \cite[Corollary~3.10 and the discussion following it]{Simpson1992}).

On non-projective varieties, numerical flatness in the curve-theoretic
sense is too weak for a direct extension of these results
(see \cite[Example 3.2]{La-Simpson} and Example \ref{ex:unitary-filtration-not-weakly-positive}).
This motivates replacing nefness by weak positivity over the whole
variety, which agrees with nefness in the projective case.
We call a vector bundle $\cE$ on $X$ \emph{weakly flat} if both
$\cE$ and $\cE^*$ are weakly positive over $X$.

Theorem~\ref{main1} shows that weakly flat vector bundles form a rigid
symmetric monoidal category $\Vect^{\wf}(X)$.
If $X$ is a big open subset of a normal projective variety, we prove
that this category, equipped with evaluation at a point $x\in X(k)$,
is neutral Tannakian (see Proposition~\ref{Vect-wf-rigid-abelian}).
Its Tannaka dual defines the S-fundamental group scheme
$\pi_1^S(X,x)$.
In positive characteristic, this agrees with the S-fundamental group
scheme introduced in \cite[Definition~4.18]{La-Simpson}.

We compare weak flatness with strong numerical flatness, introduced
in \cite{La-Simpson}, and study extension across the boundary of a
small compactification. The following theorem summarizes the
comparison when the reflexive extension is locally free or the
compactification is smooth.

\begin{Theorem}\label{main2}
	Let $\overline X$ be a normal projective variety of dimension
	$n\ge2$ over an algebraically closed field $k$.
	Fix an ample line bundle $H$ on $\overline X$.
	Let $j:X\hookrightarrow\overline X$ be a big open subset, let
	$\cE$ be a vector bundle on $X$, and set $\cF:=j_*\cE$.
	Assume that either $\overline X$ is smooth or $\cF$ is locally
	free on $\overline X$.
	Then the following conditions are equivalent:
	\begin{enumerate}
		\item $\cE$ is strongly numerically flat.
		\item $\cE$ is weakly flat.
		\item $\cF$ is strongly slope $H$-semistable and
		\[
		\int_{\overline X}\ch_1(\cF)H^{n-1}
		=
		\int_{\overline X}\ch_2(\cF)H^{n-2}
		=0.
		\]
		\item $\cF$ is a numerically flat vector bundle on
		$\overline X$.
	\end{enumerate}
	Moreover, $(2)$ implies
	$(1)$ without either additional assumption on $\overline X$
	or $\cF$.
\end{Theorem}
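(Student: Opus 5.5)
The plan is to run the chain $(4)\Rightarrow(2)\Rightarrow(1)\Rightarrow(3)\Rightarrow(4)$, proving $(2)\Rightarrow(1)$ with no extra hypothesis on $\overline X$ or $\cF$. Recall from \cite{La-Simpson} that $\cE$ is strongly numerically flat if, for every (equivalently some) small projective compactification, the reflexive extension $j_*\cE$ is strongly slope semistable with vanishing $\int\ch_1 H^{n-1}$ and $\int\ch_2 H^{n-2}$. Equivalently, its restriction to a general complete intersection surface of sufficiently ample divisors, which lies in $X$ because $\codim(\overline X\setminus X)\ge 2$, has a numerically flat restriction. I would use whichever formulation is stated there.

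\emph{Step $(4)\Rightarrow(2)$.} If $\cF$ is numerically flat on $\overline X$, then $\cF$ and $\cF^*$ are nef. On a projective variety, nefness coincides with weak positivity over the whole variety. Restricting to the open set $X$ preserves weak positivity over $X$ (pull back the generic global generation statement for $\Sym^{ab}\cF\otimes H^{b}$). Hence $\cE=\cF|_X$ and $\cE^*=\cF^*|_X$ are both weakly positive over $X$, so $\cE$ is weakly flat.

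\emph{Step $(2)\Rightarrow(1)$, in general.} Take a general complete intersection curve $C$, and a general complete intersection surface $S$, of divisors in $|mH|$ avoiding $\overline X\setminus X$. Weak positivity over $X$ means that for every $a$ there is $b$ with $\Sym^{ab}\cE\otimes H^b$ globally generated over $X$. Restricting to $C$ or $S$ shows that $\cE|_C$, $\cE|_S$ and their duals are nef, so $\cE|_S$ is numerically flat. By Mehta–Ramanathan, semistability of $\cE|_C$ for general $C$ gives slope semistability of $\cF$. The Chern numbers are computed on $S$, where numerical flatness forces $\ch_1\cdot H=0$ and $\ch_2=0$ (for instance via DPS-type filtrations, or via $c_2$ of nef bundles with trivial determinant). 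For strong semistability in characteristic $p$, note that Frobenius pullbacks of weakly flat bundles are weakly flat, since weak positivity is stable under $F^*$ (Proposition~\ref{frobenius-wp}). Hence all $F^{e*}\cE$ satisfy the same conclusion.

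\emph{Steps $(1)\Leftrightarrow(3)$ and $(3)\Rightarrow(4)$.} The equivalence $(1)\Leftrightarrow(3)$ is essentially the definition, or the characterization in \cite{La-Simpson}. For $(3)\Rightarrow(4)$, assume $\cF$ is locally free or $\overline X$ is smooth. I would invoke the author's earlier theorem that a strongly semistable reflexive sheaf with $\Delta\cdot H^{n-2}=0$ and $\ch_1\cdot H^{n-1}=0$ on a normal projective variety is locally free and numerically flat, provided it is already locally free or the variety is smooth. In the smooth case this is \cite{La-Simpson} or the Bogomolov-type result of Langer. The argument: restrictions to all complete intersection curves are strongly semistable of degree $0$, hence nef together with their duals. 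Via the restriction theorem and boundedness, this extends to every curve, giving nefness of $\cF$ and $\cF^*$.

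The main obstacle is this last step. It requires reflexive $\cF$ to be nef on curves meeting the singular locus or the boundary. I would handle it by passing to $\PP(\cF)$ and proving nefness of $\cO(1)$ from $\ch_2=0$ plus strong semistability, as in the standard argument for $\Delta=0$ implying numerical flatness.
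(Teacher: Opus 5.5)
There is a genuine gap, and it starts with the definition you recall. In this paper (Definition~\ref{Def:strongly-num-flat}), $\cE$ is strongly numerically flat if it is numerically flat and $\int_{\overline S}\ch_2(i_*g^*\cE)=0$ for \emph{every} morphism $g:S\to X$ from \emph{every} normally big surface $S$ with small compactification $i:S\hookrightarrow\overline S$. This is a condition on arbitrary, possibly singular, normal surfaces mapping to $X$. It is not a condition on $j_*\cE$ or on complete intersection surfaces of $\overline X$. Hence $(1)\Leftrightarrow(3)$ is not ``essentially the definition''. Only $(1)\Rightarrow(3)$ is easy: restrict to general complete intersection curves and surfaces. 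Your step $(2)\Rightarrow(1)$ only yields the Chern-number conditions of $(3)$ on complete intersection surfaces, so nothing in your chain ever returns to $(1)$. The paper closes the loop with $(4)\Rightarrow(1)$:
\begin{itemize}
\item given $g:S\to X$, resolve $\overline S\dashrightarrow\overline X$ by a smooth surface, with $\pi:T\to\overline S$ and $h:T\to\overline X$;
\item use $(\pi_*h^*\cF)^{**}\simeq i_*g^*\cE$;
\item apply Lemma~\ref{lem:nf-birational-chern}, which says that for a numerically flat bundle on $T$ every local relative class $c_2(\pi_x,\cdot)$ vanishes.
\end{itemize}
That lemma carries the real content. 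Its proof uses approximate splitting filtrations on finite covers, the Hodge index theorem and negative definiteness of the exceptional intersection matrices. You need an argument of this kind.

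The ``moreover'' part has a further problem. When $\codim(\overline X\setminus X)=2$, a general complete intersection surface $S\subset\overline X$ is not contained in $X$; only $S\cap X$ is big in $S$. So on $S$ you only know weak positivity over $S\cap X$. Lemma~\ref{surface-weak-positivity} turns this into nefness only for a sheaf that is locally free on all of $S$, and that is exactly what the extra hypotheses supply. If $\overline X$ is smooth, a general $S$ misses the codimension-$3$ non-locally-free locus of $\cF$; otherwise $\cF$ is assumed locally free. For merely reflexive sheaves this conclusion fails, as the paper notes. Without the hypotheses you therefore get neither nefness of $\cF|_S$ nor $\ch_2=0$. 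Moreover, you need the vanishing for arbitrary $g$, i.e.\ $\int_{\overline V}\ch_2(i_*\cV)=0$ for a weakly flat bundle $\cV$ on any normally big surface $V\subset\overline V$. The paper proves this by a two-sided squeeze:
\begin{itemize}
\item The lower bound comes from ampleness over $V$ of $F^{[a]}(i_*\cV)\otimes H$ in characteristic $p$ (Proposition~\ref{frobenius-wp}), or of $f_m^{[*]}(i_*\cV)\otimes A_m$ on coordinate root covers in characteristic $0$. This ampleness is fed into the Kleiman-type inequality $s_2>0$ of Lemma~\ref{snf-wp:ample-surface-segre}.
\item The upper bound comes from Bogomolov and Hodge index in characteristic $p$ (Theorem~\ref{snf-wp:equivalence}). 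In characteristic $0$ it comes from the filtration with unitary flat quotients (Proposition~\ref{prop:stable-unitary-weak-positivity}, resting on Cao--Deng--Matsumura) together with Lemmas~\ref{lem:unitary-surface-chern} and~\ref{lem:surface-chern-subadditivity}.
\end{itemize}

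Finally, your $(3)\Rightarrow(4)$ for singular $\overline X$ with $\cF$ locally free is only a plan, not an argument. In characteristic $p$ the paper uses boundedness of Frobenius pullbacks. In characteristic $0$ it uses Lemma~\ref{nf-normal:lem-surface-char-zero}, which perturbs the nef and big pullback polarization on a resolution to an ample one. It then runs an induction showing that $\ch_2(\cF)$ pairs to zero with every integral surface. Also, the direction of Mehta--Ramanathan you quote is reversed: deducing semistability of $\cF$ from its curve restrictions is elementary.
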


The equivalence is proved in
Theorem~\ref{nf-normal:prop-weak-positivity}.
In positive characteristic, conditions $(1)$--$(3)$ are equivalent
without either additional assumption; see
Theorem~\ref{snf-wp:equivalence}.
We also prove that in arbitrary characteristic  weak flatness implies strong numerical flatness
on every normal quasi-projective variety, even without a small
compactification (see
Corollary~\ref{cor:weak-flat-implies-snf-char-zero}).

We use the above theorem, together with the recent result of Ejiri and Yoshikawa \cite[Theorem 1.2]{Ejiri-Yoshikawa}, to 
 obtain the following characterization of quotients of abelian varieties in positive characteristic (see Corollary \ref{cor:frobenius-weak-positivity-abelian}).

\begin{Corollary}\label{cor:main2}
Let $X$ be a smooth projective globally $F$-split variety
of positive dimension over an algebraically closed field
$k$ of characteristic $p>0$, and let $U\subseteq X$ be
a big open subset. Then the following conditions are
equivalent:
\begin{enumerate}
  \item There exist an ordinary abelian variety $A$
    and a finite surjective \'etale morphism
    $\pi:A\to X$.  
    \item The vector bundle $\Omega^1_{X/k}$ is
    weakly positive over $U$.
  \item For some integer $e\geq1$, 
    \(
    (F_X^e)_*\cO_X
    \)
    is weakly positive over $U$.
    \item For every integer $e\geq1$,  $(F_X^e)_*\cO_X$ is numerically flat.
\end{enumerate}
\end{Corollary}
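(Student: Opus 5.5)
The plan is to run the cycle $(1)\Rightarrow(4)\Rightarrow(3)\Rightarrow(2)\Rightarrow(1)$, taking the input from Ejiri--Yoshikawa \cite[Theorem 1.2]{Ejiri-Yoshikawa} essentially as a black box. As I recall, their theorem says that a smooth projective globally $F$-split variety with nef $\Omega^1_{X/k}$ (equivalently, nef or numerically flat $(F^e_X)_*\cO_X$) is an étale quotient of an ordinary abelian variety. The remaining work is to transport weak positivity over the big open subset $U$ to nefness on $X$, and for this I will use Theorem~\ref{main2} with $\overline X=X$ smooth.

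\emph{Step $(1)\Rightarrow(4)$.} For an ordinary abelian variety $A$, $(F^e_A)_*\cO_A$ is a direct sum of torsion line bundles of order $p^e$ in $\mathrm{Pic}^0$, so it is numerically flat. Since $\pi$ is finite étale, the square formed by $F^e_A$, $F^e_X$ and $\pi$ is cartesian. Flat base change then gives $\pi^*(F^e_X)_*\cO_X\cong (F^e_A)_*\cO_A$. Numerical flatness descends along finite surjective maps, because nefness can be checked after a finite surjective pullback; hence $(F^e_X)_*\cO_X$ is numerically flat.

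\emph{Step $(4)\Rightarrow(3)$.} A numerically flat bundle $\cF$ on $X$ restricts to a bundle on $U$ whose reflexive extension is $\cF$ itself. By Theorem~\ref{main2}, $(4)\Rightarrow(2)$ there, $\cF|_U$ is weakly flat, and in particular weakly positive over $U$. Alternatively, nef implies weakly positive over $X$, and this restricts to $U$.

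\emph{Step $(3)\Rightarrow(2)$.} Global $F$-splitting gives the surjection $(F^e_X)_*\cO_X\to\cO_X$ and, for $e=1$, the exact sequence
\[
0\to \cO_X\to (F_X)_*\cO_X\to (F_X)_*B^1\to0,
\]
together with the Cartier operator. The route I would take is to first use $(3)$ together with Ejiri--Yoshikawa's argument in the weak positivity setting. From weak positivity of $(F_X^e)_*\cO_X$ over $U$ I would extract pseudo-effectivity or nefness information on $-K_X$ and on quotients of $\Omega^1$. Concretely, I would push $(F^e_X)_*\cO_X$ to $(F^e_X)_*\cO_X|_U$, note that $\det$ is weakly positive, and use $\det (F^e_X)_*\cO_X \cong \cO_X\bigl(\tfrac{(1-p^e)p^{e\dim X}}{2}K_X\bigr)$ up to sign conventions. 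This gives $-K_X$ weakly positive over $U$, hence pseudo-effective. The dual surjection $\cO_X\to (F^e)_*\omega^{1-p^e}$ combined with $F$-splitting forces $K_X\equiv0$ numerically. The cleanest version of this step is probably to show directly that $(F^e_X)_*\cO_X$ is numerically flat, i.e.\ weakly flat on $U$, using $c_1=0$ and Theorem~\ref{main2}. Ejiri--Yoshikawa then gives $(1)$, hence $(2)$, since $\Omega^1$ is trivial on $A$ and étale-descends to a numerically flat, in particular weakly positive, bundle.

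\emph{Step $(2)\Rightarrow(1)$.} If $\Omega^1_{X/k}|_U$ is weakly positive over $U$, then on the smooth projective $X$ weak positivity over a big open set gives generic nefness/pseudo-effectivity. I would show that $\Omega^1_X$ is actually nef. Globally $F$-split implies that $-K_X$ is effective up to a $\mathbb Q$-divisor, namely $(1-p)K_X\sim$ an effective divisor. Combined with $K_X=\det\Omega^1$ being weakly positive, hence pseudo-effective, this gives $K_X\equiv0$. Then $\Omega^1_X|_U$ is weakly positive with $\det$ numerically trivial, so its dual is also weakly positive over $U$: a weakly positive bundle with numerically trivial determinant has weakly positive dual, via $\cE^*\cong\wedge^{r-1}\cE\otimes\det\cE^{-1}$ and Theorem~\ref{main1}. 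Thus $\Omega^1|_U$ is weakly flat, and Theorem~\ref{main2} with $X$ smooth makes $\Omega^1_X$ numerically flat, so Ejiri--Yoshikawa applies. The main obstacle is the step $(3)\Rightarrow$ nefness, namely deriving $K_X\equiv0$ from weak positivity of $(F^e)_*\cO_X$ and matching the exact hypotheses of \cite{Ejiri-Yoshikawa}. I would handle it using the same determinant-plus-duality trick.
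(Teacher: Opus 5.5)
Your steps $(1)\Rightarrow(4)$, $(4)\Rightarrow(3)$ and $(2)\Rightarrow(1)$ are correct and agree with the paper. In $(1)\Rightarrow(4)$ you use that $(F_A^e)_*\cO_A$ splits into $p^e$-torsion line bundles, which needs ordinarity; the paper instead uses that $(F_A^e)^*(F_A^e)_*\cO_A$ is trivial. Your $(2)\Rightarrow(1)$ is exactly the paper's argument: pseudo-effectivity of $K_X$ plus the splitting section of $\omega_X^{1-p}$ gives $\omega_X^{p-1}\simeq\cO_X$, and then Lemma~\ref{snf-wp:weak-flat-determinant}, Theorem~\ref{main2} with $\overline X=X$, and Ejiri--Yoshikawa finish.

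The gap is $(3)\Rightarrow(2)$, which you yourself flag, and the missing idea is elementary. The adjoint of $(F_X^e)_*d$ is a surjection
\[
\vartheta_e:(F_X^e)^*(F_X^e)_*\cO_X\twoheadrightarrow\Omega^1_{X/k},\qquad a\otimes b\mapsto a\,db .
\]
It is surjective because $1\otimes t_i\mapsto dt_i$ for local coordinates $t_i$ (Lemma~\ref{lem:frobenius-cotangent-quotient}). Since $(F_X^e)^{-1}(U)=U$, weak positivity of $(F_X^e)_*\cO_X$ over $U$ passes to its Frobenius pullback by Lemma~\ref{snf-wp:elementary-properties}(1). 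It then passes to the quotient $\Omega^1_{X/k}$ by Corollary~\ref{operations:weak-positivity}, which proves $(2)$. You even wrote down $(F_X)_*d$ in the sequence involving $(F_X)_*B^1$, but never took its adjoint.

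The detour you propose instead does not work as written. First, the sign is not a matter of convention. Duality gives $((F_X^e)_*\cO_X)^\vee\simeq(F_X^e)_*\omega_X^{1-p^e}$, hence numerically $c_1((F_X^e)_*\cO_X)=\frac{(p^e-1)p^{e(n-1)}}{2}K_X$ with $n=\dim X$, a \emph{positive} multiple of $K_X$. You can check this on $\PP^1$, where $(F_{\PP^1})_*\cO_{\PP^1}\simeq\cO\oplus\cO(-1)^{\oplus(p-1)}$. Your conclusion that $-K_X$ is pseudo-effective adds nothing to what $F$-splitting already gives, namely that $(1-p)K_X$ is effective, so it cannot force $K_X\equiv0$. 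Your ``dual surjection'' $\cO_X\to(F_X^e)_*\omega_X^{1-p^e}$ is really the injection dual to the splitting, and it yields only the splitting section again.

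With the correct sign, the argument of your $(2)\Rightarrow(1)$ does give $\omega_X^{p-1}\simeq\cO_X$, and hence numerical flatness of $(F_X^e)_*\cO_X$ for the single given $e$. But you then invoke Ejiri--Yoshikawa with that as the hypothesis. The paper applies \cite[Theorem 1.2]{Ejiri-Yoshikawa} only under the hypothesis that $X$ is globally $F$-split and $T_X$ is numerically flat. Your parenthetical ``equivalently, nef or numerically flat $(F^e_X)_*\cO_X$'' is an unjustified recollection. The natural way to pass from $(F_X^e)_*\cO_X$ to $\Omega^1_{X/k}$ is precisely the surjection $\vartheta_e$, and once you have it the determinant computation is superfluous.
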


For general normal varieties in characteristic zero, a formulation of (3) in Theorem \ref{main2}  is more delicate.
When $X$ is smooth and $\overline X$ has klt singularities, we prove
a stronger version using orbifold Chern classes and a finite
quasi-\'etale cover (see
Theorem~\ref{snf-wp:equivalence-klt}). This generalizes an earlier result of Lu and Taji \cite{LT}. Together with the results of Greb, Kebekus and Peternell \cite{GKP-projectively-flat} we obtain the following characterization of quotients of abelian varieties (see Corollary \ref{cor:normalized-cotangent-weak-positivity}).

\begin{Theorem}\label{main4}
	Let $X$ be a complex projective klt variety of dimension
$n\ge2$. Then the following conditions are equivalent:
\begin{enumerate}
	\item The vector bundle
	\(	\Sym^n\Omega_{X_{\mathrm{reg}}}^1\otimes\omega_{X_{\mathrm{reg}}}^{-1}	\)
	is weakly positive over $X_{\mathrm{reg}}$.
	\item There exist an abelian variety $A$ and a finite
	surjective quasi-\'etale morphism $A\to X$. 
\end{enumerate}
\end{Theorem}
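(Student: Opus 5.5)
The equivalence will follow by combining the klt version of the comparison theorem (Theorem~\ref{snf-wp:equivalence-klt}) with the characterization of torus quotients by Greb, Kebekus and Peternell \cite{GKP-projectively-flat}. Their result says: for a projective klt variety $X$ of dimension $n\ge2$, there is a finite quasi-\'etale cover by an abelian variety if and only if $K_X$ is numerically trivial and the orbifold Chern class equalities $\hat c_1(\cF)\cdot H^{n-1}=0$, $\hat c_2(\cF)\cdot H^{n-2}=0$ hold for the reflexive sheaf $\cF:=(\Sym^n\Omega^{[1]}_X\otimes\omega_X^{[-1]})^{**}$. Equivalently, $\Omega_X^{[1]}$ is projectively flat with $c_1=0$ numerically. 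So the plan is to show that (1) is equivalent to these Chern class conditions together with semistability of $\cF$.

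\emph{(2)$\Rightarrow$(1).} Let $\pi:A\to X$ be finite quasi-\'etale. Over $\pi^{-1}(X_{\rm reg})$, which is big in $A$, the pullback of $\Sym^n\Omega^1\otimes\omega^{-1}$ is the restriction of the trivial bundle $\Sym^n(\cO_A^n)$. A trivial bundle is weakly positive on every open subset, and in particular weakly flat there. I would then descend weak positivity along the finite surjective map $\pi|:\pi^{-1}(X_{\rm reg})\to X_{\rm reg}$. If the paper has no explicit lemma for this, I would use the trace/norm trick. For a finite flat map of degree $d$, $\cE$ is a quotient of $\pi_*\pi^*\cE$ only when the trace splits in characteristic zero, which it does here. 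Combined with the fact that $\pi_*$ of a weakly positive bundle twisted appropriately stays weakly positive, this gives $\cE$ weakly positive. A cleaner alternative is to go through Theorem~\ref{snf-wp:equivalence-klt}, whose criterion is stated after a quasi-\'etale cover anyway.

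\emph{(1)$\Rightarrow$(2).} Set $\cE:=\Sym^n\Omega^1_{X_{\rm reg}}\otimes\omega^{-1}_{X_{\rm reg}}$. The bundle $\cE$ has $\det\cE$ a power of $\omega^{-1}\otimes\omega^{m}$ with total exponent zero: writing $\Omega$ of rank $n$, we have $\det\Sym^n\Omega=\omega^{\binom{2n-1}{n}}$ up to the factor computed by $\binom{2n-1}{n-1}\cdot n/n$, and the twist by $\omega^{-\binom{2n-1}{n}}$ cancels it. Hence $\det\cE\cong\cO$. Weak positivity of $\cE$ gives weak positivity of $\Lambda^{r-1}\cE\cong\cE^*\otimes\det\cE=\cE^*$ by Theorem~\ref{main1}. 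Therefore $\cE$ is weakly flat on $X_{\rm reg}$, which is a big open subset of $X$.

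Applying Theorem~\ref{snf-wp:equivalence-klt} to $X_{\rm reg}\subseteq X$, I obtain that $\cF=j_*\cE$ is slope semistable with vanishing orbifold $\hat c_1\cdot H^{n-1}$ and $\hat c_2\cdot H^{n-2}$. Since $\cE$ is a representation-theoretic bundle of $\Omega^1$ built from the representation $\Sym^n V\otimes\det V^{-1}$ of $\GL_n$, whose kernel is finite, this should translate into projective flatness of $\Omega^{[1]}_X$ and numerical triviality of $\hat c_1(\Omega^{[1]})^2$-type discriminants. It remains to get $K_X\equiv0$. Weak positivity of $\cE$ implies that $\Sym^n\Omega\otimes\omega^{-1}$ is generically nef. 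Combined with the vanishing conditions, $\Omega^{[1]}$ should be semistable with $\hat\Delta=0$. The sign of $K_X$ must then be pinned down: if $K_X$ were not numerically trivial, a projectively flat cotangent sheaf would force $X$ to be a ball quotient or of $\PP^n$ type. The $\PP^n$ case is excluded because $\cE$ would then contain negative pieces, and the ball quotient case is excluded because it contradicts weak positivity of $\cE^*$. This gives $K_X\equiv0$ with $\hat c_2=0$, and GKP then yields (2).

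\emph{Main obstacle.} I expect the hard step to be this last one: translating weak flatness of the twisted symmetric power back into the exact orbifold Chern equalities for $\Omega^{[1]}_X$ and extracting $K_X\equiv0$. The first approach I would try is to pass to the quasi-\'etale cover furnished by Theorem~\ref{snf-wp:equivalence-klt}, where the reflexive hull becomes a numerically flat, hence flat, bundle in codimension two. There I would use the fact that a flat $\Sym^n V\otimes\det^{-1}$ forces a flat $\mathrm{PGL}$-structure on $\Omega$.
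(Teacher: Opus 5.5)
\emph{Verdict: there is a genuine gap in (1)$\Rightarrow$(2), in the step from $\cB:=j_*\cE$ to $\Omega_X^{[1]}$ and in how you close the argument.}

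\textbf{What matches the paper.} Your argument up to the application of Theorem~\ref{snf-wp:equivalence-klt} agrees with the paper's. Since $\det\cE\simeq\cO$, the bundle $\cE^*\simeq\bigwedge^{r-1}\cE$ is weakly positive, so $\cE$ is weakly flat. Hence $\cB$ is $H$-semistable with $\widehat{\ch}_2(\cB)\cdot H^{n-2}=0$.

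\textbf{The direction (2)$\Rightarrow$(1).} This is fine, but ``$\pi_*$ of a weakly positive bundle stays weakly positive'' is not a valid general principle. You only need that every $f^*\Sym^m\cE$ is trivial on the finite \'etale cover $V\to U$. Lemma~\ref{operations:finite-flat-generation} then gives generation of $\Sym^m\cE\otimes H^c$ over $U$, with one $c$ for all $m$.

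\textbf{The false classification.} You close with a version of the Greb--Kebekus--Peternell criterion that takes $K_X\equiv0$ as an input. You then try to prove $K_X\equiv0$ by a classification that is false. Ball quotients and $\PP^n$ are not examples of projectively flat (co)tangent sheaves. They are the equality cases of the Miyaoka--Yau-type inequality, which concerns $\Omega\oplus\cO$ and $T\oplus\cO$ respectively. Put $\Delta:=2n\,c_2-(n-1)c_1^2$ and let $h$ be the hyperplane class. For a smooth ball quotient, $\Delta(\Omega_X)\cdot K_X^{n-2}=\frac1{n+1}K_X^n>0$, and $\Delta(T_{\PP^n})=(n+1)h^2\neq0$. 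So ``excluding'' these cases proves nothing.

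There is also a structural reason this bookkeeping cannot work. The representation $\Sym^nV\otimes\det V^{-1}$ does not have finite kernel: its kernel is the whole centre $\mathbb G_m\subset\GL_n$. Equivalently, $\cE$ is unchanged when $\Omega$ is twisted by a line bundle. Positivity or Chern data of $\cE$ alone therefore cannot detect the sign of $K_X$.

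\textbf{The missing translation.} What the paper does is carry out the step you only assert (``should translate''), passing from $\cB$ to $\Omega_X^{[1]}$. It then applies \cite[Theorem~1.2]{GKP-projectively-flat}, whose hypotheses are only $H$-semistability of $\Omega_X^{[1]}$ and vanishing of its discriminant against $H^{n-2}$. Numerical triviality of $K_X$ is an output of that theorem, so your ``main obstacle'' disappears.

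\emph{Semistability.} A saturated $\cG\subset\Omega_X^{[1]}$ with $\mu_H(\cG)>\mu_H(\Omega_X^{[1]})$ gives an inclusion $(\Sym^n\cG\otimes\cO_X(-K_X))^{**}\hookrightarrow\cB$. This subsheaf has slope $n\bigl(\mu_H(\cG)-\mu_H(\Omega_X^{[1]})\bigr)>0=\mu_H(\cB)$, contradicting semistability of $\cB$.

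\emph{Discriminant.} The splitting principle for orbifold Chern classes gives
\[
\widehat{\ch}_2(\cB)\cdot H^{n-2}=-\frac{N}{n+1}\bigl(2n\,\widehat c_2(\Omega_X^{[1]})-(n-1)\widehat c_1(\Omega_X^{[1]})^2\bigr)\cdot H^{n-2},\qquad N=\binom{2n-1}{n-1}.
\]
So the vanishing you already have is exactly the vanishing GKP require.

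Your fallback through the cover of Theorem~\ref{snf-wp:equivalence-klt}(4) would likewise give at most projective flatness of the reflexive cotangent sheaf. It would still need GKP's theorem to reach (2).
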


If $(\Sym^n\Omega_{X}^1\otimes\omega_{X}^{-1})^{**}$ is nef in the sense of \cite{GKP-projectively-flat} then
(1) holds. Thus Theorem \ref{main4} strengthens \cite[Theorem 1.3]{GKP-projectively-flat} by replacing nefness of the normalized reflexive cotangent sheaf on \(X\) with weak positivity of its restriction over \(X_{\mathrm{reg}}\). Example \ref{ex:normalized-cotangent-not-nef} shows that this weakening is strict.

\medskip

Finally, we obtain the following analogue of the
Demailly--Peternell--Schneider theorem (see Propositions~\ref{prop:unitary-uniform-generation}
and~\ref{prop:stable-unitary-weak-positivity}).

\begin{Theorem}\label{main3}
	Let $X$ be a smooth complex variety admitting a small normal
	projective compactification $X\subset\overline X$.
	Let $\cE$ be a vector bundle on $X$.
	\begin{enumerate}
		\item Assume that $\cE$ is equipped with an integrable
		algebraic connection $\nabla$ and a filtration
		\[
		0=\cE_0\subset\cE_1\subset\cdots\subset\cE_s=\cE
		\]
		by $\nabla$-invariant subbundles.
		If the induced connections on the quotients
		$\cE_i/\cE_{i-1}$ have unitary monodromy, then $\cE$
		is weakly flat.
		\item The bundle $\cE$ is unitary flat if and only if it is
		a semisimple object of $\Vect^{\wf}(X)$.
	\end{enumerate}
\end{Theorem}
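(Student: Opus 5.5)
Part (1) I would reduce to the unitary flat case. Weak positivity over $X$ passes to extensions: if $0\to\cE'\to\cE\to\cE''\to0$ with $\cE',\cE''$ weakly positive over $X$, then $\cE$ is weakly positive over $X$. I would prove this by the standard Viehweg argument. Take $\Sym^{ab}\cE\otimes H^{b}$, filter it with graded pieces $\Sym^i\cE'\otimes\Sym^{ab-i}\cE''\otimes H^b$, and use Theorem~\ref{main1} (tensor products and symmetric powers) to get generic global generation of each piece over $X$. Lifting sections then needs a vanishing or an extension argument. Rather than fight that, I would argue directly for the whole filtered object. By induction on $s$ and duality ($\cE^*$ carries the dual filtration with unitary quotients), it is enough to show that any $\cE$ with such a filtration is weakly positive over $X$.

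The plan is to pass to $\overline X$. Since $X$ is smooth and $\overline X\setminus X$ has codimension $\ge2$, a unitary flat bundle $\cE_i/\cE_{i-1}$ comes from a unitary representation of $\pi_1(X)$. By purity, $\pi_1(X)\cong\pi_1$ of a resolution restricted to the complement of codimension-$\geq 2$ loci; more usefully, the local fundamental groups at boundary points are involved. I would instead pass to a resolution $\mu:Y\to\overline X$ that is an isomorphism over $X$, with boundary divisor $D$. On $Y$, the flat bundle with unitary monodromy extends as a Deligne canonical extension. Its eigenvalues of residues lie in $[0,1)$, and being unitary it is a parabolic bundle of degree zero, which is nef up to the $\QQ$-twist by $D$. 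The whole $\cE$ with its integrable connection also has regular singularities (Simpson/Deligne), so I would take the Deligne extension $\overline\cE$ on $Y$ with residue eigenvalues with real parts in $[0,1)$. It inherits the filtration, and its graded pieces are canonical extensions of unitary flat bundles. Then I would use Mochizuki/Simpson, or the DPS-type result for parabolic bundles, to see that $\overline{\cE}$ is nef on $Y$ after adding small multiples of $D$. Since extensions of nef bundles are nef, $\Sym^{a}\overline\cE\otimes\mu^*H$ is generically generated. Global generation over $X$ then follows from the fact that $\mu_*$ of such sections restricted to $X$ gives sections, because $Y\setminus D\cong X$. What must be controlled is that the generation is along all of $X$, using that nef$+$ample is ample on $Y$, and ample implies globally generated powers everywhere.

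For part (2), I would first show that unitary flat bundles are weakly flat by (1) with $s=1$, and that they are semisimple in $\Vect^{\wf}(X)$. For this, I would show that Hom between weakly flat objects extends to $\overline X$ reflexively. Strong semistability and Theorem~\ref{main2}-type arguments (the implication $(2)\Rightarrow(1)$ holds without assumptions) would then show that any weakly flat subobject of a unitary flat bundle is a flat subbundle, hence unitary, and has a unitary complement. Conversely, a stable object of $\Vect^{\wf}(X)$ gives $j_*\cE$ slope stable with vanishing $\ch_1H^{n-1}$ and $\ch_2H^{n-2}$ on the small compactification. Then I would apply Donaldson–Uhlenbeck–Yau in the form valid on normal varieties smooth in codimension where needed, e.g. restriction to complete intersection surfaces inside $X$ and Mehta–Ramanathan plus Lefschetz for $\pi_1$. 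This gives a Hermitian–Einstein, hence flat unitary, structure on $\cE$. The main obstacle I expect is this last step, the passage from the numerical Chern-class conditions on a singular $\overline X$ to a unitary flat structure on $X$. I would try first to restrict to a general complete intersection surface $S\subset X$, which misses the boundary, where the projectively flat criterion applies, and to lift the representation using Lefschetz $\pi_1(S)\to\pi_1(X)$ surjective and an injectivity argument for the unitary representations via Mehta–Ramanathan.
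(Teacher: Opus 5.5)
Both parts of your proposal have genuine gaps.

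\textbf{Part (1).} Your opening claim is false. Weak positivity over $X$ is not preserved by extensions on a normally big variety: Example~\ref{ex:unitary-filtration-not-weakly-positive} gives $0\to\cO_X\to\cE|_X\to\cO_X\to0$ with $\cE|_X$ not weakly positive over $X$. This is exactly why the connection hypothesis is needed.

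You do abandon that route, but its replacement rests on an unproved and, as stated, false claim. You assert that the Deligne extension $\overline\cE$ on the resolution $Y$ becomes nef after a $\mathbb{Q}$-twist by a divisor supported on $D$. A unitary local system on $X$ can have nontrivial local monodromy around exceptional components, for instance at quotient singularities of $\overline X$. Then $\overline\cE$ has negative degree on curves meeting $D$ properly. Twisting up by exceptional divisors lowers degrees on curves inside $D$, and different residue eigenvalues would need different twists.

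Concretely, let $\overline X$ be a singular Kummer surface, $X$ its smooth locus, $L$ the flat line bundle of order two given by the double cover, and $Y$ the minimal resolution with $(-2)$-curves $D_i$. The Deligne extension satisfies $c_1(\overline L)=-\frac12\sum_iD_i$. For $\Delta=\sum_it_iD_i$, the line bundle $\cO_Y(\Delta)$ is nef only if all $t_i\le0$, and then $\overline L(\Delta)$ has negative degree on every curve meeting $\sum_iD_i$ properly. So the Deligne extension $\overline L\oplus\cO_Y$ of the unitary bundle $L\oplus\cO_X$ is not nef after any such twist, and for a non-split filtration you cannot treat the pieces separately. Citing ``Mochizuki/Simpson or DPS for parabolic bundles'' does not fill this.

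A nefness argument could be rescued only after first passing to a finite \'etale cover on which the graded pieces have trivial local monodromy at infinity. That would use Selberg's lemma and finiteness of unitary local monodromies, along the lines of the proof of Lemma~\ref{lem:unitary-surface-chern}. The Deligne extension there is an iterated extension of flat unitary bundles, hence nef, and one then descends.

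The paper avoids nefness altogether. It filters the canonical extension $\cV_m$ of $\Sym^m\cE$ by canonical extensions of unitary pieces. It then proves $H^i(Y,\cV_m\otimes\omega_Y(D)\otimes A^{n+1-i})=0$ for $1\le i\le n$, via Timmerscheidt's $E_1$-degeneration and vanishing on the affine complement of $D+B$ (Lemma~\ref{lem:unitary-log-vanishing}). Castelnuovo--Mumford regularity then shows that $\Sym^{[m]}\cF\otimes H^c$ is generated over $X$ with $c$ independent of $m$.

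\textbf{Part (2).} In the forward direction, what makes a weakly flat subobject $\cA$ of a unitary flat bundle parallel is not strong numerical flatness but the second fundamental form. On complete intersection curves $C\subset X$ through every point and in every direction, $\deg(\cA|_C)=0$ forces it to vanish. Hartogs and GAGA then algebraize the orthogonal projector. This part is fillable.

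The converse is the serious gap.
\begin{enumerate}
\item Nothing shows that $j_*\cA$ is stable for a simple $\cA$. Restriction to curves gives only semistability. A slope-zero saturated subsheaf of $j_*\cA$ need not restrict to a weakly flat subbundle, so simplicity does not exclude it.
\item The Donaldson--Uhlenbeck--Yau step has nowhere to live. On a singular normal $\overline X$ in characteristic zero, $\ch_2$ of a reflexive sheaf is not defined in general in higher dimensions. Even on normal surfaces, the paper notes that the passage from such Chern-number conditions to weak flatness is not established.
\item Your restriction to a complete intersection surface $S\subset X$ missing the boundary is impossible whenever $\codim(\overline X\setminus X)=2$. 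A general such surface meets $\overline X\setminus X$ in finitely many points, and for $n=2$ it is $\overline X$ itself. Only curves can be chosen inside $X$.
\end{enumerate}

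The missing idea is to use positivity rather than Chern numbers. Take an $H$-stable saturated slope-zero subsheaf $\cG\subseteq j_*\cA$. Weak positivity of $\cA^*$ and the generically surjective map $(j_*\cA)^*\to\cG^*$ make $\cG^*$, hence $\det\cG^*$, pseudo-effective. Together with $c_1(\cG^*)\cdot H^{n-1}=0$, this gives $c_1(\cG^*)=0$ in the sense of Cao--Deng--Matsumura. Their flatness criterion then shows that $\cG|_X$ is unitary flat. By part (1) it is weakly flat, and simplicity forces $\cG|_X=\cA$.
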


The proof of (2) in Theorem \ref{main3} depends crucially on the recent flatness criterion 
of Cao, Deng and Matsumura (see \cite{CDM-flatness}). 
Note that every weakly flat vector
bundle admits a filtration by weakly flat subbundles with unitary
flat quotients. We do not know whether it always admits an integrable algebraic
connection preserving such a filtration and inducing unitary
connections on the quotients.
The existence of the underlying filtration alone does not imply
weak flatness, unlike in the projective case (see
Example~\ref{ex:unitary-filtration-not-weakly-positive}).
Thus the connection hypothesis in $(1)$ cannot simply be omitted.

\medskip

A substantial part of this paper originates in an unpublished
manuscript written by the author in 2023. 
In particular,  the results of Section 4
(including Proposition~\ref{frobenius-wp}, which plays a crucial role in
the proof of Theorem~\ref{snf-wp:equivalence}) were already known
to the author at that time. However, Theorem~\ref{main1},
whose proof relies on \cite{La-Ample}, was then out of reach,
which delayed publication. Although the author had already proved
Theorem~\ref{snf-wp:equivalence-klt}, most of the results of
Section~6.2 were not known to the author at that time. A small
part of the discussion of the relationship between strongly
numerically flat and weakly flat bundles revises and strengthens
material that appeared in Section~4.2 of version~2 of
\cite{La-Simpson} but was omitted from the subsequent version.

\medskip

The paper is organized as follows. In Section 1 we recall the definition of strong numerical flatness
and prove a few auxiliary results on reflexive sheaves on normal projective surfaces.
In Section 2  we develop weak positivity over
noetherian bases. In Section 3 we prove preservation of weak positivity under tensor operations
(in particular, we  prove Theorem \ref{main1}).  Section 4 is devoted to the study of  Frobenius criteria in positive
characteristic.
In Section 5 we  construct the S-fundamental group scheme
and prove most of Theorem \ref{main2} and Corollary \ref{cor:main2}. In the final section we prove the remaining part of Theorem \ref{main2} and we prove Theorem \ref{main3}. We also study weak flatness on klt compactifications in characteristic zero and prove Theorem \ref{main4}.

\medskip

\subsection*{Notation}

A vector bundle on a locally noetherian scheme $X$ is a locally
free $\cO_X$-module of finite rank.
It is called \emph{ample} if $\cO_{\PP_X(\cE)}(1)$ is ample.

For a coherent sheaf $\cF$ on $X$ we write
\[
\Sym^{[m]}\cF:=(\Sym^m\cF)^{**}\quad \hbox{and} \quad\Gamma^{[m]}\cF=(\Sym^m(\cF^*))^*.
\]
If $X$ is integral and $\cF$ has generic rank $r$, we set
\(\det\cF:=(\textstyle\bigwedge^r\cF)^{**}. \)

In  positive characteristic we denote the absolute Frobenius morphism
by $F_X$ and set
\[
F_X^{[e]}\cF:=((F_X^e)^*\cF)^{**}.
\]

Let $X$ be a  scheme over an algebraically
closed field $k$.
A vector bundle $\cE$ on $X$ is \emph{nef} if, for every morphism
$f:C\to X$ from a smooth connected projective $k$-curve, every
quotient vector bundle of $f^*\cE$ has non-negative degree.
It is \emph{numerically flat} if both $\cE$ and $\cE^*$ are nef. 
Over an arbitrary field, nefness and numerical flatness are understood 
geometrically, after extension to an algebraic closure.

An open subset $U$ of a normal variety $X$ is \emph{big} if
$\codim(X\setminus U)\ge2$.
A normal variety $X$ is \emph{normally big} if it admits an open
immersion $j:X\hookrightarrow\overline X$ as a big open subset
of a normal projective variety.
We call $\overline X$ a \emph{small compactification} of $X$.

In characteristic zero, strong slope semistability means slope
semistability.

\section{Preliminaries}

For reflexive sheaves on normal projective surfaces (or normal projective varieties in positive characteristic), Chern classes and intersection numbers are understood in the sense of
\cite{La-Inters} (see also \cite{Langer2000} for the characteristic zero version).

Let us recall the following less standard definition from \cite{La-Simpson}.

\begin{Definition}\label{Def:strongly-num-flat} 
	Let $X$ be a normal variety defined over an algebraically closed field $k$.
	A vector bundle $\cE$ on $X$ is
	\emph{strongly numerically flat} if it is numerically flat and, 
	for every normally big surface $S$, its small compactification
	$i:S\hookrightarrow\overline S$, and every morphism $g:S\to X$,
	\[
	\int_{\overline S}\ch_2(i_*g^*\cE)=0.
	\]
\end{Definition}

From now on in this section $S$ is a normal projective surface over an algebraically closed
field. The following lemma can be easily derived from the author's results (see also \cite[Lemma 2.1]{La-Bog-normal}
for the positive characteristic case).
 
 \begin{Lemma}\label{lem:surface-chern-subadditivity}
 	Let 
 	\[
 	0\longrightarrow\cF_1\longrightarrow\cF
 	\longrightarrow\cF_2
 	\]
 	be a left exact sequence of reflexive sheaves which is
 	right exact on a big open subset. Then
 	\[
 	\int_S\ch_2(\cF)
 	\le
 	\int_S\ch_2(\cF_1)+\int_S\ch_2(\cF_2).
 	\]
 \end{Lemma}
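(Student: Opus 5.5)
The plan is to reduce to a resolution of singularities, where Chern characters are additive, and to use one property of the intersection theory of \cite{La-Inters}, which I take as the key input. Let $f:\widetilde S\to S$ be a resolution. For a reflexive sheaf $\cG$ on $S$, $\int_S\ch_2(\cG)$ should be the supremum of $\int_{\widetilde S}\ch_2(\widetilde\cG)$ over torsion-free sheaves $\widetilde\cG$ on $\widetilde S$ with $(f_*\widetilde\cG)^{**}\cong\cG$. Moreover, this supremum should be attained by a suitable locally free $\widetilde\cG$, a ``maximal'' extension across the exceptional locus adapted to $\cG$. I expect this characterization to follow from the local analysis of reflexive sheaves near the singular points in \cite{La-Inters} (or \cite{Langer2000} in characteristic zero). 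Granting it, the lemma becomes a formal consequence of additivity on the smooth surface $\widetilde S$.

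The steps are as follows. Choose $\widetilde\cF$ on $\widetilde S$ realizing $\int_S\ch_2(\cF)$. Let $\widetilde\cF_1\subset\widetilde\cF$ be the saturation of the subsheaf generated by $\cF_1$ over $f^{-1}(S\setminus\mathrm{Sing}\,S)$, and set $\widetilde\cQ:=\widetilde\cF/\widetilde\cF_1$, which is torsion free.

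Outside the finitely many singular points and the finitely many points where $\cF\to\cF_2$ fails to be surjective, these sheaves agree with $\cF_1$ and $\cF_2$. Since reflexive sheaves are determined by their restrictions to big open subsets, we get
\[
(f_*\widetilde\cF_1)^{**}\cong\cF_1,\qquad (f_*\widetilde\cQ)^{**}\cong\cF_2 .
\]
On the smooth surface $\widetilde S$, additivity of $\ch_2$ for the exact sequence $0\to\widetilde\cF_1\to\widetilde\cF\to\widetilde\cQ\to0$ gives
\[
\int_S\ch_2(\cF)=\int_{\widetilde S}\ch_2(\widetilde\cF)=\int_{\widetilde S}\ch_2(\widetilde\cF_1)+\int_{\widetilde S}\ch_2(\widetilde\cQ).
\]
Note that $\widetilde\cQ$ need not be reflexive. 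However, $\int\ch_2(\widetilde\cQ)=\int\ch_2(\widetilde\cQ^{**})-\mathrm{length}(\widetilde\cQ^{**}/\widetilde\cQ)\le\int\ch_2(\widetilde\cQ^{**})$. The supremum property then bounds the two terms by $\int_S\ch_2(\cF_1)$ and $\int_S\ch_2(\cF_2)$, which proves the inequality.

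The main obstacle is the first step: establishing that $\int_S\ch_2$ of a reflexive sheaf is indeed the supremum (equivalently, an upper bound attained by a maximal extension). The inequality direction matters here. If the definition in \cite{La-Inters} is instead through $f^{*}\cG$ modulo torsion with rational correction terms $\frac12 D^2$ for exceptional $\mathbb{Q}$-divisors, I would first try to show directly that modifying $\widetilde\cG$ by elementary transformations along exceptional curves can only decrease $\ch_2$. This would use negative definiteness of the intersection form on the exceptional curves (Mumford), which should give exactly the required maximality.
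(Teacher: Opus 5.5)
There is a genuine gap, and it sits in the first step. You choose $\widetilde\cF$ on $\widetilde S$ with $\int_{\widetilde S}\ch_2(\widetilde\cF)=\int_S\ch_2(\cF)$. This is not merely unproven; it is false in general. On a smooth projective surface, $\int\ch_2=\frac12c_1^2-c_2$ lies in $\frac12\mathbb Z$, whereas $\int_S\ch_2$ of a reflexive sheaf on a normal surface is only rational.

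For example, let $S\simeq\PP(1,1,2)$ be the quadric cone, with minimal resolution $\widetilde S\cong\mathbb F_2$ and exceptional curve $E$, $E^2=-2$. Let $\ell$ be a ruling. Its Mumford pullback is $\widetilde\ell+\frac12E$, so $\ell^2=\frac12$ and $\int_S\ch_2(\cO_S(\ell))=\frac14$. This is consistent with the double cover $\PP^2\to\PP(1,1,2)$, under which the reflexive pullback of $\cO_S(\ell)$ is $\cO_{\PP^2}(1)$. No torsion-free sheaf on any resolution attains this value, nor approximates it.

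What does hold is the half your fallback would give. Write $\int_S\ch_2(\cG)=\int_T\ch_2(\cE)-\sum_x\ch_2(\pi_x,\cE)$ for a bundle $\cE$ on a resolution $\pi:T\to S$ with $(\pi_*\cE)^{**}\simeq\cG$. The local term $\ch_2(\pi_x,\cE)=\frac12c_1(\pi_x,\cE)^2-c_2(\pi_x,\cE)$ is $\le0$, by negative definiteness together with relative Bogomolov. Hence $\int_T\ch_2(\cE)\le\int_S\ch_2(\cG)$. That is the right direction for $\cF_1$ and $\cF_2$, but the wrong one for the middle term $\cF$, whose correction cannot in general be made to vanish.

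The missing idea is to compare the local corrections of all three terms, rather than trying to kill the one for $\cF$. The paper takes $\pi:T\to S$ dominating flattenings of $\cF$ and of $\operatorname{im}(\cF\to\cF_2)$. This yields an exact sequence of vector bundles $0\to\cE_1\to\cE\to\cE_2\to0$ on $T$. Your saturation only produces a torsion-free quotient, for which the relative classes are not defined.

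Pulling back approximate splitting filtrations of $\cE_1$ and $\cE_2$ to a common cover and concatenating them gives an admissible filtration for $\cE$ in \cite[Definition~3.1]{La-Inters}. This yields $c_2(\pi_x,\cE)\le c_2(\pi_x,\cE_1)+c_2(\pi_x,\cE_2)+c_1(\pi_x,\cE_1)c_1(\pi_x,\cE_2)$. Combined with additivity of $c_1(\pi_x,\cdot)$, this is $\ch_2(\pi_x,\cE)\ge\ch_2(\pi_x,\cE_1)+\ch_2(\pi_x,\cE_2)$. Subtracting these local terms from the additive Chern characters on $T$ proves the lemma. Your outline can be repaired only by supplying such a superadditivity statement for the local corrections.
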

 
 \begin{proof}
 	Choose a projective resolution $\pi:T\to S$ dominating
 	simultaneous flattenings of $\cF$ and
 	$\operatorname{im}(\cF\to\cF_2)$.
 	The resulting quotient of vector bundles gives an exact
 	sequence
 	\[
 	0\longrightarrow\cE_1\longrightarrow\cE
 	\longrightarrow\cE_2\longrightarrow0
 	\]
 	on $T$, with
 	$
 	(\pi_*\cE)^{**}\simeq\cF$ and
 	$(\pi_*\cE_i)^{**}\simeq\cF_i$ for $i=1,2$.	
 	For each exceptional fiber, the definition of the relative
 	second Chern class gives
 	\[
 	c_2(\pi_x,\cE)
 	\le
 	c_2(\pi_x,\cE_1)+c_2(\pi_x,\cE_2)
 	+c_1(\pi_x,\cE_1)c_1(\pi_x,\cE_2).
 	\]
 This can be seen directly from \cite[Definition~3.1]{La-Inters} by  pulling back approximate splitting filtrations of
 	$\cE_1$ and $\cE_2$ to a common generically finite cover 	and concatenating them using the pulled-back exact sequence.
 	Additivity of the first relative Chern class therefore gives
 	\[
 	\ch_2(\pi_x,\cE)
 	\ge
 	\ch_2(\pi_x,\cE_1)+\ch_2(\pi_x,\cE_2).
 	\]
 	Subtracting these local corrections from the ordinary
 	Chern characters on $T$, which are additive, proves
 	the assertion (see \cite[Definition~4.1]{La-Inters}).
 \end{proof}

\begin{Lemma}\label{snf-wp:symmetric-chern-bound}
	Let $\cE$ be a reflexive 
	sheaf of rank $r>0$. Then 
	\begin{equation*}
		\chi(S,\Sym^{[n]}\cE)
		\le
		\frac{n^{r+1}}{(r+1)!}
		\left(c_1(\cE)^2-\int  _S c_2(\cE)\right)+O(n^r).
	\end{equation*}
\end{Lemma}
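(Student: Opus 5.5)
Pass to a resolution and compute with Riemann--Roch on the projective bundle there. Choose a resolution $\pi:T\to S$ (a projective resolution of singularities, which exists for surfaces in any characteristic) such that $\cE':=\pi^*\cE/\mathrm{torsion}$ is locally free, after flattening, and $(\pi_*\cE')^{**}\simeq\cE$. By the definition of Chern classes in \cite[Definition~4.1]{La-Inters},
\[
\int_S c_2(\cE)=\int_T c_2(\cE')-\sum_x c_2(\pi_x,\cE'),
\qquad
c_1(\cE)^2=c_1(\cE')^2+\sum_x (\text{local correction}).
\]
The key point is to combine the two local corrections into the relative term $\sum_x \Delta(\pi_x,\cE')$ or $\ch_2(\pi_x,\cE')$. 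We will use the corresponding statement for symmetric powers of the relative invariants: $\ch_2(\pi_x,\Sym^n\cE')$ grows like $\frac{n^{r+1}}{(r+1)!}\cdot(\text{local term of }\cE')$, plus lower order.

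Next, compare $\chi(S,\Sym^{[n]}\cE)$ with $\chi(T,\Sym^n\cE')$. Since $\Sym^{[n]}\cE=(\pi_*\Sym^n\cE')^{**}$, the Leray spectral sequence gives
\[
\chi(T,\Sym^n\cE')=\chi(S,\pi_*\Sym^n\cE')-h^0(R^1\pi_*\Sym^n\cE').
\]
Also $\chi(S,\pi_*\Sym^n\cE')=\chi(S,\Sym^{[n]}\cE)-\mathrm{length}\bigl(\Sym^{[n]}\cE/\pi_*\Sym^n\cE'\bigr)$. Hence
\[
\chi(S,\Sym^{[n]}\cE)=\chi(T,\Sym^n\cE')+\ell_n+h^0(R^1\pi_*\Sym^n\cE'),
\]
with two nonnegative local terms. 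Riemann--Roch on $T$, applied via the projective bundle $\PP(\cE')$ with $\xi=\cO(1)$ and $\chi(\cO_{\PP}(n))$, gives
\[
\chi(T,\Sym^n\cE')=\frac{n^{r+1}}{(r+1)!}\,\xi^{r+1}+O(n^r),
\qquad
\xi^{r+1}=c_1(\cE')^2-c_2(\cE').
\]

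The main obstacle is to show that the local excess $\ell_n+h^0(R^1\pi_*\Sym^n\cE')$ is bounded above by $\frac{n^{r+1}}{(r+1)!}$ times the difference between the $S$-invariants and the $T$-invariants, up to $O(n^r)$. This is exactly how the local Chern classes in \cite{La-Inters} are defined: the local $\chi$-defect of $\Sym^n$ has leading coefficient given by the local contribution to $c_1^2-c_2$. I will try first to invoke the asymptotic formula from \cite{La-Inters} (or \cite{Langer2000}) expressing $c_1^2-c_2$ of a reflexive sheaf on a normal surface via $\lim \chi(\Sym^{[n]})\,(r+1)!/n^{r+1}$. If only an inequality is available, I will use the fact that $h^0(R^1\pi_*)$ and $\ell_n$ are both computed on a neighbourhood of the exceptional fibres. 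I will then bound them from above using the formal-function theorem, applied to the $\xi$-degree on $\PP(\cE')$ restricted over the exceptional curves. This yields the inequality, and the inequality sign accounts for any possible non-sharpness in this local estimate.
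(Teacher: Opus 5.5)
\paragraph{The missing step.} Your global bookkeeping is correct. Put
$\chi(x,\cV):=\mathrm{length}_x\bigl((\pi_*\cV)^{**}/\pi_*\cV\bigr)+\mathrm{length}_x(R^1\pi_*\cV)$. Then $\chi(S,\Sym^{[n]}\cE)=\chi(T,\Sym^n\cE')+\sum_x\chi(x,\Sym^n\cE')$, and Riemann--Roch on $\PP(\cE')$ handles the first term.

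Let $D_x$ be the exceptional $\mathbb Q$-divisor over $x$ such that $c_1(\cE')-D_x$ is numerically trivial on the curves over $x$. Then $c_1(\cE)^2-\int_Sc_2(\cE)=c_1(\cE')^2-\int_Tc_2(\cE')+\sum_x\bigl(c_2(\pi_x,\cE')-D_x^2\bigr)$. So the lemma is equivalent to the local estimate
\[
\chi(x,\Sym^n\cE')\le\frac{n^{r+1}}{(r+1)!}\bigl(c_2(\pi_x,\cE')-D_x^2\bigr)+O(n^r).
\]
The relevant local quantity is this one, not $\Delta(\pi_x,\cE')$ or $\ch_2(\pi_x,\cE')$. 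This estimate is the entire content of the lemma, and your proposal does not prove it.

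Your remark that ``this is exactly how the local Chern classes in \cite{La-Inters} are defined'' is not right. By \cite[Definition~3.1]{La-Inters}, $c_2(\pi_x,\cdot)$ is defined through approximate splitting filtrations with line bundle quotients on generically finite covers, not through the $\chi$-defect of symmetric powers. The link between the two is therefore a theorem, not a definition. Likewise, quoting a formula $\lim (r+1)!\,\chi(S,\Sym^{[n]}\cE)/n^{r+1}=c_1(\cE)^2-\int_Sc_2(\cE)$ is quoting a strengthening of the statement itself, and you do not say where it is proved for these Chern classes.

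The paper's proof is exactly a citation of \cite[Theorems~4.4 and~3.12]{La-Inters]}, which together supply the global decomposition and the local asymptotic bound. If you cite those, your argument becomes the paper's, and your set-up is just an unpacking of it.

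\paragraph{Why the fallback fails.} Your formal-functions fallback would not close the gap. Estimating $\mathrm{length}(R^1\pi_*\Sym^n\cE')$ and the colength through $\xi$-degrees over the exceptional curves never brings in the quantities $\frac{1}{\deg f}\sum_{i<j}D_i\cdot D_j$ attached to filtrations on covers, and these are what bound $c_2(\pi_x,\cE')$. A direct proof has to go through those filtrations, in three steps.
\begin{itemize}
\item[(a)] $\chi(x,\cdot)$ is subadditive in short exact sequences of bundles, because $0\to(\pi_*\cV_1)^{**}\to(\pi_*\cV)^{**}\to(\pi_*\cV_2)^{**}$ is exact with finite-length cokernel. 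So a filtration of $f^*\cE'$ with quotients $L_1,\dots,L_r$ bounds the local defect of $\Sym^nf^*\cE'$ by the sum over the monomials $L_1^{a_1}\otimes\cdots\otimes L_r^{a_r}$.
\item[(b)] You need local Riemann--Roch for line bundles, with leading term $-\frac12(\sum_ia_iD_i)^2$ and errors $O(n)$ uniform in the monomials. This sums to $\frac{n^{r+1}}{(r+1)!}\bigl(\sum_{i<j}D_i\cdot D_j-(\sum_iD_i)^2\bigr)+O(n^r)$.
\item[(c)] The filtration exists only upstairs, so you must compare, up to $O(n^r)$ and the factor $\deg f$, the local defects on $S$ with those on the Stein factorization of the cover.
\end{itemize}
None of these steps appears in your plan.
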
 

\begin{proof}
The inequality follows from \cite[Theorem 4.4 and Theorem 3.12]{La-Inters} (cf. \cite[Theorem 4.15]{Langer2000}).	
\end{proof}

\medskip

\begin{Lemma}\label{nf-normal:lem-surface-char-zero}
	Let $\cE$ be a vector bundle on $S$.
	Suppose that $\cE$ is strongly slope $H$-semistable for some ample divisor $H$
	and that
	\[
	c_1(\cE)\cdot H=0,
	\qquad 
	\int_S\ch_2(\cE)=0.
	\]
	Then $\cE$ is numerically flat.
\end{Lemma}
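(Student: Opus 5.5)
We work on the normal projective surface $S$ and reduce to the smooth case via a resolution, where the classical strategy applies: semistability with vanishing discriminant gives nefness through the Bogomolov-type bound of Lemma~\ref{snf-wp:symmetric-chern-bound}.

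\emph{Step 1: vanishing discriminant and $c_1$.} Since $\cE$ is strongly $H$-semistable, the Bogomolov inequality for reflexive sheaves on normal surfaces \cite{La-Inters} (in positive characteristic, for strongly semistable sheaves) gives $\int_S\Delta(\cE)\ge0$. Here
\[
\Delta(\cE)=2r c_2(\cE)-(r-1)c_1(\cE)^2=(c_1^2-2r\,\ch_2)(\cE).
\]
Since $\ch_2=0$, we get $\int_S \Delta(\cE)=c_1(\cE)^2$. On the other hand, $c_1(\cE)\cdot H=0$, so the Hodge index theorem gives $c_1(\cE)^2\le 0$, with equality iff $c_1(\cE)\equiv 0$. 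Combining the two inequalities forces $c_1(\cE)\equiv0$ numerically and $\int_S c_2(\cE)=0$. In particular $\det\cE$ is numerically trivial, so it suffices to prove that $\cE$ is nef: then $\cE^*\simeq\bigwedge^{r-1}\cE\otimes\det\cE^{-1}$ is nef as well.

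\emph{Step 2: nefness via curves.} Let $f:C\to S$ be a morphism from a smooth projective curve and suppose $f^*\cE\to\cQ$ is a quotient of negative degree. Standard reductions apply here. After pulling back by Frobenius powers in characteristic $p$, which is allowed by strong semistability, and after passing to a finite cover, we may replace $\cQ$ by a line bundle quotient $\cL$ of negative degree on a suitable symmetric power. Note that $\Sym^m$ of a strongly semistable sheaf with $\Delta=0$ again has vanishing discriminant and remains strongly semistable (Ramanan--Ramanathan in characteristic $0$; strong semistability in characteristic $p$).

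\emph{Step 3: the main step.} The key input is the estimate of Lemma~\ref{snf-wp:symmetric-chern-bound}, which, with $c_1^2=c_2=0$, gives $\chi(S,\Sym^{[n]}\cE)=O(n^r)$, i.e.\ $\cO_{\PP(\cE)}(1)$ has vanishing top self-intersection. The classical argument (Nakayama / \cite{La-Inters} Section 4) is to show that $\cO_{\PP(\cE)}(1)\otimes\pi^*\cO(\epsilon H)$ is big for every $\epsilon>0$ and, via restricted semistability, that its restriction to the general complete intersection curve is semistable of degree zero. Instead, I would argue as follows. Suppose $\cO_{\PP(\cE)}(1)$ has negative degree on some curve $C'\subset\PP(\cE)$ lying over $C$. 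Take $D\in|mH|$ general through a general point, and use Langer's restriction theorem: $\cE|_D$ is strongly semistable of degree $0$ for large $m$, hence nef on $D$. Since the curves $D$ cover $S$, the class $\xi=c_1(\cO_{\PP(\cE)}(1))$ is nef on a family of curves covering $\PP(\cE)$. Together with $\xi^{r+1}=0$ and the Hodge-type inequalities, this should force $\xi$ to be nef.

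This last deduction is the obstacle. Pseudo-effectivity alone gives nothing, so I expect to argue instead that $\cE$ has a filtration by stable pieces with numerically trivial $c_1$ and $\int\Delta=0$. The filtration comes from a Jordan--Hölder filtration, and the Chern classes are controlled via Lemma~\ref{lem:surface-chern-subadditivity} combined with Bogomolov on each piece. Stable bundles with $\Delta=0$ and $c_1\equiv0$ are then numerically flat on the smooth resolution, which is classical: they are flat unitary in characteristic $0$, and in characteristic $p$ one uses boundedness of Frobenius pullbacks. Extensions of numerically flat bundles are numerically flat, which concludes the argument.
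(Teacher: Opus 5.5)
\textbf{There is a genuine gap.} Your Step 1 is the paper's. You rightly concede that Step 2 and the first half of Step 3 are not an argument: nefness of $\xi$ on a covering family of curves together with $\xi^{r+1}=0$ does not give nefness. So everything rests on your last paragraph.

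Its Chern-class bookkeeping is fine. For a Jordan--H\"older filtration of $\cE$ on $S$, Lemma~\ref{lem:surface-chern-subadditivity} gives $0=\int_S\ch_2(\cE)\le\sum_i\int_S\ch_2(\cF_i)$. Bogomolov and Hodge index give $\int_S\ch_2(\cF_i)\le c_1(\cF_i)^2/(2r_i)\le0$, so every factor has $c_1\equiv0$ and $\int\ch_2=0$.

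The gap is the sentence ``stable bundles with $\Delta=0$ and $c_1\equiv0$ are then numerically flat on the smooth resolution, which is classical.'' The factors $\cF_i$ are only reflexive sheaves on the singular surface $S$, not bundles. Their reflexive pullbacks to a resolution $f:T\to S$ need not keep these Chern numbers, because of local correction terms at the exceptional fibres. If you instead filter $f^*\cE$ on $T$, the factors are stable only with respect to $L=f^*H$. That class is nef and big but trivial on exceptional curves, whereas Donaldson--Uhlenbeck--Yau and Kobayashi--L\"ubke (or Simpson) require an ample polarization. Overcoming exactly this point is the content of the paper's characteristic-zero argument.

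\textbf{The missing idea is to perturb the polarization.} The paper shows that $f^*\cE$ is semistable with respect to the ample $\mathbb Q$-divisor $L+\varepsilon A$ for small $\varepsilon>0$, in two steps.
\begin{enumerate}
\item A saturated $\cG\subset f^*\cE$ with $c_1(\cG)\cdot L=0$ has $c_1(\cG)\equiv0$. This uses Bogomolov for nef polarizations \cite[Theorem~3.2]{La-AnnMath} applied to $\cG$ and $f^*\cE/\cG$, additivity of $\ch_2$ on $T$, and the Hodge index theorem.
\item A saturated $\cG$ with $\mu_L(\cG)<0$ has $\mu_L(\cG)\le-1/r$, while $\mu_A(\cG)$ is bounded above uniformly via an embedding $f^*\cE\hookrightarrow\cO_T(mA)^{\oplus N}$.
\end{enumerate}

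With this in hand your route closes. On the smooth surface $T$, $\ch_2$ is additive and $\ch_2(\cG_i)\le\ch_2(\cG_i^{**})$. The same bookkeeping then forces the $(L+\varepsilon A)$-Jordan--H\"older factors to be locally free with vanishing Chern classes, hence unitary flat. Extensions of numerically flat bundles are numerically flat, and numerical flatness descends from $T$ to $S$. The paper shortcuts these last steps by quoting \cite[Theorem~2.13]{FL}.

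In characteristic $p$ no filtration is needed. The paper bounds $\{(F_S^e)^*\cE\}_{e\ge0}$ directly on $S$ by \cite[Theorem~4.2]{La-AnnMath} and concludes with \cite[Theorem~2.9]{FL}; your brief remark should be carried out for $\cE$ itself in this way.
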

\begin{proof}
	We may assume that $r:=\rk\cE>0$.
	Bogomolov's inequality and the Hodge index theorem give
	\[
	0\leq \int_S\Delta(\cE)
	=c_1(\cE)^2-2r\int_S\ch_2(\cE)
	=c_1(\cE)^2\leq0.
	\]
	Consequently, equality in the Hodge index theorem implies
	that $c_1(\cE)\equiv0$. Hence $c_2(\cE)$ is also numerically
	trivial.
	
	Suppose first that $\operatorname{char}k=p>0$.
	The bundles $(F_S^e)^*\cE$, for $e\ge0$, are slope
	$H$-semistable and have numerically trivial Chern classes.
	By Riemann--Roch, they have the same Hilbert polynomial.
	They therefore form a bounded family by
	\cite[Theorem~4.2]{La-AnnMath}.
	The boundedness criterion
	\cite[Theorem~2.9]{FL} implies that $\cE$ is numerically flat.
	
	Assume now that $\operatorname{char}k=0$.
	Let $f:T\to S$ be a projective resolution, and put
	$\cE_T=f^*\cE$ and $L=f^*H$.
	Then $L$ is nef and big, $\cE_T$ is slope $L$-semistable,
	and
	\[
	c_1(\cE_T)\equiv0,
	\qquad
	\int_T\ch_2(\cE_T)=0.
	\]
	We show that $\cE_T$ is semistable with respect to an ample
	polarization on $T$.
	
	Let $\cG\subset\cE_T$ be a nonzero proper saturated
	subsheaf with $c_1(\cG)\cdot L=0$, and put
	$\cQ=\cE_T/\cG$.
	Both $\cG$ and $\cQ$ are slope $L$-semistable of slope zero.
	Bogomolov's inequality for nef polarizations
	\cite[Theorem~3.2]{La-AnnMath} and the Hodge index theorem give
	\[
	\int_T\ch_2(\cG)
	\leq \frac{c_1(\cG)^2}{2\rk\cG}\leq0,
	\qquad
	\int_T\ch_2(\cQ)
	\leq \frac{c_1(\cQ)^2}{2\rk\cQ}\leq0.
	\]
	Since the two Chern characters add to
	$\int_T\ch_2(\cE_T)=0$, all these inequalities are equalities.
	Thus $c_1(\cG)^2=0$, and the Hodge index theorem yields
	$c_1(\cG)\equiv0$.
	
	Fix an ample divisor $A$ on $T$ and choose $m>0$ and an
	embedding
	\[
	\cE_T\hookrightarrow\cO_T(mA)^{\oplus N}.
	\]
	Then for every nonzero proper saturated subsheaf
	$\cG\subset\cE_T$, we have
	$\mu_A (\cG)\leq mA^2.$
	If $\mu_L(\cG)<0$, then 
	$\mu_L(\cG)\leq-1/r$. Hence, for any rational number
	$0<\varepsilon<1/(rmA^2)$,
	\[
	c_1(\cG)\cdot(L+\varepsilon A)<0.
	\]
	If $c_1(\cG)\cdot L=0$, the preceding argument gives
	$c_1(\cG)\equiv0$.
	Therefore $\cE_T$ is slope semistable with respect to the
	ample rational divisor $L+\varepsilon A$.
So $\cE$ satisfies the semistability hypothesis of \cite[Theorem~2.13]{FL}, which implies that
	$\cE$ is numerically flat.
\end{proof}

\medskip

The following lemma generalizes Kleiman's result \cite[Theorem 3]{Kleiman1969}.

\begin{Lemma}\label{snf-wp:ample-surface-segre}
	Let  $U\subset S$ be a
	big open subset. Let $\cE$ be a reflexive sheaf of rank $r>0$ such
	that $\cE|_U$ is a vector bundle. If $\cE|_U$ is ample, then
	\begin{equation*}
	s_2(\cE):=c_1(\cE)^2-	\int_S c_2(\cE)>0.
	\end{equation*}
\end{Lemma}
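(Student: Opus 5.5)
Our approach is asymptotic Riemann--Roch for symmetric powers, combined with Lemma~\ref{snf-wp:symmetric-chern-bound}, following Kleiman's argument that an ample bundle has positive top Segre class. Let $\pi:\PP=\PP_U(\cE|_U)\to U$ and write $\cO(1)$ for the tautological bundle. Since $\cE|_U$ is ample, $\cO_{\PP}(1)$ is ample on the (non-proper) scheme $\PP$. Hence there exist $m_0>0$ and finitely many sections of $\Sym^{m_0}\cE|_U$ that generate it and give a finite morphism to projective space. Because $S\setminus U$ is finite and $\cE$ is reflexive, these sections extend to sections of $\Sym^{[m_0]}\cE$ on $S$.

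The key step is to show that $h^0(S,\Sym^{[n]}\cE)$ grows like $c\,n^{r+1}$ with $c>0$. Indeed, $H^0(S,\Sym^{[n]}\cE)=H^0(U,\Sym^n\cE|_U)=H^0(\PP,\cO(n))$. Ampleness of $\cO(1)$ on $\PP$, whose dimension is $r+1$, gives a finite morphism $\PP\to\PP^N$ defined by the global sections of $\cO(m_0)$, and its image has dimension $r+1$. Consequently the section ring has growth $\ge c\,n^{r+1}$ with $c=\mathrm{vol}/(r+1)!>0$, where $\mathrm{vol}$ is the degree of the closure of the image.

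We then need $h^2(S,\Sym^{[n]}\cE)=o(n^{r+1})$, and ideally also control of $h^1$. By Serre duality on the normal surface, $h^2(\Sym^{[n]}\cE)=\dim\operatorname{Hom}(\Sym^{[n]}\cE,\omega_S)$, and such homomorphisms are determined on $U$. Choose a curve $C\subset U$ that is a general complete intersection of a very ample linear system avoiding the finite set $S\setminus U$. Then $\cE|_C$ is ample, so $\deg\Sym^n\cE|_C$ grows like $n^{r}\cdot n$ while $\omega_S|_C$ has fixed degree. Filtering $\Sym^{[n]}\cE$ against $\omega_S$ via $\mathcal{O}(-kC)$ gives $h^2=O(n^{r})$, or even $h^2=0$ for $n\gg0$. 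Assuming this, $\chi\ge h^0-h^1$, and the $h^1$ term is the main obstacle, since $\chi$ only gives $h^0-h^1+h^2$.

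To handle $h^1$, we would instead run the argument in a shape where $\chi$ is bounded below directly. The plan is to pass to $\PP$ itself. Take a resolution or compactification $\overline{\PP}\to\PP(\cE)$, with $\PP(\cE)$ the projectivization over $S$ of the reflexive sheaf. The volume of $\cO(1)$ is positive, since $\cO(1)$ is big because its section ring grows as computed above. The asymptotic $\chi(S,\Sym^{[n]}\cE)$, which by Lemma~\ref{snf-wp:symmetric-chern-bound} has leading coefficient at most $s_2(\cE)/(r+1)!$, should then be compared with $h^0$. We would use the intersection-theoretic fact from \cite{La-Inters} that the bound in Lemma~\ref{snf-wp:symmetric-chern-bound} is realized as the top self-intersection of the tautological class on a resolution of $\PP(\cE)$. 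That class is nef (since $\cO(1)$ is ample on the big open $\PP$ and its sections extend) and big, so its top self-intersection equals the volume and is $>0$. This gives $s_2(\cE)\ge (r+1)!\cdot\mathrm{vol}/(r+1)!>0$. The main obstacle is justifying this comparison, namely that $s_2(\cE)$ dominates the top self-intersection of the nef tautological class on the resolution. We would first try to extract it from the proof of Lemma~\ref{snf-wp:symmetric-chern-bound} in \cite{La-Inters}, where the inequality arises precisely from local corrections that are non-negative.
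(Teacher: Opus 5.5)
There is a genuine gap, and it is the step you yourself call the main obstacle: you never obtain a lower bound for $\chi(S,\Sym^{[n]}\cE)$, and the two facts you plan to use for it are not available. The $h^0$ growth from your first paragraph cannot be turned into a bound on $\chi$ because of $h^1$, as you note. In your second route, the tautological class on a resolution of $\PP_S(\cE)$ need not be nef. The reason is that $P=\PP_U(\cE|_U)$ is \emph{not} big there. Over a point where the reflexive sheaf $\cE$ is not locally free, the fibre of $\PP_S(\cE)$ has dimension $\ge r$, and after resolving, the preimage of a point of $S\setminus U$ is typically a divisor. So a section of $\Sym^{[n]}\cE$ gives only a rational section of the tautological bundle, possibly with poles along exceptional divisors. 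Nor does \cite{La-Inters} identify $s_2(\cE)$ with a top self-intersection on a resolution; Lemma~\ref{snf-wp:symmetric-chern-bound} is only an asymptotic \emph{upper} bound for $\chi$. Comparing $s_2(\cE)$ with a positive volume is essentially the whole content of the lemma, so deferring it leaves the proof undone.

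The missing idea is that you never need to compare $s_2(\cE)$ with an intersection number directly. You only need a lower bound for $\chi(S,\Sym^{[n]}\cE)$ of order $n^{r+1}$ with positive leading coefficient, and you get it from an honest projective model fibred over $S$.

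The paper proceeds as follows. Take an immersion $h:P\hookrightarrow\PP^N_k$ with $h^*\cO(1)\simeq\cO_P(c)$. Let $Y$ be the closure of the graph of $(\pi,h)$ in $S\times\PP^N_k$, with $q:Y\to S$ and $M=\mathrm{pr}_2^*\cO(1)|_Y$. Then $q^{-1}(U)=P$, $M$ is $q$-very ample, and $M^{r+1}>0$ because $Y\to\PP^N_k$ is birational onto its image.

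Relative Serre vanishing gives, for $t\gg0$, $\chi(S,q_*M^t)=\chi(Y,M^t)=\frac{M^{r+1}}{(r+1)!}t^{r+1}+O(t^r)$. This is where your $h^1$ problem disappears: higher cohomology is absorbed into an Euler characteristic on the projective variety $Y$.

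Since $q_*M^t$ is torsion free and equals $\Sym^{ct}\cE$ on $U$, there is an injection $q_*M^t\hookrightarrow\Sym^{[ct]}\cE$ with zero-dimensional cokernel. Hence $\chi(S,\Sym^{[ct]}\cE)\ge\chi(Y,M^t)$. Comparing leading coefficients with Lemma~\ref{snf-wp:symmetric-chern-bound} at $n=ct$ gives $c^{r+1}s_2(\cE)\ge M^{r+1}>0$.

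Your closure of the image in $\PP^N$ is the right object. You need it over $S$ so that the pushforward lands inside $\Sym^{[ct]}\cE$ and can be compared with the Euler characteristic there.
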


\begin{proof}
	Put $P=\PP_U(\cE|_U)$, let $\pi:P\to U$ be the projection, and
	set $L=\cO_P(1)$. Since $L$ is ample, there exist an integer $c>0$
	and an immersion	$	h:P\hookrightarrow\PP_k^N$ such that $h^*\cO_{\PP_k^N}(1)\simeq L^c.$
	Let $Y$ be the reduced closure of the image of $(\pi,h)$ in
	$S\times\PP_k^N$, and write
	\[
	q:Y\to S,\qquad
	M=\operatorname{pr}_2^*\cO_{\PP_k^N}(1)|_Y.
	\]
	The graph morphism over $U$ is a closed immersion, since $\pi$ is
	proper. Hence $q^{-1}(U)=P$ and $M|_P=L^c$.
	The variety $Y$ is integral and projective of dimension $r+1$, and
	$M$ is $q$-very ample. The second projection is generically an
	isomorphism onto its image, because its restriction to the dense
	open subset $P$ is $h$. In particular,
	\[
	M^{r+1}>0.
	\]
	
	For $t\gg0$, relative Serre vanishing gives $R^iq_*M^t=0$ for all
	$i>0$. The sheaf $q_*M^t$ is torsion free and restricts to
	$\Sym^{ct}(\cE|_U)$ on $U$. Its reflexive hull is therefore
	$\Sym^{[ct]}\cE$, so there is an injection
	\[
	q_*M^t\hookrightarrow\Sym^{[ct]}\cE
	\]
	with zero-dimensional cokernel. It follows that
$$	\chi(S,\Sym^{[ct]}\cE)
		\ge\chi(S,q_*M^t)=\chi(Y,M^t)=\frac{M^{r+1}}{(r+1)!}t^{r+1}+O(t^r).$$
	Comparing this with the inequality from Lemma \ref{snf-wp:symmetric-chern-bound}, applied with
	$n=ct$, yields
	\[
	c^{r+1} s_2(\cE)\ge M^{r+1}>0.
	\]
\end{proof}

\section{Weakly positive sheaves}

Let $R$ be a noetherian ring and let $X$ be a quasi-projective  $R$-scheme. 
In \cite{Vi1} and \cite[2.3]{Vi2} E. Viehweg introduced and studied the following definition over an algebraically closed field.

We use ordinary ampleness: a vector bundle $\cE$ is ample if
$\cO_{\PP_X(\cE)}(1)$ is ample on $\PP_X(\cE)$. Since the base
$\Spec R$ is affine, this is equivalent to ampleness relative to
$\Spec R$. 

\begin{Definition}
	Let $X_0\subset X$ be a Zariski dense open subset.
	We say that a vector bundle $\cE$ is 
	\begin{enumerate}
		\item \emph{globally generated over $X_0$}  if the evaluation map $\Gamma(X, \cE)\otimes \cO_X\to \cE$ is surjective over $X_0$.
		\item 	\emph{weakly positive over $X_0$} if for every ample line bundle $H$ on $X$ and 	every positive integer $a$ there exists  a positive integer $b $ such that $\Sym ^{ab}\cE \otimes _{\cO_X}H^{ b}$ is globally generated over $X_0$.
		\item  \emph{weakly positive} if there exists  a Zariski dense open subset $X_0\subset X$ such that $\cE$ is weakly positive over $X_0$.
	\end{enumerate}
\end{Definition}

One can also introduce another variant of weak positivity but this is in fact a special case of the above.

\begin{Definition}
	Assume that $X$ is normal.
	Let $\cE$ be a coherent $\cO_X$-module and let $V_{\cE}$ be the largest open subset of $X$
	on which $\cE':=\cE/\mathrm{torsion} $ is locally free. Let $X_0\subset V_{\cE}$ be a Zariski dense open subset.
	We say that $\cE$ is \emph{weakly positive over $X_0$} if for every ample line bundle $H$ on $X$ and 	every positive integer $a$ there exists  a positive integer $b $ such that $\Sym ^{[ab]}\cE \otimes _{\cO_X}H^{ b}$ is globally generated over $X_0$.
	We say that $\cE$ is \emph{weakly positive} if there exists  a Zariski dense open subset $X_0\subset V_{\cE}$ such that $\cE'$ is weakly positive over $X_0$.
\end{Definition}

Note that $\cE$ is weakly positive over $X_0$ if and only if $\cE'|_{V_{\cE}}$ is weakly positive over $X_0$ in the previous sense.

Generation over an open subset always means
generation by sections defined on $X$, as in
\cite[Definitions 2.10 and 2.11]{Vi2}. The definition of weak positivity
is independent of the ample line bundle, by
\cite[Lemma 2.14(a)]{Vi2}; its proof also works over a noetherian ring, without a reducedness assumption.
Indeed, to replace $H$ by another ample line bundle $A$, choose $c>0$
such that $A^c\otimes H^{-1}$ is globally generated and apply the
condition for $H$ with $ac$ in place of $a$.

If $X$ is normal, $j:X_0\hookrightarrow X$ is big and $\cE$ is locally free on $X_0$, we have
\[
j_*\Sym^m\cE=\Sym^{[m]}(j_*\cE).
\]
Consequently, weak positivity of $\cE$ over all of $X_0$ agrees with
weak positivity of $j_*\cE$ over $X_0$, computed on $X$.

Weak positivity over $X$ was introduced to replace nefness on quasi-projective varieties, which is weaker than weak positivity over $X$.
Let us recall the following lemma from \cite[Lemma 2.24]{Vi2}. The reducedness assumption in the cited statement is unnecessary; see Lemma~\ref{open-ample-equivalence} below. 

\begin{Lemma}\label{Viehweg-ampleness}
	Let $X$ be a quasi-projective $R$-scheme.
	Let $\cE$ be a vector bundle and $H$ an ample line bundle on $X$. Then the following conditions are equivalent:
	\begin{enumerate}
		\item $\cE$ is ample.
		\item For some $d>0$, $\Sym ^d\cE\otimes _{\cO_X} H^{-1}$ is globally generated over $X$.
		\item  For some $d>0$, $\Sym ^d\cE\otimes _{\cO_X} H^{-1}$ is weakly positive over $X$.
	\end{enumerate}
\end{Lemma}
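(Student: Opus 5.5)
The plan is to prove the cycle of implications $(1)\Rightarrow(2)\Rightarrow(3)\Rightarrow(1)$. The step from (2) to (3) is trivial: it is the case $X_0=X$ of the definition of weak positivity, applied with $d$ in place of the multiplier.

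For $(1)\Rightarrow(2)$, assume $\cE$ is ample.
\begin{enumerate}
\item Since $\Spec R$ is affine, $\cO_{\PP_X(\cE)}(1)$ is ample relative to $\Spec R$. Hence there is $d>0$ such that $\Sym^d\cE\otimes H^{-1}$ is globally generated on $X$.
\item To arrange this, put $P=\PP_X(\cE)$ with projection $\pi$, and $L=\cO_P(1)$. Then $L^m\otimes\pi^*H^{-1}$ is ample for $m\gg0$, because $L$ is ample and $\pi^*H^{-1}$ is a fixed line bundle.
\item Since $P$ is quasi-projective over the affine base, some power $(L^m\otimes\pi^*H^{-1})^{N}$ is globally generated for $N\gg0$.
\item The danger is that the exponent of $H$ becomes $-N$ rather than $-1$. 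I would fix this by writing
\[
\Sym^{mN}\cE\otimes H^{-1}=\big(\Sym^{mN}\cE\otimes H^{-N}\big)\otimes H^{N-1},
\]
and then choosing $N$ large enough that $H^{N-1}$ is globally generated. This requires care on a non-proper $X$: global generation of $H^{k}$ holds for $k\gg0$ because $X$ is quasi-projective over an affine base.
\item It remains to pass from global generation of $L^{mN}\otimes\pi^*H^{-N}$ on $P$ to global generation of $\pi_*(\,\cdot\,)=\Sym^{mN}\cE\otimes H^{-N}$ on $X$. This holds because $\pi_*$ is compatible with sections, and the relevant higher direct images vanish for large $mN$ on the projective bundle.
\end{enumerate}

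For $(3)\Rightarrow(1)$, assume $\cG:=\Sym^d\cE\otimes H^{-1}$ is weakly positive over $X$. Apply the definition with the ample bundle $H$ and $a=2$. This gives $b$ such that $\Sym^{2b}\cG\otimes H^{b}$ is globally generated over all of $X$. The natural surjection
\[
\Sym^{2b}(\Sym^d\cE)\to\Sym^{2bd}\cE
\]
then shows that
\[
\Sym^{2bd}\cE\otimes H^{-2b}\otimes H^{b}=\Sym^{2bd}\cE\otimes H^{-b}
\]
is globally generated. Consequently $L^{2bd}\otimes\pi^*H^{-b}$ is globally generated on $P$. Writing
\[
L^{2bd}=\big(L^{2bd}\otimes\pi^*H^{-b}\big)\otimes\pi^*H^{b},
\]
$L^{2bd}$ is a globally generated bundle tensored with $\pi^*H^b$, and $\pi^*H^b$ is ample relative to $\pi$. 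I would then show that $L^{2bd}$ is ample via the Cartier-type criterion: a tensor product of a globally generated line bundle and a line bundle which is ample relative to $\pi$ and pulled back from an ample bundle on the base is ample. More precisely, the globally generated line bundle $L^{2bd}\otimes\pi^*H^{-b}$ defines a morphism $\phi$ to a projective space over $R$. Together with $\pi$, it gives a morphism $P\to X\times\PP^M_R$. This morphism is quasi-finite, since $\phi$ restricted to fibres of $\pi$ is given by a power of $\cO(1)$ minus nothing. In fact $L^{2bd}\otimes\pi^*H^{-b}$ restricts to $\cO(2bd)$ on each fibre, which is ample there, so $\phi$ is finite on fibres. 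The bundle $L^{2bd}$ is then the pullback of $\cO(1)\boxtimes H^{b}$ under this quasi-finite morphism, and is therefore ample.

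The main obstacle is to handle ampleness and pullback along quasi-finite morphisms without reducedness or properness, over a noetherian base. I would use the standard fact that the pullback of an ample line bundle under a quasi-finite separated morphism of finite type is ample (EGA II, 5.1.12 and 4.6.13), together with the definition of ampleness via the open sets $P_s$ being affine. None of these ingredients uses reducedness, which justifies the remark that Viehweg's hypothesis is unnecessary.
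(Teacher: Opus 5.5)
\textbf{There is a genuine gap in $(1)\Rightarrow(2)$, at your step 5.} You claim that global generation of $M=L^{k}\otimes\pi^*G$ on $P=\PP_X(\cE)$ implies global generation of $\pi_*M=\Sym^k\cE\otimes G$ on $X$. This is false. Global generation of $M$ only says that the image of $H^0(X,\Sym^k\cE\otimes G)$ in $\Sym^k\cE(x)\otimes G(x)=H^0(P_x,\cO(k))\otimes G(x)$ is a base-point-free linear system on the fibre $P_x$. It need not be the whole space, and vanishing of $R^i\pi_*$ says nothing about this.

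For a counterexample, let $X$ be a complex elliptic curve, let $\eta$ be nontrivial with $\eta^{2}\simeq\cO_X$, and put $\cE=\cO_X\oplus\eta$. The sections of $L^2$ coming from $H^0(\cO_X)$ and $H^0(\eta^2)$ restrict to $u^2,v^2$ on every fibre, so $L^2$ is globally generated. But $\Sym^2\cE\simeq\cO_X\oplus\eta\oplus\cO_X$ is not.

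Since this passage carries the whole content of $(1)\Rightarrow(2)$, your steps 2--4 do not prove it. The paper avoids the issue by using the standard equivalence between ampleness of $\cO_{\PP_X(\cE)}(1)$ and Hartshorne's condition: $\Sym^m\cE\otimes\cF$ is globally generated for every coherent $\cF$ and $m\gg0$. This is condition (3) of Lemma~\ref{open-ample-equivalence} with $X_0=X$. Taking $\cF=H^{-1}$ then gives (2) directly, with no adjustment of exponents of $H$.

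You can also repair your own route. Choose $N$ so that $(L^m\otimes\pi^*H^{-1})^N$ is very ample relative to $R$. The map $(\pi,h):P\to\PP^M_X$ is then a proper immersion, hence a closed immersion onto some $Z$. Relative Serre vanishing $R^1p_*(\cI_Z(t))=0$ for $t\gg0$ on $p:\PP^M_X\to X$ makes $\Sym^t(\cO_X^{\oplus M+1})\to p_*\cO_Z(t)\simeq\Sym^{mNt}\cE\otimes H^{-Nt}$ surjective. The essential point is that you must pass to further powers of an embedding line bundle; global generation alone does not suffice.

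\textbf{The other two implications are fine.} $(2)\Rightarrow(3)$ holds because a globally generated bundle is weakly positive: its symmetric powers are globally generated, and $H^b$ is globally generated for $b\gg0$. Your phrase ``with $d$ in place of the multiplier'' is not the reason.

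Your $(3)\Rightarrow(1)$ is correct and takes a different route from the paper. The paper twists $\Sym^{2bd}\cE\otimes H^{-b}$ by $H^{b-1}$ to reach (2), and then relies again on the Hartshorne characterization. You instead show directly that $(\pi,\phi):P\to X\times_R\PP^M_R$ is finite: it is proper because $\pi$ is, and quasi-finite because on $P_x$ it is given by a base-point-free subsystem of $|\cO(2bd)|$. You then pull back the ample bundle $\mathrm{pr}_1^*H^b\otimes\mathrm{pr}_2^*\cO(1)$. This argument is self-contained and visibly needs no reducedness.

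One phrasing error: calling $\pi^*H^b$ ``ample relative to $\pi$'' is wrong, since it is trivial on the fibres. The relative positivity comes from the globally generated factor. Your ``more precisely'' argument does not use that claim, so this does not affect correctness.
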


The above lemma motivated Viehweg to introduce another definition of ampleness with respect to an open subset
(see \cite[Definition 1.2(c)]{Vi3}). The corresponding equivalence over an open subset is explicitly included in that definition. We give the details, also without a reducedness assumption, in Lemma~\ref{open-ample-equivalence} below.

\begin{Definition}\label{ample-over-open}
	A vector bundle $\cE$ on $X$ is \emph{ample with respect to $X_0$} if, for some ample line bundle $H$ and for some
	$d>0$, the bundle
	\(	\Sym^d\cE\otimes H^{-1}\)
	is globally generated over $X_0$.
\end{Definition} 

By the following lemma, the above definition agrees with
\cite[Definition 1.2(c)]{Vi3} and it is independent of $H$.

\begin{Lemma}\label{open-ample-equivalence}
	Fix an ample line bundle $H$ on $X$. For a vector bundle $\cE$ on $X$, the following conditions are equivalent:
	\begin{enumerate}
		\item $\cE$ is ample with respect to $X_0$.
		\item For some $d>0$, $\Sym^d\cE\otimes H^{-1}$ is weakly positive 
		over $X_0$.
		\item For every coherent $\cO_X$-module $\cF$, the sheaf
		$\Sym^m\cE\otimes\cF$ is globally generated over $X_0$ for $m\gg0$.
	\end{enumerate}
	In particular, this notion is independent of $H$, and for $X_0=X$
	it is equivalent to ordinary ampleness.
\end{Lemma}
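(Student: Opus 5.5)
We prove the implications $(1)\Rightarrow(2)\Rightarrow(3)\Rightarrow(1)$. Afterwards we deduce the independence of $H$ and the comparison with ordinary ampleness.

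\emph{$(1)\Rightarrow(2)$.} By Definition~\ref{ample-over-open} there are an ample line bundle $H'$ and $d'>0$ such that $\Sym^{d'}\cE\otimes H'^{-1}$ is globally generated over $X_0$. We first compare $H'$ with the fixed $H$: choose $c>0$ with $H'^{c}\otimes H^{-1}$ globally generated. The multiplication maps $\Sym^{c}(\Sym^{d'}\cE)\to\Sym^{cd'}\cE$ are surjective. Hence $\Sym^{cd'}\cE\otimes H'^{-c}$ is globally generated over $X_0$, and so is $\Sym^{cd'}\cE\otimes H^{-1}$, since it is the tensor product with $H'^{c}\otimes H^{-1}$. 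A bundle globally generated over $X_0$ is weakly positive over $X_0$: its symmetric powers are quotients of $\Gamma\otimes\cO_X$ over $X_0$, so taking $b=1$ already works. This gives (2) with $d=cd'$.

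\emph{$(2)\Rightarrow(3)$.} Set $\cG=\Sym^d\cE\otimes H^{-1}$. Applying weak positivity of $\cG$ with $a=2$ gives $b$ such that $\Sym^{2b}\cG\otimes H^{b}$ is globally generated over $X_0$. Using the surjection $\Sym^{2b}\Sym^d\cE\to\Sym^{2bd}\cE$, we see that $\Sym^{2bd}\cE\otimes H^{-b}$ is globally generated over $X_0$. Put $N=2bd$. Then for every $k\ge1$ the bundle $\Sym^{kN}\cE\otimes H^{-kb}$ is globally generated over $X_0$.

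Now let $\cF$ be coherent. Choose $n_0$ such that $\cF\otimes H^{n}$ is globally generated on $X$ for all $n\ge n_0$; this uses only ampleness of $H$ on a quasi-projective scheme over a noetherian ring, and no reducedness. Write $m=kN+s$ with $0\le s<N$. The sheaf $\Sym^m\cE\otimes\cF$ is a quotient of
\[
(\Sym^{kN}\cE\otimes H^{-kb})\otimes(\Sym^{s}\cE\otimes H^{-t})\otimes(\cF\otimes H^{kb+t}).
\]
To handle the finitely many twists by $\Sym^s\cE$, fix $t$ such that $\Sym^s\cE\otimes H^{t'}$ is globally generated for every $s<N$ and every $t'\ge t$. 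Then, for $k$ large, each factor is globally generated over $X_0$, and a product of sheaves globally generated over $X_0$ is globally generated over $X_0$. This proves (3).

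The obstacle here is purely bookkeeping: the exponents must be arranged so that the negative power of $H$ is absorbed. I would use the term $\Sym^{kN}\cE\otimes H^{-kb}$ to produce a large negative twist, and then absorb it into $\cF\otimes H^{kb+t}$ together with the fixed positive twist needed by $\Sym^s\cE$.

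\emph{$(3)\Rightarrow(1)$.} Take $\cF=H^{-1}$. Then for $m\gg0$ the bundle $\Sym^m\cE\otimes H^{-1}$ is globally generated over $X_0$, which is (1).

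Since condition (3) does not mention $H$, the notion is independent of $H$. For $X_0=X$, condition (1) is condition (2) of Lemma~\ref{Viehweg-ampleness}. Alternatively, (3) with $X_0=X$ says that $\cO_{\PP(\cE)}(1)$ satisfies the cohomological-free generation criterion on $\PP(\cE)$; pushing forward $\cO(m)\otimes\pi^*\cF$ recovers the usual characterization of ampleness. This argument is valid over a noetherian base without reducedness, which removes the hypothesis in Lemma~\ref{Viehweg-ampleness} as well.
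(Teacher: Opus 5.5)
Your argument is correct and uses the same ingredients as the paper. Your $(2)\Rightarrow(3)$ is the paper's step producing $\Sym^{2bd}\cE\otimes H^{-b}$ generated over $X_0$, followed by its decomposition $m=kN+s$ from $(1)\Rightarrow(3)$, only arranged as a cycle; in $(1)\Rightarrow(2)$ you handle the change from $H'$ to $H$ by hand, where the paper uses the independence of $H$ already obtained from (3).

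Two small slips need fixing. First, in your display the middle and last factors should be $\Sym^s\cE\otimes H^{t}$ and $\cF\otimes H^{kb-t}$, as your choice of $t$ requires. Second, in $(1)\Rightarrow(2)$ you cannot take $b=1$ unless the ample line bundle in the definition of weak positivity is globally generated; take $b$ large enough that its $b$-th power is.
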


\begin{proof}
	Assume $(1)$. Then for some ample line bundle $A$ and for some
	$d>0$, the bundle
	\(	\Sym^d\cE\otimes A^{-1}\)
	is globally generated over $X_0$. Write $m=dn+i$, where $0\le i<d$. The natural multiplication map 
	\[
	\Sym^n(\Sym^d\cE\otimes A^{-1})
	\otimes(\Sym^i\cE\otimes\cF\otimes A^n)
	\longrightarrow\Sym^m\cE\otimes\cF
	\]
	is	surjective.
	The first factor is generated over $X_0$, and the second is globally
	generated on $X$ for $n\gg0$, uniformly for the finitely many possible
	values of $i$. This proves $(3)$.  $(3)\Rightarrow(1)$ follows by
	taking $\cF=H^{-1}$. Since $(3)$ does not involve $H$, it also
	proves independence of $H$. For $X_0=X$, condition $(3)$ is the usual definition of ampleness.
	
	A sheaf generated over $X_0$ is weakly positive over $X_0$, so
	$(1)\Rightarrow(2)$. Conversely, under $(2)$ choose $b>0$ such that
	\[
	\Sym^{2b}(\Sym^d\cE\otimes H^{-1})\otimes H^b
	\]
	is generated over $X_0$. Its quotient
	$\Sym^{2db}\cE\otimes H^{-b}$ is also generated over $X_0$.
	We may replace $b$ by a sufficiently large multiple so that
	$H^{b-1}$ is globally generated. Tensoring with this line bundle
	proves $(1)$.
\end{proof}

\medskip

The following lemma is well-known but we recall it for the convenience of the reader.

\begin{Lemma}\label{snf-wp:elementary-properties}
	Let $R$ be a noetherian ring, let $X$ be a quasi-projective
	$R$-scheme, and let $X_0\subset X$ be a Zariski dense open subset.
	\begin{enumerate}
		\item Weak positivity of vector bundles over the entire scheme is
		preserved by arbitrary pullback between quasi-projective $R$-schemes.
		More generally, if $f:Z\to X$ is a morphism of quasi-projective
		$R$-schemes and $f^{-1}(X_0)$ is dense in $Z$, then $f^*\cE$ is
		weakly positive over $f^{-1}(X_0)$ whenever $\cE$ is weakly positive over $X_0$.
		\item If a vector bundle on $X$ is weakly positive over $X_0$,
		its determinant is weakly positive over the same subset.
		\item Assume that $R=k$ is a field, let $\cE$ be locally free on $X$ and
		weakly positive over $X_0$, and let $f:C\to X$ be a morphism from
		a smooth projective integral curve over a field extension of $k$ whose image meets $X_0$. Then $f^*\cE$
		is nef.
	\end{enumerate}
	In particular, if $R=k$ is a field and both $\cE$ and $\det\cE^*$ are weakly positive
	over the entire scheme $X$, then $\cE$ is numerically flat.
\end{Lemma}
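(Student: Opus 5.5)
We prove the three items in order and then deduce the final assertion.

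\emph{Pullback.} For (1), fix an ample line bundle $A$ on $Z$ and an integer $a>0$. Choose an ample line bundle $H$ on $X$. Since $Z$ is quasi-projective over the affine base, $f^*H$ need not be ample, but we can find $c>0$ such that $A^c\otimes f^*H^{-1}$ is globally generated on $Z$. This holds because a line bundle that is ample relative to an affine base has some power whose twist by any fixed coherent sheaf is globally generated. Apply weak positivity of $\cE$ with $ac$ in place of $a$: there is $b$ with $\Sym^{acb}\cE\otimes H^b$ generated over $X_0$. Pulling back, $\Sym^{acb}f^*\cE\otimes f^*H^b$ is generated over $f^{-1}(X_0)$ by pulled-back global sections. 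Tensoring with the globally generated bundle $(A^c\otimes f^*H^{-1})^{b}$ shows that $\Sym^{a(cb)}f^*\cE\otimes A^{cb}$ is generated over $f^{-1}(X_0)$. The density hypothesis on $f^{-1}(X_0)$ is needed only so that the definition applies.

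\emph{Determinant.} For (2), let $r=\rk\cE$ on each connected component; $r$ is locally constant, so we may work componentwise. There is a natural surjection $\Sym^{rm}\cE\to \Sym^m(\cE)^{\otimes r}$-type quotient. The cleanest route is the surjection
\[
\cE^{\otimes r}\to\det\cE ,
\]
combined with the fact that $(\det\cE)^{m}$ is a quotient of $\Sym^{m}(\cE)^{\otimes r}$ via multiplication. This quotient is compatible with generation: if $\Sym^{ab}\cE\otimes H^b$ is generated over $X_0$, then so is its $r$-fold tensor power, and hence so is the quotient $(\det\cE)^{ab}\otimes H^{rb}$. Replacing $H$ by $H^r$, which is allowed by the independence of the choice of ample line bundle, gives the claim. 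Concretely, the map $\Sym^m\cE^{\otimes r}\to(\det\cE)^{m}$ is defined locally by $\det$ of the matrix of coordinates and glues. This is the only step requiring a small verification, namely that the map is a surjection over any base ring. It is surjective because locally $e_1^m\otimes\cdots\otimes e_r^m\mapsto (e_1\wedge\cdots\wedge e_r)^m$.

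\emph{Curves.} For (3), the plan has four steps.
\begin{enumerate}
\item By (1), applied after base change to the field of definition of $C$ (flat base change preserves global generation), $f^*\cE$ is weakly positive over the nonempty open set $f^{-1}(X_0)$ of the curve $C$.
\item Given a quotient line bundle $f^*\cE\to Q$ of degree $-\delta<0$, take any quotient bundle $Q'$ and pass to its determinant, using (2) applied to the quotient. Thus it suffices to treat quotient line bundles.
\item With $H$ of degree $h$, generation over a nonempty open set of $\Sym^{ab}f^*\cE\otimes H^b$ gives a nonzero map $\cO\to Q^{ab}\otimes H^b$. Hence $-ab\delta+bh\ge0$ for all $a$, which is impossible.
\item Nefness over a field is understood geometrically, so we pass to the algebraic closure. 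This is harmless because global generation commutes with field extension.
\end{enumerate}

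\emph{Final assertion.} Apply (3) to every curve; since $X_0=X$, every curve meets $X_0$, so $\cE$ is nef. Moreover, $\det\cE^*$ is nef, so $\deg f^*\cE\le0$, while nefness of $\cE$ gives $\deg f^*\cE\ge0$; hence $\deg f^*\cE=0$. Now let $S\subset f^*\cE$ be a subbundle. Then $f^*\cE/S$ has degree $\ge0$, so $\deg S\le0$. Therefore every quotient of $f^*\cE^*$, which is dual to a subbundle of $f^*\cE$, has degree $\ge0$, so $\cE^*$ is nef and $\cE$ is numerically flat.

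The main obstacle I expect is the bookkeeping in (1) for a non-ample $f^*H$ over a noetherian base.
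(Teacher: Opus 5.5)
Your proof of (1) is the paper's argument. Your degree estimate in (3) and your deduction of the final assertion also match the paper. The gap is in (2), at exactly the step you flag as a small verification.

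The recipe ``determinant of the coordinate matrix, to the $m$-th power'', i.e.\ $v_1^m\otimes\cdots\otimes v_r^m\mapsto(v_1\wedge\cdots\wedge v_r)^{m}$, is a multihomogeneous polynomial law. It therefore factors naturally through the divided powers $(\Gamma^m\cE)^{\otimes r}$, not through $(\Sym^m\cE)^{\otimes r}$. The two differ as soon as some residue characteristic is at most $m$, which is precisely the situation the lemma must cover.

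Concretely, take $r=m=2$. Every $\GL_2$-equivariant map $\Sym^2V\otimes\Sym^2V\to(\det V)^{\otimes2}$ over $\mathbb Z$ is an integer multiple of the polarized discriminant
\[
(ae_1^2+be_1e_2+ce_2^2)\otimes(a'e_1^2+b'e_1e_2+c'e_2^2)\longmapsto bb'-2(ac'+a'c).
\]
This map sends $e_1^2\otimes e_2^2$ to $-2$. In characteristic $2$ it kills every $v_1^2\otimes v_2^2$, although $(v_1\wedge v_2)^2\neq0$. So no natural map takes the value you assert on $e_1^m\otimes\cdots\otimes e_r^m$, and your surjectivity check is invalid.

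The statement you need is nevertheless true, but it requires an argument. Over $\mathbb Q$, the summand $(\det V)^{\otimes m}=S_{(m^r)}V$ occurs in $(\Sym^mV)^{\otimes r}$ with multiplicity $K_{(m^r),(m^r)}=1$. Hence the lattice of integral equivariant maps $(\Sym^mV)^{\otimes r}\to(\det V)^{\otimes m}$ is saturated and of rank one. A generator of this lattice is divisible by no prime, so it is surjective over $\mathbb Z$ and stays surjective after every base change. For $r=m=2$, surjectivity comes from $e_1e_2\otimes e_1e_2\mapsto 1$.

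With this repair, your route to (2) is genuinely more elementary than the paper's. The paper deduces (2) from closure under exterior powers in Corollary~\ref{operations:weak-positivity}, which rests on the root-cover criterion and the ampleness theorems for polynomial representations.

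Step 2 of your (3) inherits the gap, since it uses (2) to reduce to quotient line bundles. The paper needs no such reduction. It tests only quotient line bundles of $g^*f^*\cE$ for all finite covers $g\colon C'\to C$, which suffices for nefness via $\cO_{\PP}(1)$. Alternatively, for a quotient $Q$ of rank $q$, a nonzero section of $\det(\Sym^{ab}Q\otimes f^*H^b)$ together with $\deg\Sym^nQ=\binom{n+q-1}{q}\deg Q$ gives $\tfrac aq\deg Q+\deg f^*H\ge0$ directly.
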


\begin{proof}
	For (1), choose ample line bundles $H$ on $X$ and $A$ on $Z$.
	Choose $c>0$ such that $A^c\otimes f^*H^{-1}$ is globally generated.
	For every $a>0$, choose $b>0$ such that
	$\Sym^{acb}\cE\otimes H^b$ is generated over $X_0$.
	Pulling back and tensoring with $(A^c\otimes f^*H^{-1})^b$ shows that
	$\Sym^{acb}(f^*\cE)\otimes A^{cb}$ is generated over $f^{-1}(X_0)$.
	This proves (1), extending \cite[Lemma 2.15]{Vi2}.
	(2) follows from Corollary \ref{operations:weak-positivity} (see also \cite[Lemma 1.4, 6)]{Vi1} for the case when $R$ is a field).
	
	To prove (3), fix an ample line bundle $H$ on $X$.
	For every positive integer $a$ there exists a positive integer $b $ such that
	$\Sym ^{ab}(f^*\cE) \otimes (f^*H)^{ b}$ is generically globally generated.
	So for any finite surjective morphism $g:C'\to C$ from a smooth projective integral curve and any quotient line bundle $g^*(f^*\cE)\to L$ we have
	\[
	ab\deg L+b\deg (g^*f^*H)\ge 0.
	\]
	Dividing by $ab$ and letting $a$ tend to infinity gives $\deg L\ge 0$, proving
	nefness of $f^*\cE$.
	
	Finally, under the last hypothesis, (3) shows that every pullback
	of $\cE$ to a smooth projective curve is nef, while its inverse
	determinant is also nef. So every such pullback has degree zero and hence $\cE$ is numerically flat.
\end{proof}

For the remainder of this section, let $k$ be an algebraically closed field
and let $X$ be a normal projective $k$-variety of dimension $n$. The following lemma will be needed in the proof of Proposition \ref{prop:comparison-of-fund-groups}.

\begin{Lemma}\label{lem:numerical-criterion} 
Let $B$ be a Weil divisor on $X$ and
let $H$ be an ample line bundle such that $	B\cdot H^{n-1}=0.$
Let	$f:Y\to X$ be a projective birational morphism from
smooth $Y$. If $\cO_X(B)$ is weakly positive then the numerical class of any line bundle extending
$f^*\cO_X(B)$ from the inverse image of a big open subset of $X$
belongs to the rational span of the $f$-exceptional divisors.
\end{Lemma}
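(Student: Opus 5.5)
The plan is to reduce the statement to the classical negativity lemma on $Y$. Let $U\subset X$ be a big open subset over which the given extension is taken, and write $L$ for a line bundle on $Y$ with $L|_{f^{-1}(U)}\simeq f^*\cO_X(B)|_{f^{-1}(U)}$. Since $Y$ is smooth, any two such extensions differ by a divisor supported on $Y\setminus f^{-1}(U)$. The components of $Y\setminus f^{-1}(U)$ are either $f$-exceptional or map onto divisors of $X$. The latter is impossible, because $X\setminus U$ has codimension $\ge2$. So all such extensions agree modulo $f$-exceptional divisors, and we may choose $U$ and $L$ conveniently.

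Take $U$ to be the regular locus of $X$ intersected with the open set over which $f$ is an isomorphism, so that $\cO_X(B)|_U$ is a line bundle. By Lemma~\ref{snf-wp:elementary-properties}(1), $L|_{f^{-1}(U)}$ is weakly positive over $f^{-1}(U)$. We pass from $Y$ to $X$ by pushforward and use Viehweg's definition directly. Fix an ample $A$ on $Y$. Weak positivity over $U$ of $\cO_X(B)$, computed on $X$ through $\cO_X(mB)$ reflexive hulls, gives the following: for every $a$ there is $b$ with $\cO_X(abB)\otimes H^b$ generically globally generated. In particular it has a nonzero section, so $abB+bH$ is linearly equivalent to an effective Weil divisor. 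Hence $B+\frac1a H$ is $\mathbb{Q}$-effective for all $a$, i.e.\ $B$ is pseudoeffective as a Weil divisor class. Pulling back, $f^*$ of such an effective divisor, taken as strict transform plus exceptional parts, shows that $L+\frac1a f^*H$ is pseudoeffective modulo exceptional divisors on $Y$. Thus $L\equiv P+E$, with $E$ exceptional and $P$ a limit of effective classes $D_a$ with $f_*D_a\equiv aB+H$ (suitably scaled). Intersecting with $H^{n-1}$ gives $f_*D_a\cdot H^{n-1}=\frac1a H^n\to0$. The non-exceptional part of the effective divisors $D_a$ therefore has $H$-degree tending to $0$. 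Hence its numerical class tends to $0$ on $X$: a pseudoeffective Weil divisor with zero degree against an ample class is numerically trivial in the Weil class group modulo the correct equivalence. From this we conclude $f_*L\equiv0$ as a Weil divisor class.

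The remaining step is to upgrade ``$f_*L$ numerically trivial'' to ``$L$ lies in the span of exceptional divisors''. For this I would use nefness and apply the same argument to $\cO_X(-B)$. That requires $-B$ to be weakly positive as well, which we do not have, so instead I would use the negativity lemma. Write $N=L-\sum e_iE_i$ with $N\cdot C=0$ for curves $C$ in a general complete intersection surface. The hard part, and the main obstacle, is to show that $L$ is $f$-nef after modifying it by an exceptional combination, i.e.\ to control $L$ on exceptional curves. First I would try Lemma~\ref{snf-wp:elementary-properties}(3), which makes $L$ nef on curves meeting $f^{-1}(U)$. Combined with $L\cdot (f^*H)^{n-1}=0$, this forces $L\cdot C=0$ for movable curves. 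Then, by the duality between movable curves and pseudoeffective divisors (Boucksom--Demailly--Păun--Peternell, valid in characteristic zero; in positive characteristic I would cut down to surfaces and use the Hodge index theorem as in Lemma~\ref{nf-normal:lem-surface-char-zero}), a pseudoeffective class $L$ that is zero on movable classes has numerically trivial positive part of its divisorial Zariski decomposition. Its negative part consists of finitely many divisors $D$ with $f_*D\equiv0$, hence all $f$-exceptional. This gives the claim.
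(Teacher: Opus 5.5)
The first half of your argument matches the paper: reduce to a single extension (the paper takes $\cO_Y(\widetilde B)$), choose effective $D_a\sim ab_aB+b_aH$, and note that $D_a\cdot H^{n-1}/(ab_a)=H^n/a\to0$. The gap is the next step, ``hence its numerical class tends to $0$''. What you need is $[\widetilde D_a]/(ab_a)\to0$ in $N^1(Y)_{\mathbb R}$. Given this, the class $[\widetilde D_a]/(ab_a)-[\widetilde B]-\frac1a[f^*H]$ lies in the closed span $V$ of the $f$-exceptional divisors, and passing to the limit gives $[\widetilde B]\in V$ at once; no negativity lemma or Zariski decomposition is needed.

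The $H$-degree on $X$ does not by itself control classes in $N^1(Y)$. The paper supplies the missing ingredient in two parts. First, a uniform bound $\widetilde D\cdot A^{n-1}\le C\,D\cdot H^{n-1}$ for all effective Weil divisors $D$ on $X$, with $A$ ample on $Y$ \cite[Proposition~3.21]{FL-positive}. Second, the fact that intersecting with $A^{n-1}$ is the restriction of a norm to effective classes \cite[Corollary~3.16]{FL-positive}. Without such a bound you cannot even extract the limit class $P$ that you use.

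Moving the statement down to $X$ does not help. The natural numerical notion for Weil divisors on a non-$\mathbb Q$-factorial $X$ is pairing with products of Cartier classes, and vanishing there does not put the strict transform in $V$. For example, let $X\subset\PP^4$ be the projective cone over a smooth quadric surface, and take the difference of two planes from opposite families. It pairs to zero with every product of Cartier classes, yet its strict transform on the blow-up of the vertex is not proportional to the exceptional divisor. Any notion strong enough for your purpose amounts to the conclusion of the lemma.

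The repair in your last paragraph also fails. Lemma~\ref{snf-wp:elementary-properties}(3) applies to $L$ only on a scheme where $L$ itself is weakly positive, i.e.\ on $f^{-1}(U)$. So it controls only complete curves contained in $f^{-1}(U)$. It says nothing about curves in $Y$ that merely meet $f^{-1}(U)$: there the pulled-back sections of $\cO_X(abB)\otimes H^b$ acquire poles along exceptional divisors, of order that may grow with $a$.

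Here is a concrete example. Let $X$ be the quadric cone in $\PP^3$, let $Y=\mathbb F_2$ with negative section $E$ and fibre $F$, and let $B=2\ell-h$, where $\ell$ is a ruling and $h$ a hyperplane section missing the vertex. Then $B\sim0$, so $\cO_X(B)\simeq\cO_X$ is weakly positive and $B\cdot H=0$. The extension $L=\cO_Y(\widetilde B)\simeq\cO_Y(2F-(E+2F))\simeq\cO_Y(-E)$ has $L\cdot F=-1$, although $F$ meets $f^{-1}(U)$. This $L$ is not pseudoeffective and is nonzero on the movable class $F$. Since movable curve classes span $N_1(Y)_{\mathbb R}$, your claimed vanishing of $L$ on all movable curves would force $L\equiv0$. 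That is false here, even though the lemma's conclusion $[L]\in V$ holds.

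You say you may choose $L$ conveniently, but you never say how. Finding the exceptional correction that makes $L$ nef or pseudoeffective is essentially the content of the lemma, and it again requires the boundedness discussed above.
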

\begin{proof}
Let $\widetilde B$ denote the strict transform of $B$ on $Y$.
Any line bundle as in the statement differs from
$\cO_Y(\widetilde B)$ by an $f$-exceptional divisor. It therefore
suffices to prove the assertion for $[\widetilde B]$.

Fix an ample divisor $A$ on $Y$. By
\cite[Corollary~3.16]{FL-positive}, there is a norm on
$N^1(Y)_{\mathbb R}$ whose restriction to effective divisor
classes is given by intersection with $A^{n-1}$.
Moreover, \cite[Proposition~3.21]{FL-positive}, together with
the same corollary on $X$, gives a constant $C>0$ such that
\[
\|[\widetilde D]\|
=\widetilde D\cdot A^{n-1}
\le C\,D\cdot H^{n-1}
\]
for every effective Weil divisor $D$ on $X$.
Indeed, the proposition provides an effective lift $D'$ with
$f_*D'=D$ and the required degree bound, and
$D'-\widetilde D$ is effective and $f$-exceptional.

By weak positivity, for every integer $a>0$ there exist an
integer $b_a>0$ and an effective Weil divisor $D_a$ such that
\[
D_a\sim ab_aB+b_aH.
\]
Since $B\cdot H^{n-1}=0$, the preceding estimate yields
\[
\left\|\frac{[\widetilde D_a]}{ab_a}\right\|
\le
\frac{C\,D_a\cdot H^{n-1}}{ab_a}
=
\frac{C\,H^n}{a}.
\]
Thus $[\widetilde D_a]/(ab_a)$ converges to zero in
$N^1(Y)_{\mathbb R}$.

Let $V\subset N^1(Y)_{\mathbb R}$ be the real span of the
classes of the $f$-exceptional prime divisors.
The linear equivalence defining $D_a$ implies
\[
\frac{[\widetilde D_a]}{ab_a}
-[\widetilde B]-\frac1a[f^*H]\in V.
\]
Passing to the limit and using that $V$ is closed, we obtain
$[\widetilde B]\in V$.
Finally, $[\widetilde B]$ and the exceptional divisor classes
belong to $N^1(Y)_{\mathbb Q}$, so rational linear algebra
shows that $[\widetilde B]$ belongs to their rational span.
\end{proof}

\begin{Lemma}\label{surface-weak-positivity}
Assume that $n=2$. Let $X_0\subset X$ be a big open subset
	and let $\cE$ be a vector bundle on $X$. Then $\cE$ is weakly
	positive over $X_0$ if and only if $\cE$ is nef on $X$.
\end{Lemma}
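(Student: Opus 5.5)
The plan is to prove the two implications separately. The direction ``weakly positive over $X_0$ $\Rightarrow$ nef'' should follow from Lemma~\ref{snf-wp:elementary-properties}(3). The converse should be deduced from standard ampleness properties of nef bundles on projective schemes, and in fact gives weak positivity over all of $X$.

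Assume first that $\cE$ is weakly positive over $X_0$, and let $f:C\to X$ be a morphism from a smooth connected projective curve. If $f$ is constant, then $f^*\cE$ is trivial, hence nef. Otherwise $f(C)$ is an irreducible curve in $X$. Since $X_0$ is big in the surface $X$, the complement $X\setminus X_0$ is a finite set of closed points. Hence $f(C)$ is not contained in $X\setminus X_0$, so the image of $f$ meets $X_0$. Lemma~\ref{snf-wp:elementary-properties}(3) then shows that $f^*\cE$ is nef. Therefore $\cE$ is nef on $X$. This direction is essentially formal.

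Conversely, assume that $\cE$ is nef on the projective surface $X$, and fix an ample line bundle $H$ and an integer $a>0$. The key claim is that $\Sym^a\cE\otimes H$ is an ample vector bundle. To see this, I would pass to $\pi:P=\PP_X(\cE)\to X$ and use the standard facts below. Nefness of $\cE$ is equivalent to nefness of $\cO_P(1)$. Moreover, on a projective scheme a nef line bundle plus an ample one is ample, and $\cO_P(1)\otimes\pi^*H^{c}$ is ample for $c\gg0$. From these facts, the $\mathbb Q$-twisted bundle $\cE\langle \tfrac1a H\rangle$ is ample, i.e.\ $\cO_P(a)\otimes\pi^*H$ is ample on $P$. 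Taking the $a$-th symmetric power, $\Sym^a\cE\otimes H$ is ample; this is the characteristic-free statement that a symmetric power of an ample $\mathbb Q$-twisted bundle is ample, due to Barton and Kleiman.

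Given this ampleness, for $b\gg0$ the bundle $\Sym^b(\Sym^a\cE\otimes H)$ is globally generated on all of $X$. Its natural quotient $\Sym^{ab}\cE\otimes H^b$ is then also globally generated on $X$, in particular over $X_0$. Since $a$ was arbitrary, $\cE$ is weakly positive over $X$, hence over $X_0$. By the independence of the ample line bundle noted after the definition, it suffices to treat a single $H$.

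The main obstacle is the step ``nef $\Rightarrow$ $\Sym^a\cE\otimes H$ ample'' in arbitrary characteristic, namely the passage from nefness of $\cO_P(1)$ to ampleness of the twisted symmetric power. I would argue on $P$ via $\cO_P(a)\otimes\pi^*H=a\bigl(\cO_P(1)+\tfrac1a\pi^*H\bigr)$. To show this class is ample, I would write $\tfrac1a\pi^*H$ as a small positive multiple of the ample class $\cO_P(1)+c\,\pi^*H$ plus a nef correction. Then I would pass from ampleness on $P$ to ampleness of $\Sym^a\cE\otimes H$ through the Veronese-type map $\PP(\Sym^a\cE)\dashrightarrow$ comparison, or equivalently through the criterion of Lemma~\ref{Viehweg-ampleness}.
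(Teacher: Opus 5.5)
Your forward direction is exactly the paper's argument: $X\setminus X_0$ is finite, so every nonconstant curve meets $X_0$, and Lemma~\ref{snf-wp:elementary-properties}(3) applies. For the converse the paper simply cites \cite[Remarks~2.12]{Vi2}, whereas you try to prove it.

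Your first reduction is sound in every characteristic. By Kleiman's criterion on $P=\PP_X(\cE)$,
\[
\cO_P(1)+\tfrac1a\pi^*H=\tfrac1{ac}\bigl(\cO_P(1)+c\,\pi^*H\bigr)+\bigl(1-\tfrac1{ac}\bigr)\cO_P(1)
\]
is ample. Note that it is this class, not $\frac1a\pi^*H$ itself, that splits as ample plus nef. The weak step is the next one, ampleness of $\Sym^a\cE\otimes H$. That statement is true. In positive characteristic, however, it is a genuinely nontrivial theorem: it needs Barton's tensor product theorem combined with a Bloch--Gieseker cover, or Corollary~\ref{operations:weak-positivity}, which rests on \cite{La-Ample}. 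The self-contained routes you suggest go the wrong way. The Veronese embedding $\PP_X(\cE)\hookrightarrow\PP_X(\Sym^a\cE)$ only restricts ampleness from $\PP_X(\Sym^a\cE)$ down to $\PP_X(\cE)$. Applying Lemma~\ref{Viehweg-ampleness} to $\Sym^a\cE\otimes H$ would require generation of $\Sym^d(\Sym^a\cE)\otimes H^{d-1}$. But $\Sym^{ad}\cE$ is only a quotient of $\Sym^d(\Sym^a\cE)$, so positivity of the former gives nothing about the latter.

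You do not need this step at all, since weak positivity only asks for generation of $\Sym^{ab}\cE\otimes H^b$ for one $b$. Take $H$ very ample, which is allowed by independence of $H$, and put $L=\cO_P(a)\otimes\pi^*H$. We have $R^j\pi_*\cO_P(ab)=0$ for $j>0$ and $\pi_*(L^b\otimes\pi^*H^{-i})=\Sym^{ab}\cE\otimes H^{b-i}$. The Leray spectral sequence therefore gives
\[
H^i\bigl(X,\Sym^{ab}\cE\otimes H^{b-i}\bigr)\simeq H^i\bigl(P,L^b\otimes\pi^*H^{-i}\bigr),
\]
which vanishes for $i=1,2$ and $b\gg0$ by Serre vanishing for the ample $L$. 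Thus $\Sym^{ab}\cE\otimes H^b$ is $0$-regular with respect to $H$ in the sense of Castelnuovo--Mumford, hence globally generated on all of $X$. With this substitution your argument becomes a complete, characteristic-free proof of the fact the paper takes from Viehweg, using only Kleiman's criterion and Castelnuovo--Mumford regularity.
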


\begin{proof}
	A nef vector bundle on a projective variety is weakly positive
	over the whole variety (see \cite[Remarks~2.12]{Vi2}).
	Conversely, every nonconstant curve in $X$ meets $X_0$, because
	$X\setminus X_0$ is finite. Apply
	Lemma~\ref{snf-wp:elementary-properties}(3).
\end{proof}

The local freeness assumption cannot be replaced by reflexivity
(see \cite[Example~4.14]{La-Simpson}). The next example shows that the
surface assumption is also necessary, even for line bundles on
smooth varieties.

\begin{Example}\label{Fulger}
	Let $\cE:=\cO_{\PP^1}^{\oplus 2}\oplus \cO_{\PP^1} (-1)$,  $X=\PP(\cE )$ and  $L=\cO_{\PP(\cE)} (1)$. Let $C$ be the image of the section $\PP^1\to \PP(\cE)$ corresponding to $\cE\to \cO_{\PP^1} (-1)$. Then $H^0(X, L)= H^0(\PP^1, \cE)$ is $2$-dimensional and 
	the  linear system $|L|$ has a basis consisting of two  surfaces $\PP (\cO_{\PP^1}\oplus \cO_{\PP^1} (-1))$ corresponding to different factors of $\cO_{\PP^1}$ in $\cE$. These surfaces intersect along $C$, so the evaluation map $H^0(X, L)\otimes \cO_X\to L$ is surjective over $U=X\backslash C$. Since the degree of $L$ on $C$ is negative, $L$ is weakly positive over $U$ but it is not nef on $X$.
\end{Example}

\section{Operations on weakly positive vector bundles}
\label{sec:operations-weak-positivity}

Throughout this section, $R$ is a noetherian ring, $X$ is a
quasi-projective $R$-scheme, and $X_0\subseteq X$ is a fixed Zariski
dense open subset. In particular, mixed characteristic is allowed.
We fix an ample line bundle $H$ on $X$.

We will apply the theorems on tensor products and polynomial
representations of ample vector bundles from \cite{La-Ample}.
The following lemma explains how to pass from ordinary ampleness
to ampleness with respect to a fixed open subset.

\begin{Lemma}\label{operations:transfer-ampleness}
	Let $\Phi$ be an operation on finite tuples of vector bundles on
	quasi-projective $R$-schemes, taking values in vector bundles and
	commuting with arbitrary base change. Suppose that $\Phi$ takes
	tuples of ample vector bundles to ample vector bundles.
	If $\cE_1,\ldots,\cE_l$ are ample with respect to $X_0$, then
	$\Phi(\cE_1,\ldots,\cE_l)$ is ample with respect to $X_0$.
\end{Lemma}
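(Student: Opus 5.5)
The strategy is to pass to a modification on which ampleness with respect to $X_0$ becomes honest ampleness, and to use that $\Phi$ commutes with base change. Since each $\cE_i$ is ample with respect to $X_0$, Lemma~\ref{open-ample-equivalence} gives a single $d>0$ (take a common multiple) such that $\Sym^d\cE_i\otimes H^{-1}$ is globally generated over $X_0$ for all $i$. Choose finitely many global sections generating these bundles over $X_0$. Their images define the subsheaf $\cG_i\subseteq\Sym^d\cE_i\otimes H^{-1}$ generated by these sections, which agrees with the whole bundle over $X_0$.

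The construction is as follows. Let $\pi:Y\to X$ be the blow-up along a suitable ideal flattening all the quotients $\Sym^d\cE_i\otimes H^{-1}\to(\text{coker of }\cG_i)$; this is a projective morphism of quasi-projective $R$-schemes and is an isomorphism over $X_0$. The key replacement I would make is to work not on $Y$ but directly with a positivity statement. Rather than flattening, the simpler route is this. On $P_i=\PP_X(\cE_i)$, the line bundle $\cO(d)\otimes p^*H^{-1}$ has sections generating it over $p^{-1}(X_0)$.

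Take $Y\to X$ to be the closure of the graph, i.e.\ the blow-up of $X$ in an ideal $\cI$ with $V(\cI)\subseteq X\setminus X_0$, chosen so that $\pi^*\cE_i\otimes\cO(-E)$ becomes ample for the exceptional Cartier divisor $E$. Concretely, I would choose $\cI$ as the product of the ideals $\cI_i$, where $\cI_i\cdot(\Sym^d\cE_i\otimes H^{-1})^{\vee}$-type images come from the evaluation map. The point is that after blowing up, the images of the generating sections give a surjection $\cO_Y^N\to\pi^*\Sym^d\cE_i\otimes\pi^*H^{-1}\otimes\cO_Y(-E_i)$ onto a twist by the exceptional divisor. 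Then $\pi^*\Sym^d\cE_i\otimes\cO_Y(-E_i)$ is a quotient of $\pi^*H^{\oplus N}$. Since $\pi^*H^{a}(-E)$ is ample for $a\gg0$ and $-E$ is $\pi$-ample, for suitable multiples the bundle $\cF_i:=\pi^*\cE_i\otimes M^{-1}$ is ample. Here $M$ is a $\QQ$-twist by $\cO(E)$, which I would make integral by replacing $d$ with multiples, or by working with the formal twist.

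Next I would apply the hypothesis on $\Phi$. Twisting by line bundles is not part of the hypothesis, so instead of twisting I would take the finite cover trick: after a suitable base change $Y'\to Y$ (Bloch--Gieseker type, or just avoid twists by arguing with $\Sym$), the bundles $\pi^*\cE_i$ become ample. Then $\Phi(\pi^*\cE)=\pi^*\Phi(\cE)$ is ample on $Y$. Ampleness of $\pi^*\Phi(\cE)$ on $Y$ means that $\Sym^m\pi^*\Phi(\cE)\otimes A^{-1}$ is globally generated for an ample $A=\pi^*H^c(-E)$. Pushing forward and using that $\pi$ is an isomorphism over $X_0$, with $\pi_*\cO(-E)\subseteq\cO_X$, gives sections of $\Sym^m\Phi(\cE)\otimes H^{-c}$ generating over $X_0$. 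By Lemma~\ref{open-ample-equivalence}, $\Phi(\cE)$ is then ample with respect to $X_0$.

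The main obstacle is making the pullbacks $\pi^*\cE_i$ themselves ample, not merely ample after twisting by $-E$. This is needed because the hypothesis on $\Phi$ does not allow line-bundle twists. The first thing I would try is to avoid twists altogether. I would apply Lemma~\ref{open-ample-equivalence}(3) with the coherent sheaf $\cF=\cI^{k}$-twisted, showing $\pi^*\cE_i$ is ample on $Y$ via a criterion that $\Sym^m\pi^*\cE_i\otimes\cO(-E)$ is generated for large $m$. This works if $\cO_Y(-E)\otimes\pi^*H^{a}$ is ample and the generating sections from $X$ give generation of $\Sym^{dm}\pi^*\cE_i\otimes\pi^*H^{-m}$ after twisting by $\cO(-mE)$.
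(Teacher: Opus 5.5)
There is a genuine gap, exactly at the step you call the main obstacle, and none of your proposed fixes can close it. If $\pi:Y\to X$ is proper and an isomorphism over $X_0$, then $\pi^*\cE_i$ is trivial on every curve contracted by $\pi$. So it is never ample once $\pi$ has a positive-dimensional fiber, and a finite cover of $Y$ does not change this.

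Even when no blow-up is needed, the twist cannot be dropped. On $\mathrm{Bl}_p\PP^2$, let $h$ be the pullback of a line and $e$ the exceptional curve, and take $H=\cO(2h-e)$. Then $\Sym^3\cO(h)\otimes H^{-1}=\cO(h+e)$ is generated off $e$, so $\cO(h)$ is ample with respect to the complement of $e$. Yet $h\cdot e=0$, so $\cO(h)$ is not ample.

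Thus on a proper modification you only ever obtain ampleness of a twist $\pi^*\cE_i\otimes M^{-1}$, with $M$ a fractional multiple of $\cO_Y(E)$. Since $\Phi$ is only assumed to commute with base change, such a twist tells you nothing about $\Phi(\pi^*\cE_i)$. Your final criterion does not help either. Generation of $\Sym^m\pi^*\cE_i\otimes\cO_Y(-E)$ would give ampleness only if $\cO_Y(E)$ were ample, whereas it is $-E$ that is $\pi$-ample.

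Separately, for $\rk\cW_i>1$ the generating sections do not give a surjection onto $\pi^*\cW_i\otimes\cO_Y(-E_i)$. After flattening, their image is an elementary modification of $\pi^*\cW_i$, not a line-bundle twist of it.

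The missing idea is to replace the proper modification by a scheme that is quasi-affine over $X$, where pulled-back bundles can become honestly ample. The paper takes $T=\prod_X\underline{\operatorname{Hom}}_X(\cO_X^{\oplus N_i},\cW_i)$, with $\cW_i=\Sym^{d_i}\cE_i\otimes H^{-1}$, and the open subset $S\subseteq T$ where all universal maps are surjective. Since $S\to X$ is quasi-affine, $\pi^*H|_S$ is ample. The universal surjections $(\pi^*H|_S)^{\oplus N_i}\twoheadrightarrow\Sym^{d_i}(\pi^*\cE_i|_S)$ then show that $\pi^*\cE_i|_S$ is ample with no twist, so the hypothesis on $\Phi$ applies directly.

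Descent to $X$ is then done not by pushforward but by two ingredients:
\begin{itemize}
\item the section $s:X\to T$ defined by your generating sections, which satisfies $s(X_0)\subseteq S$;
\item the equality $j_*\cO_S=\cO_T$. After padding so that $N_i\ge 2r_i$ with $r_i=\rk\cW_i$, the complement of $S$ lies in $V(f,g)$, where $f,g$ are products of maximal minors in disjoint columns and form a regular sequence.
\end{itemize}
Sections on $S$ generating $\pi^*(\Sym^m\Phi(\cE_1,\ldots,\cE_l)\otimes H^{-1})$ therefore extend to $T$. Pulling them back along $s$ gives sections generating over $X_0$.

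This also avoids the normality and reducedness problems hidden in your pushforward step. There, sections on $Y$ only yield sections of $\cV\otimes\pi_*\cO_Y$, and $\cO_X\to\pi_*\cO_Y$ need not be an isomorphism when $X$ is non-normal or $X_0$ is not schematically dense.
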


\begin{proof}
	Choose homomorphisms
	\[
	u_i:\cO_X^{\oplus N_i}\longrightarrow
	\cW_i:=\Sym^{d_i}\cE_i\otimes H^{-1},
	\qquad 1\le i\le l,
	\]
	surjective over $X_0$. Work on the finitely many open-and-closed
	subsets on which all ranks are constant. Rank-zero factors can be
	omitted from the following Hom construction; if all ranks are zero,
	take $S=T=X$. For the remaining factors, put $r_i=\rk\cW_i>0$.
	By adding zero sections, arrange that $N_i\ge2r_i$.
	Let
	\[
	\pi:T=T_1\times_X\cdots\times_X T_l\longrightarrow X,
	\qquad
	T_i=\underline{\operatorname{Hom}}_X
	(\cO_X^{\oplus N_i},\cW_i).
	\]
	Let $j:S\hookrightarrow T$ be the open subset where all universal
	homomorphisms are surjective. The homomorphisms $u_i$ give a section
	$s:X\to T$ with $s(X_0)\subseteq S$.
	
	We have
	\begin{equation}\label{operations:hartogs}
		j_*\cO_S=\cO_T.
	\end{equation}
	This is immediate if $S=T=X$. Otherwise, work over an affine open subset trivializing the
	$\cW_i$. The coordinate ring $B$ of $T$ is a polynomial ring over
	the coordinate ring of that open subset. In the $i$-th universal
	matrix, choose maximal minors $\delta_i,\epsilon_i$ using disjoint
	blocks of $r_i$ columns. The polynomials
	\[
	f=\prod_i\delta_i,\qquad g=\prod_i\epsilon_i
	\]
	involve disjoint sets of variables and have leading coefficients
	equal to $1$, up to sign. Hence $f$ is a nonzerodivisor in $B$ and
	$g$ is a nonzerodivisor in $B/(f)$. Moreover,
	$D(f)\cup D(g)\subseteq S$.
	The extension property for a regular sequence of length two gives
	\[
	B\xrightarrow{\sim}
	\Gamma(D(f)\cup D(g),\cO).
	\]
	Every function on $S$ therefore extends to $T$, and the extension
	agrees with it on all of $S$, since $D(f)$ is schematically dense.
	The same argument after localization proves
	\eqref{operations:hartogs}. No reducedness assumption is used.
	
	Since $S\to X$ is quasi-affine, $\pi^*H|_S$ is ample
	\cite[Tag 0892]{SP}. The universal surjections show that
	\[
	(\pi^*H|_S)^{\oplus N_i}\twoheadrightarrow
	\Sym^{d_i}(\pi^*\cE_i|_S).
	\]
	Thus these symmetric powers, and hence $\pi^*\cE_i|_S$, are ample.
	The latter implication follows from the Veronese embedding.
	By the hypothesis on $\Phi$,
	\[
	\pi^*\Phi(\cE_1,\ldots,\cE_l)|_S
	\simeq\Phi(\pi^*\cE_1|_S,\ldots,\pi^*\cE_l|_S)
	\]
	is ample. Consequently, for some $m>0$,
	\[
	\pi^*\bigl(
	\Sym^m\Phi(\cE_1,\ldots,\cE_l)\otimes H^{-1}
	\bigr)|_S
	\]
	is globally generated.
	By \eqref{operations:hartogs}, sections of this locally free sheaf
	extend to $T$. Pulling the extensions back along $s$ proves the
	required generation over $X_0$.
\end{proof}

To treat weak positivity in mixed characteristic, we replace
Frobenius by finite flat covers extracting roots of a very ample
line bundle. The following uniform bound permits descent of the
required generation.

\begin{Lemma}\label{operations:finite-flat-generation}
	Let $f:Y\to X$ be finite and faithfully flat. There exists an
	integer $c\ge0$, depending only on $f$ and $H$, such that for
	every vector bundle $\cE$ on $X$ the following implication holds:
	if $f^*\cE$ is generated over $f^{-1}(X_0)$ by sections on $Y$,
	then $\cE\otimes H^c$ is generated over $X_0$ by sections on $X$.
\end{Lemma}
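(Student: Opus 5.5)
The plan is to use the trace map together with Serre-type global generation on $X$. Since $f$ is finite and faithfully flat, $f_*\cO_Y$ is a vector bundle on $X$ of some rank $d$, locally constant. Assume first that $X$ is connected, or work on each connected component. Sections of $f^*\cE$ on $Y$ are the same as sections of $f_*f^*\cE\simeq\cE\otimes f_*\cO_Y$ on $X$. So the hypothesis says that the evaluation map $\Gamma(X,\cE\otimes f_*\cO_Y)\otimes\cO_Y\to f^*\cE$ is surjective over $f^{-1}(X_0)$.

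To get back to $\cE$ we need a map $f_*\cO_Y\to\cO_X$ that turns generation upstairs into generation downstairs. The trace does not work directly, because it may vanish when $d$ is divisible by residue characteristics. I would therefore use the dual sheaf $\cQ:=(f_*\cO_Y)^*=\underline{\operatorname{Hom}}_X(f_*\cO_Y,\cO_X)$, which is a vector bundle. Choose $c$ with $\cQ\otimes H^c$ globally generated on $X$; this $c$ depends only on $f$ and $H$.

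The key pointwise claim is the following. Let $x\in X_0$, $y\in f^{-1}(x)$, and let $\sigma\in\Gamma(Y,f^*\cE)$ have prescribed image in the fiber. Then some $\varphi\in\Gamma(X,\cQ\otimes H^c)$ makes $(\mathrm{id}_\cE\otimes\varphi)(\sigma)\in\Gamma(X,\cE\otimes H^c)$ hit a prescribed vector of $\cE(x)\otimes H^c(x)$.

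To prove it, work in the fiber at $x$. Write $A:=(f_*\cO_Y)(x)=\cO_Y\otimes\kappa(x)$, a finite $\kappa(x)$-algebra of dimension $d\geq1$. Then
\[
(f_*f^*\cE)(x)=\cE(x)\otimes_{\kappa(x)}A .
\]
The hypothesis says the global sections of $\cE\otimes f_*\cO_Y$ generate $\cE(x)\otimes A$ as an $A$-module at every point of $f^{-1}(x)$. By Nakayama over the semilocal ring $A$, their images span $\cE(x)\otimes A$ as an $A$-module.

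By global generation, $\Gamma(X,\cQ\otimes H^c)$ surjects onto $\cQ(x)\otimes H^c(x)=\operatorname{Hom}_{\kappa(x)}(A,\kappa(x))\otimes H^c(x)$. Now pick a functional $\lambda$ with $\lambda(1)\neq0$. For global sections $s_j$ of $\cE\otimes f_*\cO_Y$ and elements $a_j\in A$, we have
\[
\textstyle\sum_j a_j s_j(x)=v\otimes1 .
\]
Applying the functionals $\lambda\circ(a_j\cdot)$, which lift to global sections $\varphi_j$, gives
\[
\textstyle\sum_j(\mathrm{id}\otimes\varphi_j)(s_j)(x)=\lambda(1)\,v .
\]
This shows that sections of $\cE\otimes H^c$ span the fiber at each $x\in X_0$. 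By Nakayama, they generate the stalk there.

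The main obstacle is making sure that nothing depends on $\cE$ and that no characteristic assumption enters. The trick of using all of $\cQ=(f_*\cO_Y)^*$ rather than the trace handles this. The bound $c$ comes only from global generation of $\cQ\otimes H^c$, which holds for $c\gg0$ since $H$ is ample on the quasi-projective $X$. I would double-check that $X$ has finitely many connected components, which holds because it is noetherian, so a single $c$ works.
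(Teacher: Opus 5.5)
Your proof is correct and is essentially the paper's argument. Both avoid the trace by contracting the generating sections of $f^*\cE$, viewed as sections of $\cE\otimes\cB$ with $\cB=f_*\cO_Y$, against sections of $\cB^\vee\otimes H^c$. The paper makes $\cB\otimes\cB^\vee\otimes H^c$ globally generated and writes down the surjection $(\cB\otimes\cB^\vee)^{\oplus M}\to\cE$ over $X_0$. You instead absorb the $\cB$-action into the functional $\lambda\circ(a_j\cdot)$ and check spanning on fibers, so global generation of $\cB^\vee\otimes H^c$ alone suffices.

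The only flaw is the phrasing of your ``key pointwise claim'' for a single $\sigma$ and a single $\varphi$. The spanning statement you actually prove is the right one.
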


\begin{proof}
	The sheaf $\cB=f_*\cO_Y$ is locally free of positive rank.
	Choose $c\ge0$ such that
	$\cB\otimes\cB^\vee\otimes H^c$ is globally generated.
	Finitely many sections give a homomorphism
	$\cO_Y^{\oplus M}\to f^*\cE$ surjective over $f^{-1}(X_0)$.
	Pushing it forward gives
	\[
	\cB^{\oplus M}\longrightarrow\cE\otimes\cB,
	\]
	surjective over $X_0$.
	Tensoring with $\cB^\vee$ and composing with contraction gives
	\[
	(\cB\otimes\cB^\vee)^{\oplus M}\longrightarrow\cE,
	\]
	again surjective over $X_0$.
	Twisting by $H^c$ proves the assertion.
	The contraction $\cB\otimes\cB^\vee\to\cO_X$ is surjective
	because $\cB$ has positive rank; no invertibility of its rank is
	required.
\end{proof}

For the next proposition, choose an immersion
$i:X\hookrightarrow\PP_R^N$ and put $H=i^*\cO_{\PP_R^N}(1)$.

\begin{Proposition}\label{operations:root-cover-criterion}
	For a vector bundle $\cE$ on $X$, the following conditions are
	equivalent:
	\begin{enumerate}
		\item $\cE$ is weakly positive over $X_0$.
		\item For every integer $m>0$, there exist a finite faithfully flat
		morphism $f:Y\to X$ and a globally generated ample line bundle
		$\cA$ on $Y$ such that $f^*H\simeq\cA^m$
		{and} $f^*\cE\otimes\cA$	 is ample with respect to $f^{-1}(X_0)$.
		\item The condition in $(2)$ holds for arbitrarily large $m$.
	\end{enumerate}
\end{Proposition}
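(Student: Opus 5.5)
The implication $(2)\Rightarrow(3)$ is trivial, so two things remain. First, I will construct the covers in $(1)\Rightarrow(2)$. Second, I will descend generation in $(3)\Rightarrow(1)$ using Lemma~\ref{operations:finite-flat-generation}.

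\emph{Construction of covers.} Fix $m>0$. On $\PP_R^N$ take the $m$-th power map
\[
\phi_m:\PP_R^N\to\PP_R^N,\qquad [x_0:\dots:x_N]\mapsto[x_0^m:\dots:x_N^m].
\]
It is finite and flat over any base, since $\PP^N_R$ is regular over $R$ fibrewise and fibres are finite of constant degree; flatness can also be seen directly because $R[x_0,\dots,x_N]$ is free over $R[x_0^m,\dots,x_N^m]$. It satisfies $\phi_m^*\cO(1)=\cO(m)$. Put $Y=X\times_{\PP^N_R}\PP^N_R$ via $\phi_m$, let $f:Y\to X$ be the base change, and let $\cA=\cO_{\PP^N}(1)|_Y$. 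Then $f$ is finite and faithfully flat, and $\cA$ is globally generated and ample on the quasi-projective scheme $Y$, with $f^*H\simeq\cA^m$.

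\emph{$(1)\Rightarrow(2)$.} By Lemma~\ref{snf-wp:elementary-properties}(1), applied to $f$ (note $f^{-1}(X_0)$ is dense because $f$ is flat, hence open), $f^*\cE$ is weakly positive over $f^{-1}(X_0)$. Now I use weak positivity with respect to the ample bundle $\cA$. Taking $a=2$, there is $b$ with $\Sym^{2b}f^*\cE\otimes\cA^{b}$ generated over $f^{-1}(X_0)$. Hence
\[
\Sym^{2b}(f^*\cE\otimes\cA)\otimes\cA^{-b}\simeq\Sym^{2b}f^*\cE\otimes\cA^{b}
\]
is generated over $f^{-1}(X_0)$. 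By Definition~\ref{ample-over-open}, $f^*\cE\otimes\cA$ is ample with respect to $f^{-1}(X_0)$.

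\emph{$(3)\Rightarrow(1)$.} This is the main step; the difficulty is keeping the twist uniform in $a$. Fix $a>0$. Since the covers depend on $m$, the constant $c$ of Lemma~\ref{operations:finite-flat-generation} depends on $m$, so the approach is to absorb it by powers. Choose $m\ge a$ such that $(2)$ holds, and let $f:Y\to X$ and $\cA$ be the corresponding cover and line bundle. Lemma~\ref{open-ample-equivalence}(3), applied to $\cF=\cA^{-1}$, gives that for all large $n$
\[
\Sym^n(f^*\cE\otimes\cA)\otimes\cA^{-1}=\Sym^n f^*\cE\otimes\cA^{n-1}
\]
is generated over $f^{-1}(X_0)$. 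Take $n=mk$ with $k$ large. Then $\Sym^{mk}f^*\cE\otimes\cA^{mk}=f^*(\Sym^{mk}\cE\otimes H^{k})$ is generated, because tensoring with the globally generated bundle $\cA$ preserves generation. By Lemma~\ref{operations:finite-flat-generation}, $\Sym^{mk}\cE\otimes H^{k+c}$ is generated over $X_0$, where $c=c(f)$ is fixed once $m$ is fixed. Now take $k$ to be a multiple of $c$, say $k\ge c$, so that $k+c\le 2k$; then tensoring with the globally generated $H^{2k-k-c}$ shows that $\Sym^{mk}\cE\otimes H^{2k}$ is generated over $X_0$.

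Since $m\ge a$, I want to pass from $\Sym^{mk}$ to the target $\Sym^{ab}$ with $b$ fixed. The cleanest route is to choose $m$ to be a multiple of $2a$, which $(3)$ permits because arbitrarily large $m$ are allowed and the root-cover construction works for every $m$. Write $m=2am'$ and set $b=2m'k$. Then $\Sym^{ab}\cE\otimes H^{b}\supseteq$-generation holds, since $ab=mk$ and $H^{b}=H^{2m'k}$ is a multiple of $H^{2k}$ by a globally generated twist. This gives generation over $X_0$ for every $a$, which is $(1)$.

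The main obstacle is this bookkeeping: ensuring the exponent $c(f)$, which depends on the cover, is dominated by choosing $k$ large after fixing $m$. With $m$ fixed first, this causes no problem.
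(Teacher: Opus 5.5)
Your construction of the covers and your proof of $(1)\Rightarrow(2)$ are fine. You argue via the pullback Lemma~\ref{snf-wp:elementary-properties}(1) and weak positivity with respect to $\cA$, whereas the paper uses $f^*H\simeq\cA^m$ directly; both work.

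The gap is in the last step of $(3)\Rightarrow(1)$, where you choose $m$ to be a multiple of $2a$. You say $(3)$ permits this because the root-cover construction works for every $m$, but it does not. Condition $(3)$ only says that the set of $m$ for which \emph{some} finite flat $f$ and globally generated $\cA$ with $f^*\cE\otimes\cA$ ample with respect to $f^{-1}(X_0)$ exist is unbounded. It gives no control over divisibility. That root covers exist for every $m$ is irrelevant: ampleness of $f^*\cE\otimes\cA$ is the hypothesis, and it is known only for the $m$ and the covers that $(3)$ supplies. Nor can you formally pass from a given $m$ to a multiple $mt$. Extracting a further root $\cB$ with $g^*\cA\simeq\cB^t$ would require ampleness of $g^*f^*\cE\otimes\cB$, which carries a smaller twist than the known-ample $g^*(f^*\cE\otimes\cA)$.

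The repair is easy. Take any $m\ge 2a$ provided by $(3)$, and put the divisibility on $k$ instead: choose $k\ge c$ divisible by $a$. Then $b:=mk/a$ is an integer with $ab=mk$ and $b\ge 2k\ge k+c$. Twisting the generated sheaf $\Sym^{mk}\cE\otimes H^{k+c}$ by the globally generated $H^{b-k-c}$ gives generation of $\Sym^{ab}\cE\otimes H^b$ over $X_0$. If you keep the sharper twist $H^{k+c}$ instead of weakening it to $H^{2k}$, then $m>a$ already suffices.

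The paper avoids divisibility altogether. It applies Lemma~\ref{open-ample-equivalence}(3) directly to $\Sym^{ab}(f^*\cE\otimes\cA)$ and twists by $\cA^{(m-a)b-mc}$, whose exponent is nonnegative for $b\gg0$ since $m>a$. This lands exactly on $f^*(\Sym^{ab}\cE\otimes H^{b-c})$, with no need to make $ab$ a multiple of $m$.
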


\begin{proof}
	For every $m>0$, the coordinate power morphism
	\[
	q_m:\PP_R^N\longrightarrow\PP_R^N,
	\qquad [z_0:\cdots:z_N]\longmapsto[z_0^m:\cdots:z_N^m],
	\]
	is finite and faithfully flat. On each standard affine chart its
	coordinate algebra is free over the target algebra, with basis
	the monomials whose exponents are all less than $m$.
	Base-changing $q_m$ along $i$ gives a finite faithfully flat
	morphism $f:Y\to X$ and an immersion $h:Y\hookrightarrow\PP_R^N$.
	Then
	\[
	\cA=h^*\cO_{\PP_R^N}(1)
	\]
	is ample and globally generated, and $f^*H\simeq\cA^m$ (this is a generalization of the standard Bloch-Gieseker covering
	\cite[Theorem 4.1.10]{Laz1}).
	
	Assume $(1)$. Choose $b>0$ such that
	$\Sym^{(m+1)b}\cE\otimes H^b$ is generated over $X_0$.
	Then
	\[
	\Sym^{(m+1)b}(f^*\cE\otimes\cA)\otimes\cA^{-1}
	\simeq
	f^*(\Sym^{(m+1)b}\cE\otimes H^b)\otimes\cA^{b-1}
	\]
	is generated over $f^{-1}(X_0)$. This proves $(2)$.
	In fact, this argument works for every cover and line bundle
	satisfying the stated root and global-generation conditions.
	The implication $(2)\Rightarrow(3)$ is immediate.
	
	Assume $(3)$ and fix $a>0$. Choose $m>a$ and corresponding
	$f,\cA$, and let $c$ be the integer from
	Lemma~\ref{operations:finite-flat-generation}. Put
	$\cV=f^*\cE\otimes\cA$.
	For $b\gg0$, the identity
	\[
	f^*(\Sym^{ab}\cE\otimes H^{b-c})
	\simeq
	\Sym^{ab}\cV
	\otimes\cA^{(m-a)b-mc}
	\]
	shows that its left-hand side is generated over $f^{-1}(X_0)$:
	the first factor on the right is generated there by
	Lemma~\ref{open-ample-equivalence}(3), and the second is globally
	generated once its exponent is nonnegative.
	Lemma~\ref{operations:finite-flat-generation} now shows that
	$\Sym^{ab}\cE\otimes H^b$ is generated over $X_0$.
	Thus $(1)$ holds.
\end{proof}

\begin{Corollary}\label{operations:weak-positivity}
	Each of the following two classes of vector bundles on $X$ is
	closed under finite direct sums, quotient vector bundles, tensor
	products, positive symmetric powers, positive divided powers, and
	exterior powers of positive rank:
	\begin{enumerate}
		\item vector bundles ample with respect to $X_0$;
		\item vector bundles weakly positive over $X_0$.
	\end{enumerate}
	More generally, let $\cE$ have constant rank $r$, and let $W$ be a
	finite locally free polynomial $\GL_{r,R}$-module with vanishing
	degree-zero homogeneous component. If $\cE$ belongs to either
	class, then its associated bundle $\cE(W)$ belongs to that class
	whenever it has positive rank.
\end{Corollary}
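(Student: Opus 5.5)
The plan is to prove the general statement about polynomial representations first; the listed operations are special cases, plus direct sums and quotients, which are elementary. Tensor products, symmetric powers, divided powers and exterior powers come from $\cE=\cE_1\oplus\cE_2$ with $W=V\otimes V'$-type polynomial modules. The precise input from \cite{La-Ample} is that $\cE\mapsto\cE(W)$, for $W$ a finite locally free polynomial $\GL_{r,R}$-module with no degree-zero part, takes ample bundles to ample bundles (whenever the result has positive rank). The same holds for tensor products of several ample bundles.

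\textbf{Direct sums and quotients.} Quotients are immediate: generation over $X_0$ passes to quotients, and $\Sym^m$ of a quotient is a quotient of $\Sym^m$. For direct sums, $\Sym^m(\cE_1\oplus\cE_2)=\bigoplus_{i+j=m}\Sym^i\cE_1\otimes\Sym^j\cE_2$, and we use the multiplication maps. In the ample case, take $d$ a common multiple such that $\Sym^{d}\cE_i\otimes H^{-1}$ is generated over $X_0$. Then Lemma~\ref{open-ample-equivalence}(3) applied to each summand, for $m\gg0$, shows every $\Sym^i\cE_1\otimes\Sym^j\cE_2\otimes H^{-1}$ is generated. Indeed, one of $i,j$ is large, and the bounded factor is absorbed using (3) with $\cF=\Sym^{j}\cE_2\otimes H^{-1}$ for finitely many $j$. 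Alternatively, direct sums can be handled inside the Hom-scheme argument of Lemma~\ref{operations:transfer-ampleness}, since a direct sum of ample bundles is ample.

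\textbf{Ample with respect to $X_0$.} Here I apply Lemma~\ref{operations:transfer-ampleness} with $\Phi$ equal to $\cE\mapsto\cE(W)$, or $(\cE_1,\cE_2)\mapsto\cE_1\otimes\cE_2$. These operations commute with arbitrary base change and preserve ampleness by \cite{La-Ample}. Some care is needed with constant rank: we work on components of constant rank as in that lemma.

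\textbf{Weakly positive over $X_0$.} I use Proposition~\ref{operations:root-cover-criterion}. Let $\cE$ be weakly positive, $W$ homogeneous of degree $w\ge1$ (for general $W$, split into homogeneous components and use direct sums). For each $m$, choose the cover $f:Y\to X$ and $\cA$ with $f^*H\simeq\cA^m$ and $f^*\cE\otimes\cA$ ample with respect to $f^{-1}(X_0)$. By the ample case, $(f^*\cE\otimes\cA)(W)\simeq f^*\cE(W)\otimes\cA^{w}$ is ample with respect to $f^{-1}(X_0)$. This is where homogeneity is used: the twist by a line bundle scales by $\cA^w$.

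The problem is that we need twist $\cA$, not $\cA^w$. To fix this, use the cover for $mw$ instead of $m$, i.e.\ $f^*H\simeq\cA^{mw}$, and apply the hypothesis with $\cA^w$ in the role of $\cA$. The proof of (1)$\Rightarrow$(2) in the proposition works for any root cover and globally generated root $\cA^w$ of $f^*H$ with $f^*H=(\cA^w)^m$. So $f^*\cE\otimes\cA^w$ is ample with respect to $f^{-1}(X_0)$. Then $f^*\cE(W)\otimes\cA^{w^2}$ is ample, with $(\cA^{w^2})^{m/w}=f^*H$. Adjusting by choosing $m$ divisible by $w$ yields condition (3) of the proposition with a root of order $m/w$, arbitrarily large. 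For tensor products $\cE_1\otimes\cE_2$, similarly use one cover for both bundles: the proof of (1)$\Rightarrow$(2) works for any cover. Then $(f^*\cE_1\otimes\cA)\otimes(f^*\cE_2\otimes\cA)=f^*(\cE_1\otimes\cE_2)\otimes\cA^2$ is ample, giving roots of order $m/2$.

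\textbf{Main obstacle.} I expect the main obstacle to be exactly this bookkeeping of the twist degree together with the base-change/functoriality of $\cE\mapsto\cE(W)$ in mixed characteristic. This is needed so that $(\cE\otimes L)(W)\simeq\cE(W)\otimes L^{w}$ and $f^*(\cE(W))\simeq(f^*\cE)(W)$. Finally, the determinant statement of Lemma~\ref{snf-wp:elementary-properties}(2) follows as the case $W=\bigwedge^r$.
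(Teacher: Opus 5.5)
Your proposal is correct and follows the paper's proof. Ampleness with respect to $X_0$ comes from Lemma~\ref{operations:transfer-ampleness} combined with \cite{La-Ample}, and weak positivity comes from a single coordinate root cover, twist-homogeneity of degree $d$, and Proposition~\ref{operations:root-cover-criterion}. Your detour through $\cA^{w^2}$ is harmless but unnecessary: the paper takes the cover of order $md$ with $f^*H\simeq\cA^{md}$, applies the necessity argument with the root $\cA$ itself, and uses $\cA^d$ directly as the $m$-th root of $f^*H$. That same one-cover argument with $d=1$ also covers direct sums of weakly positive bundles, which you only treated explicitly in the ample case.
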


\begin{proof}
	For ordinary ample vector bundles, direct sums and quotient
	bundles preserve ampleness. The assertions about tensor products
	and polynomial representations are the ampleness theorems of
	\cite{La-Ample}. All the displayed operations commute with base
	change. Thus Lemma~\ref{operations:transfer-ampleness} proves the
	assertions about ampleness with respect to $X_0$; closure under
	quotient bundles follows directly from the definition.
	
	For weak positivity, it suffices first to treat an operation
	$\Phi$ on $l$ bundles which is homogeneous of positive total
	degree $d$ under simultaneous line-bundle twists, so that
	\[
	\Phi(\cE_1\otimes\cL,\ldots,\cE_l\otimes\cL)
	\simeq\Phi(\cE_1,\ldots,\cE_l)\otimes\cL^d.
	\]
	Suppose that the $\cE_i$ are weakly positive over $X_0$.
	Given $m>0$, take the coordinate root cover of order $md$ from the
	proof of Proposition~\ref{operations:root-cover-criterion}.
	It gives $f:Y\to X$ and an ample globally generated $\cA$ with
	$f^*H=\cA^{md}$. The necessity argument in that proposition
	shows simultaneously that every $f^*\cE_i\otimes\cA$ is ample
	with respect to $f^{-1}(X_0)$. Hence
	\[
	\Phi(f^*\cE_1\otimes\cA,\ldots,f^*\cE_l\otimes\cA)
	\simeq
	f^*\Phi(\cE_1,\ldots,\cE_l)\otimes\cA^d
	\]
	is ample with respect to $f^{-1}(X_0)$.
	Since $(\cA^d)^m=f^*H$, the sufficiency direction of
	Proposition~\ref{operations:root-cover-criterion} proves that
	$\Phi(\cE_1,\ldots,\cE_l)$ is weakly positive over $X_0$.
	
	Apply this argument to direct sums ($d=1$), tensor products
	($d=2$), and $\Sym^d$, $\Gamma^d$, and $\bigwedge^d$. 
	Quotient bundles preserve weak positivity because symmetric
	powers preserve surjections.
	Finally, decompose $W$ into its finitely many homogeneous
	components, apply the argument to each positive degree, and use
	closure under direct sums.
\end{proof}

\section{Positivity in positive characteristic}

In this section $X$ is a quasi-projective scheme defined over  an algebraically closed field $k$
of positive characteristic $p$.

\subsection{Frobenius criteria for weak positivity}
\label{sec:frobenius-wp}

We have the following positive characteristic analogue of \cite[Lemma 2.27]{Vi2}.

\begin{Proposition}\label{frobenius-wp} 
	Let $\cE$ be a vector bundle of positive rank on $X$ 
	and $H$ be an ample line bundle on $X$.	  Fix a Zariski dense open subset $X_0\subseteq X$.
	The following conditions are equivalent:
	\begin{enumerate}
		\item $\cE$ is weakly positive over $X_0$.
		\item For every $e\ge0$, the bundle $(F_X^e)^*\cE\otimes H$ is ample
		with respect to $X_0$.
		\item For arbitrarily large $e$, the bundle $(F_X^e)^*\cE\otimes H$ is
		ample with respect to $X_0$.
	\end{enumerate}
\end{Proposition}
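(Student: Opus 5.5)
The plan is to reduce both directions to two facts. The first is that for any bundle $\cW$ weakly positive over $X_0$, the twist $\cW\otimes H$ is ample with respect to $X_0$. The second is that ampleness with respect to $X_0$ descends along Frobenius pullback. The second fact will be combined with the coordinate root cover of order $p^e$ from the proof of Proposition~\ref{operations:root-cover-criterion}.

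\emph{(1)$\Rightarrow$(2).} Since $F_X^{-1}(X_0)=X_0$, Lemma~\ref{snf-wp:elementary-properties}(1) shows that $\cW:=(F_X^e)^*\cE$ is weakly positive over $X_0$. Choose $b>0$ with $\Sym^{2b}\cW\otimes H^b$ generated over $X_0$, and rewrite it as $\Sym^{2b}(\cW\otimes H)\otimes H^{-b}$. After replacing $b$ by a multiple so that $H^{b-1}$ is globally generated, tensoring with $H^{b-1}$ shows that $\Sym^{2b}(\cW\otimes H)\otimes H^{-1}$ is generated over $X_0$. This is the same trick as in Lemma~\ref{open-ample-equivalence}. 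The implication (2)$\Rightarrow$(3) is trivial.

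\emph{Descent step.} Let $\cV$ be a vector bundle with $(F^e)^*\cV$ ample with respect to an open dense $U$; I claim $\cV$ is ample with respect to $U$. Locally choose a basis $x_1,\dots,x_r$ of $\cV$. Every monomial of degree $N$ factors as a $p^e$-th power of a monomial of degree $i$ times a monomial whose exponents are all $<p^e$. Hence there is a natural surjection
\[
\bigoplus_{i}(F^e)^*\Sym^i\cV\otimes\Sym^{N-p^ei}\cV\longrightarrow\Sym^N\cV ,
\]
where the $p^e$-th power maps $(F^e)^*\Sym^i\cV\to\Sym^{p^ei}\cV$ are used, and only $0\le N-p^ei\le r(p^e-1)$ occur. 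We have $(F^e)^*\Sym^i\cV=\Sym^i((F^e)^*\cV)$. By Lemma~\ref{open-ample-equivalence}(3), applied with the finitely many coherent sheaves $\cF=\Sym^j\cV\otimes H^{-1}$ for $j\le r(p^e-1)$, each summand twisted by $H^{-1}$ is generated over $U$ once $i\gg0$. Thus $\Sym^N\cV\otimes H^{-1}$ is generated over $U$ for $N\gg0$.

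\emph{(3)$\Rightarrow$(1).} I will verify condition (3) of Proposition~\ref{operations:root-cover-criterion}. Given $e$ as in (3), take the coordinate power cover with $m=p^e$, giving $f:Y\to X$ and $\cA$ with $f^*H\simeq\cA^{p^e}$. Over $k$ (perfect), $q_{p^e}$ agrees with the absolute Frobenius of $\PP^N_k$ up to the automorphism induced by Frobenius of $k$. So, at least after the harmless identification, $F_X^e\circ f=f\circ F_Y^e$, which gives
\[
f^*\bigl((F_X^e)^*\cE\otimes H\bigr)\simeq (F_Y^e)^*(f^*\cE)\otimes\cA^{p^e}\simeq(F_Y^e)^*(f^*\cE\otimes\cA).
\]
The left side is ample with respect to $f^{-1}(X_0)$, because generation over an open set pulls back and $f^*$ of an ample line bundle is ample for finite $f$. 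The descent step then gives that $f^*\cE\otimes\cA$ is ample with respect to $f^{-1}(X_0)$. Since this holds for arbitrarily large $m=p^e$, Proposition~\ref{operations:root-cover-criterion} yields (1).

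The main obstacle is this last step, namely identifying the root cover with a Frobenius base change so that $\cA^{p^e}=f^*H$ can be absorbed into a Frobenius pullback. I would check it carefully on the standard affine charts, treating the semilinear twist by $\mathrm{Frob}_k$ explicitly (composing with the $k$-automorphism where needed). The point is that this twist does not affect generation or ampleness.
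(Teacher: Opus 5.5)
Your proof is correct up to one small fix, and your (1)$\Rightarrow$(2) is essentially the paper's argument. The paper pulls back $\Sym^{2qb}\cE\otimes H^b$ directly instead of quoting Lemma~\ref{snf-wp:elementary-properties}(1). That lemma is stated for $R$-morphisms and $F_X$ is not $k$-linear, so you should add that its proof applies verbatim with $A=H$ and $c=p^e$.

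For (3)$\Rightarrow$(1) you take a genuinely different route. The paper uses the same monomial surjection as your descent step, but applies it directly to $\Sym^{qm}\cE$ with $m=an$ and $q=p^e>a$, tensored by $H^{qn}=H^{an-j}\otimes H^{(q-a)n+j}$. The first factor turns $\Sym^{an-j}((F_X^e)^*\cE)$ into $\Sym^{an-j}\cG$, which is generated over $X_0$. The second makes $\Sym^{qj}\cE\otimes H^{(q-a)n+j}$ globally generated. So Frobenius itself plays the role of the root cover, because $(F_X^e)^*H\simeq H^q$. The monomial surjection then replaces the finite flat descent of Lemma~\ref{operations:finite-flat-generation}, and no covers from Section~3 are needed. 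Your version instead isolates a clean reusable lemma: ampleness with respect to an open subset descends along Frobenius pullback. It also exhibits Proposition~\ref{frobenius-wp} as an instance of Proposition~\ref{operations:root-cover-criterion}, at the cost of a detour through finite flat covers.

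Two points in your write-up need adjusting.

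\emph{The step you call the main obstacle is automatic.} $F_X^e\circ f=f\circ F_Y^e$ holds for every morphism of $\mathbb F_p$-schemes, and $(F_Y^e)^*\cA\simeq\cA^{p^e}$ for every line bundle. So your displayed isomorphism follows from $f^*H\simeq\cA^{p^e}$ alone. Do not try to identify $Y$ with a Frobenius twist of $X$. Here $f$ has degree $p^{eN}$ rather than $p^{e\dim X}$, and $Y$ is usually non-reduced: on a standard chart of $\PP^2$, the conic $y=x^2$ pulls back to $(y-x^2)^p=0$ when $e=1$. That identification is false, and you do not need it.

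\emph{You need $H$ very ample.} The coordinate root cover and Proposition~\ref{operations:root-cover-criterion} require $H=i^*\cO_{\PP^N}(1)$ for an immersion $i$, whereas here $H$ is only ample. Fix this as follows.
\begin{itemize}
\item[(a)] Choose $t$ with $H^t$ very ample and $H^{t-1}$ globally generated, and write $\cG=(F_X^e)^*\cE\otimes H$.
\item[(b)] If $\Sym^d\cG\otimes A^{-1}$ is generated over $X_0$ for some ample $A$, then so is $\Sym^d(\cG\otimes H^{t-1})\otimes A^{-1}$. Hence $(F_X^e)^*\cE\otimes H^t$ is ample with respect to $X_0$ for the same values of $e$.
\item[(c)] Weak positivity does not depend on the choice of ample line bundle, so you may run your argument with $H^t$ in place of $H$.
\end{itemize}
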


\begin{proof}
	Assume $(1)$, fix $e\ge0$, and put $q=p^e$.
	Choose $b>0$ such that $\Sym^{2qb}\cE\otimes H^b$ is generated over
	$X_0$. Replacing $b$ by a sufficiently large multiple, assume that
	$H^{qb-1}$ is globally generated.
	Then the sheaf
	\[
	\Sym^{2qb}((F_X^e)^*\cE\otimes H)\otimes H^{-1}
	=
	(F_X^e)^*(\Sym^{2qb}\cE\otimes H^b)\otimes H^{qb-1}
	\]
	is generated over $X_0$. This proves $(2)$, and $(2)\Rightarrow(3)$
	is immediate.
	
	Assume $(3)$ and fix $a>0$. Choose $e$ with $q=p^e>a$
	such that \(	\cG=(F_X^e)^*\cE\otimes H\)
	is ample with respect to $X_0$, and let $r$ be the maximum of the ranks of $\cE$ on the connected components of $X$.
	The natural map $(F_X^e)^*\cE\to\Sym^q\cE$, which sends a local
	section to its $q$th power, induces for $m\ge r-1$ a surjection
	\begin{equation}\label{frob-monomials}
		\bigoplus_{j=0}^{r-1} 
		\Sym^{m-j}((F_X^e)^*\cE)\otimes\Sym^{qj}\cE
		\longrightarrow\Sym^{qm}\cE.
	\end{equation}
	Indeed, in a local basis write every exponent of a monomial as
	$qu_i+v_i$, with $0\le v_i<q$. Since the total degree is $qm$, the
	sum of the $v_i$ is $qj$ for some $0\le j<r$.
	
	Put $m=an$ and tensor
	\eqref{frob-monomials} by $H^{qn}$. Its source becomes
	\[
	\bigoplus_{j=0}^{r-1}
	\Sym^{an-j}\cG\otimes
	\bigl(\Sym^{qj}\cE\otimes H^{(q-a)n+j}\bigr).
	\]
	For $n\gg0$, the first factor of each summand is generated over $X_0$
	by Lemma~\ref{open-ample-equivalence}, while the second is globally
	generated on $X$, since $q>a$. Hence
	$\Sym^{aqn}\cE\otimes H^{qn}$ is generated over $X_0$.
	Taking $b=qn$ proves $(1)$.
\end{proof}

\begin{Remark}
	For $X_0=X$, ampleness with respect to $X_0$ is ordinary ampleness, so this includes the criterion for weak positivity over all of $X$.
\end{Remark}

\begin{Example}\label{restriction-example} In
	Proposition~\ref{frobenius-wp}, ampleness with respect to $X_0$ cannot be replaced  by ampleness of the
	restriction to $X_0$. Take
	\[
	X=\PP_k^1,\qquad X_0=\AA_k^1,\qquad
	\cE=\cO_{\PP^1}(-1),\qquad H=\cO_{\PP^1}(1).
	\]
	Every $((F_X^e)^*\cE\otimes H)|_{X_0}$ is trivial, hence ample
	on the affine scheme $X_0$. But, for every $b>0$,
	\[
	\Sym^{2b}\cE\otimes H^b=\cO_{\PP^1}(-b)
	\]
	has no global sections. Thus $\cE$ is not weakly positive over $X_0$.
\end{Example}

\subsection{Weakly $p$-positive sheaves}

Let us recall the following notion of ampleness from \cite{Gi} and \cite{Ha1}.

\begin{Definition}
	A vector bundle $\cE$ on $X$ is called \emph{$p$-ample} if for any coherent $\cO_X$-module $\cF$, $(F_X^{e})^{*}\cE\otimes \cF$ is globally generated for $e\gg 0$.
\end{Definition}

Similarly, motivated by Viehweg's definition of weakly positive sheaves  we consider the following notions.

\begin{Definition}
	Assume that $X$ is normal.
	Let $\cE$ be a coherent torsion free $\cO_X$-module and let $V_{\cE}$ be the largest open subset of $X$
	on which $\cE$ is locally free.
	\begin{enumerate}
		\item 
		We say that $\cE$ is \emph{weakly $p$-positive over   a Zariski dense open subset $X_0\subset V_{\cE}$} if for every ample line bundle $H$ on $X$ and 
		every positive integer $a$ there exists  a positive integer $b $ such that $F_X^{[ab]}\cE \otimes H^{b}$ is globally generated over $X_0$.
		\item 	We say that $\cE$ is \emph{weakly $p$-positive} if there exists  a Zariski dense open subset $X_0\subset V_{\cE}$ such that $\cE$ is weakly $p$-positive over $X_0$.
	\end{enumerate}
\end{Definition}

The following proposition is motivated by \cite[Proposition 2.9]{Vi2}.

\begin{Proposition}\label{projective-p-positivity}
	Let $\cE$ be a vector bundle on a projective $k$-scheme $X$. Consider the following conditions:
	\begin{enumerate}
		\item $\cE$ is nef.
		\item $\cO_{\PP(\cE)} (1)$ is nef.
		\item For one ample line bundle $H$ on $X$ and for every $e>0$,
		\( (F_X^{e})^*\cE\otimes H \)
		is $p$-ample.
		\item For all ample line bundles $H$ on $X$ and for every $e>0$,
		\(
		(F_X^{e})^*\cE\otimes H
		\)
		is $p$-ample.
		\item For one ample line bundle $H$ on $X$ and for every positive integer $a$ there exists  a positive integer $b$ such that $(F_X^{ab})^*\cE \otimes H^{ b}$ is globally generated.
	\end{enumerate}
	Then
	\[
	(5)\Longrightarrow(4)\Longleftrightarrow(3)
	\Longrightarrow(1)\Longleftrightarrow(2).
	\]
\end{Proposition}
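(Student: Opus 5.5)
I will prove the implications in the order $(5)\Rightarrow(4)$, $(3)\Leftrightarrow(4)$, $(3)\Rightarrow(1)$ and $(1)\Leftrightarrow(2)$, the last being standard.

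\emph{$(1)\Leftrightarrow(2)$.} This is the usual characterization of nefness. Quotient line bundles of $f^*\cE$ on a curve $C$ correspond to lifts $C\to\PP(\cE)$. Conversely, every curve in $\PP(\cE)$ either lies in a fiber, where $\cO(1)$ is ample, or maps finitely onto a curve in $X$, where its normalization gives such a quotient. So nefness of $\cE$ is equivalent to nefness of $\cO_{\PP(\cE)}(1)$.

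\emph{$(4)\Rightarrow(3)$ and $(3)\Rightarrow(4)$.} The first implication is trivial. For the converse, fix $H$ as in (3) and let $A$ be another ample line bundle. Choose $c>0$ with $A^{c}\otimes H^{-1}$ globally generated. Write $(F^e)^*\cE\otimes A$ as a twist of $(F^{e+e'})^*\cE\otimes H$: choose $e'$ with $p^{e'}\ge c$, so that
\[
(F^{e'})^*\bigl((F^e)^*\cE\otimes A\bigr)=(F^{e+e'})^*\cE\otimes A^{p^{e'}}=\bigl((F^{e+e'})^*\cE\otimes H\bigr)\otimes (A^{p^{e'}}\otimes H^{-1}).
\]
Here $A^{p^{e'}}\otimes H^{-1}=(A^{p^{e'}-c})\otimes(A^c\otimes H^{-1})$ is globally generated. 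A $p$-ample bundle tensored with a globally generated line bundle is $p$-ample, since $F^*$ of a globally generated bundle is globally generated and one can absorb it into $\cF$. A bundle whose Frobenius pullback is $p$-ample is $p$-ample, by reindexing $e$. So (3) gives (4).

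\emph{$(3)\Rightarrow(1)$.} By Gieseker/Hartshorne, $p$-ample implies ample; concretely, global generation of $(F^e)^*\cG\otimes\cF$ for $\cF=H^{-1}$ implies that on any curve every quotient of $f^*\cG$ has degree $>0$. Given $f:C\to X$ and a quotient $f^*\cE\to L$, apply this to $\cG=(F^e)^*\cE\otimes H$. This gives $p^e\deg L+\deg f^*H>0$ for all $e$, hence $\deg L\ge0$.

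\emph{$(5)\Rightarrow(4)$.} This is the main step. Given $e$ and an ample $A$, first reduce, as above, to showing $p$-ampleness of $\cG=(F^e)^*\cE\otimes H^{2}$ say, which suffices after a Frobenius twist. Take $a=2p^e$ in (5): there is $b$ with $(F^{2p^eb})^*\cE\otimes H^b$ globally generated. Then
\[
(F^{p^eb})^*\cG\otimes H^{-1}=\bigl((F^{p^eb+e})^*\cE\bigr)\otimes H^{2p^{p^eb}-1},
\]
and we compare it with the generated bundle $(F^{2p^eb})^*\cE\otimes H^b$ pulled back by an appropriate Frobenius. I expect the bookkeeping of Frobenius exponents versus twists to be the main obstacle, because (5) uses the non-multiplicative indexing $F^{ab}$. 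I would try to get, for some $N$, that $(F^N)^*\cG\otimes H^{-1}$ is globally generated. Then, as in Lemma~\ref{open-ample-equivalence}, for any coherent $\cF$ choose $s$ with $\cF\otimes H^{s}$ globally generated, and use $(F^{N+s'})^*\cG\otimes\cF$ generated by writing it as a Frobenius pullback of $(F^N)^*\cG\otimes H^{-1}$, twisted by a power of $H$ and by $\cF$. This is the standard argument that Frobenius-generation with a negative twist gives $p$-ampleness.
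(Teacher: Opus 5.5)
Your arguments for $(1)\Leftrightarrow(2)$, $(3)\Leftrightarrow(4)$ and $(3)\Rightarrow(1)$ are essentially the paper's, with one slip. In $(3)\Rightarrow(4)$ the factor $A^{p^{e'}-c}$ need not be globally generated, since a positive power of an ample line bundle need not be. Instead, choose $e'$ so large that $A^{p^{e'}}\otimes H^{-1}$ itself is globally generated, which Serre's theorem allows; this is what the paper does.

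The genuine gap is $(5)\Rightarrow(4)$: you leave it open, and the comparison you sketch goes in the wrong direction. Pulling $\cG$ back by $F^{p^eb}$ puts $\cE$ under $F^{p^eb+e}$, while (5) with $a=2p^e$ only controls $(F^{2p^eb})^*\cE$. Since $2p^eb>p^eb+e$ and Frobenius pullbacks only raise the index, no pullback of the generated bundle $(F^{2p^eb})^*\cE\otimes H^b$ is a twist of $(F^{p^eb+e})^*\cE$. The missing idea has two parts. First, pull back by exactly $F^{ab-e}$, so that the index on $\cE$ is $ab$. Second, fix $a$ \emph{before} $b$ is known, so that the leftover twist is positive uniformly in $b$. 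For an ample $\cA$, choose $c$ with $\cA^c\otimes H^{-1}$ ample, and then $a>e$ with $p^{a-e}>c$. Then $p^{ab-e}=p^{a-e}p^{a(b-1)}>cb$ for every $b\ge1$, and with $b$ as in (5)
\[
(F^{ab-e})^*\bigl((F^e)^*\cE\otimes\cA\bigr)=\bigl((F^{ab})^*\cE\otimes H^b\bigr)\otimes\bigl((\cA^c\otimes H^{-1})^b\otimes\cA^{p^{ab-e}-cb}\bigr).
\]
The right-hand side is a quotient of a direct sum of copies of an ample line bundle, hence $p$-ample. Since $p$-ampleness of one Frobenius pullback implies $p$-ampleness of the bundle, $(F^e)^*\cE\otimes\cA$ is $p$-ample. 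If you first reduce to twisting by a power of $H$, as you propose, then $a=e+1$ already suffices, because $p^{ab-e}\ge p^b>b$. Note also that the second factor is merely ample. Your intended target ``$(F^N)^*\cG\otimes H^{-1}$ globally generated'' would therefore need a further enlargement of $a$ to make the leftover power of $H$ globally generated. Arguing via quotients of sums of an ample line bundle, as the paper does, avoids this.
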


\begin{proof}
	Equivalence of $(1)$ and $(2)$ is the definition of nefness, and
	$(4)\Rightarrow(3)$ is immediate. We use the elementary facts that
	$p$-ampleness is preserved by quotients and by tensoring with a globally
	generated vector bundle, that an ample line bundle is $p$-ample, and that
	a vector bundle is $p$-ample if and only if any one of its Frobenius
	pullbacks is $p$-ample.
	
	Assume $(3)$, and let $\cA$ be any ample line bundle. Fix $e>0$ and
	choose $n$ so large that $\cA^{p^n}\otimes H^{-1}$ is globally
	generated. Then
	\[
	(F_X^n)^*((F_X^e)^*\cE\otimes\cA)
	=
	((F_X^{e+n})^*\cE\otimes H)
	\otimes(\cA^{p^n}\otimes H^{-1})
	\]
	is $p$-ample. This proves $(4)$.
	
	To prove $(5)\Rightarrow(4)$, fix an ample line bundle $\cA$ and $e>0$.
	Choose $c>0$ such that $\cA^c\otimes H^{-1}$ is ample, and then choose
	$a>e$ such that $p^{a-e}>c$. For every $b\ge1$ we have
	$p^{ab-e}>cb$. Choose $b$ as in $(5)$. The line bundle
	\[
	\cA^{p^{ab-e}}\otimes H^{-b}
	=
	(\cA^c\otimes H^{-1})^b\otimes\cA^{p^{ab-e}-cb}
	\]
	is ample. Thus
	\[
	(F_X^{ab-e})^*((F_X^e)^*\cE\otimes\cA)
	=
	((F_X^{ab})^*\cE\otimes H^b)
	\otimes(\cA^{p^{ab-e}}\otimes H^{-b})
	\]
	is a quotient of a finite direct sum of an ample line bundle, and hence
	is $p$-ample. This proves $(4)$.
	
	Finally, a $p$-ample vector bundle is ample by \cite[Proposition 6.3]{Ha1}.
	Under $(3)$, if $f:C\to X$ is a morphism from a smooth projective curve
	and $\cL$ is a line bundle quotient of $f^*\cE$, then
	\[
	p^e\deg\cL+\deg f^*H\ge0
	\]
	for every $e>0$. Dividing by $p^e$ and letting $e$ tend to infinity proves
	that $\cE$ is nef.
\end{proof}

\begin{Remark*}
	The implication $(1)\Rightarrow(4)$ is false in general. Let $\cG$ be
	Gieseker's ample vector bundle on $\PP^2$ which is not $p$-ample
	\cite[Theorem 3.1]{Gi}, and fix an ample line bundle $H$.
	By Lemma~\ref{ample-Frobenius-twist}, for some $n$ the bundle
	$\cE=(F_X^n)^*\cG\otimes H^{-1}$ is ample, hence nef. But
	\[
	(F_X)^*\cE\otimes H^p=(F_X^{n+1})^*\cG
	\]
	is not $p$-ample. Thus the five conditions above are not equivalent.
\end{Remark*}

\subsection{Ample and $p$-ample vector bundles}

\begin{Lemma}\label{p-ample-equivalence}
	Let $\cE$ be a vector bundle on $X$. The following conditions are equivalent. 
	\begin{enumerate}
		\item $\cE$ is $p$-ample.
		\item For some ample line bundle $H$ on $X$ the bundle $(F_X^{e})^{*}\cE\otimes H^{-1}$ is globally generated for $e\gg 0$.
	\end{enumerate}
\end{Lemma}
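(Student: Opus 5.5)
(1)$\Rightarrow$(2) is immediate: take $H$ any ample line bundle and $\cF=H^{-1}$ in the definition of $p$-ampleness. Then $(F_X^e)^*\cE\otimes H^{-1}$ is globally generated for $e\gg0$.

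For (2)$\Rightarrow$(1) I would imitate the proof of (3)$\Rightarrow$(1) in Lemma~\ref{open-ample-equivalence}, with Frobenius pullbacks in place of symmetric powers. Fix $e_0$ such that $\cG_e:=(F_X^e)^*\cE\otimes H^{-1}$ is globally generated for all $e\ge e_0$, and let $\cF$ be a coherent sheaf. For $e\ge e_0$ write $e=e_0+n$. Then
\[
(F_X^e)^*\cE\otimes\cF=(F_X^n)^*\bigl((F_X^{e_0})^*\cE\otimes H^{-1}\bigr)\otimes H^{p^n}\otimes\cF .
\]
The first factor is the Frobenius pullback of a globally generated bundle, so it is globally generated. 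By Serre's theorem $H^{p^n}\otimes\cF$ is globally generated for $n\gg0$. The tensor product of two globally generated sheaves is globally generated, so $(F_X^e)^*\cE\otimes\cF$ is globally generated for all $e\gg0$, which is (1). In fact a single $e_0$ suffices here; the hypothesis ``for $e\gg0$'' is only used to obtain one such $e_0$.

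I expect no real obstacle in this argument. The only points to check are that pullbacks (in particular Frobenius pullbacks) preserve global generation, and that $H^{m}\otimes\cF$ is globally generated for all sufficiently large $m$, not merely for some sequence. Serre's theorem on the quasi-projective scheme $X$ gives this: choose $m_0$ with $H^m\otimes\cF$ globally generated for every $m\ge m_0$, and take $p^n\ge m_0$. Note that the uniformity over all large $e$ matters here, since the definition of $p$-ampleness requires generation for every $e\gg0$ and not just along a subsequence.
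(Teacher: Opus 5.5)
Your proposal is correct and follows essentially the paper's argument. Both rest on Serre's theorem, giving global generation of $\cF\otimes H^{m}$ for all $m\ge m_0$, combined with the identity $(F_X^{e+n})^*\cE\otimes H^{-p^n}=(F_X^n)^*\bigl((F_X^{e})^*\cE\otimes H^{-1}\bigr)$. The only cosmetic difference is that you fix one $e_0$ and let $n$ vary, while the paper fixes $n$ with $p^n\ge m_0$ and lets $e\ge e_0$ vary.
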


\begin{proof}
	By Serre's theorem for any coherent $\cO_X$-module $\cF$ there exists $m_0$ such that $\cF \otimes H^{ m}$ is globally generated
	for every $m\ge m_0$. 
	By assumption there exists some $e_0$ such that for every $e\ge e_0$ the bundle $(F_X^{e})^{*}\cE\otimes H^{-1}$ is globally generated.
	It follows that $$(F_X^{e+n})^{*}\cE\otimes  H^{-p^n}=(F_X^n)^*((F_X^{e})^{*}\cE\otimes  H^{-1})$$ is also globally generated for all $e\ge e_0$ and $n\ge 0$. So if $p^n\ge m_0$ then
	$$(F_X^{e+n})^{*}\cE\otimes \cF= ((F_X^{e+n})^{*}\cE\otimes _{\cO_X} H^{-p^n})\otimes (\cF \otimes H^{p^n}) $$ 
	is globally generated for every $e\ge e_0$. 
\end{proof}

The following proposition was proven for projective schemes in \cite[Corollary 6.7]{Ha1}.

\begin{Proposition}
	Let $\cE_1$ and $\cE_2$ be $p$-ample vector bundles on $X$. Then $\cE_1\otimes  \cE_2$ is also $p$-ample.
\end{Proposition}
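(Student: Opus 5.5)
We prove the statement through the criterion of Lemma~\ref{p-ample-equivalence}: fix an ample line bundle $H$ on $X$ and show that $(F_X^e)^*(\cE_1\otimes\cE_2)\otimes H^{-1}$ is globally generated for $e\gg0$. Frobenius pullback commutes with tensor products:
\[
(F_X^e)^*(\cE_1\otimes\cE_2)\simeq(F_X^e)^*\cE_1\otimes(F_X^e)^*\cE_2 .
\]
Since a tensor product of globally generated bundles is globally generated, it suffices to find one integer $e_0$ such that, for all $e\ge e_0$, both $(F_X^e)^*\cE_1\otimes H^{-1}$ and $(F_X^e)^*\cE_2$ are globally generated. Their tensor product is then $(F_X^e)^*(\cE_1\otimes\cE_2)\otimes H^{-1}$.

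To get these two conditions we apply the definition of $p$-ampleness with suitable coherent sheaves $\cF$. Taking $\cF=H^{-1}$ for $\cE_1$ gives an $e_1$ such that $(F_X^e)^*\cE_1\otimes H^{-1}$ is globally generated for $e\ge e_1$. Taking $\cF=\cO_X$ for $\cE_2$ gives an $e_2$ such that $(F_X^e)^*\cE_2$ is globally generated for $e\ge e_2$. The definition quantifies over all $e\gg0$ for a fixed $\cF$, so each condition holds on a tail of integers. Setting $e_0=\max(e_1,e_2)$ finishes the argument.

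I do not expect a serious obstacle. The only point to check is that ``for $e\gg0$'' in the definition of $p$-ampleness means for all sufficiently large $e$, and not merely for infinitely many $e$; the definition as stated gives the former. Alternatively, one can argue directly with the definition instead of Lemma~\ref{p-ample-equivalence}: given a coherent $\cF$, apply $p$-ampleness of $\cE_1$ to $\cF\otimes H^{m}$ and of $\cE_2$ to $H^{-m}$, choosing $m$ so that $\cF\otimes H^m$ is globally generated.

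Nothing in the argument uses projectivity of $X$, so Hartshorne's projective hypothesis in \cite[Corollary 6.7]{Ha1} is not needed here.
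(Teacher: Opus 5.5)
Your proof is correct and follows the paper's argument: the paper also gets a common $e_0$ from $p$-ampleness of both bundles, tensors the two globally generated Frobenius pullbacks, and concludes with Lemma~\ref{p-ample-equivalence}. The only difference is cosmetic. The paper twists both factors by $H^{-1}$ and applies the lemma to the ample line bundle $H^2$, whereas you use $\cO_X$ for $\cE_2$ so that $H$ itself works. Your alternative direct argument bypasses the lemma entirely, and it already works with $m=0$.
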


\begin{proof}
	Let $H$ be an ample line bundle on $X$. By assumption there exists some $e_0$ such that for 
	every $e\ge e_0$ the bundles  $(F_X^{e})^{*}\cE_1\otimes  H^{-1}$ and
	$(F_X^{e})^{*}\cE_2\otimes H^{-1}$ are globally generated. Therefore  
	$(F_X^{e})^{*}(\cE_1\otimes \cE_2)\otimes  H^{-2}$ is globally generated  for
	all $e\ge e_0$. So 
	by Lemma \ref{p-ample-equivalence} the bundle
	$\cE_1\otimes  \cE_2$ is $p$-ample.
\end{proof}

\medskip

\medskip

\begin{Lemma}\label{ample-Frobenius-twist}
	Let $\cE$ be an ample vector bundle on $X$ and let $H$ be an ample line bundle on $X$. Then  there exists some $e_0$ such that  $(F_X^{e})^{*}\cE\otimes H^{-1}$ is ample for all  $e\ge e_0$.
\end{Lemma}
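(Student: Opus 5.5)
The plan is to reduce to a global generation statement for a Frobenius pullback twisted by a very negative line bundle, and then use that the Frobenius pullback multiplies degrees by $p^e$, in the spirit of Lemma~\ref{Viehweg-ampleness}. Since $\cE$ is ample, Lemma~\ref{open-ample-equivalence} with $X_0=X$ gives an integer $d>0$ such that $\Sym^d\cE\otimes H^{-1}$ is globally generated. A Frobenius-friendly version of this is cleaner, so I would first replace $d$ by a power of $p$. Write $q_0=p^{e_1}\ge d$. By Lemma~\ref{open-ample-equivalence}(3) applied with $\cF=H^{-2}$ (or any fixed negative power), $\Sym^m\cE\otimes H^{-2}$ is globally generated for all $m\gg0$, so in particular for $m=q_0=p^{e_1}$ with $e_1$ large.

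Next, use the natural map $(F_X^{e_1})^*\cE\to\Sym^{q_0}\cE$ sending a local section to its $q_0$-th power. This map is \emph{not} surjective for $r>1$, so it cannot directly transfer generation. This is the main obstacle, and I would instead argue through the ampleness criterion on curves and the projective bundle, or more robustly through the surjection \eqref{frob-monomials} from the proof of Proposition~\ref{frobenius-wp}. For $m\ge r-1$,
\[
\bigoplus_{j=0}^{r-1}\Sym^{m-j}((F_X^e)^*\cE)\otimes\Sym^{qj}\cE\twoheadrightarrow\Sym^{qm}\cE .
\]
This expresses $\Sym^{qm}\cE$ in terms of the Frobenius pullback, which is the wrong direction for us. So instead I would go via Proposition~\ref{frobenius-wp}. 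Since $\cE$ is ample, $\cE\otimes H^{-1/N}$ is morally still ample, and one wants $\cE$ to be ``weakly positive after twisting by $H^{-\epsilon}$''. Concretely, by Lemma~\ref{open-ample-equivalence}(2), there is $d>0$ with $\Sym^d\cE\otimes H^{-1}$ weakly positive over $X$. By Corollary~\ref{operations:weak-positivity} this class is closed under the operations involved, and weak positivity of $\Sym^d\cE\otimes H^{-1}$ should imply weak positivity of $\cE$ twisted by a fractional $H^{-1/d}$. To make that precise, I would pass to the root cover $f:Y\to X$ of Proposition~\ref{operations:root-cover-criterion} with $f^*H\simeq\cA^{2d}$. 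Then $\Sym^d(f^*\cE\otimes\cA^{-2})$ is ample, hence $f^*\cE\otimes\cA^{-2}$ is ample, hence $\cV:=f^*\cE\otimes\cA^{-1}$ satisfies that $\cV\otimes\cA^{-1}$ is ample. In particular $\cV$ is weakly positive on $Y$.

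Now apply Proposition~\ref{frobenius-wp} on $Y$ with $X_0=Y$: for every $e\ge0$, $(F_Y^e)^*\cV\otimes\cA$ is ample, i.e.\ $(F_Y^e)^*f^*\cE\otimes\cA^{1-p^e}$ is ample. For $p^e-1\ge 2d$ this contains $f^*((F_X^e)^*\cE\otimes H^{-1})\otimes\cA^{\,2d-p^e+1}$ up to an ample twist. More precisely, $f^*((F_X^e)^*\cE\otimes H^{-1})=\bigl((F_Y^e)^*\cV\otimes\cA\bigr)\otimes\cA^{p^e-1-2d}$, which is ample tensored with a globally generated line bundle and hence ample. (Here I use $F_Y\circ f=f\circ F_X$ on pullbacks, which holds since absolute Frobenius commutes with all morphisms.) Finally, ampleness descends along the finite surjective $f$: $\cO_{\PP(\cdot)}(1)$ pulled back along the finite surjective map $\PP_Y(f^*\cdot)\to\PP_X(\cdot)$ is ample, so the original is ample. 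This gives $e_0$ with $p^{e_0}-1\ge2d$. The delicate points to check are that the root cover exists for $H$ (after replacing $H$ by a very ample power, which is harmless since $H^{-1}$ only gets more negative when we need to, or by applying the argument to $H^{N}$ and noting that ampleness of $(F^e)^*\cE\otimes H^{-N}$ implies that of $(F^e)^*\cE\otimes H^{-1}$), and the descent of ampleness under finite surjective maps for quasi-projective schemes.
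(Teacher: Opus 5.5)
Your final argument can be made to work, but it takes a long detour that the paper avoids. The paper never compares $(F_X^e)^*\cE$ with $\Sym^{p^e}\cE$, so the ``main obstacle'' you isolate never arises. It simply applies $(F_X^e)^*$ to the globally generated sheaf $\Sym^d\cE\otimes H^{-1}$ from your first paragraph. Frobenius pullback commutes with $\Sym^d$ and sends $H$ to $H^{p^e}$, so
\[
(F_X^e)^*\bigl(\Sym^d\cE\otimes H^{-1}\bigr)\simeq\Sym^d\bigl((F_X^e)^*\cE\otimes H^{-1}\bigr)\otimes H^{-(p^e-d)}
\]
is globally generated. Lemma~\ref{Viehweg-ampleness}, applied with the ample line bundle $H^{p^e-d}$, then shows that $(F_X^e)^*\cE\otimes H^{-1}$ is ample once $p^e>d$.

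Your route uses the root cover $f:Y\to X$ with $f^*H\simeq\cA^{2d}$, then Proposition~\ref{frobenius-wp} on $Y$, then descent. In effect it realizes a fractional twist $H^{-1/d}$. But the implication $(1)\Rightarrow(2)$ of Proposition~\ref{frobenius-wp} is itself proved by the same Frobenius-pullback computation, so the cover gains nothing. It also costs an extra nontrivial input: ampleness descends along finite surjective morphisms of non-proper noetherian schemes. That is true (for line bundles it is EGA~III, Proposition~2.6.2, applied to $\PP_Y(f^*\cG)\to\PP_X(\cG)$), but you would have to cite it rather than assert it.

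One intermediate claim is false as written. From $\Sym^d(f^*\cE\otimes\cA^{-2})\simeq f^*(\Sym^d\cE\otimes H^{-1})$ you only get global generation (or weak positivity), not ampleness. For example, take $X=\PP^1$, $H=\cO_{\PP^1}(d)$ and $\cE=\cO_{\PP^1}(1)$. Then this sheaf is $\cO_Y$, and $f^*\cE\otimes\cA^{-2}$ is a torsion line bundle on the one-dimensional projective scheme $Y$, so neither is ample.

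What is true, and is all you use afterwards, is that $\Sym^d\cV\otimes\cA^{-d}\simeq f^*(\Sym^d\cE\otimes H^{-1})$ is globally generated (resp.\ weakly positive). Hence $\cV=f^*\cE\otimes\cA^{-1}$ is ample by Lemma~\ref{Viehweg-ampleness}, and in particular weakly positive. With that repair, and with the Frobenius compatibility written correctly as $f\circ F_Y=F_X\circ f$ (not $F_Y\circ f=f\circ F_X$), the identity $f^*\bigl((F_X^e)^*\cE\otimes H^{-1}\bigr)\simeq\bigl((F_Y^e)^*\cV\otimes\cA\bigr)\otimes\cA^{p^e-1-2d}$ holds and the rest of your argument goes through.
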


\begin{proof}
	By Lemma \ref{Viehweg-ampleness} there exists some $d>0$ such that $\Sym ^d\cE\otimes  H^{-1}$ is globally generated over $X$.  Then 
	\begin{equation*}
		\Sym ^d((F_X^e)^*\cE\otimes  H^{-1})\otimes  H^{{d}-p^e}
		=(F_X^e)^*(\Sym ^d\cE\otimes H^{-1})
	\end{equation*}
	is also globally generated. So if $p^e>{d}$ then $(F_X^{e})^{*}\cE\otimes  H^{-1}$ is ample by Lemma \ref{Viehweg-ampleness}.
\end{proof}

As a corollary we obtain a generalization of Hironaka's result \cite[Proposition 3.1]{Ba} to quasi-projective schemes. Note that the proof of \cite[Proposition 3.1]{Ba} uses a numerical characterization of ampleness so it does not extend to quasi-projective schemes.

\begin{Corollary}
	Let $\cE$ and $\cF$ be  vector bundles on $X$.
	If $\cE$ is ample then		
	there exists some $e_0$ such that  $(F_X^{e})^{*}\cE\otimes  \cF$ is ample for all  $e\ge e_0$.
\end{Corollary}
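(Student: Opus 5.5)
The natural route is to reduce to Lemma~\ref{ample-Frobenius-twist} by absorbing $\cF$ into a negative line bundle twist. Fix an ample line bundle $H$ on $X$. Since $X$ is quasi-projective, Serre's theorem gives $m>0$ such that $\cF\otimes H^{m}$ is globally generated. Equivalently, there is a surjection
\[
\cO_X^{\oplus N}\twoheadrightarrow \cF\otimes H^{m}.
\]
Applying Lemma~\ref{ample-Frobenius-twist} to $\cE$ and the ample line bundle $H^{m}$ yields $e_0$ such that $(F_X^e)^*\cE\otimes H^{-m}$ is ample for all $e\ge e_0$.

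For such $e$, tensor the surjection above with the ample bundle $\cG_e:=(F_X^e)^*\cE\otimes H^{-m}$. This gives a surjection
\[
\cG_e^{\oplus N}\twoheadrightarrow (F_X^e)^*\cE\otimes\cF .
\]
A finite direct sum of ample bundles is ample, and a quotient vector bundle of an ample bundle is ample. Both facts hold on quasi-projective schemes; see Corollary~\ref{operations:weak-positivity}(1) with $X_0=X$, where ampleness with respect to $X$ is ordinary ampleness by Lemma~\ref{open-ample-equivalence}. It follows that $(F_X^e)^*\cE\otimes\cF$ is ample for all $e\ge e_0$.

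The step needing the most care is quotient stability. The target $(F_X^e)^*\cE\otimes\cF$ is a vector bundle, since both $\cE$ and $\cF$ are. Hence the surjection is a quotient of vector bundles, $\PP((F_X^e)^*\cE\otimes\cF)$ embeds as a closed subscheme of $\PP(\cG_e^{\oplus N})$, and $\cO(1)$ restricts to $\cO(1)$. Ampleness therefore passes to the quotient directly, with no projectivity assumption on $X$. Apart from this, the argument only uses the uniformity in $e$ already supplied by Lemma~\ref{ample-Frobenius-twist}.
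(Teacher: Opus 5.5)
Your proposal is correct and is essentially the paper's own proof. You make $\cF\otimes H^m$ globally generated, apply Lemma~\ref{ample-Frobenius-twist} to $H^m$, and exhibit $(F_X^e)^*\cE\otimes\cF$ as a quotient of finitely many copies of the ample bundle $(F_X^e)^*\cE\otimes H^{-m}$. Your added justification of quotient stability, via the closed embedding of projective bundles, is a harmless elaboration of a step the paper treats as standard.
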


\begin{proof}
	Fix an ample line bundle $H$ on $X$, and choose $m>0$ such that $\cF\otimes H^m$ is globally generated.
	By Lemma~\ref{ample-Frobenius-twist}, applied to $H^m$,
	the bundle $(F_X^e)^*\cE\otimes H^{-m}$ is ample for $e\gg0$.
	Hence $(F_X^e)^*\cE\otimes\cF$ is a quotient of a finite direct sum
	of this ample bundle.
\end{proof}

\begin{Remark}
	In	\cite[Theorem 3.1]{Gi} Gieseker constructed an ample vector bundle on $\PP^2$, which is not $p$-ample. In fact, in his example $(F_X^{e})^{*}\cE$ is not $p$-ample for any $e\ge 0$.
	For this bundle $\cE$ and every ample line bundle $H$, there is no $e\ge0$ such that $(F_X^{e})^{*}\cE\otimes H^{-1}$ is $p$-ample.
\end{Remark}

\section{Strong numerical flatness and weak positivity}
\label{sec:snf-weak-positivity}

In this section $X$ is a normal quasi-projective variety defined over an
algebraically closed field.

\subsection{S-fundamental group scheme for normally big varieties}

\begin{Definition}\label{def:weakly-flat} 
	We say that a vector bundle $\cE$ on a variety $X$ is
	\emph{weakly flat} if both $\cE$ and $\cE^*$ are weakly positive
	over $X$.
	Let $U\subset X$ be a dense open subset.
	We say that a coherent sheaf $\cF$ on $X$ is
	\emph{weakly flat over $U$} if $\cF|_U$ is locally free and
	both $\cF$ and $\cF^*$ are weakly positive over $U$.
\end{Definition}

We denote by $\Vect^{\wf}(X)$ the full subcategory of coherent
$\cO_X$-modules whose objects are weakly flat vector bundles.
We denote by $\Vect^{\wf}(X/U)$ the full subcategory whose objects
are reflexive coherent sheaves weakly flat over $U$.
In particular, we have
$\Vect^{\wf}(X)=\Vect^{\wf}(X/X)$.
Moreover, if $j:X\hookrightarrow\overline X$ is a small
compactification, then restriction and
reflexive extension give mutually inverse equivalences
\[
\Vect^{\wf}(\overline X/X)\simeq\Vect^{\wf}(X),
\qquad
\cF\longmapsto j^*\cF,
\qquad
\cE\longmapsto j_*\cE.
\]

\begin{Lemma}\label{snf-wp:weak-flat-determinant}
Let $\cE$ be a vector bundle on $X$. Then the following conditions are equivalent:
	\begin{enumerate}
		\item $\cE$ is weakly flat.
		\item Both $\cE$ and $\det\cE^*$ are weakly positive over $X$.
	\end{enumerate}
\end{Lemma}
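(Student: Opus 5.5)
The direction $(1)\Rightarrow(2)$ follows from Lemma~\ref{snf-wp:elementary-properties}(2). If $\cE^*$ is weakly positive over $X$, then so is its determinant $\det\cE^*$, because determinants of weakly positive bundles are weakly positive over the same open subset; this is the exterior-power case of Corollary~\ref{operations:weak-positivity}. When the rank is zero, $\det\cE^*=\cO_X$, which is trivially weakly positive. Hence both $\cE$ and $\det\cE^*$ are weakly positive over $X$.

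For $(2)\Rightarrow(1)$, I only need to show that $\cE^*$ is weakly positive over $X$. We may work on each connected component, so assume $\cE$ has constant rank $r>0$. The key step is the natural isomorphism
\[
\cE^*\simeq{\textstyle\bigwedge^{r-1}}\cE\otimes\det\cE^*,
\]
induced by the perfect pairing $\bigwedge^{r-1}\cE\otimes\cE\to\bigwedge^r\cE=\det\cE$. The bundles on the right are weakly positive over $X$ for the following reasons.
\begin{itemize}
\item If $r\ge2$, then $\bigwedge^{r-1}\cE$ is an exterior power of positive rank of $\cE$, so it is weakly positive over $X$ by Corollary~\ref{operations:weak-positivity}.
\item $\det\cE^*$ is weakly positive over $X$ by hypothesis.
\end{itemize}
Their tensor product is then weakly positive over $X$, again by Corollary~\ref{operations:weak-positivity}. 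If $r=1$, then $\cE^*=\det\cE^*$ directly.

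I expect no real obstacle here. The only points needing care are these:
\begin{itemize}
\item the isomorphism above should be an isomorphism of vector bundles, which it is since the pairing is perfect locally;
\item the operations are applied over the whole of $X$, that is, with $X_0=X$ in Corollary~\ref{operations:weak-positivity};
\item the rank-zero and rank-one cases must be handled separately, as done above.
\end{itemize}
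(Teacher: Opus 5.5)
Your proposal is correct and matches the paper's proof: $(1)\Rightarrow(2)$ comes from Corollary~\ref{operations:weak-positivity} applied to $\det\cE^*$, and $(2)\Rightarrow(1)$ comes from the isomorphism $\cE^*\simeq\bigwedge^{r-1}\cE\otimes\det\cE^*$ together with closure under exterior powers and tensor products. Your separate handling of ranks zero and one is a harmless extra precaution that the paper leaves implicit.
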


\begin{proof}
If $\cE$ is weakly flat, then $\det \cE^*$ is weakly positive
	by Corollary \ref{operations:weak-positivity}. This proves
	$(1)\Rightarrow(2)$.
	Conversely, assume (2) and let $r:=\rk \cE$. Then Corollary \ref{operations:weak-positivity}
	implies that both  $\bigwedge^{r-1}\cE$ and
	\[
	\cE^*\simeq
	{\bigwedge}^{r-1}\cE\otimes\det\cE^*
	\]
are weakly positive. Thus $\cE$ is weakly flat.
\end{proof}

From now on in this subsection we assume that $X$ admits a normal projective compactification  $j:X\hookrightarrow \bar X$ such that the complement of $X$ in $\bar X$ has codimension $\ge 2$.

\begin{Proposition}\label{Vect-wf-rigid-abelian}
	$\Vect ^{\wf}(X)$ is a rigid  abelian $k$-linear tensor category.
\end{Proposition}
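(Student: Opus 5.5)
The plan is to establish the structure in four stages: the rigid tensor structure, additivity, existence of kernels and cokernels inside $\Vect^{\wf}(X)$, and finally the isomorphism of coimage and image.

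\textbf{Tensor structure and additivity.} Corollary~\ref{operations:weak-positivity} shows that weak positivity over $X$ is preserved by finite direct sums and tensor products. Since $(\cE\oplus\cF)^*=\cE^*\oplus\cF^*$ and $(\cE\otimes\cF)^*=\cE^*\otimes\cF^*$, weak flatness is preserved as well. Duals are weakly flat by symmetry of the definition. The unit $\cO_X$ is weakly flat, as are the evaluation and coevaluation maps. So $\Vect^{\wf}(X)$ is a rigid $k$-linear symmetric monoidal full subcategory of coherent sheaves. Moreover, $\operatorname{End}(\cO_X)=H^0(X,\cO_X)=H^0(\overline X,\cO_{\overline X})=k$, because $X$ is big in the normal projective variety $\overline X$.

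\textbf{Kernels and cokernels.} This is the main step. Let $\varphi:\cE\to\cF$ be a morphism of weakly flat bundles, and let $\cI=\operatorname{im}\varphi$, a torsion free coherent sheaf. I aim to show that $\cI$ is locally free and that $\cE\to\cI$ and $\cI\to\cF$ are surjective and injective as maps of bundles. Then the kernel and cokernel are vector bundles, and they are weakly flat by Corollary~\ref{operations:weak-positivity}: a quotient of a weakly positive bundle is weakly positive, and the dual of a subbundle is a quotient of the dual.

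First I reduce to line bundles. Let $s$ be the generic rank of $\cI$. Taking $\bigwedge^s$, the map $\bigwedge^s\cE\to\bigwedge^s\cF$ factors through a rank-one torsion free sheaf $\cL'$. Its reflexive hull $\cL=(\cL')^{**}$ is $\cO_X(B)$ for a Weil divisor $B$ on $\overline X$. The sheaf $\cL'$ is a quotient of the weakly positive bundle $\bigwedge^s\cE$, so $\cL$ is weakly positive over the open set where $\cL'$ is locally free and the quotient map is surjective. Dually, $\cL^*$ receives a generically surjective map from the weakly positive bundle $\bigwedge^s\cF^*$, via the composite $\bigwedge^s\cF^*\to\cL^*$.

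Restrict to a general complete intersection curve $C\subset X$ cut out by multiples of $H$. Such a curve misses $\overline X\setminus X$ and meets all relevant dense open sets. By Lemma~\ref{snf-wp:elementary-properties}(3), $\cE|_C$ and $\cF|_C$ are numerically flat. The two generically surjective or nonzero maps then give $\deg\cL|_C\ge0$ and $\deg\cL|_C\le0$, so $B\cdot H^{n-1}=0$.

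Now I use a standard numerically flat argument. Pulling back to smooth curves $C'\to X$, every restriction $\varphi|_{C'}$ is a map of numerically flat bundles, hence of semistable bundles of degree $0$. Its image is therefore a subbundle of degree $0$, so $\det\cI|_{C'}$ has degree zero whenever $C'$ meets the relevant open sets. This gives numerical triviality of $B$ on curves through points of the degeneracy locus. Here Lemma~\ref{lem:numerical-criterion} should be invoked: on a resolution $Y\to\overline X$, the pullback of $\cL$ is numerically exceptional. Combining this with the weak positivity of $\cL$ and of $\cL^*$ over $X$, after pulling back to covers via Lemma~\ref{snf-wp:elementary-properties}(1), I expect to conclude that the natural maps $\bigwedge^s\cE\to\cL$ and $\cL\to\bigwedge^s\cF$ have no zeros on $X$. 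Then $\cL$ is invertible on $X$ and $\bigwedge^s\varphi$ has constant rank one everywhere on $X$. Consequently $\varphi$ has constant rank $s$ at every point, so $\cI$ is a subbundle with locally free cokernel.

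The anticipated obstacle is ruling out zeros of $\bigwedge^s\varphi$ at points of $X$ that are invisible to curves avoiding them. My first attempt would be to take a curve $C'$ through such a point. The pullback of $\bigwedge^s\varphi$ to $C'$ is a map between degree-$0$ line bundles: it is the map $\det(\operatorname{im})\to\det$ of the saturation, both of degree $0$. Such a map is nonzero only if it is an isomorphism, which forbids zeros. This argument, if it works cleanly, avoids Lemma~\ref{lem:numerical-criterion} entirely.

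\textbf{Abelian.} Once kernels, images and cokernels are subbundles, the canonical map $\operatorname{coim}\to\operatorname{im}$ is an isomorphism, because it is an isomorphism of sheaves. Therefore $\Vect^{\wf}(X)$ is abelian.
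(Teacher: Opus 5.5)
\textbf{Gap: weak flatness of the kernel and cokernel.} Your tensor and rigidity part is fine, and so is the final abelian-category step once kernels and cokernels stay in the category. The gap is where you assert that the kernel and cokernel are weakly flat ``by Corollary~\ref{operations:weak-positivity}: a quotient of a weakly positive bundle is weakly positive, and the dual of a subbundle is a quotient of the dual.''

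Write $\cK=\ker\varphi$, $\cI=\operatorname{im}\varphi$, $\cQ=\operatorname{coker}\varphi$. Your argument only gives weak positivity of $\cI$, $\cI^*$, $\cQ$ (a quotient of $\cF$) and $\cK^*$ (a quotient of $\cE^*$). Weak flatness also needs $\cK$ and $\cQ^*$ to be weakly positive. These are \emph{subbundles} of $\cE$ and $\cF^*$, and subbundles of weakly positive bundles need not be weakly positive. You also cannot fall back on curve-wise numerical flatness: Example~\ref{ex:unitary-filtration-not-weakly-positive} shows that numerical flatness on complete curves in $X$ does not imply weak positivity.

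The missing idea is the determinant trick used in the paper. The bundles $\det\cK\simeq\det\cE\otimes\det\cI^*$ and $\det\cQ^*\simeq\det\cF^*\otimes\det\cI$ are weakly positive, by closure under determinants and tensor products. Lemma~\ref{snf-wp:weak-flat-determinant}, applied to $\cK^*$ and $\cQ$, then gives weak flatness of $\cK$ and $\cQ$. This is where the nontrivial closure under exterior powers (Theorem~\ref{main1}) actually enters the proof.

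\textbf{Local freeness.} Drop the route through $\cL$, $B\cdot H^{n-1}=0$ and Lemma~\ref{lem:numerical-criterion}; as written (``I expect to conclude'') it is not an argument. Your ``first attempt'' is the right one. It is essentially the content of \cite[Proposition~3.4]{La-Simpson}, which the paper simply cites. To make it precise:
\begin{enumerate}
\item Fix $x\in X$ and assume $n\ge2$. Take a general complete intersection of $n-1$ members of a very ample linear system on $\overline X$ passing through $x$. It misses $\overline X\setminus X$, which has codimension $\ge2$ and does not contain $x$. It meets the degeneracy locus of $\varphi$ in only finitely many points.
\item Let $\widetilde C$ be the normalization of a component through $x$. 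By Lemma~\ref{snf-wp:elementary-properties}(3), the pullbacks of $\cE$ and $\cF$ to $\widetilde C$ are numerically flat, hence semistable of degree $0$.
\item So the image of the pulled-back map and its saturation both have degree $0$. Hence the image is a subbundle of rank equal to the generic rank $s$, and $\varphi$ has rank $s$ at $x$.
\item Since $X$ is reduced and $\varphi$ has constant rank, $\cK$, $\cI$, $\cQ$ are locally free.
\end{enumerate}
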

\begin{proof}
	By Corollary~\ref{operations:weak-positivity}, the category
	$\Vect^{\wf}(X)$ is closed under finite direct sums and tensor
	products. Let $\cE_1$ and $\cE_2$ be weakly flat vector bundles
	on $X$ and let $\varphi:\cE_1\to\cE_2$ be an
	$\cO_X$-linear morphism. Note that a weakly flat vector bundle is numerically flat in the sense of \cite[Definition 3.1]{La-Simpson}. So  \cite[Proposition 3.4]{La-Simpson} shows that 
	\[
	\cK:=\ker\varphi,\qquad
	\cI:=\operatorname{im}\varphi,\qquad
	\cQ:=\operatorname{coker}\varphi
	\]
	are locally free.
	
	The bundles $\cI$ and $\cI^*$ are quotients of $\cE_1$ and
	$\cE_2^*$, respectively. Hence both are weakly positive, and
	$\cI$ is weakly flat. Similarly, $\cK^*$ and $\cQ$ are weakly
	positive as quotients of $\cE_1^*$ and $\cE_2$, respectively.
	The determinant identities
	\[
	\det\cK\simeq\det\cE_1\otimes\det\cI^*,
	\qquad
	\det\cQ^*\simeq\det\cE_2^*\otimes\det\cI
	\]
	show that $\det\cK$ and $\det\cQ^*$ are weakly positive.
	Applying Lemma~\ref{snf-wp:weak-flat-determinant} to
	$\cK^*$ and $\cQ$, we conclude that $\cK$ and $\cQ$ are
	weakly flat.
	
	Thus $\Vect^{\wf}(X)$ is closed under kernels and cokernels
	in the category of coherent $\cO_X$-modules, and hence is
	abelian. It is $k$-linear, contains $\cO_X$, and is closed
	under duals by definition. The usual evaluation and
	coevaluation morphisms therefore make it a rigid tensor
	category.
\end{proof}

\medskip

Every $k$-point $x$ of $X$ gives a fiber functor $T_x:\Vect ^{\wf}(X) \to k\operatorname{-}\Vect$ defined by sending $\cE$ to $\cE\otimes k(x)$. Then  $(\Vect ^{\wf}(X), \otimes, T_x, \cO_X)$ is a neutral Tannaka category
by Proposition \ref{Vect-wf-rigid-abelian}.

\begin{Definition}\label{def:S-fundamental}
	The affine $k$-group scheme Tannaka dual to  $(\Vect ^{\wf}(X), \otimes, T_x, \cO_X)$ is called the \emph{S-funda\-men\-tal group scheme} of $X$ and it is denoted by $\pi_1^S(X, x)$.
\end{Definition}

In positive characteristic, Theorem \ref{snf-wp:equivalence} shows that the above definition is equivalent  to \cite[Definition 4.18]{La-Simpson}.

\subsection{Comparison of S-fundamental group schemes of open subsets}

The following proposition generalizes  a part of \cite[Proposition 4.20]{La-Simpson} to an arbitrary characteristic.

\begin{Proposition} \label{prop:comparison-of-fund-groups}
Let $X$ be a normally big $k$-variety with a small compactification  $j:X\hookrightarrow \bar X$.
Let $X'\subseteq\bar X$ be an open subset containing $X$,
and let $x\in X(k)$. Then we have an induced faithfully flat homomorphism
	$$\pi_1^S(X, x)\to \pi_1^S(X', x)$$
	of S-fundamental group schemes.
\end{Proposition}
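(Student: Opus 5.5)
The plan is to use the standard Tannakian criterion (Deligne--Milne, Prop.~2.21). A homomorphism of Tannaka duals induced by an exact tensor functor $\omega:\cA'\to\cA$ is faithfully flat if and only if $\omega$ is fully faithful and its essential image is closed under subobjects. We take for $\omega$ the restriction functor
\[
\Vect^{\wf}(X')\longrightarrow\Vect^{\wf}(X),\qquad \cE'\longmapsto \cE'|_X .
\]
This functor is well defined: $X'$ is quasi-projective and normal, and by Lemma~\ref{snf-wp:elementary-properties}(1) weak positivity of $\cE'$ and $\cE'^*$ over $X'$ pulls back along the open immersion $X\hookrightarrow X'$. It is a $k$-linear exact tensor functor compatible with the fibre functors $T_x$, since $x\in X(k)$; so it induces $\pi_1^S(X,x)\to\pi_1^S(X',x)$.

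Full faithfulness follows because $X\subseteq X'$ is big: $X'\setminus X\subseteq\bar X\setminus X$ has codimension $\ge2$ and $X'$ is normal. Hence, for vector bundles $\cE',\cF'$ on $X'$, restriction $\operatorname{Hom}_{X'}(\cE',\cF')\to\operatorname{Hom}_X(\cE'|_X,\cF'|_X)$ is bijective by Hartogs applied to the locally free sheaf $\mathcal{H}om(\cE',\cF')$.

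The main step, and the one I expect to be the real obstacle, is closure under subobjects. Let $\cE'\in\Vect^{\wf}(X')$ and let $\cG\subset\cE'|_X$ be a weakly flat subbundle of rank $s$; by Proposition~\ref{Vect-wf-rigid-abelian} it is a subbundle with weakly flat quotient. Put $\cF:=j'_*\cG\subset\cE'$, where $j':X\hookrightarrow X'$; this is a saturated reflexive subsheaf, and it suffices to show that $\cF$ is a subbundle of $\cE'$. Indeed, then $\cF$ is locally free and weak flatness of $\cF$ and $\cF^*$ over $X'$ follows exactly as in the proof of Proposition~\ref{Vect-wf-rigid-abelian}. This uses the quotient $\cE'\to\cE'/\cF$, the dual quotient $\cE'^*\to\cF^*$, and Lemma~\ref{snf-wp:weak-flat-determinant} with the determinant identities. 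Via Plücker, it is enough to show that the rank-one reflexive sheaf $\cO_{X'}(B):=\det\cF$, where $B$ is a Weil divisor, is invertible and that the induced map $\cO_{X'}(B)\to\bigwedge^s\cE'$ is a line subbundle. On $X$ this is $\det\cG\hookrightarrow\bigwedge^s\cE'|_X$, with $\det\cG$ and its dual weakly positive over $X$.

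To prove this I would pass to $\bar X$ and extend $B$ to a Weil divisor $\bar B$. Weak positivity of $\cO(\pm\bar B)$ over $X$ gives effective divisors $D_a\sim ab_a(\pm\bar B)+b_aH$. Dividing by $ab_a$ and letting $a\to\infty$ gives $\bar B\cdot H^{n-1}=0$. Now take a resolution $f:Y\to\bar X$. Lemma~\ref{lem:numerical-criterion} shows that a line bundle $L$ extending $f^*\cO(B)$ is numerically equivalent to an $f$-exceptional $\mathbb Q$-divisor $\Sigma$. Both $\pm$ weak positivity are available, and the negativity lemma together with the effective divisors $\widetilde D_a$ should force the correction to be adjustable. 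So after modifying $L$ by exceptional divisors we obtain a numerically trivial line bundle $L'$ on $f^{-1}(X')$, or on a compactification, with a nonzero map $L'\to f^*\bigwedge^s\cE'$ extending the given one. Since $f^*\bigwedge^s\cE'$ is numerically flat by Lemma~\ref{snf-wp:elementary-properties}, \cite[Proposition~3.4]{La-Simpson} (maps between numerically flat bundles have locally free image) gives that this map is a subbundle. Pushing down and using that the $f$-fibres are connected, the image in $\bigwedge^s\cE'$ is a line subbundle agreeing with $\det\cF$ in codimension one, hence equal to it by reflexivity. This controlling of the exceptional part is where I expect the argument to need the most care. If it fails, I would instead restrict to general complete-intersection surfaces through points of $X'\setminus X$ and use Lemma~\ref{surface-weak-positivity} and the surface lemmas of Section~1.
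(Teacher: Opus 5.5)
Your Tannakian reduction (restriction functor, full faithfulness by Hartogs, reduction to closure under subobjects via \cite[Proposition~2.21(a)]{DM}) is the same as the paper's. The subobject step, however, has two genuine gaps.

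\textbf{Weak flatness of the extended subbundle.} Suppose $\cF=j'_*\cG$ is already known to be a subbundle. Weak flatness over $X'$ does \emph{not} then follow as in Proposition~\ref{Vect-wf-rigid-abelian}. There the image $\cI$ is weakly flat because $\cI$ is a quotient of $\cE_1$ and $\cI^*$ is a quotient of $\cE_2^*$; this is what makes the determinant identities work. In your situation, $\cE'/\cF$ and $\cF^*$ are weakly positive as quotients. But Lemma~\ref{snf-wp:weak-flat-determinant} also requires $\det\cF$ to be weakly positive over $X'$ (equivalently $\det(\cE'/\cF)^*$, since $\det\cE'$ is weakly flat). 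You only know this over $X$, and generation over a big open subset does not automatically extend to the added points (cf.\ Example~\ref{Fulger}). The paper supplies this with a separate boundedness argument:
\begin{itemize}
\item the numerically trivial bundle $\cM=\cL^{\otimes d}\otimes\cO_Y(-dD)$ on the projective $Y$ restricts to $h^*\cN'^{\otimes d}$ on $Y'$;
\item the reflexive determinants $\cT_m$ of the pushforwards of $\cM^{\otimes m}$ form a bounded family on $\bar X$;
\item a uniform $c$ with every $\cT_m\otimes H^c$ globally generated shows that $\cN'=\det\cQ'$ is weakly flat over $X'$.
\end{itemize}

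\textbf{The subbundle step itself.} You leave this at ``should force the correction to be adjustable'', but this is where the content lies. With your reflexive sub-line $\det\cF\to\bigwedge^s\cE'$, several things are unresolved:
\begin{itemize}
\item the map from $L$ on $f^{-1}(X')$ exists only away from the exceptional divisors over $X'$;
\item its saturation may still degenerate in codimension two;
\item you never identify which exceptional twist makes the negativity lemma apply with the correct sign.
\end{itemize}
The paper's device (in its notation) is to take the normalized closure $\Gamma$ of the section $X\to\operatorname{Gr}(\cE',q)$ defined by the quotient $\cQ$. Then on $Y'$ the bundle $\cL$ is an honest line bundle quotient of $\bigwedge^q h^*\cE'$, so it has nonnegative degree on every complete curve in $Y'$. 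The argument then runs as follows:
\begin{itemize}
\item $D|_{Y'}$ is $f$-nef, so the negativity lemma gives $D|_{Y'}\le0$;
\item $Y'$ is connected by chains of complete curves, because $Z'$ is big in $Z$, and this forces $D|_{Y'}=0$;
\item relative ampleness of the Pl\"ucker bundle then makes $\Gamma\to X'$ finite and birational, hence an isomorphism by normality of $X'$.
\end{itemize}

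Two smaller points. First, $f^{-1}(X')$ need not be normally big, so invoking \cite[Proposition~3.4]{La-Simpson} there needs the same chain-of-curves argument. Second, the statement is in arbitrary characteristic, so ``take a resolution'' should be replaced by a smooth alteration with Stein factorization $Y\to Z\to\bar X$, applying Lemma~\ref{lem:numerical-criterion} on $Z$ with polarization $g^*H$.
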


\begin{proof} Let $i: 	X\hookrightarrow X'$ 	be the open embedding factoring $j$. 
	Restriction defines an exact tensor functor
	\[
	i^*:\Vect^{\wf}(X')\longrightarrow\Vect^{\wf}(X)
	\]
	compatible with evaluation at $x$. Since $X'$ is normal and
	$X'\setminus X$ has codimension at least two, this functor is
	fully faithful. 	So by \cite[Proposition~2.21(a)]{DM}
it is sufficient to prove that the essential image of $i^*$ is closed
under subobjects.
Let $\cE'\in\Vect^{\wf}(X')$ and let
	\[
	0\longrightarrow\cG\longrightarrow i^*\cE'
	\longrightarrow\cQ\longrightarrow0
	\]
	be an exact sequence in $\Vect^{\wf}(X)$. We may assume that
	$q:=\rk\cQ>0$. The quotient defines a section
	\(
	X\longrightarrow\operatorname{Gr}(\cE', q)\) of the Grassmannian of rank $q$ quotient vector bundles of $\cE'$
	(see \cite[Example 2.2.3]{Huybrechts-Lehn2010}).
	Let $\Gamma$ be the normalization of its closure, with
	structure morphism $p:\Gamma\to X'$. Thus $p$ is projective
	and birational and is an isomorphism over $X$. Denote the
	tautological quotient on $\Gamma$ by $\cQ_\Gamma$.
	
	Choose a projective compactification of $\Gamma$ over $\bar X$
	and a smooth projective alteration dominating it. Write the
	Stein factorization of the resulting morphism as
	\[ 
	h:Y\xrightarrow{f}Z\xrightarrow{g}\bar X,
	\]
	where $f$ is birational and $g$ is finite. Set $Y'=h^{-1}(X')$ and $Z'=g^{-1}(X').$
	Let $\cL$ be a line bundle on $Y$ extending the pullback of
	$\det\cQ_\Gamma$ to $Y'$.

Choose a very ample line bundle \(H\) on $\overline{X}$.	
	The determinant of $\cQ$ and its inverse are weakly positive.
	Their pullbacks to $g^{-1}(X)$ therefore have degree zero on
	general complete intersection curves. Applying Lemma \ref{lem:numerical-criterion} to their reflexive extension on $Z$, with
	polarization $g^*H$, gives
	\(
	c_1(\cL)\equiv D
	\)
	for an $f$-exceptional $\mathbb Q$-divisor $D$.
	
	On $Y'$, the line bundle $\cL$ is a quotient of
	$\bigwedge^q h^*\cE'$. Its degree on every complete curve in
	$Y'$ is therefore nonnegative: after normalization, the
	pullback of $\cE'$ and its dual are nef on that projective
	curve. In particular, $D|_{Y'}$ is $f$-nef. The negativity
	lemma gives
	\(
	D|_{Y'}\le0.
	\)
	
	In fact, $D|_{Y'}=0$. To see this, note that $Z'$ is connected
	by proper curves, since it is a big open subset of the normal
	projective variety $Z$. As $f$ is proper with connected
	fibers, $Y'$ is connected by proper chains of curves.
	If $-D|_{Y'}$ were nonzero, such a chain joining a point outside
	its support to a point in its support would contain an
	integral complete curve $C$ meeting that support without
	being contained in it. Then
	\(
	\deg_C\cL=D\cdot C<0,
	\)
	a contradiction.
	
	The morphism $p$ has no positive-dimensional fiber. Otherwise
	a curve in such a fiber could be dominated by a complete
	curve in $Y'$. The degree of $\cL$ on this curve would be
	positive, by relative ampleness of the Pl\"ucker line bundle,
	whereas $D|_{Y'}=0$ makes it zero. Thus $p$ is finite and
	birational, hence an isomorphism by normality of $X'$.
	We obtain an exact sequence of vector bundles
	\[
	0\longrightarrow\cG'\longrightarrow\cE'
	\longrightarrow\cQ'\longrightarrow0
	\]
	extending the given sequence on $X$.
	
	We must still verify weak positivity at the added points.
	Put $\cN'=\det\cQ'$. Choose $d>0$ clearing the denominators
	of $D$, and set
	\(
	\cM:=\cL^{\otimes d}\otimes\cO_Y(-dD).
	\)
	This line bundle is numerically trivial, and
	\[
	\cM|_{Y'}\simeq h^*(\cN'^{\otimes d}).
	\]
	The line bundles $\cM^{\otimes m}$, $m\in\mathbb Z$, form a
	bounded family (see \cite[Theorem 9.6.3]{KleimanPicard}).
	Boundedness is preserved by proper pushforward and by taking
	reflexive determinants. Hence the rank-one reflexive sheaves
	\[
	\cT_m:=
	\left(
	\det h_*(\cM^{\otimes m})
	\otimes(\det h_*\cO_Y)^*
	\right)^{**},
	\qquad m\in\mathbb Z,
	\]
	form a bounded family on $\bar X$.
	
	Let $e=\deg g$. The projection formula gives
	\(
	\cT_m|_{X'}\simeq\cN'^{\otimes dem}.
	\)
 After replacing $H$ by a very ample power, choose $c>0$,
divisible by $de$, such that every $\cT_m\otimes H^c$ is
	globally generated. For any $a>0$, taking
	$m=\pm ac/(de)$ shows that
	\[
	\cN'^{\otimes ac}\otimes H^c|_{X'},
	\qquad
	\cN'^{\otimes(-ac)}\otimes H^c|_{X'}
	\]
	are globally generated. Thus $\cN'$ is weakly flat.
	
	Now $\cQ'$ and $\cG'^*$ are weakly positive, being quotients
	of $\cE'$ and $\cE'^*$, respectively. Moreover, $\det\cQ'^*=\cN'^*$ and
	$\det\cG'=\det\cE'\otimes\cN'^*$
	are weakly positive. So Lemma~\ref{snf-wp:weak-flat-determinant}
	implies that both $\cQ'$ and $\cG'$ are weakly flat.
	This proves closure of the essential image under subobjects.
\end{proof}

\begin{Remark}
We expect that if $X'$ is smooth then $\pi_1^S(X, x)\to \pi_1^S(X', x)$ in the above proposition is an isomorphism. If $k$ has positive characteristic this is true by \cite[Proposition 4.20]{La-Simpson}. Unfortunately, it is not clear how to prove this in characteristic zero.
\end{Remark}

\subsection{The equivalence in positive characteristic}

\begin{Theorem}\label{snf-wp:equivalence} 
	Let $X$ be a normal  variety of dimension $n\ge 2$ defined over an algebraically closed
	field of characteristic $p>0$. Assume that $X$ admits a small compactification  $j:X\hookrightarrow\overline X$
and let $H$ be an ample line bundle on $\overline X$.
Let $\cE$ be a vector bundle on $X$, and set $\cF:=j_*\cE$.  The following conditions are equivalent:
	\begin{enumerate}
		\item $\cE$ is strongly numerically flat.
		\item  $\cE$ is weakly flat.
		\item $\cF$ is strongly slope $H$-semistable and
		\[ 
		c_1(\cF)\cdot H^{n-1}=0,
		\qquad
		\int_{\overline X}\ch_2(\cF)H^{n-2}=0. 
		\]
	\end{enumerate}
\end{Theorem}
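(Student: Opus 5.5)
I would prove the cycle $(2)\Rightarrow(1)\Rightarrow(3)\Rightarrow(2)$, using the Frobenius criterion (Proposition~\ref{frobenius-wp}) to move between weak positivity and statements about Frobenius pullbacks, and using boundedness to pass back.

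\emph{$(2)\Rightarrow(1)$.} By Lemma~\ref{snf-wp:elementary-properties}, $\cE$ is numerically flat. Let $g:S\to X$ be a morphism from a normally big surface with small compactification $i:S\hookrightarrow\overline S$. By Lemma~\ref{snf-wp:elementary-properties}(1), $g^*\cE$ is weakly flat, so $\cF_S:=i_*g^*\cE$ is weakly flat over $S$. I would show $\int\ch_2(\cF_S)=0$ as follows.
\begin{itemize}
\item \emph{Upper bound.} For any ample $A$ on $\overline S$, Proposition~\ref{frobenius-wp} makes $(F^e)^*g^*\cE\otimes A$ ample on $S$. Lemma~\ref{snf-wp:ample-surface-segre} then gives $s_2(F^{[e]}\cF_S\otimes A)>0$. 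Since $c_1(\cF_S)$ is numerically trivial (weak positivity of $\det$ and of its inverse, by the argument of Lemma~\ref{lem:numerical-criterion}), expanding in $q=p^e$ gives $-q^2 c_2(\cF_S)+O(q)>0$. Dividing by $q^2$ and letting $e\to\infty$ yields $\int c_2\le0$, i.e.\ $\int\ch_2(\cF_S)\ge 0$.
\item \emph{Lower bound.} The same argument applied to the dual $\cF_S^*$ gives $\int\ch_2(\cF_S^{*})\ge0$. For reflexive sheaves the Chern characters of $\cF$ and $\cF^{*}$ need not agree exactly, but I would use Lemma~\ref{lem:surface-chern-subadditivity} with the trace/evaluation sequence for $\cF\otimes\cF^*$, or rather the additivity of local corrections. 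Alternatively, and more directly, I would use Bogomolov-type bounds from Lemma~\ref{snf-wp:symmetric-chern-bound} on symmetric powers of $\cF_S$, which stay of polynomial growth because $\Sym^{[n]}$ is weakly flat, to get the opposite inequality.
\end{itemize}
This two-sided control of $\ch_2$ for reflexive sheaves is the step I expect to be the main technical obstacle.

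\emph{$(1)\Rightarrow(3)$.} Numerical flatness plus $\det\cE$ numerically trivial on curves through $X$ gives $c_1(\cF)H^{n-1}=0$. Restricting to a general complete intersection surface $\overline S=\overline X\cap H_1\cap\dots\cap H_{n-2}$ gives a normally big surface $S=\overline S\cap X$, since the codimension-$\ge2$ complement meets $\overline S$ in finitely many points. The definition of strong numerical flatness then gives $\int\ch_2(\cF|_{\overline S})=0$, and hence $\int\ch_2(\cF)H^{n-2}=0$. For strong semistability: $(F^e)^*\cE$ is again numerically flat, and its restriction to general complete intersection curves is nef with nef dual, hence semistable of degree zero. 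By Mehta--Ramanathan type restriction, a destabilizing subsheaf of $F^{[e]}\cF$ would restrict to a destabilizing subsheaf on a general curve, a contradiction.

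\emph{$(3)\Rightarrow(2)$.} The idea is to use boundedness to get positivity of all Frobenius pullbacks.
\begin{itemize}
\item On general complete intersection surfaces, Lemma~\ref{nf-normal:lem-surface-char-zero} shows that $\cF|_{\overline S}$ is numerically flat.
\item For all $e$, the sheaves $F^{[e]}\cF$ are strongly semistable with vanishing $c_1H^{n-1}$ and $\ch_2H^{n-2}$, so they form a bounded family (by the boundedness of \cite{La-AnnMath}, as in Lemma~\ref{nf-normal:lem-surface-char-zero}). Hence there is a fixed $c$ such that $F^{[e]}\cF\otimes H^c$ is globally generated on $\overline X$ for every $e$, and likewise for the duals.
\item Restricting to $X$, where $F^{[e]}\cF=(F^e)^*\cE$, gives that $(F^{ab})^*\cE\otimes H^{b}$ is globally generated for $b\ge c$. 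Using the surjection \eqref{frob-monomials} as in the proof of Proposition~\ref{frobenius-wp}, this yields weak positivity of $\cE$ over $X$, and similarly of $\cE^*$.
\end{itemize}
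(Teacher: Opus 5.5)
\textbf{The gap.} Your $(3)\Rightarrow(2)$ is essentially the paper's argument: boundedness of all $F^{[a]}\cF$ and their duals gives uniform global generation, and Proposition~\ref{frobenius-wp} then gives weak positivity. Your $(1)\Rightarrow(3)$ is also fine; you do not need Mehta--Ramanathan, since you can simply restrict the destabilizing subsheaf to a general curve. The genuine gap is the inequality $\int_{\overline S}\ch_2(\cF_S)\le0$ in $(2)\Rightarrow(1)$, the step you flagged yourself. None of your three suggestions can produce it, because each gives a bound in the same direction as your Segre argument:
\begin{enumerate}
\item For bundles $\ch_2(\cE^*)=\ch_2(\cE)$, so the dual argument just repeats $\int\ch_2\ge-\frac12c_1^2$.
\item For $\cE\otimes\cE^*$ one has $\ch_2=2r\ch_2(\cE)-c_1(\cE)^2=-\Delta(\cE)$. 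Positivity therefore gives $\Delta\le0$, which is again a lower bound $2r\ch_2\ge c_1^2$. Moreover, multiplicativity of $\ch$ for reflexive hulls on normal surfaces is not available.
\item Lemma~\ref{snf-wp:symmetric-chern-bound} bounds $\chi(\Sym^{[n]}\cF_S)$ from above by a multiple of $s_2$. It can only turn lower bounds on $\chi$ into lower bounds on $s_2$, which is exactly its role in Lemma~\ref{snf-wp:ample-surface-segre}. Polynomial growth of $h^0$ bounds $\chi$ from above and gives nothing.
\end{enumerate}
Weak positivity by itself only ever bounds $\ch_2$ from below.

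\textbf{The missing ingredient: semistability plus Bogomolov.} Restrict to general curves in $S\cap\overline S_{\mathrm{reg}}$. On such curves every Frobenius pullback of the numerically flat bundle $g^*\cE$ is semistable of degree zero. This shows that $\cF_S$ is strongly slope $A$-semistable with $c_1(\cF_S)\cdot A=0$. Bogomolov's inequality for reflexive sheaves on normal projective surfaces \cite[Corollary~6.6]{La-Inters} then gives $c_1(\cF_S)^2-2r\int\ch_2(\cF_S)\ge0$, and the Hodge index theorem gives $c_1(\cF_S)^2\le0$. Your Segre bound, in the form $\int\ch_2(\cF_S)\ge-\frac12c_1(\cF_S)^2$, then yields
\[
0\le-\tfrac12c_1^2\le\int\ch_2\le\tfrac{c_1^2}{2r}\le0 .
\]
This is exactly how the paper concludes. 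It also means you do not need to prove $c_1(\cF_S)\equiv0$ beforehand via Lemma~\ref{lem:numerical-criterion}.

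\textbf{Two smaller points in $(3)\Rightarrow(2)$.} First, your first bullet misapplies Lemma~\ref{nf-normal:lem-surface-char-zero}. That lemma needs a vector bundle on the surface, whereas $\cF|_{\overline S}$ is only reflexive, and the conclusion can fail (cf.\ \cite[Example~4.14]{La-Simpson}). Since you never use this step, just delete it. Second, boundedness of $\{F^{[a]}_{\overline X}\cF\}$ on a possibly singular $\overline X$ is non-trivial input, which the paper imports from \cite{La-Simpson}; boundedness of the duals comes from \cite[Proposition~2.5]{La-Simpson}. The fixed-Hilbert-polynomial argument of Lemma~\ref{nf-normal:lem-surface-char-zero} does not transfer, because these sheaves are only reflexive and (3) controls only $c_1H^{n-1}$ and $\ch_2H^{n-2}$.
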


\begin{proof}
We may assume that $r:=\rk\cE>0$.
By \cite[Corollary~4.6]{La-Simpson}, condition (1) implies (3) (in fact, this implication follows easily from the definition). 
Moreover, under either (1) or (3), the family
\(\{F_{\overline X}^{[a]}\cF\}_{a\ge0}\)
is bounded (these implications are non-trivial). Since $\cE$ is locally free on the normal variety
$X$, we have
\[
F_{\overline X}^{[a]}\cF
\simeq j_*(F_X^a)^*\cE.
\]
Indeed, both sides are reflexive and agree on $X$.
Their duals are the sheaves $j_*(F_X^a)^*\cE^*$, and form a
bounded family by \cite[Proposition~2.5]{La-Simpson}.

Consequently, there is one integer $c\ge0$ such that both
\[
j_*(F_X^a)^*\cE\otimes H^c
\quad\hbox{and}\quad
j_*(F_X^a)^*\cE^*\otimes H^c
\]
are globally generated for every $a\ge0$.
After restriction to $X$ and tensoring with $H|_X$, the
resulting bundles are locally free quotients of finite direct
sums of the ample line bundle $H|_X$, and hence are ample.
Proposition \ref{frobenius-wp} applied with 
$H^{c+1}|_X$, proves weak positivity of both $\cE$ and $\cE^*$.
Thus either (1) or (3) implies (2).	
	
Assume (2). Lemma~\ref{snf-wp:elementary-properties} shows that 
	$\cE$ is numerically flat. Let $g:V\to X$ be a morphism from a
	normally big surface, and let $i:V\hookrightarrow S$ be its
	small compactification. Put
	\[
	\cV=g^*\cE,\qquad \cG=i_*\cV,\qquad D=c_1(\cG).
	\]
	Both $\cV$ and $\cV^*$ are weakly positive over $V$, so
	$\cV$ is numerically flat by Lemma \ref{snf-wp:elementary-properties}. Choose an ample line bundle $H$ on
	$S$. 
	General smooth curves in sufficiently large multiples of $H$
	lie in $V\cap S_{\mathrm{reg}}$. Numerical flatness on these
	curves gives $D\cdot H=0$ and strong $H$-semistability of $\cG$.
	Indeed, for each $a$ and each saturated subsheaf of
	$F_S^{[a]}\cG$, choose such a curve avoiding the non-locally-free
	loci of that subsheaf and its quotient. The ambient bundle on
	the curve is a Frobenius pullback of a numerically flat bundle,
	hence is semistable of degree zero. Thus the subsheaf has
	non-positive slope. 
	
	The Hodge index theorem and Bogomolov's inequality on normal
	projective surfaces give
	\begin{equation}\label{snf-wp:hodge-bogomolov}
		D^2\le0,\qquad
		\Delta(\cG)=D^2-2r\int_S\ch_2(\cG)\ge0;
	\end{equation}
	(see \cite[Corollary~6.6]{La-Inters}).
	By Proposition \ref{frobenius-wp} the bundle 
	$(F_V^a)^*\cV\otimes H|_V$ is ample for every $a\ge0$.
	Its reflexive extension is
	\(
	\cG_a:=F_S^{[a]}\cG\otimes H.	\)
Then Lemma~\ref{snf-wp:ample-surface-segre} implies that
\begin{equation*}
		0<s_2(\cG_a) =q^2\left(\int_S\ch_2(\cG)+\frac12D^2\right)
	+\frac{r(r+1)}2H^2,
\end{equation*}
where $q=p^a$. 
	Dividing by $q^2$ and letting $a$ tend to infinity yields
	$\int_S\ch_2(\cG)\ge-\frac12D^2$. Together with
	\eqref{snf-wp:hodge-bogomolov}, this gives
	\[
	0\le-\frac12D^2\le\int_S\ch_2(\cG)
	\le\frac{D^2}{2r}\le0.
	\]
	Hence $D^2=0$ and $\int_S\ch_2(i_*g^*\cE)=0$.
	Since $g$ was arbitrary, $\cE$ is strongly numerically flat.
\end{proof}

\begin{Remark}
Note that the proof of Theorem~\ref{snf-wp:equivalence} is independent of Corollary~\ref{operations:weak-positivity}.
So under the hypotheses of the theorem, closure of weakly flat bundles under tensor products follows already from the
corresponding property of strongly numerically flat bundles \cite[Proposition~4.15]{La-Simpson}.
\end{Remark}

\subsection{Extension to projective compactifications}

The results in this subsection hold in arbitrary characteristic,
unless otherwise indicated. The following technical lemma is needed in the proof of Theorem \ref{nf-normal:prop-weak-positivity}.

\begin{Lemma}\label{lem:nf-birational-chern}
	Let $\pi:T\to S$ be a projective birational morphism from a
	smooth projective surface to a normal projective surface.
	Let $\cE$ be a numerically flat vector bundle on $T$ and set
	$\cG:=(\pi_*\cE)^{**}$. Then $c_1(\cG)\equiv 0$ and $\int_S\ch_2(\cG)=0.$
\end{Lemma}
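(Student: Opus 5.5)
The plan is to combine the local Chern-class formalism of \cite{La-Inters} with Lemma~\ref{lem:surface-chern-subadditivity} applied to $\cE$ and its dual. Recall that $\int_S\ch_2(\cG)$ is computed from $\ch_2(\cE)$ on $T$ by subtracting the local corrections $\ch_2(\pi_x,\cE)$ at the finitely many points $x$ over which $\pi$ is not an isomorphism. Similarly, $c_1(\cG)=\pi_*c_1(\cE)$, and $c_1(\cG)^2$ equals $c_1(\cE)^2$ corrected by the Mumford pullback of the exceptional part.

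\emph{First Chern class.} Since $\cE$ is numerically flat on the smooth projective surface $T$, we have $c_1(\cE)\equiv0$, and $\ch_2(\cE)=0$ numerically. This follows from the Demailly--Peternell--Schneider type filtration, or in any characteristic from semistability with respect to every ample class together with Bogomolov equality. Hence $c_1(\cG)=\pi_*c_1(\cE)$ is numerically trivial: for any Cartier divisor $A$ on $S$ we get $c_1(\cG)\cdot A=c_1(\cE)\cdot\pi^*A=0$, and on a normal surface Mumford intersection theory then gives $c_1(\cG)\equiv0$.

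\emph{Second Chern character, reduction.} It remains to show that the local corrections vanish, i.e.\ $\int_S\ch_2(\cG)=\int_T\ch_2(\cE)=0$. I would prove the two inequalities $\int_S\ch_2(\cG)\ge0$ and $\int_S\ch_2(\cG^*)\ge0$ and combine them with the duality $\int_S\ch_2(\cG^*)=\int_S\ch_2(\cG)$, which holds because $\cG^*=(\pi_*\cE^*)^{**}$ by reflexivity. Since $\cE^*$ is also numerically flat, it then suffices to prove the single inequality
\[
\int_S\ch_2\bigl((\pi_*\cE)^{**}\bigr)\ge\int_T\ch_2(\cE)=0
\]
for every numerically flat $\cE$, and apply it to both $\cE$ and $\cE^*$. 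This means showing that each local term $\ch_2(\pi_x,\cE)$ has a definite sign, opposite for $\cE$ and $\cE^*$.

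\emph{Second Chern character, sign of the local terms.} Here I would use the structure of numerically flat bundles: $\cE$ has a filtration whose graded pieces are numerically flat and stable. By Lemma~\ref{lem:surface-chern-subadditivity} applied to the reflexive hulls of the pushforwards, the inequality reduces to the pieces, provided the signs align. For a single piece, restricted to a neighbourhood of each exceptional curve $C$, the bundle $\cE|_C$ is numerically flat on the curve. I would then use the formula for the local Chern class via approximate splitting (\cite[Definition~3.1]{La-Inters}) to compare with the trivial bundle, whose local terms vanish.

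A cleaner alternative would be to apply Lemma~\ref{snf-wp:symmetric-chern-bound} and Lemma~\ref{snf-wp:ample-surface-segre} to $\cG\otimes H$, with $H$ ample on $S$. Since $\pi^*H$ is only nef on $T$, this would mirror the positive characteristic argument of Theorem~\ref{snf-wp:equivalence}: it bounds $\int_S\ch_2(\cG)$ from below via Segre positivity of twists by $H^{\epsilon}$, scaled by symmetric powers instead of Frobenius. Concretely, the bundles $\Sym^m\cE\otimes\pi^*H$ are nef and big, hence have controlled Euler characteristic, and pushing forward bounds $\chi(S,\Sym^{[m]}\cG\otimes H)$ from below. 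Comparing asymptotics in $m$ with the upper bound of Lemma~\ref{snf-wp:symmetric-chern-bound} gives $s_2(\cG)\ge0$, i.e.\ $\int_S\ch_2(\cG)\ge-\tfrac12c_1(\cG)^2=0$.

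\emph{Main obstacle.} The hard step is the lower bound on $\chi(S,\Sym^{[m]}\cG\otimes H)$: the pushforward $\pi_*(\Sym^m\cE\otimes\pi^*H)$ must be compared with the reflexive hull $\Sym^{[m]}\cG\otimes H$ up to a cokernel that is $o(m^{r+1})$. One also needs $R^1\pi_*$ to contribute only $o(m^{r+1})$, which should follow from nefness of $\cE$ restricted to the exceptional curves.
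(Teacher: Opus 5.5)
\textbf{There is a genuine gap: your argument only ever yields $\int_S\ch_2(\cG)\ge0$, never the reverse inequality.}

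Your $c_1$ step is fine in substance. The clean formulation is that $c_1(\cE)$ is orthogonal to every exceptional curve, so the Mumford pullback of $\pi_*c_1(\cE)$ is $c_1(\cE)$ itself. Testing only against Cartier divisors would not suffice on a non-$\mathbb{Q}$-factorial surface.

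The duality step cannot supply the missing direction, precisely because of the identity you quote: $\int_S\ch_2(\cG^*)=\int_S\ch_2(\cG)$. The same holds for each local term, since $c_1(\pi_x,\cdot)$ changes sign and $c_2(\pi_x,\cdot)$ does not under $\cE\mapsto\cE^*$. So proving ``$\ge0$'' for $\cE$ and for $\cE^*$ proves the same inequality twice. Asking the local terms to have ``opposite sign for $\cE$ and $\cE^*$'' is just a restatement of their vanishing.

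Both concrete mechanisms you offer point the same way. The Segre/ampleness argument via Lemmas~\ref{snf-wp:symmetric-chern-bound} and~\ref{snf-wp:ample-surface-segre} gives $s_2\ge0$, i.e.\ $\int_S\ch_2(\cG)\ge0$. That is the cheap half: since $c_1(\pi_x,\cE)=0$ and $\int_Tc_2(\cE)=0$, one has $\int_S\ch_2(\cG)=\sum_xc_2(\pi_x,\cE)$. This is $\ge0$ directly by relative Bogomolov \cite[Proposition~3.2(2)]{La-Inters}. So the ``main obstacle'' you isolate (controlling $R^1\pi_*$ and cokernels) sits on the side that needs no work.

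Lemma~\ref{lem:surface-chern-subadditivity} bounds the middle term from \emph{above}, so it does not fit the inequality you set out to prove. In any case, for the stable pieces you would be back to the original problem.

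\textbf{What is missing is an upper bound $\sum_x c_2(\pi_x,\cE)\le0$, i.e.\ $\int_S\ch_2(\cG)\le0$.} The paper obtains it as follows.
\begin{enumerate}
\item The approximation theorem \cite[Theorem~4.1]{La-approximation} gives finite covers $f_m:T_m\to T$ of degree $\delta_m$ on which $f_m^*\cE$ is filtered with line bundle quotients $N_{i,m}$, with $(f_m)_*N_{i,m}\equiv\frac{\delta_m\gamma_i}{m}B$.
\item Numerical flatness gives $\sum_iN_{i,m}^2=0$.
\item The exceptional parts $D_{i,m}$ sum to zero and satisfy $N_{i,m}\cdot D_{i,m}=D_{i,m}^2$.
\item The Hodge index theorem, applied to $N_{i,m}-D_{i,m}$ against the nef and big class $f_m^*\pi^*H_S$, gives $0\le-\sum_iD_{i,m}^2=O(\delta_m/m^2)$.
\item The definition of the relative $c_2$, applied to this admissible filtration, yields $\sum_xc_2(\pi_x,\cE)\le-\frac{1}{2\delta_m}\sum_iD_{i,m}^2$, which tends to $0$.
\end{enumerate}

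Your ``cleaner alternative'' reproduces only one half of the proof of Theorem~\ref{snf-wp:equivalence}. There, Segre positivity gives the lower bound, and Bogomolov's inequality for the strongly semistable reflexive sheaf on the normal surface \cite[Corollary~6.6]{La-Inters} gives the upper bound. Invoking that inequality for $\cG$ would also close your gap. Its strong semistability with respect to an ample $H$ follows by restricting the numerically flat $\cE$, and its Frobenius pullbacks, to general curves avoiding the finitely many points over which $\pi$ is not an isomorphism. Together with $c_1(\cG)\equiv0$, this gives $\int_S\ch_2(\cG)\le c_1(\cG)^2/(2r)=0$.
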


\begin{proof}
	Since $\cE$ is numerically flat, its Chern classes are
	numerically trivial. In particular, its first relative
	Chern class vanishes at every exceptional fiber of $\pi$.
	The formula for the first Chern class therefore gives
	$c_1(\cG)\equiv0$.
	By the definition of the second Chern class on a normal
	surface, it remains to prove
	\[
	c_2(\pi_x,\cE)=0
	\qquad\text{for every exceptional fiber over }x\in S.
	\]
	Relative Bogomolov's inequality,
	\cite[Proposition~3.2(2)]{La-Inters}, gives
	$c_2(\pi_x,\cE)\geq0$.
	The assertion in rank one follows from
	\cite[Proposition~3.2(1)]{La-Inters}, so assume that
	$r:=\rk\cE\geq2$.
	
	Fix an ample divisor $B$ on $T$.
	By \cite[Theorem~4.1]{La-approximation}, there exist
	finite surjective morphisms $f_m:T_m\to T$ from smooth
	projective surfaces and filtrations of $f_m^*\cE$ by
	subbundles with line bundle quotients
	$\cL_{1,m},\ldots,\cL_{r,m}$ such that, setting
	\[
	\delta_m:=\deg f_m,
	\qquad N_{i,m}:=c_1(\cL_{i,m}),
	\]
	we have
	\[
	\frac{(f_m)_*N_{i,m}}{\delta_m}
	\equiv\frac{\gamma_i}{m}B
	\]
	for rational numbers $\gamma_1,\ldots,\gamma_r$
	independent of $m$.
	The filtration and numerical flatness give
	\[
	\sum_iN_{i,m}\equiv0,
	\qquad
	\sum_{i<j}N_{i,m}\cdot N_{j,m}
	=\int_{T_m}c_2(f_m^*\cE)=0.
	\]
	Consequently,
$\sum_iN_{i,m}^2=0.$

	Consider the Stein factorization
	\[
	T_m\xrightarrow{\rho_m}S_m\longrightarrow S
	\]
	of $\pi f_m$.
	Let $D_{i,m}$ be the exceptional $\mathbb Q$-divisor
	representing the first relative Chern class of
	$\cL_{i,m}$ with respect to $\rho_m$, summed over its
	exceptional fibers.
	Since $\det f_m^*\cE$ is numerically trivial, negative
	definiteness of the exceptional intersection matrices gives
	\[
	\sum_iD_{i,m}=0.
	\]
	By the defining property of $D_{i,m}$, we also have
	\[
	N_{i,m}\cdot D_{i,m}=D_{i,m}^2.
	\]
	
	Choose an ample divisor $H_S$ on $S$, and put
	$P=\pi^*H_S$ and $K_m=f_m^*P$.
	Then $K_m$ is nef and big, it is orthogonal to every
	$\rho_m$-exceptional curve, and
	\[
	K_m^2=\delta_mP^2,
	\qquad
	N_{i,m}\cdot K_m
	=\frac{\delta_m\gamma_i}{m}(B\cdot P).
	\]
	Applying the Hodge index theorem to $N_{i,m}-D_{i,m}$
	gives
	\[
	0\leq-D_{i,m}^2
	\leq
	\frac{(N_{i,m}\cdot K_m)^2}{K_m^2}
	-N_{i,m}^2.
	\]
	Summing over $i$ and using $\sum_iN_{i,m}^2=0$, we obtain
	\[
	0\leq-\sum_iD_{i,m}^2
	\leq
	\frac{\delta_m(B\cdot P)^2}{m^2P^2}
	\sum_i\gamma_i^2.
	\]
	
	The filtration of $f_m^*\cE$ is admissible in the
	definition of the relative second Chern class.
	Applying \cite[Definition~3.1]{La-Inters} over the
	henselizations at the exceptional points and summing gives
	\[
		0\leq\sum_xc_2(\pi_x,\cE)
		\leq
		\frac{1}{\delta_m}
		\sum_{i<j}D_{i,m}\cdot D_{j,m}
		=-\frac{1}{2\delta_m}\sum_iD_{i,m}^2
		\leq
		\frac{(B\cdot P)^2}{2m^2P^2}\sum_i\gamma_i^2.
	\]
	Here, if the base change splits into several components,
	one applies the defining inequality to each component,
	multiplies by its generic degree, and adds; these degrees
	sum to $\delta_m$.
	Letting $m$ tend to infinity proves the required vanishing.
	
	Finally, \cite[Definition~4.1]{La-Inters} gives
	\[
	\int_Sc_2(\cG)
	=
	\int_Tc_2(\cE)-\sum_xc_2(\pi_x,\cE)=0.\]
	Together with $c_1(\cG)\equiv0$, this implies
	$\int_S\ch_2(\cG)=0$.
\end{proof}

\begin{Theorem}\label{nf-normal:prop-weak-positivity}
	Let $\overline X$ be a normal projective variety of dimension $n$
	over an algebraically closed field $k$. Fix an ample divisor $H$
	on $\overline X$.
	Let $j:X\hookrightarrow\overline X$ be a big open subset, and let
	$\cE$ be a vector bundle on $X$. Put $\cF=j_*\cE$.
	Assume that either $\overline X$ is smooth or $\cF$ is locally
	free on $\overline X$.
	Then the following conditions are equivalent:
	\begin{enumerate}
		\item $\cE$ is strongly numerically flat.
		\item $\cE$ is weakly flat.
		\item $\cF$ is strongly slope $H$-semistable and
		\[
		c_1(\cF)\cdot H^{n-1}=0,
		\qquad
		\int_{\overline X}\ch_2(\cF)H^{n-2}=0.
		\]
		\item $\cF$ is a numerically flat vector bundle on $\overline X$.
	\end{enumerate}
\end{Theorem}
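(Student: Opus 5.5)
I plan to prove the cycle $(4)\Rightarrow(2)\Rightarrow(1)\Rightarrow(3)\Rightarrow(4)$, reducing the last step to surfaces.

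\emph{$(4)\Rightarrow(2)$.} If $\cF$ is a numerically flat vector bundle on $\overline X$, then $\cF$ and $\cF^*$ are nef on a projective variety. They are therefore weakly positive over all of $\overline X$ by \cite[Remarks~2.12]{Vi2}, and restriction (Lemma~\ref{snf-wp:elementary-properties}(1)) shows that $\cE$ is weakly flat.

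\emph{$(2)\Rightarrow(1)$.} This is the assertion made without extra hypotheses, so the argument must not use them. By Lemma~\ref{snf-wp:elementary-properties}, $\cE$ is numerically flat. Let $g:V\to X$ be a map from a normally big surface with small compactification $V\subset S$, and put $\cG=i_*g^*\cE$. Exactly as in the proof of Theorem~\ref{snf-wp:equivalence}, restriction to general complete intersection curves gives $D\cdot H=0$ and slope semistability of $\cG$, where $D=c_1(\cG)$. Hodge index and Bogomolov then give $D^2\le0$ and $\int_S\ch_2(\cG)\le D^2/2r$.

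The missing reverse inequality is the main obstacle, since there is no Frobenius in characteristic zero. I would replace it with the root covers of Proposition~\ref{operations:root-cover-criterion}. For each $m$, take the coordinate root cover $f:T\to S$ with $f^*H=\cA^m$; by pulling back along $V$ as in Corollary~\ref{operations:weak-positivity}, arrange that $f^*g^*\cE\otimes\cA$ is ample with respect to $f^{-1}(V)$. Since $f^{-1}(V)$ is big in the normal surface $T$ after normalization, Lemma~\ref{snf-wp:ample-surface-segre} applied to the reflexive extension of $f^*\cG\otimes\cA$ gives
\[
0<\deg f\Bigl(\int_S\ch_2(\cG)+\tfrac12D^2\Bigr)+O(m^{-1})\deg f .
\]
To get this, I expand $s_2$ and use that Chern numbers of reflexive pullbacks scale by $\deg f$. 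One technical point here is that ampleness is only with respect to an open set rather than on all of it; it may be simpler to use the finite-cover version of Lemma~\ref{snf-wp:ample-surface-segre} via the map $Y\to S\times\PP^N$. Letting $m\to\infty$ yields $\int_S\ch_2\ge-D^2/2$, and the squeeze forces $D^2=0=\int_S\ch_2(\cG)$.

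\emph{$(1)\Rightarrow(3)$.} This follows from \cite[Corollary~4.6]{La-Simpson}, or directly: restriction to complete intersection curves in $X$ gives semistability and $c_1\cdot H^{n-1}=0$. Restricting to a general complete intersection surface $S'\subset\overline X$, whose intersection with $X$ is big in $S'$, strong numerical flatness gives $\int\ch_2(\cF|_{S'})=0$. Under either hypothesis, $\cF|_{S'}$ is reflexive and computes $\ch_2(\cF)H^{n-2}$.

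\emph{$(3)\Rightarrow(4)$.} If $\cF$ is locally free, restrict to a general complete intersection surface $S'$. By Mehta--Ramanathan (strong) semistability is preserved, and Lemma~\ref{nf-normal:lem-surface-char-zero} gives numerical flatness on $S'$. Numerical flatness on all of $\overline X$ then follows from the Bogomolov-type criterion: by \cite[Theorem~2.13]{FL} or \cite{La-AnnMath}, a semistable bundle with $c_1\cdot H^{n-1}=\ch_2\cdot H^{n-2}=0$ and $\Delta\cdot H^{n-2}=0$ is numerically flat. If instead $\overline X$ is smooth, the same equalities $\Delta(\cF)H^{n-2}=0$ together with semistability imply that the reflexive sheaf $\cF$ is locally free with numerically trivial Chern classes. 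In characteristic zero this is Simpson/\cite{La-AnnMath}; in positive characteristic I would use strong semistability and the boundedness argument of Lemma~\ref{nf-normal:lem-surface-char-zero}. Lemma~\ref{lem:nf-birational-chern} would be used to check that surface Chern numbers computed via resolutions agree.
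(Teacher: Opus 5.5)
The gap is in $(3)\Rightarrow(4)$ when $\overline X$ is singular and $\cF$ is locally free. Your cycle cannot avoid this case, because (1) and (2) reach (4) only through it.

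Numerical flatness of $\cF|_{S'}$ on a general complete intersection surface $S'$ says nothing about curves that do not lie on general surface sections, such as curves in $\operatorname{Sing}\overline X$ or in special surfaces. Nefness is not detected by restriction to ample divisors.

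The Simpson-type criterion you then cite is a theorem on smooth projective varieties (compare \cite[Theorem~B]{FL}). On a normal $\overline X$ it is precisely what this step has to establish. Passing to a resolution $\mu:Y\to\overline X$ does not rescue it: $\mu^*\cF$ is only known to be semistable with respect to the nef and big class $\mu^*H$. Condition (3) gives no control over semistability or $\ch_2$ for an ample perturbation $\mu^*H+\varepsilon A$, which is the kind of hypothesis under which the paper applies \cite[Theorem~2.13]{FL}. Even on surfaces the paper has to manufacture that hypothesis by a Hodge-index perturbation argument (Lemma~\ref{nf-normal:lem-surface-char-zero}). That argument does not extend to $n\ge3$ from the data in (3).

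The missing idea in characteristic zero is the paper's induction on $n$, which controls \emph{every} integral surface $S\subset\overline X$, not just general ones.
\begin{itemize}
\item By the restriction theorem and induction, $\cF|_D$ is numerically flat for general $D\in|mH|$.
\item On the normalization $\nu:S^\nu\to S$, the curve $\nu^{-1}(S\cap D)$ shows that $\nu^*(\cF|_S)$ is semistable with $c_1\cdot H_S=0$. Bogomolov and Hodge index then give $\int_{S^\nu}\ch_2\le0$.
\item Write $(mH)^{n-2}$ as a complete intersection cycle $a[S]+\sum_j a_j[S_j]$ with all coefficients positive. The vanishing $\ch_2(\cF)H^{n-2}=0$ forces each term to vanish.
\item Lemma~\ref{nf-normal:lem-surface-char-zero} then makes $\cF$ numerically flat on every surface, hence on every curve.
\end{itemize}
In characteristic $p$ the paper instead uses Theorem~\ref{snf-wp:equivalence} and the boundedness of $\{(F^a_{\overline X})^*\cF\}$ and $\{(F^a_{\overline X})^*\cF^*\}$ from \cite[Corollary~4.6]{La-Simpson}. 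That boundedness restricts to every curve and forces nefness of $\cF$ and $\cF^*$.

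The rest of your proposal appears sound. Your $(2)\Rightarrow(1)$ combines the root-cover lower bound $\int_S\ch_2(\cG)\ge-\tfrac12D^2$ with Bogomolov and Hodge index. It works in every characteristic without the extra hypotheses. The ampleness it needs is ordinary ampleness on $f^{-1}(V)$, which is exactly the hypothesis of Lemma~\ref{snf-wp:ample-surface-segre}. This differs from the paper, which obtains (1) from (4) in this theorem. In general characteristic zero the paper goes through unitary filtrations and \cite{CDM-flatness} instead (Corollary~\ref{cor:weak-flat-implies-snf-char-zero}).
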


\begin{proof}
	Assume (4). Since nef bundles on a projective variety are weakly
	positive, restriction proves (2). 
	To prove (1), let $i:S\hookrightarrow\overline S$ be a small
	compactification of a normally big surface and let $g:S\to X$.
	Resolve the rational map $\overline S\dashrightarrow\overline X$
	induced by $jg$. This gives a smooth projective surface $T$ and
	morphisms
	\(
	\pi:T\to\overline S,\) \(h:T\to\overline X,
	\)
	with $\pi$ projective and birational and
	$h|_{\pi^{-1}(S)}=jg\pi$. The bundle $\cV=h^*\cF$ is numerically
	flat. The projection formula over $S$ and normality give
	\[
	(\pi_*\cV)|_S=g^*\cE,\qquad
	(\pi_*\cV)^{**}=i_*g^*\cE.
	\]
	Lemma~\ref{lem:nf-birational-chern} proves the required second
	Chern character vanishing. Numerical flatness of $\cE$ is immediate.
	
	We next deduce (4) from either (1) or (2). In both cases
	$\cE$ is numerically flat, by definition or by
	Lemma~\ref{snf-wp:elementary-properties}. If $n\le1$, then
	$X=\overline X$ and there is nothing more to prove.
	Assume henceforth that $n\ge2$.
	
	Suppose first that $\overline X$ is smooth. For any ample divisor
	$H$ on $\overline X$, restriction to general complete intersection
	curves contained in $X$ shows that $\cF$ is strongly slope
	$H$-semistable and $c_1(\cF)\cdot H^{n-1}=0.$
	For semistability, restrict a hypothetical destabilizing subsheaf
	to a general curve avoiding the non-locally-free loci involved; 
	apply the same argument to each Frobenius pullback when $p>0$.
	Choose a general smooth complete intersection surface
	$T\subset\overline X$ avoiding the non-locally-free locus of the
	reflexive sheaf $\cF$. Then $T\cap X$ is big in $T$ and
	$\cF|_T$ is locally free. Under (1), the defining surface condition
	gives $\int_T\ch_2(\cF|_T)=0$.
	Under (2), restrict the sections witnessing weak positivity to
	$T$. Lemma~\ref{surface-weak-positivity} shows that $\cF|_T$ and
	its inverse determinant are nef. Thus $\cF|_T$ is numerically
	flat, and again its second Chern character vanishes. In either case,
	$	\int_{\overline X}\ch_2(\cF)H^{n-2}=0.$
	The smooth-projective criterion \cite[Theorem~B]{FL} now shows
	that $\cF$ is locally free and numerically flat.
	
	We may therefore assume that $\overline X$ is normal and $\cF$
	is locally free. If $\operatorname{char}k=p>0$,
	Theorem~\ref{snf-wp:equivalence} implies that $\cE$ is
	strongly numerically flat under either (1) or (2). Then by \cite[Corollary 4.6]{La-Simpson}
	the families
	\[
	\{(F_{\overline X}^a)^*\cF\}_{a\ge0},\qquad
	\{(F_{\overline X}^a)^*\cF^*\}_{a\ge0}
	\]
	are bounded. So for any morphism $f:C\to\overline X$ from a smooth
	projective curve, the families
	\[
	\{(F_{C}^a)^*(f^*\cF) \}_{a\ge0},\qquad
	\{(F_{C}^a)^*(f^*\cF^*) \}_{a\ge0}
	\]
are bounded. So $\cF$ is numerically flat (e.g., again by \cite[Corollary 4.6]{La-Simpson}), proving (4).
	
	Assume now that $\operatorname{char}k=0$.
	Recall that $\cF$ is locally free on $\overline X$.
	We show that either (1) or (2) implies (3).
	Under either assumption, $\cE$ is numerically flat on $X$.
	As in the smooth case, restriction to general complete
	intersection curves contained in $X$ shows that $\cF$ is
	slope $H$-semistable and
	\(
	c_1(\cF)\cdot H^{n-1}=0.
	\)
	Choose a sufficiently large integer $m$ and a general normal
	complete intersection surface
	\[
	T=D_1\cap\cdots\cap D_{n-2},
	\qquad D_i\in|mH|,
	\]
	such that $T^\circ:=T\cap X$ is big in $T$.
	For $n=2$, take $T=\overline X$.
	Put $\cV:=\cF|_T$.
	
	Under (1), the defining surface condition for strong numerical
	flatness gives
	\[
	\int_T\ch_2(\cV)=0,
	\]
	since $\cV$ is the reflexive extension of
	$\cE|_{T^\circ}$.
	
	Under (2), restriction of the sections witnessing weak
	positivity shows that both $\cV$ and $\cV^*$ are weakly
	positive over $T^\circ$.
	Lemma~\ref{surface-weak-positivity} therefore shows that
	both bundles are nef on $T$.
	Thus $\cV$ is numerically flat, and again
	$\int_T\ch_2(\cV)=0$.
	
	Consequently, in either case,
	\[
	0=\int_T\ch_2(\cV)
	 =m^{n-2}\int_{\overline X}\ch_2(\cF)H^{n-2}.
	\]
	This proves (3).
	
	Condition (4) also implies (3), since numerically flat
	bundles are strongly slope semistable and have numerically
	trivial Chern classes.
	To complete the proof, it therefore remains to establish
	$(3)\Rightarrow(4)$.
	If $\overline X$ is smooth, this follows from
	\cite[Theorem~B]{FL}.

	If $\operatorname{char}k>0$ and $\cF$ is locally free then  equivalence of (3) with the remaining conditions follows from Theorem \ref{snf-wp:equivalence}.
 	
	Assume that $\operatorname{char}k=0$ and that $\cF$ is locally
	free. We argue by induction on $n=\dim\overline X$. The curve
	case is immediate, and the surface case is
	Lemma~\ref{nf-normal:lem-surface-char-zero}. 
	
	Suppose that $n\ge3$. By the restriction theorem, for sufficiently
	large $m$ and a general normal divisor $D\in|mH|$, the bundle
	$\cF|_D$ is slope $H|_D$-semistable and satisfies the same
	Chern-number vanishings. Hence $\cF|_D$ is numerically flat by
	induction.

	Let $S\subset\overline X$ be any integral surface, let
	$\nu:S^\nu\to S$ be its normalization, and set
	\[
	\cV_S:=\nu^*(\cF|_S),\qquad H_S:=\nu^*(H|_S).
	\]
	Choosing $D$ sufficiently general, the restriction of $\cV_S$
	to the smooth curve $\nu^{-1}(S\cap D)$ is numerically flat.
	Restricting a hypothetical destabilizing subsheaf to such a
	curve shows that $\cV_S$ is slope $H_S$-semistable, with
	$c_1(\cV_S)\cdot H_S=0$. Bogomolov's inequality and the Hodge
	index theorem therefore give
	\[
	\int_{S^\nu}\ch_2(\cV_S)
	\le \frac{c_1(\cV_S)^2}{2\rk\cF}\le0.
	\]
	Thus $\ch_2(\cF)$ pairs nonpositively with every integral
	surface in $\overline X$.
	
	Choose $(n-2)$ sufficiently high degree hypersurfaces containing $S$
	and meeting properly. Their intersection cycle has the form
	$a[S]+\sum_j a_j[S_j]$ for some  $a,a_j>0,$
	and its pairing with $\ch_2(\cF)$ is zero by (3).
	Since every summand has nonpositive pairing, we obtain
	\[
	\int_{S^\nu}\ch_2(\cV_S)=0.
	\]
	Lemma~\ref{nf-normal:lem-surface-char-zero} now shows that
	$\cV_S$ is numerically flat for every integral surface $S$.
	
	Let $C\subset\overline X$ be an integral curve, and choose
	an integral surface $S\subset\overline X$ containing $C$.
	Write $\nu:S^\nu\to S$ for its normalization.
	An irreducible component of $\nu^{-1}(C)$ dominates $C$.
	Its normalization $C'$ therefore admits a finite surjective
	morphism
	\(
	q:C'\longrightarrow C^\nu,
	\)
	where $C^\nu$ is the normalization of $C$.
	
	By the preceding paragraph, $\nu^*(\cF|_S)$ is numerically
	flat. Its pullback to $C'$ is consequently numerically
	flat, and this pullback is $q^*(\cF|_{C^\nu})$.
	Numerical flatness descends under finite surjective
	morphisms, so $\cF|_{C^\nu}$ is numerically flat.
	The curve criterion now shows that $\cF$ is numerically
	flat on $\overline X$, proving (4).
\end{proof}

\medskip

\begin{Remark}
The Chern-number conditions in (3) in Theorem \ref{nf-normal:prop-weak-positivity} are imposed only in dimensions
	where they are defined.
	Note also that if $\overline X$ is smooth, local freeness of
	$\cF$ is a conclusion and need not be assumed.
\end{Remark}

\begin{Remark}
Assume that $\bar X$ is singular and $\cF$ is not assumed to be locally free. Then (4) does not follow from the remaining conditions (see \cite[Example 4.14]{La-Simpson}). 
In positive characteristic Theorem \ref{snf-wp:equivalence} implies that conditions (1)--(3) are equivalent. In characteristic zero, condition (3) is not defined in this
generality in higher dimensions.
In this case Corollary~\ref{cor:weak-flat-implies-snf-char-zero}
gives $(2)\Rightarrow(1)$, and on surfaces we obtain
$(2)\Rightarrow(1)\Rightarrow(3)$. For arbitrary normal projective surfaces, the converse implication $(3)\Rightarrow(2)$  is not established.
\end{Remark}
 
\subsection{Frobenius pushforwards and ordinary abelian varieties}

We first record the following elementary observation concerning the
Frobenius pushforward of the de Rham differential.

\begin{Lemma}\label{lem:frobenius-cotangent-quotient}
Let $X$ be a smooth variety over a perfect field $k$ of
characteristic $p>0$. For every integer $e\geq1$, the adjoint
of the $\cO_X$-linear homomorphism
\(
(F_X^e)_*d:
(F_X^e)_*\cO_X
\longrightarrow
(F_X^e)_*\Omega^1_{X/k}
\)
is a surjective homomorphism
\[
\vartheta_e:
(F_X^e)^*(F_X^e)_*\cO_X
\twoheadrightarrow
\Omega^1_{X/k}.
\]
\end{Lemma}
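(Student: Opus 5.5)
We need to prove that the adjoint of $(F_X^e)_*d$ is an $\cO_X$-linear map $(F_X^e)^*(F_X^e)_*\cO_X\to\Omega^1_{X/k}$ and that it is surjective. The plan is to check first that $(F_X^e)_*d$ is $\cO_X$-linear, then write down the adjoint concretely, and finally reduce surjectivity to the statement that $\Omega^1$ is locally generated by differentials of functions.

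First, linearity. The $\cO_X$-module structure on $(F_X^e)_*\cO_X$ is $g\cdot f=g^{q}f$ with $q=p^e$, and similarly on $(F_X^e)_*\Omega^1_{X/k}$. Since $d(g^qf)=g^q\,df+qg^{q-1}f\,dg=g^q\,df$ in characteristic $p$, the map $(F_X^e)_*d$ is $\cO_X$-linear. Here $d$ is $k$-linear, and this is compatible because $k$ is perfect.

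Next, the adjoint. By the adjunction $(F_X^e)^*\dashv(F_X^e)_*$, the map $(F_X^e)_*d$ corresponds to an $\cO_X$-linear map
\[
\vartheta_e:(F_X^e)^*(F_X^e)_*\cO_X=\cO_X\otimes_{F^e,\cO_X}\cO_X\longrightarrow\Omega^1_{X/k}.
\]
Explicitly, $\vartheta_e(g\otimes f)=g\,df$. Indeed, the adjoint of a map $\cM\to F_*\cN$ is $F^*\cM\to\cN$, given by $g\otimes m\mapsto g\cdot\phi(m)$, where the product uses the ordinary $\cO_X$-module structure of $\cN=\Omega^1$. Well-definedness amounts to $g\otimes h^qf=gh^q\otimes f$, which holds because $d(h^qf)=h^q\,df$.

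Finally, surjectivity, which is the only real content. The image of $\vartheta_e$ is the $\cO_X$-submodule of $\Omega^1_{X/k}$ generated by all $df$ with $f$ a local section of $\cO_X$. This is all of $\Omega^1_{X/k}$, since $\Omega^1_{X/k}$ is generated as an $\cO_X$-module by the differentials $df$. Locally one can use étale coordinates $x_1,\dots,x_n$, for which $dx_1,\dots,dx_n$ form a basis. I do not expect any genuine obstacle here; the only point requiring care is keeping track of the two distinct module structures in the adjunction formula, and smoothness is needed only to identify $\Omega^1_{X/k}$ as a vector bundle, not for surjectivity.
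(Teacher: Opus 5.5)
Your proposal is correct and follows essentially the same route as the paper, which writes the adjoint locally as $\vartheta_e(a\otimes b)=a\,db$ and gets surjectivity from $\vartheta_e(1\otimes t_i)=dt_i$ for local functions $t_i$ whose differentials form a basis of $\Omega^1_{X/k}$. Your extra verification of $\cO_X$-linearity and well-definedness via $d(h^qf)=h^q\,df$ is a welcome addition. The remark about perfectness of $k$ is not needed for that step.
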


\begin{proof}
Locally, the adjoint homomorphism is given by
\(
\vartheta_e(a\otimes b)=a\,db.
\)
Choose local functions $t_1,\ldots,t_n$ whose differentials
form a basis of $\Omega^1_{X/k}$. Since
\(
\vartheta_e(1\otimes t_i)=dt_i,
\)
the homomorphism $\vartheta_e$ is surjective.
\end{proof}

\begin{Corollary}\label{cor:frobenius-weak-positivity-abelian}
Let $X$ be a smooth projective globally $F$-split variety
of positive dimension over an algebraically closed field
$k$ of characteristic $p>0$, and let $U\subseteq X$ be
a big open subset. Then the following conditions are
equivalent:
\begin{enumerate}
    \item For some integer $e\geq1$, the vector bundle
    \(
    (F_X^e)_*\cO_X
    \)
    is weakly positive over $U$.

    \item The vector bundle $\Omega^1_{X/k}$ is
    weakly positive over $U$.

    \item There exist an ordinary abelian variety $A$
    and a finite surjective \'etale morphism
    $\pi:A\to X$.

    \item For every integer $e\geq1$, the vector bundle
    $(F_X^e)_*\cO_X$ is numerically flat on $X$.
\end{enumerate}
If these conditions hold, then
\(
\omega_X^{\otimes(p-1)}\simeq\cO_X.
\)
\end{Corollary}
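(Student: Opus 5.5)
We prove the cycle $(1)\Rightarrow(2)\Rightarrow(3)\Rightarrow(4)\Rightarrow(1)$, and then derive the statement about $\omega_X$.

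\emph{Step $(1)\Rightarrow(2)$.} Fix $e$ with $(F_X^e)_*\cO_X$ weakly positive over $U$. Lemma~\ref{snf-wp:elementary-properties}(1), applied to $F_X^e$ (with $(F_X^e)^{-1}(U)=U$, which is dense), shows that $(F_X^e)^*(F_X^e)_*\cO_X$ is weakly positive over $U$. The surjection $\vartheta_e$ of Lemma~\ref{lem:frobenius-cotangent-quotient} exhibits $\Omega^1_{X/k}$ as a quotient bundle of it. Corollary~\ref{operations:weak-positivity} then gives $(2)$.

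\emph{Step $(2)\Rightarrow(3)$.} If $\Omega^1_X$ is weakly positive over $U$, then so is $\omega_X=\det\Omega^1_X$. Since $X$ is globally $F$-split, the splitting $F_*\cO_X\to\cO_X$ corresponds by duality to a nonzero section of $\omega_X^{1-p}$. Hence $\omega_X^{-1}$ is effective, say $\omega_X^{p-1}\simeq\cO_X(-B)$ with $B\ge0$. On the other hand, $\omega_X$ is weakly positive over the big open subset $U$, so Lemma~\ref{snf-wp:elementary-properties}(3) makes $\omega_X$ nef on general complete intersection curves. This forces $B\cdot H^{n-1}\le 0$, so $B=0$ and $\omega_X^{p-1}\simeq\cO_X$. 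Now $\Omega^1_X$ has numerically trivial determinant on curves meeting $U$, and $\Omega^1_X$ together with $\det(\Omega^1_X)^*=\omega_X^{-1}$ (torsion, hence weakly positive) are weakly positive over $U$. The argument of Lemma~\ref{snf-wp:weak-flat-determinant} therefore shows that $\Omega^1_X|_U$ is weakly flat. Theorem~\ref{snf-wp:equivalence}, or Theorem~\ref{nf-normal:prop-weak-positivity} with $\overline X=X$ smooth and $\cF=\Omega^1_X$, then shows that $\Omega^1_X$ is numerically flat on $X$. A globally $F$-split smooth projective variety with nef tangent bundle dual (here numerically flat cotangent bundle) is covered by the result of Ejiri and Yoshikawa \cite[Theorem 1.2]{Ejiri-Yoshikawa}. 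This gives an ordinary abelian variety $A$ with a finite étale cover $A\to X$. Deducing exactly their hypothesis, presumably numerical flatness or nefness of $\Omega^1_X$ or of $T_X$, is the main point of this step; I expect that their theorem applies directly once numerical flatness of $\Omega^1_X$ is established.

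\emph{Step $(3)\Rightarrow(4)$.} Let $\pi:A\to X$ be finite étale. Étale base change commutes with Frobenius, so $\pi^*(F_X^e)_*\cO_X\simeq (F_A^e)_*\cO_A$. For an ordinary abelian variety, $(F_A^e)_*\cO_A$ is a direct sum of torsion line bundles of order prime to $p$, hence numerically flat. Numerical flatness descends along the finite surjective morphism $\pi$, which gives $(4)$.

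\emph{Step $(4)\Rightarrow(1)$.} A nef bundle on a projective variety is weakly positive over the whole variety, hence over $U$. This gives $(1)$.

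Finally, under any of the equivalent conditions, $(2)$ holds, and the argument in the $(2)\Rightarrow(3)$ step already yields $\omega_X^{\otimes(p-1)}\simeq\cO_X$.
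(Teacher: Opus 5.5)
Your proposal is correct and follows the paper's proof essentially step by step. The ingredients match: $\vartheta_e$ plus pullback and quotients for $(1)\Rightarrow(2)$; the section of $\omega_X^{\otimes(1-p)}$ from the $F$-splitting together with $K_X\cdot H^{n-1}\ge0$ from curves meeting $U$, giving $\omega_X^{\otimes(p-1)}\simeq\cO_X$; then Lemma~\ref{snf-wp:weak-flat-determinant}, Theorem~\ref{nf-normal:prop-weak-positivity} with $\overline X=X$, and Ejiri--Yoshikawa applied to the numerically flat $T_X$. Note that Theorem~\ref{snf-wp:equivalence} alone would only give its condition (3), not numerical flatness on $X$.

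The only deviation is in $(3)\Rightarrow(4)$. There you use the splitting of $(F_A^e)_*\cO_A$ for ordinary $A$, whereas the paper uses the trivialization $(F_A^e)^*(F_A^e)_*\cO_A\simeq\cO_A^{\oplus p^{e\dim A}}$ plus finite descent, which works for any abelian variety. You also misstate the summands: they correspond to characters of $\ker F^e_{A/k}\simeq\mu_{p^e}^{\dim A}$, so they are $p^e$-torsion, not of order prime to $p$. This is harmless, since every torsion line bundle is numerically flat.
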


\begin{proof}
Assume (1). Weak positivity is preserved by pullback
and by locally free quotients. Since
$(F_X^e)^{-1}(U)=U$, Lemma
\ref{lem:frobenius-cotangent-quotient} implies that
$\Omega^1_{X/k}|_U$ is weakly positive  over $U$. Thus (1)
implies (2).

Assume (2), and put $n=\dim X$. Choose a very ample
divisor $H$ on $X$. Since $X\setminus U$ has codimension
at least two, a general complete intersection curve
of sufficiently ample multiples of $H$ is contained
in $U$. Restriction to this curve shows that
\(
K_X\cdot H^{n-1}\geq0.
\)
On the other hand, global $F$-splitting and duality
for the finite morphism $F_X$ give a nonzero section
\(
s\in H^0(X,\omega_X^{\otimes(1-p)}).
\)
Its zero divisor $D$ is effective and satisfies
\[
D\cdot H^{n-1}
=(1-p)K_X\cdot H^{n-1}\leq0.
\]
As $H$ is ample, this forces $D=0$. Consequently,
\(
\omega_X^{\otimes(p-1)}\simeq\cO_X.
\)
In particular, $\omega_X^{-1}$ is weakly positive  over $U$.
Lemma~\ref{snf-wp:weak-flat-determinant} therefore
shows that $\Omega^1_{X/k}$ is weakly flat  over $U$.
Since
\[
j_*(\Omega^1_{X/k}|_U)=\Omega^1_{X/k},
\qquad j:U\hookrightarrow X,
\]
Theorem~\ref{nf-normal:prop-weak-positivity} implies
that $\Omega^1_{X/k}$ is numerically flat.
For $n=1$, this follows directly from the torsion
property of $\omega_X$.

Thus $T_X$ is numerically flat. Since $X$ is also globally $F$-split,   
the theorem of
Ejiri--Yoshikawa \cite[Theorem 1.2]{Ejiri-Yoshikawa} gives a finite
surjective \'etale morphism $A\to X$ from an ordinary
abelian variety. This proves (2)$\Rightarrow$(3).

Assume (3), and fix $e\geq1$. Since $\pi$ is \'etale,
the square formed by $\pi$, $F_A^e$, and $F_X^e$
is cartesian. Finite flat base change gives
\[
\pi^*(F_X^e)_*\cO_X
\simeq
(F_A^e)_*\cO_A.
\]
The relative Frobenius of an abelian variety is a
torsor under its finite kernel. Hence its
self-base-change is a trivial torsor, and consequently
\[
(F_A^e)^*(F_A^e)_*\cO_A
\simeq
\cO_A^{\oplus p^{e\dim A}}.
\]
Numerical flatness descends under finite surjective
pullback. It follows first that $(F_A^e)_*\cO_A$
is numerically flat, and then that $(F_X^e)_*\cO_X$
is numerically flat. This proves (3)$\Rightarrow$(4).

Finally, a numerically flat bundle on a projective
variety is weakly positive, and its restriction to
$U$ is weakly positive over $U$. Thus (4) implies (1).
\end{proof}

\medskip

We can partially generalize the above corollary to the normal projective case:

\begin{Proposition}\label{prop:frobenius-weak-positivity-normal}
Let $X$ be a normal projective globally $F$-split variety
of dimension $n\geq2$ over an algebraically closed field
$k$ of characteristic $p>0$. Let
$U\subseteq X_{\mathrm{reg}}$ be a big open subset.
Assume that, for some integer $e\geq1$, the reflexive sheaf
\(
(F_X^e)_*\cO_X
\)
is weakly positive over $U$. Then
\(
\omega_X^{[p-1]}\simeq\cO_X,
\)
and both $\Omega^1_{U/k}$ and $(F_X^e)_*\cO_X|_U$ are weakly flat.
In particular, they are strongly numerically flat.

If, moreover, there exists a finite surjective
quasi-\'etale morphism $f:Y\to X$ with $Y$ smooth,
then there exist an ordinary abelian variety $A$
and a finite surjective quasi-\'etale morphism $A\to X$.
\end{Proposition}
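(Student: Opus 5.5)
We follow the proof of Corollary~\ref{cor:frobenius-weak-positivity-abelian}, working on $U$ and then extending across $X\setminus U$ by reflexivity.

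\textbf{Step 1: $\Omega^1_{U/k}$ is weakly positive over $U$.} Since $U\subseteq X_{\mathrm{reg}}$ is smooth and $(F_X^e)^{-1}(U)=U$, the restriction $(F_X^e)_*\cO_X|_U=(F_U^e)_*\cO_U$ is a vector bundle. By hypothesis it is weakly positive over $U$. Lemma~\ref{snf-wp:elementary-properties}(1) shows that its Frobenius pullback is weakly positive over $U$. Lemma~\ref{lem:frobenius-cotangent-quotient} then exhibits $\Omega^1_{U/k}$ as a quotient, so Corollary~\ref{operations:weak-positivity} gives weak positivity of $\Omega^1_{U/k}$ over $U$.

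\textbf{Step 2: $\omega_X^{[p-1]}\simeq\cO_X$.} Weak positivity of the determinant gives that $\omega_U$ is weakly positive over $U$. General complete intersection curves of a very ample $H$ lie in $U$, so restricting to them yields $K_X\cdot H^{n-1}\ge0$. Global $F$-splitting and duality for $F_X$ give a nonzero section of $\omega_X^{[1-p]}$. Its effective zero divisor $D$ satisfies $D\cdot H^{n-1}=(1-p)K_X\cdot H^{n-1}\le0$, hence $D=0$. As these are reflexive rank-one sheaves on a normal variety, this gives $\omega_X^{[p-1]}\simeq\cO_X$.

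\textbf{Step 3: weak flatness on $U$.} Now $\det(\Omega^1_U)^*=\omega_U^{-1}$ is torsion in $\operatorname{Pic}U$, so it is weakly positive. Lemma~\ref{snf-wp:weak-flat-determinant} therefore makes $\Omega^1_{U/k}$ weakly flat.

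For $\cB:=(F_U^e)_*\cO_U$, the first approach is to compute $\det\cB$. Grothendieck duality gives $\cB^*\simeq(F_U^e)_*\omega_U^{1-p^e}$, and since $\omega_U$ is $(p-1)$-torsion, $\omega_U^{1-p^e}$ is trivial; hence $\cB^*\simeq\cB$. Thus $\cB^*$ is weakly positive directly, and $\cB$ is weakly flat. If this self-duality fails in some detail, the fallback is to compute $\det\cB$ through the known formula $\det (F_*\cO)^{\otimes 2}\simeq\omega^{\otimes(\cdots)}$, which is torsion.

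Strong numerical flatness then follows from Theorem~\ref{snf-wp:equivalence} applied to $U\subset X$. This needs $n\ge2$, and $X$ serves as a small compactification of $U$.

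\textbf{Step 4: the quasi-étale cover.} Let $f:Y\to X$ be finite quasi-étale with $Y$ smooth. By purity of the branch locus, $f$ is étale over $X_{\mathrm{reg}}$, so $V:=f^{-1}(U)$ is big in $Y$ and $f|_V$ is étale. Then $\Omega^1_V=f^*\Omega^1_U$ is weakly flat by Lemma~\ref{snf-wp:elementary-properties}(1). Its reflexive extension is $\Omega^1_Y$, and Theorem~\ref{nf-normal:prop-weak-positivity} (with $Y$ smooth) shows that $\Omega^1_Y$ is numerically flat.

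It remains to show that $Y$ is globally $F$-split. This is the main obstacle. The plan is to use $\omega_X^{[p-1]}\simeq\cO_X$: the splitting corresponds to a section of $\omega_X^{[1-p]}$, and it pulls back along the quasi-étale $f$ to a nonvanishing section of $\omega_Y^{1-p}\simeq f^{[*]}\omega_X^{[1-p]}$. Using the trace, this section induces a Frobenius splitting of $Y$. The point to check is that the induced map $F_*\cO_Y\to\cO_Y$ sends $1\mapsto1$; this holds because on the étale locus Frobenius commutes with étale base change.

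Ejiri--Yoshikawa \cite[Theorem 1.2]{Ejiri-Yoshikawa} then gives an étale cover $A\to Y$ with $A$ an ordinary abelian variety. The composite $A\to X$ is finite, surjective and quasi-étale.
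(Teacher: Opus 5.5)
Your proposal is correct and follows essentially the same route as the paper: you use the Frobenius-pullback quotient onto $\Omega^1_{U/k}$, the degree argument forcing the effective divisor of the splitting section of $\omega_X^{[1-p]}$ to vanish, the determinant criterion and Frobenius duality $((F_U^e)_*\cO_U)^\vee\simeq(F_U^e)_*\cO_U$ for weak flatness, and Theorem~\ref{snf-wp:equivalence} for strong numerical flatness (your fallback via $\det\cB$ is not needed). The only differences are cosmetic: you obtain the splitting of $Y$ from the pulled-back section of $\omega_Y^{1-p}$ and check $1\mapsto1$ on the \'etale locus, whereas the paper pulls back the splitting of $U$ to $V$ and extends it by \cite[Lemma~1.1.7]{Brion-Kumar}, and you prove numerical flatness of $\Omega^1_Y$ directly rather than citing Corollary~\ref{cor:frobenius-weak-positivity-abelian}.
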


\begin{proof}
On $U$, Lemma~\ref{lem:frobenius-cotangent-quotient}
gives a surjection
\(
(F_U^e)^*(F_U^e)_*\cO_U
\twoheadrightarrow \Omega^1_{U/k}.
\)
Since weak positivity is preserved by pullback and
locally free quotients, $\Omega^1_{U/k}$ is weakly
positive over $U$.

Choose a very ample divisor $H$ on $X$.
Restriction to a general complete intersection curve
contained in $U$ gives
\(
K_X\cdot H^{n-1}\geq0.
\)
The restriction of a global Frobenius splitting to $U$,
together with finite duality on $U$, determines a
nonzero section of $\omega_U^{\otimes(1-p)}$.
Since $X$ is normal and $U$ is big, this extends to
a nonzero section
\(
s\in H^0(X,\omega_X^{[1-p]}).
\)
Its associated effective Weil divisor $D$ satisfies
\[
D\sim(1-p)K_X,
\qquad
D\cdot H^{n-1}
=(1-p)K_X\cdot H^{n-1}\leq0.
\]
An effective nonzero Weil divisor has positive
intersection with $H^{n-1}$. Hence $D=0$, and therefore
\(
\omega_X^{[p-1]}\simeq\cO_X.
\)
In particular, $\omega_U^{-1}$ is torsion and hence
weakly positive. Lemma~\ref{snf-wp:weak-flat-determinant}
shows that $\Omega^1_{U/k}$ is weakly flat.

Put $q=p^e$. Since $(p-1)$ divides $(q-1)$, we have
\(
\omega_U^{\otimes(1-q)}\simeq\cO_U.
\)
So finite duality for Frobenius on the smooth
variety $U$ gives
\[
\bigl((F_U^e)_*\cO_U\bigr)^\vee
\simeq
(F_U^e)_*\omega_U^{\otimes(1-q)}\simeq(F_U^e)_*\cO_U.
\]
Thus $(F_X^e)_*\cO_X|_U$  is also weakly flat.
Theorem~\ref{snf-wp:equivalence} now implies strong
numerical flatness of both bundles.

Finally, suppose that $f:Y\to X$ is finite,
surjective and quasi-\'etale, with $Y$ smooth.
By purity, $f$ is \'etale over $X_{\mathrm{reg}}$.
Set $V=f^{-1}(U)$.
Then
\[
\Omega^1_{Y/k}|_V
\simeq
(f|_V)^*\Omega^1_{U/k}
\]
is weakly positive over $V$.

The Frobenius splitting of $U$ pulls back along the
\'etale morphism $V\to U$ to a splitting of $V$, since
the corresponding Frobenius square is cartesian.
As $Y$ is normal and
$\operatorname{codim}_Y(Y\setminus V)\geq2$, this splitting
extends uniquely to a Frobenius splitting of $Y$ by
\cite[Lemma~1.1.7]{Brion-Kumar}.
Hence $Y$ is globally $F$-split.

Applying Corollary~\ref{cor:frobenius-weak-positivity-abelian}
to $Y$ and $V$ gives a finite surjective \'etale morphism
$A\to Y$ from an ordinary abelian variety.
Its composition with $f$ is the required finite
surjective quasi-\'etale morphism $A\to X$.
\end{proof}

\begin{Remark}
We expect that if $X$
is globally $F$-split and strongly $F$-regular,
and $\Omega^1_{X/k}$ is weakly flat  over
$X_{\mathrm{reg}}$, then there exist an ordinary abelian
variety $A$ and a finite surjective quasi-\'etale morphism
\(
\pi:A\to X.
\) Together with the above results, this would lead to  an analogue of Theorem \ref{main4} in positive characteristic.
\end{Remark}

\section{Weak positivity and unitary flatness on normally big varieties in characteristic zero}
\label{sec:unitary-weak-positivity}

\subsection{Small compactifications with klt singularities}

In this subsection the ground field is algebraically closed of
characteristic zero. The following result generalizes the case of
Theorem~\ref{nf-normal:prop-weak-positivity} in which $\overline X$ is smooth.
Below hats denote orbifold Chern classes. For varieties with quotient
singularities in codimension $2$, the corresponding Chern numbers
coincide with those defined in \cite{La-Inters}. They can be computed by restriction to sufficiently general
complete intersection surfaces, using the surface Chern numbers
introduced earlier in the paper.

\begin{Theorem}\label{snf-wp:equivalence-klt}
	Let $X$ be a smooth variety of dimension
	$n\ge2$. Assume that $X$ admits a small projective
	compactification $j:X\hookrightarrow\overline X$ with
	klt singularities.
	Fix an ample line bundle $H$ on $\overline X$, let $\cE$
	be a vector bundle on $X$, and set $\cF:=j_*\cE$.
	Then the following conditions are equivalent:
	\begin{enumerate}
		\item $\cE$ is strongly numerically flat.
		\item $\cE$ is weakly flat.
		\item $\cF$ is slope $H$-semistable and
		\[
		\widehat c_1(\cF)\cdot H^{n-1}=0,
		\qquad
		\int_{\overline X}\widehat{\ch}_2(\cF)H^{n-2}=0.
		\]
		\item There exists a finite surjective Galois morphism
		$\gamma:Y\to\overline X$, \'etale over
		$\overline X_{\mathrm{reg}}$, such that
		$\gamma^{[*]}\cF$ is a numerically flat vector bundle
		on $Y$.
	\end{enumerate}
\end{Theorem}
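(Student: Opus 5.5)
The plan is to prove the cycle $(4)\Rightarrow(2)\Rightarrow(1)\Rightarrow(3)\Rightarrow(4)$, passing to a quasi-étale cover so that the klt case reduces to the already settled case of Theorem~\ref{nf-normal:prop-weak-positivity} with locally free reflexive extension.

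\emph{Step $(4)\Rightarrow(2)$.} Let $\gamma:Y\to\overline X$ be as in (4) and put $V:=\gamma^{-1}(X)$. Since $\gamma$ is finite, $V$ is big in the normal projective variety $Y$. The bundle $\gamma^{[*]}\cF$ is numerically flat on $Y$, so it and its dual are weakly positive. Restricting to $V$, the bundle $(\gamma|_V)^*\cE$ is weakly flat on $V$. To descend weak positivity along the finite surjective map $\gamma|_V:V\to X$, I will use the trace or norm argument. With $G$ the Galois group, $\Sym^{ab}\cE\otimes H^b$ is a direct summand of $(\gamma|_V)_*(\gamma|_V)^*(\cdot)$ via the averaged trace, since we are in characteristic zero. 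Combined with Lemma~\ref{operations:finite-flat-generation}-type uniform bounds on $\gamma_*\cO_Y$ (reflexive on $\overline X$, locally free on $X$ since $X$ is smooth and $V\to X$ is étale), this gives generation downstairs after an extra fixed twist $H^c$. Absorbing that twist is done exactly as in Proposition~\ref{operations:root-cover-criterion}: take $a$ larger.

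\emph{Step $(2)\Rightarrow(1)$.} Here I use Corollary~\ref{cor:weak-flat-implies-snf-char-zero}, which is stated in the introduction as holding on any normal variety. If I must avoid forward references, I argue directly. Given $g:S\to X$ from a normally big surface, $g^*\cE$ is weakly flat. Resolve the compactification to a smooth $T$ containing $S$ as a big open subset (up to blowing up points outside $S$). Lemma~\ref{surface-weak-positivity} then gives numerical flatness of the extension on $T$, and Lemma~\ref{lem:nf-birational-chern} gives $\int\ch_2=0$ on $\overline S$.

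\emph{Step $(1)\Rightarrow(3)$.} Restricting to general complete intersection curves in $X$ gives semistability and $\widehat c_1\cdot H^{n-1}=c_1\cdot H^{n-1}=0$. For the second Chern number, take a general complete intersection surface $T\subset\overline X$ of $|mH|$. It is klt, so it has quotient singularities, and $T\cap X$ is big in $T$. Strong numerical flatness gives $\int_T\ch_2(\cF|_T)=0$ in the sense of \cite{La-Inters}, and this coincides with the orbifold number by the remark preceding the theorem. Hence $\int\widehat\ch_2(\cF)H^{n-2}=0$.

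\emph{Step $(3)\Rightarrow(4)$, the main obstacle.} I would take a quasi-étale Galois cover $\gamma:Y\to\overline X$ that is étale over $\overline X_{\mathrm{reg}}$ and quasi-étale over $\overline X$, such that $Y$ has only singularities whose local fundamental groups are killed in codimension two. This is the Greb--Kebekus--Peternell type construction: a cover that is "maximally quasi-étale", where reflexive sheaves coming from flat local systems extend to vector bundles. On $Y$ the sheaf $\gamma^{[*]}\cF$ is semistable with vanishing $c_1\cdot H^{n-1}$. Orbifold Chern numbers pull back multiplicatively, and on $Y$ they become ordinary ones in codimension two. The klt version of the Simpson correspondence (Lu--Taji, GKP) then gives that $\gamma^{[*]}\cF$ is an extension of stable flat bundles with unitary monodromy on $Y_{\mathrm{reg}}$; these extend across the singularities of $Y$ by the choice of the cover, so the bundle is locally free. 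Then Theorem~\ref{nf-normal:prop-weak-positivity}, in the locally free case $(3)\Rightarrow(4)$, gives numerical flatness on $Y$.

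The delicate point is local freeness of $\gamma^{[*]}\cF$ on all of $Y$ and the compatibility of orbifold and ordinary Chern numbers under $\gamma$. I expect to cite the Lu--Taji result for this and to check that its hypotheses match after restricting to general surfaces.
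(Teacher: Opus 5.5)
Your cycle $(4)\Rightarrow(2)\Rightarrow(1)\Rightarrow(3)\Rightarrow(4)$ is sound, provided $(2)\Rightarrow(1)$ is taken from Corollary~\ref{cor:weak-flat-implies-snf-char-zero}. That corollary is proved via Propositions~\ref{prop:weak-flat-surface-chern} and~\ref{prop:stable-unitary-weak-positivity} without using the present theorem, so there is no circularity.

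Your steps $(1)\Rightarrow(3)$, $(3)\Rightarrow(4)$ and $(4)\Rightarrow(2)$ essentially match the paper, with two small remarks. For $(4)\Rightarrow(2)$, the map $V\to X$ is finite \'etale, so Lemma~\ref{operations:finite-flat-generation} applies as is. For $(3)\Rightarrow(4)$, the unitary filtration from \cite[Theorem~1.4]{LT} already gives numerical flatness, so your detour through Theorem~\ref{nf-normal:prop-weak-positivity} is unnecessary; on the other hand, a spreading-out remark is needed, since \cite{LT} works over $\mathbb C$.

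The real difference is where weak flatness enters. The paper proves $(2)\Rightarrow(3)$ directly. On a general complete intersection surface $S$ it takes a \emph{finite} cover $\pi:T\to S$ with $\pi^{[*]}\cF_S$ locally free \cite[Lemma~7.1]{La-Bog-normal}. Since $\pi^{-1}(S^\circ)$ stays big in $T$, Lemma~\ref{surface-weak-positivity} gives numerical flatness on $T$, hence $\widehat{\ch}_2=0$. The paper then gets $(1)$ for arbitrary normally big surfaces from $(4)$: $\gamma^{[*]}\cF$ is a numerically flat vector bundle on the projective variety $Y$, hence strongly numerically flat by Theorem~\ref{nf-normal:prop-weak-positivity}, and one concludes by degree multiplicativity. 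Your route instead imports the characteristic-zero machinery of Section~6 (the Cao--Deng--Matsumura criterion, unitary filtrations, Lemma~\ref{lem:unitary-surface-chern}). This gives $(2)\Rightarrow(1)$ with no klt compactification at all, but it is far less economical here.

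Your fallback ``direct'' argument for $(2)\Rightarrow(1)$ does not work. A resolution $T\to\overline S$ must blow up the singular points of $\overline S$. These may lie in $S$ itself, and they typically occur at the boundary points. So the open set on which the bundle is given is not big in $T$: its complement contains exceptional curves. Lemma~\ref{surface-weak-positivity} needs a big open subset, and Lemma~\ref{lem:nf-birational-chern} needs a bundle that is numerically flat on all of $T$; nothing in your argument produces one.

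The conclusion genuinely fails, as Example~\ref{ex:normalized-cotangent-not-nef} shows. There $\cB_X|_{X_{\mathrm{reg}}}$ is weakly flat, while the minimal resolution $\rho:T\to X$ is a K3 surface. On $T$, every numerically flat bundle is trivial, by the Demailly--Peternell--Schneider filtration together with $\pi_1(T)=1$ and $H^1(T,\cO_T)=0$. If an extension of $\cB_X|_{X_{\mathrm{reg}}}$ to $T$ were numerically flat, it would be trivial, and so $\cB_X\simeq(\rho_*\cO_T^{\oplus 3})^{**}$ would be free. But $\cB_X\simeq\pi_*\cO_A$ is not locally free at the $A_2$ points. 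The nontrivial local monodromy at quotient singularities is exactly what the paper removes with finite covers, not with birational ones.
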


\begin{proof}
	The implication $(3)\Rightarrow(4)$ follows from
	\cite[Theorem~1.4]{LT}. The reflexive pullback on the
	cover constructed there is locally free and admits a
	filtration with unitary flat quotients, hence it is
	numerically flat. Although the result is formulated
	over $\mathbb C$, standard spreading out and base-change
	arguments give the assertion over any algebraically
	closed field of characteristic zero.
	We prove $(1)\Rightarrow(3)$,
	$(2)\Rightarrow(3)$, and $(4)\Rightarrow(1),(2)$.
	
	Assume either (1) or (2). In both cases $\cE$ is
	numerically flat, by definition or by
	Lemma~\ref{snf-wp:elementary-properties}.
	Restricting to general complete intersection curves
	contained in $X$ gives
	\begin{equation}\label{snf-wp:klt-first-chern}
		\cF\text{ is slope }H\text{-semistable},
		\qquad \widehat c_1(\cF)\cdot H^{n-1}=0.
	\end{equation}
	Indeed, a destabilizing subsheaf would restrict to a
	positive-degree subsheaf of a semistable bundle of
	degree zero.
	
	Choose a sufficiently general complete intersection
	surface $S\subset\overline X$, cut out by sufficiently
	high multiples of $H$, such that $S$ is normal with
	quotient singularities, $S^\circ:=S\cap X$ is big in
	$S$, and $\cF_S:=\cF|_S$ is reflexive.
	If $n=2$, take $S=\overline X$.
	Under (1), the defining surface condition gives
	$\int_S\widehat{\ch}_2(\cF_S)=0$.
	
	Under (2), \cite[Lemma~7.1]{La-Bog-normal} provides a
	finite surjective morphism $\pi:T\to S$ from a normal
	projective surface such that
	$\mathcal W:=\pi^{[*]}\cF_S$ is locally free.
	Both $\mathcal W$ and $\det\mathcal W^*$ are weakly
	positive over $\pi^{-1}(S^\circ)$, hence nef on $T$
	by Lemma~\ref{surface-weak-positivity}.
	Thus $\mathcal W$ is numerically flat, and
	\[
	0=\int_T\ch_2(\mathcal W)
	=\deg(\pi)\int_S\widehat{\ch}_2(\cF_S).
	\]
	In either case, compatibility with complete
	intersection restriction gives
	\begin{equation*}
		\int_{\overline X}\widehat{\ch}_2(\cF)H^{n-2}=0.
	\end{equation*}
	Together with \eqref{snf-wp:klt-first-chern}, this
	proves (3).
	
	Now assume (4), and set
	$\cV:=\gamma^{[*]}\cF$ and $Y_X:=\gamma^{-1}(X)$.
	Then $\cV|_{Y_X}=\gamma|_{Y_X}^*\cE$.
	Since $\cV$ and $\cV^*$ are nef on the projective
	variety $Y$, they are weakly positive.
	Restriction and finite descent of weak positivity
	give $\cE\in\Vect^{\wf}(X)$, proving (2).
	
	Numerical flatness of $\cE$ also follows by finite
	descent. To prove (1), let $g:V\to X$ be a morphism
	from a normally big surface, let
	$i:V\hookrightarrow\overline V$ be its small
	compactification, and put $\cG=i_*g^*\cE$.
	Normalize a component of $V\times_XY_X$ dominating
	$V$, obtaining a finite surjective morphism
	$q:V'\to V$ and a morphism $h:V'\to Y_X$.
	Normalizing $\overline V$ in $k(V')$ extends $q$ to a
	finite morphism
	$\overline q:\overline V'\to\overline V$, with $V'$
	big in $\overline V'$.
	Write $i':V'\hookrightarrow\overline V'$.	
	By Theorem~\ref{nf-normal:prop-weak-positivity}, 
	the numerically flat bundle $\cV$ on the projective
	variety $Y$ is strongly numerically flat.
	Apply its defining surface condition to the composite
	$V'\xrightarrow{h}Y_X\hookrightarrow Y$.
	Since
	\[
	i'_*h^*(\cV|_{Y_X})\simeq\overline q^{[*]}\cG,
	\]
	\cite[Proposition~4.2(4)]{La-Inters} gives
	\[
	0=\int_{\overline V'}\ch_2(\overline q^{[*]}\cG)
	=\deg(\overline q)\int_{\overline V}\ch_2(\cG).
	\]
	This proves (1) and completes the proof.
\end{proof}

\begin{Remark}
	The finite Galois cover in
	Theorem~\ref{snf-wp:equivalence-klt}(4) can be chosen
	independently of $\cE$.
	Indeed, \cite[Theorem~1.4]{LT} provides a single cover
	depending only on $\overline{X}$.
\end{Remark}

\begin{Corollary}\label{cor:klt-filtered-connection}
	Let $X$ be a complex projective klt variety of dimension
	$n\ge2$, and let $j:U\hookrightarrow X$ be a smooth
	big open subset.
	For a vector bundle $\cE$ on $U$, the following conditions
	are equivalent:
	\begin{enumerate}
		\item $\cE$ is weakly flat.
		\item $\cE$ admits an integrable algebraic connection
		$\nabla$ and a filtration
		\[
		0=\cE_0\subset\cE_1\subset\cdots\subset\cE_s=\cE
		\]
		by $\nabla$-invariant subbundles such that the induced
		connections on $\cE_i/\cE_{i-1}$ have unitary monodromy.
	\end{enumerate}
\end{Corollary}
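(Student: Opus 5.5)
The plan is to reduce both directions to Theorem~\ref{snf-wp:equivalence-klt}, applied with $\overline X:=X$ and the smooth big open subset $U$ in the role of the smooth variety there, and then to pass between numerically flat bundles on a smooth cover and flat connections.

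\emph{Direction $(1)\Rightarrow(2)$.} Assume $\cE$ is weakly flat. Theorem~\ref{snf-wp:equivalence-klt}(4) then gives a finite Galois cover $\gamma:Y\to X$, \'etale over $X_{\mathrm{reg}}$, such that $\cV:=\gamma^{[*]}j_*\cE$ is a numerically flat vector bundle on $Y$; by the Remark following that theorem, the cover may be chosen independently of $\cE$. Let $G$ be the Galois group. Since $\cV$ is the reflexive pullback, it carries a natural $G$-linearization.

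The cover in \cite[Theorem~1.4]{LT} is built so that $Y$ is smooth in codimension two, or at least so that numerically flat reflexive bundles on it behave as on a smooth variety. Granting this, the Demailly--Peternell--Schneider theorem combined with Simpson's results (\cite[Corollary~3.10]{Simpson1992}, in the form used in \cite{LT}) equips $\cV$ with an integrable connection $\nabla_Y$ and a $\nabla_Y$-invariant filtration whose quotients are unitary flat. If $Y$ is singular, I will work on a $G$-equivariant resolution, or only over $\gamma^{-1}(U)$, which suffices.

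The key point is to make all of this $G$-equivariant. The filtration I will use is canonical: for instance, the socle (maximal semisimple subobject) filtration in the category of numerically flat bundles, equivalently of flat bundles with unitary semisimple graded pieces. Being canonical, it is $G$-stable. The connection is also canonical: Simpson's correspondence identifies numerically flat bundles with an abelian subcategory of local systems, via the equivalence with semistable Higgs bundles with zero Higgs field and vanishing Chern classes. This equivalence is functorial, so $G$ acts compatibly and $\nabla_Y$ is $G$-invariant.

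Restricting to $\gamma^{-1}(U)\to U$, which is \'etale, the connection and filtration descend to $\cE$. The quotients descend to flat bundles whose pullbacks have unitary monodromy. Their monodromy groups therefore contain a finite-index subgroup with unitary monodromy, and so they are unitary themselves, since a representation that is unitary on a finite-index subgroup is unitarizable by averaging. Algebraicity of $\nabla$ follows from Deligne's regularity, or simply because the descended connection comes from an algebraic one.

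\emph{Direction $(2)\Rightarrow(1)$.} Each unitary flat quotient $\cE_i/\cE_{i-1}$ on $U$ has monodromy in $\pi_1(U)=\pi_1(X_{\mathrm{reg}})$. By the Greb--Kebekus--Peternell extension results, after passing to the quasi-\'etale cover $\gamma$ (whose fundamental-group map kills enough), the pullback extends to a flat bundle on $Y$. So $\gamma^*\cE|_{\gamma^{-1}(U)}$, together with its invariant filtration and regular singular connection, extends across $\gamma^{-1}(X\setminus U)$. This extension again relies on Deligne's extension and codimension two: the monodromy around a codimension-$\ge2$ locus is trivial on a smooth model.

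The extended bundle on $Y$ is an iterated extension of unitary flat bundles on a projective variety and is therefore numerically flat. Condition (4) of Theorem~\ref{snf-wp:equivalence-klt} then holds, and that theorem gives weak flatness.

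\emph{Main obstacle.} The main obstacle is the extension step for the connection across $Y\setminus\gamma^{-1}(U)$ when $Y$ is singular. I would first try to avoid it: instead show directly that $\cF=j_*\cE$ satisfies condition (3), namely semistability and vanishing of the orbifold $\widehat c_1$ and $\widehat{\ch}_2$. These can be computed on general complete intersection surfaces contained mostly in $U$, where the unitary flat filtration gives vanishing Chern numbers by Chern--Weil on the orbifold.
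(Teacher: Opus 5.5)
Your proposal is correct in substance. The first direction is essentially the paper's argument with a citation unpacked; the second takes a genuinely different route.

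\textbf{Direction $(1)\Rightarrow(2)$.} The paper passes through condition~(3) of Theorem~\ref{snf-wp:equivalence-klt} and quotes \cite[Theorem~1.4 and Section~4.B]{LT}. That construction already supplies a flat connection on $(j_*\cE)|_{X_{\mathrm{reg}}}$ preserving a filtration with unitary flat quotients, and Hartogs and GAGA then algebraize it on $U$.

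You rebuild this from condition~(4): DPS and Simpson on a $G$-equivariant resolution of $Y$, canonical choices, Galois descent along $\gamma^{-1}(U)\to U$, and unitarization by averaging. This works. Your guess that $Y$ is smooth in codimension two is not needed: $\gamma^{[*]}j_*\cE$ is already a numerically flat bundle, and the resolution can be taken to be an isomorphism over $Y_{\mathrm{reg}}\supseteq\gamma^{-1}(X_{\mathrm{reg}})$.

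The one point you should state explicitly is why $\nabla_Y$ is $G$-invariant. The reason is that Simpson's correspondence commutes with pullback by automorphisms and, on objects with zero Higgs field, preserves the underlying holomorphic bundle and morphisms.

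\textbf{Direction $(2)\Rightarrow(1)$.} The paper uses Proposition~\ref{prop:unitary-uniform-generation}. Deligne canonical extensions on a log resolution of $X$, Timmerscheidt's vanishing and Castelnuovo--Mumford regularity give a single $c$ with $\Sym^{[m]}j_*\cE\otimes H^c$ generated over $U$ for all $m$. This needs no cover and no klt hypothesis.

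Your route through a Greb--Kebekus--Peternell cover and $(4)\Rightarrow(2)$ also works, but not for the reason you give. Codimension two does not make local monodromy at singular points trivial: a transversal $A_1$ point has link with $\pi_1=\mathbb Z/2$. This is why the step looks like an obstacle to you.

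What actually makes it work is the following. On a quasi-\'etale cover with $\hat\pi_1(Y_{\mathrm{reg}})\to\hat\pi_1(Y)$ an isomorphism, residual finiteness of finitely generated linear groups (Malcev) forces the monodromy of $\gamma^*\cE$ to factor through $\pi_1(Y^{\an})$. Here $\gamma^*\cE$ lives on $\gamma^{-1}(U)$, which is big in $Y_{\mathrm{reg}}$, so its monodromy is a representation of $\pi_1(Y_{\mathrm{reg}})$. Consequently:
\begin{enumerate}
\item the whole local system and its sub-local systems extend to $Y$;
\item Hartogs and GAGA identify the associated bundle with $\gamma^{[*]}j_*\cE$;
\item its pullback to any curve is an iterated extension of unitary flat bundles, hence numerically flat.
\end{enumerate}
That is condition~(4), after a Galois closure if necessary, so your orbifold Chern--Weil fallback is unnecessary.

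\textbf{Trade-off.} Your approach treats both directions uniformly via covers, but it depends on the klt machinery. The paper's argument for $(2)\Rightarrow(1)$ is more elementary and holds on every smooth normally big variety, as Theorem~\ref{main3}(1) requires.
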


\begin{proof}
	Assume (1), put $\cF:=j_*\cE$, and fix an ample line
	bundle $H$ on $X$.
	By Theorem~\ref{snf-wp:equivalence-klt}, $\cF$ is slope
	$H$-semistable, $\widehat c_1(\cF)\cdot H^{n-1}=0$ and	$\widehat{\ch}_2(\cF)\cdot H^{n-2}=0.$
	The construction in
	\cite[Theorem~1.4 and Section~4.B]{LT} gives an integrable
	holomorphic connection on
	$(\cF|_{X_{\mathrm{reg}}})^{\an}$ preserving a filtration
	with unitary flat quotients.
	Restricting to $U$, Hartogs extension and GAGA algebraize
	the connection and its invariant filtration.
	This proves (2).
	Conversely, (2) implies (1) by
	Proposition~\ref{prop:unitary-uniform-generation}.
\end{proof}

\medskip

Combining Theorem~\ref{snf-wp:equivalence-klt} with
\cite[Theorem~1.2]{GKP-projectively-flat}, we obtain the
following characterization.

\begin{Corollary}\label{cor:normalized-cotangent-weak-positivity}
	Let $X$ be a complex projective klt variety of dimension
	$n\ge2$.
	Then the following conditions are equivalent:
	\begin{enumerate}
		\item The vector bundle
		\(
		\Sym^n\Omega_{X_{\mathrm{reg}}}^1\otimes\omega_{X_{\mathrm{reg}}}^{-1}
		\)
		is weakly positive over $X_{\mathrm{reg}}$.
		\item There exist an abelian variety $A$ and a finite
		surjective quasi-\'etale morphism $A\to X$. 
	\end{enumerate}
\end{Corollary}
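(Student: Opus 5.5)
The idea is to reduce condition (1) to weak flatness of a twisted cotangent bundle over $U:=X_{\mathrm{reg}}$ and then apply Theorem~\ref{snf-wp:equivalence-klt} together with \cite[Theorem~1.2]{GKP-projectively-flat}. Since $X$ is klt of dimension $n\ge2$, $X_{\mathrm{reg}}$ is a big smooth open subset and $X$ is a small klt compactification of it. Put $\cW:=\Sym^n\Omega^1_U\otimes\omega_U^{-1}$.

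\textbf{Step 1: $\cW$ is weakly flat.} Its determinant is a power of $\omega_U^{a}\otimes\omega_U^{-b}$ with $a=\binom{2n-1}{n}\cdot n/n\cdot$(degree count) and $b=\rk\cW$. Because $\Sym^n$ of a rank $n$ bundle has $c_1=\binom{2n-1}{n}c_1$ and $\rk=\binom{2n-1}{n}$, we get $\det\cW\simeq\cO_U$. Hence $\det\cW^*$ is trivial, and in particular weakly positive. Lemma~\ref{snf-wp:weak-flat-determinant} then shows that $\cW$ is weakly flat over $U$.

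\textbf{Step 2: apply Theorem~\ref{snf-wp:equivalence-klt}.} Applied to $\cW$, the theorem gives that $\cF=j_*\cW=(\Sym^n\Omega^1_X\otimes\omega_X^{-1})^{[**]}$ is $H$-semistable, with vanishing orbifold $\widehat c_1\cdot H^{n-1}$ and $\widehat{\ch}_2\cdot H^{n-2}$. It also gives a quasi-\'etale Galois cover $\gamma:Y\to X$ on which $\gamma^{[*]}\cF$ is a numerically flat vector bundle.

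\textbf{Step 3: invoke \cite{GKP-projectively-flat}.} This is the main obstacle: we must translate the conclusion of Step 2 into their hypothesis. I would first try to show that the vanishing of $\widehat{\ch}_2(\cF)H^{n-2}$ is exactly the equality case of the orbifold Bogomolov inequality for $\Omega^{[1]}_X$. Concretely, $\widehat\Delta(\Sym^n\Omega\otimes\omega^{-1})$ is a positive multiple of $\widehat\Delta(\Omega^{[1]}_X)$, and $\widehat c_1$ of $\cF$ vanishes identically. From Step 2 one also needs semistability of $\Omega^{[1]}_X$. This should come from semistability of $\cF$: a destabilizing subsheaf of $\Omega^{[1]}_X$ would yield a destabilizing piece of the symmetric power, argued on the cover $Y$ or on general complete intersection curves, where everything is a numerically flat twist.

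Alternatively, and more simply, on $Y$ we have that $\gamma^{[*]}(\Sym^n\Omega^1\otimes\omega^{-1})$ is numerically flat, hence nef. Nefness is preserved under finite quasi-\'etale descent in the sense of \cite{GKP-projectively-flat}, so their nefness hypothesis holds on $X$. Then \cite[Theorem~1.2/1.3]{GKP-projectively-flat} yields either a quasi-\'etale cover by an abelian variety, or $X$ being a quotient of projective space when $K_X$ is not numerically trivial. The latter is excluded because $\omega_X\cdot H^{n-1}\ge0$: weak flatness of $\cW$ forces $\Omega^1$ restricted to curves in $U$ to be a twist of a numerically flat bundle, and the $\PP^n$ case has negative $K_X$. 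Showing that $K_X\equiv0$ (rather than $K_X$ projectively flat with $-K_X$ ample) is where care is needed. I would pass to $Y$, where $\Omega^{[1]}_Y\otimes(\text{root of }\omega_Y^{-1})$ is numerically flat on the smooth locus.

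\textbf{Converse.} For (2)$\Rightarrow$(1): on an abelian variety $A$ with quasi-\'etale $\pi:A\to X$, the bundle $\pi^*\cW$ is trivial over $\pi^{-1}(U)$, since $\pi$ is \'etale there by purity. Trivial bundles are weakly positive, and weak positivity descends along finite surjective maps, as in the proof of Theorem~\ref{snf-wp:equivalence-klt}. This gives (1).
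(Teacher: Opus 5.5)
Your Step~1, the first route in Step~3, and your converse match the paper's proof, and that route works. The identity $\det\cW\simeq\cO_U$ holds as line bundles: for $\rk\cE=n$ one has $\det\Sym^n\cE\simeq(\det\cE)^{\otimes\binom{2n-1}{n}}$ and $\rk\Sym^n\cE=\binom{2n-1}{n}$.

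With this, Lemma~\ref{snf-wp:weak-flat-determinant} and Theorem~\ref{snf-wp:equivalence-klt} give semistability of $\cB:=j_*\cW$ and $\widehat{\ch}_2(\cB)\cdot H^{n-2}=0$. Semistability of $\Omega^{[1]}_X$ follows: a destabilizing $\cG\subset\Omega^{[1]}_X$ would give $(\Sym^n\cG\otimes\cO_X(-K_X))^{**}\hookrightarrow\cB$ of slope $n\bigl(\mu_H(\cG)-\mu_H(\Omega^{[1]}_X)\bigr)>0$. Since $\widehat c_1(\cB)=0$, the orbifold splitting principle turns the $\widehat{\ch}_2$-vanishing into $\bigl(2n\,\widehat c_2-(n-1)\widehat c_1^{\,2}\bigr)(\Omega^{[1]}_X)\cdot H^{n-2}=0$.

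These are exactly the hypotheses of \cite[Theorem~1.2]{GKP-projectively-flat}, which gives the abelian cover directly. You need neither the cover $\gamma$ nor a separate argument that $K_X\equiv0$. There is also no $\PP^n$ case to exclude, since $\PP^n$ violates the equality: $(2nc_2-(n-1)c_1^2)(T_{\PP^n})=(n+1)h^2$.

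The alternative you call simpler fails, and you should drop it. Its key claim is that nefness of the numerically flat bundle $\gamma^{[*]}\cB$ on $Y$ descends to nefness of $\cB$ on $X$ in the sense of \cite{GKP-projectively-flat}. This is false. The reflexive pullback $\gamma^{[*]}\cB=(\gamma^*\cB)^{**}$ contains $\gamma^*\cB/\mathrm{torsion}$ but is not a quotient of $\gamma^*\cB$ over $X_{\mathrm{sing}}$. So it gives no control over quotients of $\cB|_C$ for curves $C$ through singular points.

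Example~\ref{ex:normalized-cotangent-not-nef} is a counterexample in precisely your setting. There $\pi\colon A\to X$ is a Galois quasi-\'etale cover with $\pi^{[*]}\cB_X\simeq\cO_A^{\oplus3}$, yet $\cB_X|_C$ surjects onto the degree $-3$ bundle $q_*\cO_E$. More generally, the corollary is strictly stronger than the nef criterion \cite[Theorem~1.3]{GKP-projectively-flat}, so no argument passing through nefness of $\cB_X$ can prove it.
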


\begin{proof}
	Let $j:U:=X_{\mathrm{reg}}\hookrightarrow X$ be the inclusion, fix an ample
	line bundle $H$ on $X$, and set
	\[
	\cB_U:=\Sym^n\Omega_U^1\otimes\omega_U^{-1},
	\qquad
	\cB:=j_*\cB_U,
	\qquad
	N:=\rk\cB=\binom{2n-1}{n-1}.
	\]

	Assume (1).  Since $\det\cB_U\simeq\cO_U$, Lemma~\ref{snf-wp:weak-flat-determinant} shows that
	$\cB_U$ is weakly flat.
	Theorem~\ref{snf-wp:equivalence-klt} implies that
	$\cB$ is slope $H$-semistable and
	\(
	\widehat{\ch}_2(\cB)\cdot H^{n-2}=0.
	\)

	We claim that $\Omega_X^{[1]}$ is slope $H$-semistable.
	Otherwise, there is a nonzero proper saturated subsheaf
	$\cG\subset\Omega_X^{[1]}$ with
	\(
	\mu_H(\cG)>\mu_H(\Omega_X^{[1]}).\)
	The induced inclusion on a big open subset extends to
	an inclusion
	\[
	\cA:=
	\bigl(\Sym^n\cG\otimes\cO_X(-K_X)\bigr)^{**}
	\hookrightarrow\cB.
	\]
	However,
	\[
	\mu_H(\cA)
	=
	n\bigl(\mu_H(\cG)-\mu_H(\Omega_X^{[1]})\bigr)>0
	=
	\mu_H(\cB),
	\]
	contradicting semistability of $\cB$.
	
	The splitting principle for orbifold Chern classes gives
	\[
	\widehat{\ch}_2(\cB)\cdot H^{n-2}
	=
	-\frac{N}{n+1}
	\left(
	2n\,\widehat c_2(\Omega_X^{[1]})
	-(n-1)\widehat c_1(\Omega_X^{[1]})^2
	\right)\cdot H^{n-2}.
	\]
	Consequently,
	\[
	\left(
	2n\,\widehat c_2(\Omega_X^{[1]})
	-(n-1)\widehat c_1(\Omega_X^{[1]})^2
	\right)\cdot H^{n-2}=0.
	\]
	Together with semistability of $\Omega_X^{[1]}$, this
	verifies the hypotheses of
	\cite[Theorem~1.2]{GKP-projectively-flat}, which yields (2).
	
	Conversely, assume (2), and let $\gamma:A\to X$ be such
	a morphism. Put $V:=\gamma^{-1}(U)$.
	By purity of the branch locus, the restriction
	$f:=\gamma|_V:V\to U$ is finite \'etale.
	Hence
	\[
	f^*\Omega_U^1\simeq\Omega_A^1|_V
	\simeq\cO_V^{\oplus n},
	\]
	and therefore $f^*\cB_U$ is trivial.
	Finite descent of weak positivity implies that $\cB_U$
	is weakly positive over $U$, proving (1).
\end{proof}

\medskip

Set $\cB_X:=
\bigl(\Sym^n\Omega_X^1\otimes\cO_X(-K_X)\bigr)^{**}.$
If $\cB_X$ is nef in the sense of
\cite{GKP-projectively-flat}, then condition~(1) holds.
Thus the corollary strengthens the criterion in
\cite[Theorem~1.3]{GKP-projectively-flat} by replacing
nefness of $\cB_X$ on $X$ with weak positivity of its
restriction over $X_{\mathrm{reg}}$.
This weakening is strict, as the following example shows.

\begin{Example}\label{ex:normalized-cotangent-not-nef}
	Let $E$ be a complex elliptic curve with an automorphism
	$\sigma$ of order three fixing the origin. Set
	\[
	A:=E\times E,\qquad
	\tau:=(\sigma,\sigma^{-1}),\qquad
	X:=A/\langle\tau\rangle,
	\]
	and let $\pi:A\to X$ be the quotient morphism.
	Then $\pi$ is quasi-\'etale, $X$ has only singularities
	of type $A_2$, and $\omega_X\simeq\cO_X$.
	In particular, condition~(2) of
	Corollary~\ref{cor:normalized-cotangent-weak-positivity}
	holds.
	
	The representation of $\langle\tau\rangle$ on
	$\Sym^2H^0(A,\Omega_A^1)$ contains each of its three
	characters exactly once. Taking invariants on the
	\'etale locus and extending reflexively therefore gives
	\[
	\cB_X=(\Sym^2\Omega_X^1)^{**}
	\simeq\pi_*\cO_A.
	\]
	
	Let $C:=\pi(E\times\{0\})$. Then
	$C\simeq E/\langle\sigma\rangle\simeq\PP^1$.
	Writing $q:E\to C$ for the quotient morphism,
	restriction of functions induces a surjection
	\[
	\cB_X|_C
	\simeq(\pi_*\cO_A)|_C
	\twoheadrightarrow q_*\cO_E.
	\]
	The vector bundle $q_*\cO_E$ has rank three and,
	by the Riemann--Roch theorem, degree $-3$. Hence it is not nef.
	Consequently, $\cB_X$ is not nef, although
	$\cB_X|_{X_{\mathrm{reg}}}$ is weakly positive over
	$X_{\mathrm{reg}}$.
\end{Example}

\subsection{Unitary flat bundles on smooth normally big complex varieties}

In this subsection, $X$ is a smooth  complex variety
of dimension $n>0$, admitting a small normal projective compactification $j:X\hookrightarrow\overline X$, and $H$ is an ample line bundle
on $\overline X$.
A vector bundle on $X$ is called \emph{unitary flat} if its analytification
is induced by a unitary representation of
$\pi_1^{\mathrm{top}}(X^{\mathrm{an}})$.

\begin{Lemma}\label{lem:unitary-log-vanishing}
	Let $Y$ be a smooth complex projective variety of dimension $n$,
	let $D$ be a reduced simple normal crossings divisor, and let
	$\mathbb V$ be a complex unitary local system on $Y\setminus D$.
	Let $\cV$ be Deligne's extension, with residue eigenvalues in
	$[0,1)$, of the corresponding flat vector bundle.
	If $A$ is very ample, then
	\[
	H^i\bigl(Y,\cV\otimes\omega_Y(D)\otimes A^k\bigr)=0
	\qquad\text{for every }i>0\text{ and }k>0.
	\]
\end{Lemma}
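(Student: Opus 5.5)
We will deduce this from a Kollár/Esnault–Viehweg type vanishing theorem for unitary local systems, applied in its log form. Recall the standard statement: if $\mathbb V$ is a unitary local system on $Y\setminus D$ and $\cV$ is the Deligne extension with residue eigenvalues in $[0,1)$, then $\cV$ carries a logarithmic connection, and the associated logarithmic de Rham complex $\Omega_Y^\bullet(\log D)\otimes\cV$ computes $Rj_*\mathbb V$ (the Deligne–Timmerscheidt theory). Since $\mathbb V$ is unitary, it underlies a polarized variation of Hodge structure of weight zero that is pure of type $(0,0)$. For such a variation, the Hodge filtration on $\cV$ is trivial: $F^0=\cV$ and $F^1=0$.

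The plan is to invoke Saito's vanishing theorem, or equivalently the Timmerscheidt/Arapura log Kodaira–Nakano vanishing for unitary local systems (with the $[0,1)$ normalization), in its Kodaira form. That form states
\[
H^i\bigl(Y,\omega_Y(D)\otimes\cV\otimes L\bigr)=0\qquad (i>0)
\]
for every ample line bundle $L$. Taking $L=A^k$ with $k>0$ then gives exactly the assertion.

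It remains to identify $\omega_Y(D)\otimes\cV$ with the lowest graded piece of the log de Rham complex. The top term of the log de Rham complex is $\Omega^n_Y(\log D)\otimes\cV=\omega_Y(D)\otimes\cV$. Because the Hodge filtration is trivial, the relevant graded piece $\mathrm{Gr}_F$ of degree $n$ is precisely this sheaf.

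To make the argument self-contained in the style of Esnault–Viehweg, we proceed as follows. Write the $[0,1)$-extension, and note that the dual local system is again unitary. The Esnault–Viehweg mechanism applies: the $E_1$-degeneration of the log Hodge-to-de Rham spectral sequence for a unitary local system holds by Timmerscheidt's theorem. Together with a Bloch–Gieseker/cyclic covering that extracts a root of $A^k$ ramified along a general member of $|A^k|$ transversal to $D$, this yields Nakano-type vanishing $H^{p}(\Omega^q(\log D)\otimes\cV\otimes A^{-k})=0$ for $p+q<n$. Serre duality then converts this into the stated vanishing for $i>0$.

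The main obstacle is the duality step. Under duality, the $[0,1)$-extension $\cV$ does not go to the $[0,1)$-extension of the dual local system $\mathbb V^\vee$; instead it goes to the dual of a differently normalized extension. This is exactly why $\omega_Y(D)$ appears rather than $\omega_Y$. To handle it, I would apply the Nakano vanishing to $\mathbb V^\vee$ with its $(-1,0]$ extension, which is $\cV^\vee(-D)$ locally along the components where the residues are nonzero. The key point is that the residues of the $[0,1)$-extension of $\mathbb V^\vee$ lie in $(-1,0]$ shifted appropriately, and this bookkeeping determines which extension pairs correctly with $\cV$ under duality. If it fails, the fallback is to cite Arapura's (or Saito's) vanishing for $\mathrm{Gr}_F$ of the intermediate extension directly.
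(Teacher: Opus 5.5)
As written, your proposal does not prove the lemma. The first route cites a ``Kodaira form'' that is literally the statement, and the citation is not accurate as given. Saito's vanishing needs a mixed Hodge module, hence quasi-unipotent local monodromy, whereas a complex unitary local system can have local monodromy eigenvalues $e^{2\pi i\alpha}$ with $\alpha$ irrational. Moreover, the relevant object is $j_*$, not the intermediate extension you name as a fallback: already for $\mathbb V$ trivial, $j_{!*}$ is the constant Hodge module, and Saito vanishing gives only $H^i(\omega_Y\otimes A^k)=0$.

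Your self-contained route breaks exactly at the step you flag. Serre duality identifies $H^i(\omega_Y(D)\otimes\cV\otimes A^k)$ with the dual of $H^{n-i}(\cV^\vee(-D)\otimes A^{-k})$. Here $\cV^\vee(-D)$ is the $(0,1]$-extension of $\mathbb V^\vee$; the $(-1,0]$-extension is $\cV^\vee$ itself, not $\cV^\vee(-D)$ as you write. Nakano-type vanishing for $[0,1)$-extensions at $q=0$ only controls the $[0,1)$-extension of $\mathbb V^\vee$. That sheaf is strictly larger than $\cV^\vee(-D)$ along every component where some local monodromy has eigenvalue $1$. For $\mathbb V$ trivial you would get $H^j(A^{-k})=0$, but you need $H^j(\cO_Y(-D)\otimes A^{-k})=0$, which requires extra vanishing on the strata of $D$. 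So the ``bookkeeping'' does not close without a new argument.

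The missing idea, which is the paper's argument, is that neither duality nor a cyclic cover is needed, since $A^k$ is itself very ample.
\begin{itemize}
\item Choose a smooth $B\in|A^k|$ with $D+B$ simple normal crossing, and put $W=Y\setminus(D+B)$.
\item Because the monodromy around $B$ is trivial, $\cV$ is still the $[0,1)$-canonical extension of $\mathbb V|_W$ with respect to $D+B$.
\item Timmerscheidt's theorem then gives an $E_1$-degenerate spectral sequence $H^q(Y,\cV\otimes\Omega^p_Y(\log(D+B)))\Rightarrow H^{p+q}(W,\mathbb V|_W)$.
\item Since $W$ is smooth affine of dimension $n$, $H^m(W,\mathbb V|_W)=0$ for $m>n$. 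Degeneration forces $E_1^{n,i}=0$ for $i>0$.
\item Finally, $\Omega^n_Y(\log(D+B))=\omega_Y(D)\otimes A^k$.
\end{itemize}
Working with the top-degree term and the \emph{upper} half of the cohomology is what makes the $[0,1)$ normalization match $\omega_Y(D)$ with no lattice shift. The duality route you attempted would have to become the Serre dual of this: use the $(0,1]$-extension twisted by $-B$, which computes $Rj_!$, together with vanishing of compactly supported cohomology below degree $n$.
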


\begin{proof}
	Choose a smooth divisor $B\in|A^k|$ such that $D+B$ has simple
	normal crossings, and set $W=Y\setminus(D+B)$.
	The canonical extension of $\mathbb V|_W$ is still $\cV$,
	since the monodromy around $B$ is trivial and its residue is
	zero. Timmerscheidt's degeneration theorem for unitary local
	systems \cite{Tim-unitary} gives an $E_1$-degenerate spectral
	sequence
	\[
	E_1^{p,q}
	=H^q\bigl(Y,\cV\otimes\Omega_Y^p(\log(D+B))\bigr)
	\ \Longrightarrow\ H^{p+q}(W,\mathbb V|_W).
	\]
	The complement $Y\setminus B$ is affine, and removing the
	effective Cartier divisor $D|_{Y\setminus B}$ preserves
	affineness. Thus $W$ is smooth affine of dimension $n$, so
	it has the homotopy type of a CW complex of real dimension
	at most $n$. Hence $H^{n+i}(W,\mathbb V|_W)=0$ for $i>0$.
	Degeneration gives $E_1^{n,i}=0$, and the assertion follows
	from
	$\Omega_Y^n(\log(D+B))=\omega_Y(D)\otimes A^k$.
\end{proof}

\begin{Proposition}\label{prop:unitary-uniform-generation}
	Let $\cE$ be a vector bundle on $X$ equipped with an
	integrable algebraic connection $\nabla$ and a filtration
	\[
	0=\cE_0\subset\cE_1\subset\cdots\subset\cE_s=\cE
	\]
	by $\nabla$-invariant subbundles. Assume that the induced
	connections on 
	$\cQ_i:=\cE_i/\cE_{i-1}$ are unitary flat.
	Set $\cF:=j_*\cE$.
	There is an integer $c>0$ such that
	\(
	\Sym^{[m]}\cF\otimes H^c
	\)
	is generated by global sections over $X$ for every integer
	$m\geq0$.
	In particular, $\cE\in\Vect^{\wf}(X)$ and
	$\cF\in\Vect^{\wf}(\overline X/X)$.
\end{Proposition}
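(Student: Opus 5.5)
We will prove uniform global generation via Castelnuovo--Mumford regularity on a log resolution, with vanishing supplied by Lemma~\ref{lem:unitary-log-vanishing}. Choose a log resolution $\mu:Y\to\overline X$ which is an isomorphism over $X$ (possible since $X$ is smooth) and such that $D:=(\mu^{-1}(\overline X\setminus X))_{\mathrm{red}}$ is a simple normal crossings divisor. The flat bundle $(\cE,\nabla)$ is algebraic and integrable on $X=Y\setminus D$. Its symmetric powers $(\Sym^m\cE,\Sym^m\nabla)$ carry the induced $\nabla$-invariant filtration, whose graded pieces are direct summands of tensor products of the unitary flat $\cQ_i$, and hence are again unitary flat. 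Let $\cV_m$ be Deligne's canonical extension of $\Sym^m\cE$ with residue eigenvalues in $[0,1)$. The filtration extends to a filtration of $\cV_m$ by subbundles whose graded pieces are the canonical extensions of the unitary graded pieces; here we use that eigenvalues of the residue on an extension are those of the graded pieces. Since $\mu$ is an isomorphism over $X$, we have $\mu_*(\cV_m)^{**}|_X=\Sym^m\cE$, so $(\mu_*\cV_m)^{**}=\Sym^{[m]}\cF$.

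We fix a very ample $A$ on $Y$ and set $L:=\omega_Y(D)\otimes A^{n+1}$. Lemma~\ref{lem:unitary-log-vanishing} gives $H^i(Y,\cG\otimes\omega_Y(D)\otimes A^k)=0$ for $i,k>0$ whenever $\cG$ is the canonical extension of a unitary local system. By the long exact sequences of the filtration, the same vanishing holds for $\cV_m$. Hence $\cV_m\otimes L$ is $0$-regular with respect to $A$ in the sense of Mumford, uniformly in $m$, and is therefore globally generated on $Y$ for every $m$.

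Next we descend to $\overline X$. We choose $c>0$ such that $H^c\otimes\mu_*(L^{-1})^{**}$ is generated by sections over $X$; more precisely, we need a line bundle $\mu^*H^c\otimes L^{-1}$ that is globally generated on $Y$ at least over $\mu^{-1}(X)$. This holds for $c\gg0$, because $\mu^*H$ is big and nef and isomorphic to an ample bundle away from the exceptional locus. If necessary, we use Kodaira's lemma: write $\mu^*H^{c_0}=A'+E$ with $A'$ ample and $E$ effective exceptional, so that sections vanishing along $E$ still generate over $X$. Then $\cV_m\otimes\mu^*H^c=(\cV_m\otimes L)\otimes(\mu^*H^c\otimes L^{-1})$ is generated over $\mu^{-1}(X)$ by global sections. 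Pushing forward gives sections of $\mu_*\cV_m\otimes H^c\subset\Sym^{[m]}\cF\otimes H^c$ that generate over $X$, uniformly in $m$.

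The main obstacle is the construction of the invariant filtration on the Deligne extension of $\Sym^m\cE$ with controlled residues, which is what allows us to apply Lemma~\ref{lem:unitary-log-vanishing} to each graded piece. A secondary difficulty is choosing $c$ independent of $m$, which the fixed twist $L$ handles. The final assertion follows directly. Given $a$, we take $b=c$: then $\Sym^{ac}\cE\otimes H^c$ is generated over $X$, so $\cE$ is weakly positive over $X$. The dual $\cE^*$ satisfies the same hypotheses, via the dual connection and the dual filtration with unitary quotients $\cQ_i^*$, so $\cE^*$ is weakly positive as well. Hence $\cE\in\Vect^{\wf}(X)$, and $\cF=j_*\cE\in\Vect^{\wf}(\overline X/X)$.
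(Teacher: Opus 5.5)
Your proposal is correct and is essentially the paper's proof. Both use the same log resolution, the Deligne extensions $\cV_m$ of $\Sym^m\cE$ with the induced filtration, Lemma~\ref{lem:unitary-log-vanishing} together with Castelnuovo--Mumford regularity applied to $\cV_m\otimes\omega_Y(D)\otimes A^{n+1}$ uniformly in $m$, and a single fixed twist by $H^c$ to descend.

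Two steps are handled more cleanly in the paper, and the second one fills a justification you leave out:
\begin{enumerate}
\item The paper chooses $c$ by Serre's theorem so that $\mu_*(L^{-1})\otimes H^c$ is globally generated on $\overline X$. This uses no double dual, which matters because sections of the reflexive hull need not come from $Y$. It also avoids Kodaira's lemma and any condition on the resolution.
\item The paper transports the sections into $\Sym^{[m]}\cF\otimes H^c$ by analytic Hartogs extension and GAGA. That same argument is what you need to identify $\cV_m|_X$ with $\Sym^m\cE$ algebraically; a priori the two are only analytically isomorphic.
\end{enumerate}
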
 

\begin{proof}
	Choose a projective log resolution $\pi:Y\to\overline X$
	which is an isomorphism over $X$ and for which
	$D:=Y\setminus\pi^{-1}(X)$ is a simple normal crossings
	divisor. Identify $\pi^{-1}(X)$ with $X$.
	Let $A$ be very ample on $Y$, and set
	\[
	M:=\omega_Y(D)\otimes A^{n+1}.
	\]
	
	For every $m\geq0$, the given filtration induces a
	filtration of $\Sym^m\cE$ by subbundles invariant under
	the induced connection. Its graded pieces are direct
	sums of bundles of the form $\bigotimes_{i=1}^s\Sym^{a_i}\cQ_i$, where $a_i\geq0$ and $\sum_{i=1}^s a_i=m$.
	The induced analytic connections on these bundles are
	unitary flat.
	
	Let $\cV_m$ be Deligne's canonical extension of the
	analytic flat bundle defining $\Sym^m\cE$, with residue
	eigenvalues in $[0,1)$. Such a choice is possible because
	the local monodromy eigenvalues have absolute value one.
	Exactness of canonical extension gives a filtration
	of $\cV_m$ by subbundles whose graded pieces are the
	canonical extensions of the unitary flat graded pieces
	above. We regard these bundles and their filtrations
	as algebraic by GAGA.
	The canonical extension is taken separately for each $m$.
	
	Lemma~\ref{lem:unitary-log-vanishing}, applied to each
	graded piece, and the cohomology exact sequences of
	the filtration give
	\[
	H^i\bigl(Y,\cV_m\otimes M\otimes A^{-i}\bigr)=0
	\qquad(1\leq i\leq n).
	\]
	Indeed, the ample twist occurring in the lemma is
	$A^{n+1-i}$.
	Castelnuovo--Mumford regularity therefore shows that
	$\cV_m\otimes M$ is globally generated, with the same $M$
	for all $m$.
	
	Choose $c>0$ such that $\pi_*(M^{-1})\otimes H^c$ is
	globally generated on $\overline X$. Since $\pi$ is an
	isomorphism over $X$, the evaluation map shows that
	$M^{-1}\otimes\pi^*H^c$ is generated over $X$ by its
	global sections on $Y$. Multiplication by global
	generators of $\cV_m\otimes M$ gives sections generating
	$\cV_m\otimes\pi^*H^c$ over $X$.
	
	Their restrictions are holomorphic sections of
	$\Sym^m\cE\otimes H^c|_X$. Since
	$\overline X\setminus X$ has codimension at least two,
	analytic Hartogs extension for the reflexive sheaf
	$\Sym^{[m]}\cF\otimes H^c$ extends them to $\overline X$,
	and GAGA makes the extended sections algebraic.
	They generate over $X$, proving the assertion.
	
	For any $a>0$, take $b=c$ and $m=ac$ to obtain weak
	positivity of $\cF$ over $X$.
	The dual connection preserves the dual filtration,
	whose graded pieces are the duals of the $\cQ_i$ and
	are again unitary flat after analytification.
	The same argument therefore proves weak positivity
	of $\cF^*$ over $X$.
\end{proof}

\begin{Remark}
In Proposition \ref{prop:unitary-uniform-generation} we can replace $\nabla$ by an integrable holomorphic connection on the analytification $\cE^{\an}$ (and a filtration of $\cE^{\an}$ by holomorphic subbundles). Indeed, on a smooth normally big variety, the holomorphic splitting of the first-jet sequence extends between the reflexive extensions on the normal projective compactification by Hartogs's theorem. Then GAGA makes it algebraic. Similarly, holomorphic subbundles extend by Hartogs's theorem to reflexive sheaves on  the analytification of a normal projective compactification. So by GAGA they are analytifications of algebraic subbundles of $\cE$.
\end{Remark}

\medskip

Unlike numerical flatness, weak flatness need not be preserved by extensions, even when
both the subbundle and the quotient are trivial.

\begin{Example}\label{ex:unitary-filtration-not-weakly-positive}
	Let ${\cE}$ be a rank-two vector bundle on $\PP^2$
	fitting into an exact sequence
	\[
	0\longrightarrow\cO_{\PP^2}\longrightarrow {\cE}
	\longrightarrow\mathcal I_Z\longrightarrow0,
	\]
	where $Z$ is a nonempty zero-dimensional locally complete
	intersection, as in \cite[Example~3.2]{La-Simpson}.
	Set $X=\PP^2\setminus\Supp Z$. Then
	\[
	0\longrightarrow\cO_X\longrightarrow\cE|_X
	\longrightarrow\cO_X\longrightarrow0.
	\]
	Thus $\cE|_X$ has a filtration with unitary flat quotients. On the other hand, 
$\cE|_X$ is not strongly numerically flat so by Theorem \ref{snf-wp:equivalence-klt}, $\cE|_X\notin\Vect^{\wf}(X)$.
This can also be  seen directly as follows.
	Choose a line $L$ through a point of $Z$.
	The section defining
	$\cO_{\PP^2}\to{\cE}$ restricts to a nonzero
	section of ${\cE}|_L$ with a nonempty zero divisor.
	Its saturated image is therefore $\cO_L(d)$ for some
	$d\geq1$. So ${\cE}|_L\simeq \cO_L(d)\oplus \cO_L(-d)$.
	If $\cE|_X$ (or equivalently $\cE$) were weakly positive over $X$, then for
	some $b>0$ the sheaf
	\[
	\Sym^{2b}{\cE}\otimes\cO_{\PP^2}(b)
	\]
	would be generated  over $X$ by its sections. But its restriction to $L$ has the negative-degree quotient
	$\cO_L((1-2d)b)$, a contradiction.  
	
This example shows also that an extension of weakly flat vector bundles on a normally big variety need not be weakly flat.	
\end{Example}

The semisimple objects of $\Vect^{\wf}(X)$ nevertheless admit
the following characterization.

\begin{Proposition}\label{prop:stable-unitary-weak-positivity}
	Let $\cE$ be a vector bundle on $X$.
	Then the following conditions are equivalent:
	\begin{enumerate}
		\item $\cE$ is unitary flat.
		\item $\cE$ is a semisimple object in $\Vect^{\wf}(X)$.
	\end{enumerate}
\end{Proposition}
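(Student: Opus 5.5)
For $(1)\Rightarrow(2)$ I will first get weak flatness from Proposition~\ref{prop:unitary-uniform-generation}. A unitary flat bundle carries an integrable algebraic connection: the flat holomorphic connection on $\cE^{\an}$ algebraizes, as in the Remark following that proposition, by Hartogs extension to $\overline X$ and GAGA. Taking the trivial filtration $s=1$, the proposition gives $\cE\in\Vect^{\wf}(X)$.

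For semisimplicity, let $\cG\subset\cE$ be a subobject in $\Vect^{\wf}(X)$ with weakly flat quotient $\cQ$. I would show that $\cG$ is $\nabla$-invariant, so that unitarity gives an orthogonal complement $\cG^\perp$, which is a flat subbundle splitting the sequence. To prove invariance I consider the $\cO_X$-linear second fundamental form $\cG\to\cQ\otimes\Omega^1_X$, and I would test it on curves. Restrict to a general complete intersection curve $C\subset X$ of high multiples of $H$; such curves lie in $X$ because $X$ is big in $\overline X$. The restriction $\cE|_C$ is unitary flat, hence polystable of degree $0$ by Narasimhan--Seshadri. By Lemma~\ref{snf-wp:elementary-properties}, $\cG|_C$ and $\cQ|_C$ are numerically flat, hence of degree $0$, so $\cG|_C$ is a direct summand which is itself unitary flat and preserved by the flat structure of $C$.

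This only controls the connection along curve directions, and I expect this step to be the main obstacle. I would try instead to show that $\cF=j_*\cE$ and $j_*\cG$ behave like polystable reflexive sheaves on $\overline X$ with vanishing Chern numbers. Alternatively, and this is the route I would try first, I would invoke the flatness criterion of Cao--Deng--Matsumura \cite{CDM-flatness}: a reflexive sheaf on a normal projective variety that is weakly positive over a big open set, with trivial determinant in the suitable sense, admits a flat structure with unitary quotients. This would show that every weakly flat bundle carries a flat structure. Then $\operatorname{Hom}$ between weakly flat bundles coincides with flat $\operatorname{Hom}$, by Hartogs extension and the fact that maps between unitary flat bundles on a curve-connected space are flat. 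The category $\Vect^{\wf}(X)$ would then embed fully faithfully into local systems, and a subobject of a unitary local system is a direct summand.

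For $(2)\Rightarrow(1)$, assume $\cE$ is semisimple, and reduce to $\cE$ simple. Apply \cite{CDM-flatness} to $\cF=j_*\cE$, which is weakly positive over $X$ together with its dual $\cF^*$; on the smooth locus this should give a filtration of $\cE$ by weakly flat subbundles with unitary flat quotients. This is the filtration alluded to in the introduction, and $\Vect^{\wf}(X)$ is abelian by Proposition~\ref{Vect-wf-rigid-abelian}. Each term $\cE_1$ of the filtration is a weakly flat subobject by Proposition~\ref{prop:unitary-uniform-generation}, so simplicity forces $\cE=\cE_1$, which is unitary flat. A semisimple $\cE$ is a direct sum of such bundles and hence unitary flat. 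The delicate points are verifying the hypotheses of \cite{CDM-flatness} on the singular $\overline X$, namely the vanishing of $c_1$ and the numerical-triviality condition derived from $\det\cE^*$ being weakly positive, and checking that the resulting subsheaves are subbundles on $X$. For the latter I would use \cite[Proposition 3.4]{La-Simpson} as in the proof of Proposition~\ref{Vect-wf-rigid-abelian}.
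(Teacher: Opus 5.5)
Your proposal contains the right ingredients for both directions. As written, however, neither direction is closed, and the route you say you would try first for $(1)\Rightarrow(2)$ relies on something that is not known.

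\textbf{The direction $(1)\Rightarrow(2)$.} Your curve test is exactly the paper's argument, and the step you call the main obstacle is not one.

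The second fundamental form $\beta:\cG\to\cQ\otimes\Omega^1_X$ is $\cO_X$-linear. Its image under $\Omega^1_X|_C\to\Omega^1_C$ is the second fundamental form of $\cG|_C\subset\cE|_C$ for the restricted connection. You showed that this restricted form vanishes; your Narasimhan--Seshadri argument suffices, and the paper instead uses the curvature (degree) formula for a degree-zero subbundle of a unitary flat bundle.

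Smooth complete intersection curves of high multiples of $H$ contained in $X$ can be chosen through any point with any prescribed tangent direction, because the boundary has codimension $\ge2$. Hence $\beta$ vanishes at every point in every direction, so $\beta=0$ and $\cG$ is parallel.

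You also omitted a necessary step. The orthogonal splitting is a priori only holomorphic on the non-compact $X$. The paper extends the projector by Hartogs to an endomorphism of $(j_*\cE)^{\an}$ and uses GAGA on $\overline X$ to make it algebraic. Only then does the subobject split in $\Vect^{\wf}(X)$, and finite length gives semisimplicity.

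Your preferred alternative needs every weakly flat bundle to carry a flat structure under which morphisms are flat. The introduction states explicitly that the author does not know whether a weakly flat bundle admits an integrable connection preserving a filtration with unitary quotients. So you cannot assume this, and that route is not available.

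\textbf{The direction $(2)\Rightarrow(1)$.} Your skeleton matches the paper: reduce to a simple object, find a nonzero unitary flat subobject, and conclude with Proposition~\ref{prop:unitary-uniform-generation} and simplicity. Two points need correcting.

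First, you cannot take the filtration ``alluded to in the introduction'' as input. In the paper that filtration is a consequence of this proposition.

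Second, applying \cite{CDM-flatness} to $\cF=j_*\cE$ to produce a whole filtration is not justified by anything in the paper; there \cite[Theorem~1.2]{CDM-flatness} is applied only to a stable sheaf. The missing content is the following reduction and hypothesis check.
\begin{enumerate}
\item By the curve argument, $\cF$ is slope $H$-semistable with $c_1(\cF)\cdot H^{n-1}=0$. So choose a nonzero saturated $H$-stable subsheaf $\cG\subseteq\cF$ of slope zero.
\item The generically surjective map $\cF^*\to\cG^*$, together with weak positivity of $\cE^*$, makes $\cG^*$ pseudo-effective \cite[Remark~2.3(1), Proposition~2.4]{CDM-flatness}.
\item Then $\det\cG^*$ is pseudo-effective of $H$-degree zero. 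By \cite[Propositions~2.5 and~4.7]{CDM-flatness}, its first Chern class on a resolution is represented by an exceptional divisor. Hence $c_1(\cG^*)=0$ in the functional sense; this is the numerical-triviality check you flagged as delicate.
\item Now \cite[Theorem~1.2]{CDM-flatness} shows that $\cG|_X$ is unitary flat.
\end{enumerate}
In particular, $\cG|_X$ is locally free and weakly flat by Proposition~\ref{prop:unitary-uniform-generation}. So your concern about subbundles disappears, and simplicity gives $\cG|_X\simeq\cE$.
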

\begin{proof}
	By Proposition~\ref{Vect-wf-rigid-abelian}, the category
	$\Vect^{\wf}(X)$ is abelian, with exact sequences given by
	exact sequences of vector bundles. Every object has finite
	length, since rank strictly decreases upon passage to a
	proper quotient.
	
	Assume first that $\cE$ is unitary flat. By
	Proposition~\ref{prop:unitary-uniform-generation}, it belongs
	to $\Vect^{\wf}(X)$. Let $\cA\subseteq\cE$ be a subobject.
	Then $\cA$ is a subbundle. For every smooth projective
	complete intersection curve $C\subset X$, weak flatness
	gives
	\[
	\deg(\cA|_C)=0.
	\]
	The degree formula for a holomorphic subbundle of a unitary
	flat bundle therefore shows that $\cA|_C$ is parallel.
	Such curves can be chosen through any point of $X$ with
	any prescribed tangent direction. Hence $\cA$ is parallel,
	and its orthogonal complement gives a holomorphic splitting.
	
	The corresponding holomorphic projector extends, by
	Hartogs's theorem, to an endomorphism of the analytification
	of $j_*\cE$ on $\overline X$. By GAGA this endomorphism
	is algebraic. Thus $\cA\subseteq\cE$ splits in
	$\Vect^{\wf}(X)$. Since every subobject splits and $\cE$
	has finite length, $\cE$ is semisimple.
	
	Conversely, it suffices to prove that every simple object
	$\cA\in\Vect^{\wf}(X)$ is unitary flat. Set $\cB=j_*\cA$.
	Restriction to a general smooth complete intersection
	curve contained in $X$ shows that $\cB$ is slope
	$H$-semistable and
	\[
	c_1(\cB)\cdot H^{n-1}=0.
	\]
	Choose a nonzero saturated $H$-stable subsheaf
	$\cG\subseteq\cB$ of slope zero. It is reflexive,
	and $\cG^*$ is $H$-stable. Weak positivity of $\cA^*$
	and the generically surjective morphism
	$\cB^*\to\cG^*$ show that $\cG^*$ is pseudo-effective;
	see \cite[Remark~2.3(1) and Proposition~2.4]{CDM-flatness}.
	
	By \cite[Proposition~2.5]{CDM-flatness},
	$\mathcal L:=\det\cG^*$ is pseudo-effective. Since
	\[
	c_1(\mathcal L)\cdot H^{n-1}=0,
	\]
	\cite[Proposition~4.7]{CDM-flatness} shows that the first
	Chern class of $\pi^*\mathcal L/\mathrm{tors}$ on a suitable
	resolution $\pi:Y\to\overline X$ is represented by a real
	$\pi$-exceptional divisor. The determinant of
	$\pi^*\cG^*/\mathrm{tors}$ differs from this line bundle
	only by an exceptional divisor. These divisors pair
	trivially with pullbacks of closed $(n-1,n-1)$-forms.
	Consequently, $c_1(\cG^*)=0$ in the functional sense of
	\cite[Section~2.3]{CDM-flatness}.
	
	By \cite[Theorem~1.2]{CDM-flatness}, $\cG|_X$ is unitary
	flat, and hence belongs to $\Vect^{\wf}(X)$ by
	Proposition~\ref{prop:unitary-uniform-generation}.
	Since $\cA$ is simple in $\Vect^{\wf}(X)$, the inclusion $\cG|_X\hookrightarrow\cA$
	is therefore an isomorphism. Consequently every semisimple object is unitary flat.
\end{proof}

The above proposition implies the following partial purity result:

\begin{Corollary}\label{prop:weak-flat-partial-purity}
	Let $X'$ be a smooth normally big complex variety,
	let $i:X\hookrightarrow X'$ be a big open subset,
	and let $x\in X(\CC)$.
	Then restriction induces an equivalence between the
	categories of semisimple weakly flat bundles on $X'$
	and $X$.
	
	Consequently, the faithfully flat homomorphism
	\(
	\pi_1^S(X,x)\to\pi_1^S(X',x)
	\)
	has a pro-unipotent kernel and it induces an isomorphism on the
	maximal pro-reductive quotients.
\end{Corollary}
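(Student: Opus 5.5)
The plan is to reduce both assertions to Proposition~\ref{prop:stable-unitary-weak-positivity}, applied on $X$ and on $X'$, together with the fact that unitary flat bundles on smooth complex varieties do not see subsets of codimension at least two. Both $X$ and $X'$ are smooth and normally big: a small compactification $\overline X$ of $X'$ also serves for $X$, because $X$ is big in $X'$ and hence in $\overline X$. So Proposition~\ref{prop:stable-unitary-weak-positivity} applies to both. It identifies the semisimple objects of $\Vect^{\wf}(X)$ with the unitary flat bundles on $X$, and likewise for $X'$.

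For the first assertion, consider the restriction functor $i^*$. It maps $\Vect^{\wf}(X')$ to $\Vect^{\wf}(X)$, and it is fully faithful by Hartogs and normality, as in the proof of Proposition~\ref{prop:comparison-of-fund-groups}. It sends unitary flat bundles to unitary flat bundles, so by the proposition it preserves semisimple objects. For essential surjectivity, take a unitary flat $\cE$ on $X$ with unitary representation $\rho$ of $\pi_1^{\mathrm{top}}(X^{\an})$.
\begin{itemize}
\item Since $X'$ is smooth and $X'\setminus X$ has complex codimension at least two, the map $\pi_1^{\mathrm{top}}(X^{\an})\to\pi_1^{\mathrm{top}}(X'^{\an})$ is an isomorphism. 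So $\rho$ comes from a unitary local system on $X'$, and hence from a holomorphic flat bundle $\cE'^{\an}$ on $X'^{\an}$ extending $\cE^{\an}$.
\item To algebraize $\cE'$, extend $\cE$ reflexively to $\cF=j_*\cE$ on $\overline X$. Then $\cF^{\an}$ and $(j'_*)\cE'^{\an}$ agree as reflexive analytic sheaves, by analytic Hartogs extension across codimension two. By GAGA, $\cF|_{X'}$ is an algebraic vector bundle whose analytification is $\cE'^{\an}$.
\item Therefore $\cF|_{X'}$ is unitary flat, and by Proposition~\ref{prop:stable-unitary-weak-positivity} it is a semisimple weakly flat bundle restricting to $\cE$.
\end{itemize}
The point needing the most care is that $\cF|_{X'}$ is locally free and not merely reflexive. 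This follows because it agrees with the analytification of the locally free $\cE'^{\an}$, and local freeness can be checked analytically.

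For the second assertion, Proposition~\ref{prop:comparison-of-fund-groups} already gives faithful flatness of $\pi_1^S(X,x)\to\pi_1^S(X',x)$. The plan is to use the standard Tannakian criterion. If the image of every semisimple object under a fully faithful exact functor is semisimple, and every semisimple object of the target lies in the essential image up to isomorphism, then the induced map on maximal pro-reductive quotients is an isomorphism. This step is justified because each of the two categories of semisimple objects is a Tannakian subcategory (closed under tensor products, duals and subquotients, since unitary flat bundles are), and they correspond to the reductive quotients in characteristic zero.

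Concretely, the first assertion identifies the categories of semisimple objects on the two sides, so the pro-reductive quotients coincide. The kernel $K$ of $\pi_1^S(X,x)\to\pi_1^S(X',x)$ then maps trivially to the pro-reductive quotient of $\pi_1^S(X,x)$. It therefore lies in the pro-unipotent radical, and a closed subgroup scheme of a pro-unipotent group is pro-unipotent. To confirm this, we check via \cite[Proposition~2.21]{DM} that the map $\pi_1^S(X,x)\to\pi_1^S(X,x)^{\mathrm{red}}$ factors through $\pi_1^S(X',x)$. This holds because the reductive quotient corresponds to the subcategory of semisimple objects, and that subcategory lies in the essential image of $i^*$.

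I expect the main obstacle to be the algebraization and local-freeness step in the essential surjectivity argument. The group-theoretic consequences are formal once that step is in place.
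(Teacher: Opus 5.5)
Your proposal is correct and follows the paper's proof essentially step by step. You identify semisimple weakly flat bundles with unitary flat ones via Proposition~\ref{prop:stable-unitary-weak-positivity}, extend the unitary local system using $\pi_1^{\mathrm{top}}(X^{\an})\simeq\pi_1^{\mathrm{top}}((X')^{\an})$, and identify the reflexive extension with the analytic flat bundle by comparing reflexive sheaves that agree off codimension two. The differences from the paper are cosmetic: the paper works with $i_*\cE$ directly on $X'$ rather than restricting $j_*\cE$ from $\overline X$, and it proves pro-unipotence of the kernel directly (the kernel acts trivially on simple graded pieces, hence by unitriangular matrices) instead of passing through the kernel of $\pi_1^S(X,x)\to\pi_1^S(X,x)^{\mathrm{red}}$.
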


\begin{proof}
	Since $X'\setminus X$ has complex codimension at least
	two, we have an isomorphism
	\(
	\pi_1^{\mathrm{top}}(X^{\an},x)
	\simeq
	\pi_1^{\mathrm{top}}((X')^{\an},x).
	\)
	Let $\cE$ be a semisimple weakly flat bundle on $X$.
	By Proposition~\ref{prop:stable-unitary-weak-positivity},
	it is unitary flat.
	Its unitary representation therefore defines a unitary
	flat holomorphic bundle $\cE'_{\an}$ on $(X')^{\an}$
	extending $\cE^{\an}$.
	The sheaf $\cF:=i_*\cE$ is coherent and reflexive.
	Both $\cF^{\an}$ and $\cE'_{\an}$ are reflexive and
	agree on $X^{\an}$, so
	\(
	\cF^{\an}\simeq\cE'_{\an}.
	\)
	Thus $\cF$ is a unitary flat vector bundle on $X'$.
	By Proposition~\ref{prop:stable-unitary-weak-positivity},
	it is semisimple and weakly flat.
	Restriction preserves unitary flat bundles and is fully
	faithful, so we get the asserted equivalence. 
	This gives also the asserted isomorphism on maximal pro-reductive
	quotients.

The homomorphism $i_*:	\pi_1^S(X,x)\to\pi_1^S(X',x)$ is faithfully flat by
	Proposition~\ref{prop:comparison-of-fund-groups}.
	The equivalence just proved shows that every simple
	representation of $\pi_1^S(X,x)$ factors through $\pi_1^S(X',x)$.
	Hence the kernel $K$ of $i_*$ acts trivially on the graded pieces of a
	composition series of every finite-dimensional
	representation of $\pi_1^S(X,x)$.
	In a basis adapted to such a filtration, its image
	consists of upper unitriangular matrices.
	Consequently, $K$ is pro-unipotent.
\end{proof}

\subsection{Chern classes of weakly flat bundles on normal surfaces}
\label{sec:weak-flat-surface-chern}

In this subsection the ground field is $\mathbb C$.

\begin{Lemma}\label{lem:unitary-surface-chern}
	Let $S$ be a normal projective surface, let
	$i:U\hookrightarrow S$ be a smooth big open subset,
	and let $\cE$ be a unitary flat vector bundle on $U$.
	Put $\cF=i_*\cE$. Then
	\[
	c_1(\cF)\equiv0,
	\qquad
	\int_S\ch_2(\cF)=0.
	\]
\end{Lemma}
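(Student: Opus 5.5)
The plan is to reduce to the statement that $\cE$ is weakly flat and then apply the surface machinery developed above. By Proposition~\ref{prop:unitary-uniform-generation}, applied with the trivial filtration and $S$ as the small normal projective compactification of $U$, $\cE\in\Vect^{\wf}(U)$. Moreover, the proof of that proposition gives uniform generation: there is a $c>0$ such that $\Sym^{[m]}\cF\otimes H^c$ and $\Sym^{[m]}\cF^*\otimes H^c$ are globally generated over $U$ for all $m\ge 0$.

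\emph{First Chern class.} Since $\det\cE$ is unitary flat of rank one, it is torsion-free flat. Applying the above to $\det\cE$ and its powers, the reflexive rank-one sheaves $\cO_S(\pm m\,c_1(\cF))\otimes H^c$ have nonzero sections for all $m$. Hence both $c_1(\cF)$ and $-c_1(\cF)$ are limits of effective classes divided by $m$ (using Mumford pullback to a resolution). This makes $\pm c_1(\cF)$ pseudo-effective, so $c_1(\cF)\cdot H=0$.

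To get $c_1(\cF)\equiv 0$, I would invoke Lemma~\ref{lem:numerical-criterion} on a resolution $\pi:T\to S$. It gives that the pullback of $\cO_S(c_1(\cF))$, extended over $T$, is numerically a combination of exceptional curves. The Mumford pullback $\pi^*c_1(\cF)$ is orthogonal to all exceptional curves, so negative definiteness forces this class to vanish, and hence $c_1(\cF)\equiv0$. Alternatively, I would use that a unitary rank-one local system on $U$ has a flat Deligne extension on $T$ with $c_1$ supported on the exceptional locus in real cohomology, and then push forward.

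\emph{Second Chern character.} Take a log resolution $\pi:T\to S$, isomorphic over $U$, with exceptional SNC divisor $D$. The unitary local system gives Deligne's canonical extension $\cV$ on $T$ with residue eigenvalues in $[0,1)$, and $(\pi_*\cV)^{**}=\cF$. By the standard formula for Chern classes of Deligne extensions of unitary (hence semistable parabolic) bundles, the parabolic Chern characters vanish: $\mathrm{par}\,c_1=0$ and $\mathrm{par}\,\ch_2=0$. Thus $\ch(\cV)$ is expressed through the residues $\alpha_j$ along the components $D_j$. In particular, $c_1(\cV)=-\sum(\mathrm{tr}\,\mathrm{Res})D_j$ and $\ch_2(\cV)$ is a quadratic expression in the $D_j$ with eigenvalue weights. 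Then \cite[Definition~4.1]{La-Inters} gives
\[
\int_S\ch_2(\cF)=\int_T\ch_2(\cV)+\sum_x(\text{local }c_2\text{ corrections}).
\]
These local terms are computed from the local unitary representation of $\pi_1$ of the link of each point $x\in S\setminus U$.

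\emph{Main obstacle.} The hard step is showing that this local correction exactly cancels the residue contribution. My first attempt would avoid the computation by reducing to a finite cover. Locally near each $x$, the image of the local fundamental group under a unitary representation is not necessarily finite, so instead I would use approximation. Unitary representations of the finitely generated group $\pi_1(U)$ are limits of representations with finite image (Malcev/residual finiteness of the closure in $U(r)$ is false in general, so instead deform within the compact real algebraic variety of unitary representations). Chern numbers of $\cF$ are locally constant in flat families of reflexive extensions and rational, so it suffices to treat points in a dense subset. For finite monodromy, $\cE$ becomes trivial on a finite étale cover $U'\to U$. Normalizing $S$ in $k(U')$ gives a finite $q:S'\to S$ with $q^{[*]}\cF$ trivial, and by \cite[Proposition~4.2(4)]{La-Inters} $\deg(q)\int_S\ch_2(\cF)=\int_{S'}\ch_2(\cO^r)=0$; similarly for $c_1$. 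The risky points are the continuity of $\int\ch_2$ in the family and the density of finite-image unitary representations. If density fails, I would fall back on the explicit parabolic computation above.
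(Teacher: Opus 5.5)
Your argument for $c_1(\cF)\equiv0$ is fine. Proposition~\ref{prop:unitary-uniform-generation}, applied to $\det\cE$ and its dual, makes $\cO_S(\pm c_1(\cF))$ weakly positive. Lemma~\ref{lem:numerical-criterion} then puts the pulled-back class in the span of the exceptional curves, and negative definiteness kills the Mumford pullback.

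\textbf{The gap is the second Chern character, which you never prove.} The parabolic route stops exactly at the step that carries all the content: showing that the local terms $c_2(\pi_x,\cV)$ of \cite{La-Inters} cancel the residue contributions of the Deligne extension.

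The approximation fallback fails for the following reasons.
\begin{itemize}
\item By Jordan's theorem, every finite subgroup of $U(r)$ has an abelian normal subgroup of index bounded in terms of $r$. A finitely generated group has only finitely many subgroups of bounded index. Hence any limit of finite-image representations of $\pi_1(U)$ has virtually abelian image.
\item So finite-image representations are not dense as soon as $\pi_1(U)$ has a unitary representation with non-virtually-abelian image (e.g.\ dense in $SU(2)$).
\item Even the weaker claim, that each connected component of the representation variety contains a finite-image point, fails. For a locally rigid unitary representation with infinite image, the whole component consists of conjugates, all with infinite image.
\item The asserted local constancy of $\int_S\ch_2$ along families of reflexive extensions is also not justified.
\end{itemize}

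\textbf{The missing idea} is that you do not need the global monodromy to be finite. It suffices to kill the local monodromy along the exceptional curves, which is what the paper does:
\begin{itemize}
\item By Selberg's lemma \cite{Selberg1960}, the monodromy group has a torsion-free subgroup of finite index. Let $U'\to U$ be the corresponding finite \'etale cover, $\gamma:S'\to S$ its extension by normalization, and $\pi:T\to S'$ a log resolution that is an isomorphism over $U'$.
\item Since $S'\setminus U'$ is finite, Kashiwara's theorem \cite{Kashiwara1981} shows that the monodromies around the components of $T\setminus U'$ are quasi-unipotent. Being unitary, they have finite order, so they are trivial in the torsion-free group.
\item Hence the local system extends over all of $T$ to a flat, hence numerically flat, bundle $\cE_T$ with $(\pi_*\cE_T)^{**}\simeq\gamma^{[*]}\cF$.
\item Lemma~\ref{lem:nf-birational-chern} gives the vanishing for $\gamma^{[*]}\cF$, and \cite[Proposition~4.2]{La-Inters} descends it to $\cF$.
\end{itemize}
Your finite-cover idea was aimed at the wrong target: killing the entire monodromy is neither possible nor needed. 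The remaining possibly infinite global monodromy is absorbed by Lemma~\ref{lem:nf-birational-chern}.

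You could also have closed the gap with what your first step already gives. Weak flatness, $c_1(\cF)\equiv0$, and slope semistability of $\cF$ (from restriction to general curves in $U$) yield $\int_S\ch_2(\cF)\le0$ by Bogomolov's inequality on the normal surface. Lemma~\ref{snf-wp:ample-surface-segre} on the coordinate root covers yields $\int_S\ch_2(\cF)\ge0$, exactly as in the second half of the proof of Proposition~\ref{prop:weak-flat-surface-chern}.
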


\begin{proof}
	By Selberg's lemma \cite{Selberg1960}, the image of the unitary monodromy
	representation has a torsion-free subgroup of finite index.
	The corresponding finite \'etale cover $U'\to U$ extends,
	by normalization, to a finite morphism
	$\gamma:S'\to S$ from a normal projective surface.
	Choose a projective log resolution $\pi:T\to S'$
	which is an isomorphism over $U'$.
	 
	By Kashiwara's theorem \cite[Theorem 3.1]{Kashiwara1981}, quasi-unipotence of monodromy at
	infinity is independent of the normal compactification.
Since \(S'\setminus U'\) consists of a finite number of points, quasi-unipotence at infinity is vacuous for this normal compactification. So the monodromies around the	components of $T\setminus U'$ are quasi-unipotent.
	They are unitary, hence have finite order, and belong to
	a torsion-free group. Consequently they are trivial.
	The pulled-back unitary local system therefore extends
	to $T$. Its associated holomorphic bundle is algebraic and it defines a numerically flat vector bundle
	$\cE_T$ on $T$. Since
	\[ 
	(\pi_*\cE_T)^{**}\simeq\gamma^{[*]}\cF,
	\]
Lemma~\ref{lem:nf-birational-chern} implies that  
	\[
c_1(\gamma^{[*]}\cF)\equiv0
	\qquad\hbox{and}\qquad 
\int_{S'}\ch_2(\gamma^{[*]}\cF)=0.
\]	
So the assertion follows from	\cite[Proposition~4.2]{La-Inters}.
\end{proof}

\begin{Proposition}\label{prop:weak-flat-surface-chern}
	Let $S$ be a normal projective surface, let
	$i:U\hookrightarrow S$ be a big open subset,
	and let $\cE$ be a weakly flat vector bundle on $U$.
	Put $\cF=i_*\cE$. Then $\cF$ is slope $H$-semistable
	for every ample divisor $H$ on $S$, and
	\[
	c_1(\cF)\equiv0,
	\qquad
	\int_S\ch_2(\cF)=0.
	\]
\end{Proposition}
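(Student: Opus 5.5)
Since the proposition sits right after the unitary-flat computation of Lemma~\ref{lem:unitary-surface-chern}, I will reduce to that lemma via the structure of $\Vect^{\wf}(U)$. There are two preliminary points. First, if $U$ is not smooth, replace it by the smooth big open subset $U\cap S_{\mathrm{reg}}$. Weak positivity over $U$ restricts to weak positivity over this smaller open set, because sections generating over $U$ also generate over the subset. Since $i_*\cE$ only depends on $\cE$ away from codimension two, the hypotheses and the conclusion are unchanged. Second, semistability and $c_1(\cF)\cdot H=0$ for every ample $H$ follow as in the proof of Theorem~\ref{snf-wp:equivalence}. A general curve in $|mH|$ with $m\gg0$ lies in $U$ and avoids the non-locally-free loci of a prospective destabilizing subsheaf and its quotient. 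By Lemma~\ref{snf-wp:elementary-properties}, $\cE$ is numerically flat on such curves, so a destabilizing subsheaf would give a positive-degree subsheaf of a degree-zero semistable bundle.

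The main step is a filtration argument. By Proposition~\ref{Vect-wf-rigid-abelian}, applied with the small compactification $U\subset S$, the category $\Vect^{\wf}(U)$ is abelian and every object has finite length. I therefore choose a composition series
\[
0=\cE_0\subset\cE_1\subset\cdots\subset\cE_s=\cE
\]
by weakly flat subbundles with simple quotients $\cQ_l=\cE_l/\cE_{l-1}$. By Proposition~\ref{prop:stable-unitary-weak-positivity}, each $\cQ_l$ is unitary flat. Put $\cF_l=i_*\cE_l$ and $\cG_l=i_*\cQ_l$. Lemma~\ref{lem:unitary-surface-chern} gives $c_1(\cG_l)\equiv0$ and $\int_S\ch_2(\cG_l)=0$. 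Because $c_1$ of reflexive sheaves is additive on left exact sequences that are exact on a big open set, we get $c_1(\cF)=\sum_l c_1(\cG_l)\equiv0$. Applying Lemma~\ref{lem:surface-chern-subadditivity} inductively to $0\to\cF_{l-1}\to\cF_l\to\cG_l$ gives
\[
\int_S\ch_2(\cF)\le\sum_l\int_S\ch_2(\cG_l)=0.
\]

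The reverse inequality is the main obstacle, since ch$_2$ is only subadditive. I will take it from Bogomolov's inequality on the normal surface (\cite[Corollary~6.6]{La-Inters}), applied to the $H$-semistable sheaf $\cF$ with $c_1(\cF)\equiv0$:
\[
0\le\Delta(\cF)=c_1(\cF)^2-2r\int_S\ch_2(\cF)=-2r\int_S\ch_2(\cF).
\]
This gives $\int_S\ch_2(\cF)\le0$, which is the same direction as before, not the reverse. For the reverse I see two routes. The first is to apply the same composition-series argument to $\cE^*$, which is also weakly flat, and relate $\ch_2(i_*\cE^*)$ to $\ch_2(\cF)$. This works only if ch$_2$ of the reflexive dual equals ch$_2$ of $\cF$; in Langer's theory this holds for $\ch_2$ (the local contributions are symmetric under dualization), and I would check this against \cite{La-Inters}. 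The second, which I prefer as a fallback, uses the Segre-class argument of Theorem~\ref{snf-wp:equivalence}. Take a finite cover $\pi:T\to S$ as in \cite[Lemma~7.1]{La-Bog-normal} making $\pi^{[*]}\cF$ locally free. That sheaf and its inverse determinant are weakly positive over $\pi^{-1}(U)$, so by Lemma~\ref{surface-weak-positivity} it is numerically flat on $T$. Then $\int_T\ch_2=0$, and \cite[Proposition~4.2]{La-Inters} divides by $\deg\pi$ to give the required equality. The fallback alone would already prove the whole statement, and I would adopt it if the dual comparison fails.
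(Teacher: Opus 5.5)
Your first half agrees with the paper: the reduction to $U\cap S_{\mathrm{reg}}$, semistability via curves, the composition series with unitary flat quotients, $c_1(\cF)\equiv0$, and $\int_S\ch_2(\cF)\le0$ from Lemma~\ref{lem:surface-chern-subadditivity}. There is a genuine gap in the reverse inequality $\int_S\ch_2(\cF)\ge0$, and neither of your routes supplies it.

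\textbf{The dual route proves nothing new.} $\ch_2$ is even under dualization. If $\int_S\ch_2(\cF^*)=\int_S\ch_2(\cF)$, then running the filtration argument on $\cE^*$ (whose graded pieces are again unitary flat) only reproduces $\int_S\ch_2(\cF)\le0$. Neither subadditivity nor $\ch_2$ changes sign under duals.

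\textbf{The fallback route fails on general normal surfaces.} It needs a finite cover $\pi:T\to S$ with $\pi^{[*]}\cF$ locally free. The paper uses \cite[Lemma~7.1]{La-Bog-normal} only when $S$ has quotient singularities, namely in Theorem~\ref{snf-wp:equivalence-klt}. On an arbitrary normal surface such a cover need not exist, even for weakly flat $\cE$. Here is a counterexample.
\begin{itemize}
\item Let $S$ be the projective cone over an elliptic curve $E$ embedded by $L$, and let $U=S\setminus\{v\}$ with $v$ the vertex.
\item Let $p:U\to E$ be the $\AA^1$-bundle projection, and put $\cE=p^*M$ with $M\in\mathrm{Pic}^0(E)$ non-torsion.
\item Then $\cE$ is unitary flat, hence weakly flat by Proposition~\ref{prop:unitary-uniform-generation}.
\item But $\cF$ is a rank-one reflexive sheaf whose local class at $v$ is the image of $M$ in $\mathrm{Pic}(E)/\mathbb{Z}L$. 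This image is non-torsion, so $\cF$ is not $\mathbb{Q}$-Cartier at $v$.
\item If $\pi^{[*]}\cF$ were invertible near $\pi^{-1}(v)$, taking norms would make $\deg(\pi)$ times the divisor of $\cF$ Cartier at $v$, a contradiction.
\end{itemize}
Without local freeness on all of the projective surface, Lemma~\ref{surface-weak-positivity} is unavailable; the paper notes that reflexivity is not enough there. So the fallback is exactly the klt argument and does not extend.

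\textbf{The missing idea.} You need a positivity statement for reflexive sheaves that are locally free only over a big open subset. This is Lemma~\ref{snf-wp:ample-surface-segre}, combined with the coordinate root covers from Proposition~\ref{operations:root-cover-criterion}.
\begin{itemize}
\item Normalizing a dominating component gives finite $f_m:S_m\to S$ and ample, globally generated $A_m$ with $f_m^*H\simeq A_m^m$.
\item The necessity argument of that proposition, applied over $U$, shows that $\cG_m=f_m^{[*]}\cF\otimes A_m$ is ample on $f_m^{-1}(U)$.
\item Lemma~\ref{snf-wp:ample-surface-segre}, together with $c_1(\cF)\equiv0$ and \cite[Proposition~4.2]{La-Inters}, gives
\[
0<\frac{s_2(\cG_m)}{\deg f_m}=\int_S\ch_2(\cF)+\frac{r(r+1)}{2m^2}H^2 .
\]
\item Letting $m\to\infty$ yields $\int_S\ch_2(\cF)\ge0$.
\end{itemize}
This is the characteristic-zero replacement for the Frobenius Segre-class argument of Theorem~\ref{snf-wp:equivalence}. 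You named that argument but did not actually carry it out.
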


\begin{proof}
	Replacing $U$ by $U\cap S_{\mathrm{reg}}$ does not change
	$\cF$, so we may assume that $U$ is smooth.
	By Proposition~\ref{prop:stable-unitary-weak-positivity}
	and the finite-length argument in its proof, $\cE$
	admits a finite filtration by subbundles
	\[
	0=\cE_0\subset\cE_1\subset\cdots\subset\cE_s=\cE
	\]
	whose quotients are unitary flat.
	Set
	\(
	\cF_i:=i_*(\cE_i/\cE_{i-1}).
	\)
	Lemma~\ref{lem:unitary-surface-chern} and additivity
	of the first Chern class give $c_1(\cF)\equiv0$.
	Repeated application of
	Lemma~\ref{lem:surface-chern-subadditivity} gives
	\[
	\int_S\ch_2(\cF)
	\le\sum_{i=1}^s\int_S\ch_2(\cF_i)=0.
	\]
	Slope semistability follows from the curve argument
	in the proof of Theorem~\ref{snf-wp:equivalence}.
	
	For the opposite inequality, fix a very ample line
	bundle $H$ on $S$.
	Use the coordinate root covers from the proof of
	Proposition~\ref{operations:root-cover-criterion},
	and normalize an irreducible component dominating $S$.
	For every integer $m>0$, this gives a finite surjective
	morphism $f_m:S_m\to S$ from a normal projective surface
	and an ample globally generated line bundle $A_m$ such that
	\[
	f_m^*H\simeq A_m^m,
	\qquad
	\cG_m:=f_m^{[*]}\cF\otimes A_m
	\]
	is ample on $f_m^{-1}(U)$.
	Here the ampleness assertion follows by applying the
	necessity argument of that proposition over $U$.
	
	Put $r=\rk\cF$ and assume $r>0$.
	Lemma~\ref{snf-wp:ample-surface-segre}, together with
	$c_1(\cF)\equiv0$ and
	\cite[Proposition~4.2]{La-Inters}, gives
	\[
	0<
	\frac{s_2(\cG_m)}{\deg f_m}
	=
	\int_S\ch_2(\cF)
	+\frac{r(r+1)}{2m^2}H^2.
	\]
	Letting $m$ tend to infinity proves
	$\int_S\ch_2(\cF)\ge0$, and hence the required equality.
\end{proof}

\begin{Corollary}\label{cor:weak-flat-implies-snf-char-zero}
	Let $X$ be a normal quasi-projective variety over an
	algebraically closed field $k$ of arbitrary characteristic.
	Then every weakly flat vector bundle on $X$ is strongly
	numerically flat.
\end{Corollary}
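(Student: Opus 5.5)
The plan is to verify the two parts of Definition~\ref{Def:strongly-num-flat} directly, reducing everything to surfaces. Let $\cE$ be weakly flat on $X$. Numerical flatness is already given by Lemma~\ref{snf-wp:elementary-properties}, applied to $\cE$ and $\det\cE^*$ (the latter is weakly positive by Corollary~\ref{operations:weak-positivity}). So fix a normally big surface $V$ with small compactification $i:V\hookrightarrow\overline V$ and a morphism $g:V\to X$. By Lemma~\ref{snf-wp:elementary-properties}(1), which places no hypothesis on the morphism when working over the entire scheme, both $g^*\cE$ and $g^*\cE^*$ are weakly positive over $V$. Hence $g^*\cE$ is a weakly flat bundle on the big open subset $V$ of the normal projective surface $\overline V$. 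It therefore suffices to prove $\int_{\overline V}\ch_2(i_*g^*\cE)=0$ for a weakly flat bundle on a big open subset of a normal projective surface.

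The argument splits by characteristic. In characteristic $p>0$, this vanishing is exactly the surface computation carried out in the proof of $(2)\Rightarrow(1)$ of Theorem~\ref{snf-wp:equivalence}. That computation combines the Frobenius criterion of Proposition~\ref{frobenius-wp}, Lemma~\ref{snf-wp:ample-surface-segre} applied to $F_S^{[a]}\cG\otimes H$, Bogomolov's inequality and the Hodge index theorem. It uses only weak flatness of $g^*\cE$ on $V$, not any compactification of $X$, so it applies verbatim here. Over $\mathbb C$, Proposition~\ref{prop:weak-flat-surface-chern} gives $\int_{\overline V}\ch_2(i_*g^*\cE)=0$ directly.

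It remains to treat an arbitrary algebraically closed field $k$ of characteristic zero, and this is the main obstacle. I would first try to avoid the unitary machinery by reproducing the characteristic-free half of the proof of Proposition~\ref{prop:weak-flat-surface-chern}. The root-cover argument together with Lemma~\ref{snf-wp:ample-surface-segre} gives $\int\ch_2\ge -\tfrac12 c_1^2\ge0$, and the Hodge index theorem together with Bogomolov's inequality on normal surfaces gives $\int\ch_2\le c_1^2/2r\le0$, exactly as in the positive-characteristic proof. Here $c_1(\cG)\cdot H=0$ comes from curves avoiding the finitely many boundary points, and semistability comes from the curve argument. This gives $c_1^2=0$ and $\int\ch_2=0$ in every characteristic without Lefschetz principles. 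Bogomolov's inequality for semistable reflexive sheaves on normal surfaces in characteristic zero is available from \cite{La-Inters}.

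If some ingredient fails, for instance if the Segre estimate requires $c_1\equiv0$, the fallback is a spreading-out argument. The data $(\overline V, V, g^*\cE)$ and the witnessing sections of weak positivity are defined over a finitely generated subfield, which I would embed into $\mathbb C$. Weak positivity over $V$ is preserved under field extension by base change of global generation. The Chern number $\int\ch_2$ of the reflexive extension is invariant under extension of the algebraically closed base field, so the $\mathbb C$ case applies.
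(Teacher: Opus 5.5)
Your proof is correct. In characteristic $p>0$ and over $\CC$ it is the paper's argument. The paper also pulls back along $g$ using Lemma~\ref{snf-wp:elementary-properties}(1), then applies the surface part of Theorem~\ref{snf-wp:equivalence} in characteristic $p$ and Proposition~\ref{prop:weak-flat-surface-chern} over $\CC$. It handles other fields of characteristic zero by exactly your fallback, a standard reduction to $\CC$.

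Your primary characteristic-zero argument is a genuinely different route, and your worry about it is unfounded. Put $D=c_1(\cG)$. Since $D\cdot H=0$, the projection formula gives $f_m^*D\cdot A_m=0$ on the root covers. Hence
\[
0<\frac{s_2(f_m^{[*]}\cG\otimes A_m)}{\deg f_m}
=\int\ch_2(\cG)+\frac12D^2+\frac{r(r+1)}{2m^2}H^2
\]
holds without assuming $c_1\equiv0$. This is the Frobenius computation in the proof of Theorem~\ref{snf-wp:equivalence} with $m$ in place of $p^a$. The Hodge index theorem and Bogomolov's inequality then close the sandwich, and $c_1(\cG)\equiv0$ follows at the end.

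The paper gets the upper bound differently. It obtains $c_1\equiv0$ and $\int\ch_2\le0$ from the filtration with unitary flat quotients. That filtration comes from Proposition~\ref{prop:stable-unitary-weak-positivity}, which rests on the Cao--Deng--Matsumura flatness criterion. The paper then uses Lemma~\ref{lem:unitary-surface-chern} (Selberg, Kashiwara) and the subadditivity Lemma~\ref{lem:surface-chern-subadditivity}, and uses the root covers only for the lower bound.

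Your route is purely algebraic and needs no reduction to $\CC$. It also covers $k=\CC$, so this corollary does not need Proposition~\ref{prop:weak-flat-surface-chern} at all. Its one extra input is Bogomolov's inequality for slope semistable reflexive sheaves on a possibly singular normal surface in characteristic zero. You should cite this precisely: the paper invokes it, via \cite{La-Inters}, in the proof of Theorem~\ref{snf-wp:equivalence}. Alternatively, reduce to the locally free case by a finite cover as in the proof of Theorem~\ref{snf-wp:equivalence-klt}. Then use that semistability survives finite pullback in characteristic zero and that $\Delta$ is multiplicative. The paper's route avoids this input and ties the vanishing to the unitary structure of weakly flat bundles, at the cost of transcendental tools.
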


\begin{proof}
	Let $\cE$ be a weakly flat vector bundle on $X$.
	Applying Lemma~\ref{snf-wp:elementary-properties}(3)
	to $\cE$ and $\cE^*$ shows that $\cE$ is numerically flat.	
	Let $g:S\to X$ be a morphism from a normally big normal
	surface, and let $i:S\hookrightarrow\overline S$ be its
	small compactification.
	By Lemma~\ref{snf-wp:elementary-properties}(1),
	the bundle $g^*\cE$ is weakly flat.
	
	If $\operatorname{char}k>0$, choose an ample line bundle
	on $\overline S$. Applying the implication
	$(2)\Rightarrow(3)$ of Theorem~\ref{snf-wp:equivalence}
	to $g^*\cE$ gives
	\[
	\int_{\overline S}\ch_2(i_*g^*\cE)=0.
	\]
	
	If $\operatorname{char}k=0$, standard reduction
	arguments allow us to assume that $k=\CC$.
	Apply Proposition~\ref{prop:weak-flat-surface-chern}
	to $(g^*\cE)|_{S_{\mathrm{reg}}}$.
	Removing the singular points of $S$ does not change
	its reflexive extension to $\overline S$, so we obtain
	the same equality.
	
	Thus $\cE$ satisfies the defining surface condition
	for strong numerical flatness.
\end{proof}

\medskip

\section*{Declaration on the use of generative AI}

During the development and preparation of this article, the author
used ChatGPT (OpenAI; GPT-6 Astra) extensively as an interactive
tool to explore proof strategies for some of the results and to
assist with drafting and revision. The author independently
checked all mathematical arguments and references, rewrote the
generated material, and takes full responsibility for the
contents of the article.

\section*{Acknowledgements} 

The author would like to thank Mihai Fulger for providing Example \ref{Fulger}.

The author was partially supported by the National Science Centre, Poland, contract number 2025/59/B/ST1/02168.

\bibliographystyle{amsalpha}
\bibliography{References}

\end{document}